\begin{document}
\input xy
\xyoption{all}

\renewcommand{\mod}{\operatorname{mod}\nolimits}
\newcommand{\proj}{\operatorname{proj.}\nolimits}
\newcommand{\rad}{\operatorname{rad}\nolimits}
\newcommand{\Gproj}{\operatorname{Gproj}\nolimits}
\newcommand{\Ginj}{\operatorname{Ginj}\nolimits}
\newcommand{\Gd}{\operatorname{Gd}\nolimits}
\newcommand{\gldim}{\operatorname{gldim}\nolimits}
\newcommand{\ind}{\operatorname{inj.dim}\nolimits}
\newcommand{\Top}{\operatorname{top}\nolimits}
\newcommand{\ann}{\operatorname{Ann}\nolimits}
\newcommand{\id}{\operatorname{id}\nolimits}
\newcommand{\Mod}{\operatorname{Mod}\nolimits}
\newcommand{\End}{\operatorname{End}\nolimits}
\newcommand{\Ob}{\operatorname{Ob}\nolimits}
\newcommand{\Ht}{\operatorname{Ht}\nolimits}
\newcommand{\cone}{\operatorname{cone}\nolimits}
\newcommand{\rep}{\operatorname{rep}\nolimits}
\newcommand{\Ext}{\operatorname{Ext}\nolimits}
\newcommand{\Hom}{\operatorname{Hom}\nolimits}
\newcommand{\RHom}{\operatorname{RHom}\nolimits}
\renewcommand{\Im}{\operatorname{Im}\nolimits}
\newcommand{\Ker}{\operatorname{Ker}\nolimits}
\newcommand{\Coker}{\operatorname{Coker}\nolimits}
\renewcommand{\dim}{\operatorname{dim}\nolimits}
\newcommand{\Ab}{{\operatorname{Ab}\nolimits}}
\newcommand{\Coim}{{\operatorname{Coim}\nolimits}}
\newcommand{\pd}{\operatorname{proj.dim}\nolimits}
\newcommand{\sdim}{\operatorname{sdim}\nolimits}
\newcommand{\add}{\operatorname{add}\nolimits}
\newcommand{\pr}{\operatorname{pr}\nolimits}
\newcommand{\Tr}{\operatorname{Tr}\nolimits}
\newcommand{\Def}{\operatorname{Def}\nolimits}
\newcommand{\red}{\operatorname{red}\nolimits}
\newcommand{\soc}{\operatorname{soc}\nolimits}

\newcommand{\ca}{{\mathcal A}}
\newcommand{\cb}{{\mathcal B}}
\newcommand{\cc}{{\mathcal C}}
\newcommand{\cd}{{\mathcal D}}
\newcommand{\cg}{{\mathcal G}}
\newcommand{\cp}{{\mathcal P}}
\newcommand{\ce}{{\mathcal E}}
\newcommand{\cs}{{\mathcal S}}
\newcommand{\cm}{{\mathcal M}}
\newcommand{\cn}{{\mathcal N}}
\newcommand{\cx}{{\mathcal X}}
\newcommand{\ct}{{\mathcal T}}
\newcommand{\cu}{{\mathcal U}}
\newcommand{\co}{{\mathcal O}}
\newcommand{\cv}{{\mathcal V}}
\newcommand{\calr}{{\mathcal R}}
\newcommand{\ol}{\overline}
\newcommand{\ul}{\underline}
\newcommand{\st}{[1]}
\newcommand{\ow}{\widetilde}
\newcommand{\coh}{{\mathrm coh}}
\newcommand{\CM}{{\mathrm CM}}
\newcommand{\vect}{{\mathrm vect}}

\newcommand{\bp}{{\mathbf p}}
\newcommand{\bL}{{\mathbf L}}
\newcommand{\bS}{{\mathbf S}}

\newtheorem{theorem}{Theorem}[section]
\newtheorem{acknowledgement}[theorem]{Acknowledgement}
\newtheorem{algorithm}[theorem]{Algorithm}
\newtheorem{axiom}[theorem]{Axiom}
\newtheorem{case}[theorem]{Case}
\newtheorem{claim}[theorem]{Claim}
\newtheorem{conclusion}[theorem]{Conclusion}
\newtheorem{condition}[theorem]{Condition}
\newtheorem{conjecture}[theorem]{Conjecture}
\newtheorem{construction}[theorem]{Construction}
\newtheorem{corollary}[theorem]{Corollary}
\newtheorem{criterion}[theorem]{Criterion}
\newtheorem{definition}[theorem]{Definition}
\newtheorem{example}[theorem]{Example}
\newtheorem{exercise}[theorem]{Exercise}
\newtheorem{lemma}[theorem]{Lemma}
\newtheorem{notation}[theorem]{Notation}
\newtheorem{problem}[theorem]{Problem}
\newtheorem{proposition}[theorem]{Proposition}
\newtheorem{remark}[theorem]{Remark}
\newtheorem{solution}[theorem]{Solution}
\newtheorem{summary}[theorem]{Summary}
\newtheorem*{thm}{Theorem}
\newtheorem*{thma}{Theorem A}
\newtheorem*{thmb}{Theorem B}
\newtheorem*{thmc}{Theorem C}
\newtheorem*{thm1}{Main Theorem 1}
\newtheorem*{thm2}{Main Theorem 2}
\def \Z{{\Bbb Z}}
\def \X{{\Bbb X}}
\def \T{\Bbb T}
\def \K{\Bbb K}
\def \N{{\Bbb N}}
\def \D{{\Bbb D}}
\def \E{{\Bbb E}}
\def \A{{\Bbb A}}
\renewcommand{\P}{{\Bbb P}}
\title[Singularity categories of some $2$-CY-tilted algebras]{Singularity categories of some $2$-CY-tilted algebras}

\author[Lu]{Ming Lu}
\address{Department of Mathematics, Sichuan University, Chengdu 610064, P.R.China}
\email{luming@scu.edu.cn}

\subjclass[2000]{18E30, 18E35}
\keywords{Singularity category, Quiver with potential, Jacobian algebra, 2-CY-tilted algebra, Mutation.}

\begin{abstract}
We define a class of finite-dimensional Jacobian algebras, which are called (simple) polygon-tree algebras, as a generalization of cluster-tilted algebras of type $\D$. They are $2$-CY-tilted algebras. Using a suitable process of mutations of quivers with potentials (which are also BB-mutations) inducing derived equivalences, and one-pointed (co)extensions which preserve singularity equivalences, we find a connected selfinjective Nakayama algebra whose stable category is equivalent to the singularity category of a simple polygon-tree algebra. Furthermore, we also give a classification of algebras of this kind up to representation type.
\end{abstract}

\maketitle

\section{Introduction}

The Fomin-Zelevinsky mutation (FZ-mutation for short) of quivers plays an important role in the theory of cluster algebras initiated in \cite{FZ1}. Motivated by this theory via \cite{MRZ}, Buan, Marsh, Reiten, Reineke and Todorov introduced cluster categories to give a perfect categorical model for cluster algebras \cite{BMRRT}. Importantly, cluster-tilting objects in cluster categories are used to categorify clusters of the corresponding cluster algebras, and their mutations correspond to the FZ-mutation of quivers \cite{BMR2}. This is generalized to more general Hom-finite triangulated $2$-Calabi-Yau ($2$-CY for short) categories \cite{IY,BIRS1}.
On the other hand, in \cite{DWZ}, Derksen, Weyman and Zelevinsky studied quivers with potentials (QPs for short), that is, pairs consisting of a quiver and a special element
of its (complete) path algebra, and defined the mutations of such objects, thus, they provide
a new representation-theoretic interpretation for FZ-mutations of quivers.

Associated with cluster-tilting objects $T$ in cluster categories (resp. $2$-CY categories) $\cc$ are the endomorphism algebras $\End_\cc(T)$, called cluster-tilted algebras \cite{BMR1} (resp. $2$-CY-tilted algebras \cite{Rei}). Associated with QPs $(Q,W)$ are the Jacobian algebras (see e.g., \cite{DWZ}). In particular, cluster-tilted algebras are $2$-CY-tilted algebras, and a large class of $2$-CY-tilted algebras coming from triangulated $2$-CY categories associated with elements in Coxeter groups (\cite{BIRS1}) are Jacobian algebras, including cluster-tilted algebras \cite{BIRS2}. Furthermore, any finite-dimensional Jacobian algebras are $2$-CY-tilted algebras \cite{Am}.

The mutation of cluster-tilting objects induces an operation on the associated $2$-CY-tilted algebras and the mutation of QPs induces an operation on the associated Jacobian algebras. It is a conjecture that for $2$-CY-tilted algebras which are Jacobian algebras, the mutations of $2$-CY-tilted algebras coincides with those of Jacobian algebras. This is proved for a large class of $2$-CY-tilted algebras in \cite{BIRS2}, including cluster-tilted algebras, see Theorem \ref{theorem mutation cluster-tilting to mutation QP}.

To know when a single mutation of QP leads to derived equivalence of the corresponding Jacobian algebras, Ladkani \cite{La} defined the negative and positive mutations of algebras, which are endomorphism algebras of tilting complexes introduced and studied by Vit\'{o}ria in \cite{Vi}, Keller and Yang in \cite{KY}. In fact, the negative mutation of algebras is a generalization of BB-tilting modules \cite{BB}, which are themselves generalizations of the BGP reflection functors introduced in \cite{BGP}. For $2$-CY-tilted algebras, Ladkani proved that the mutation of cluster-tilting objects inducing derived equivalence coincides with the BB-mutation under some conditions, see Proposition \ref{proposition mutation to BB}. These kind of mutations are used to give derived equivalence classification
of cluster-tilted algebras of Dynkin type $\D$ \cite{BHL2} and $\E$ \cite{BHL1}.

Besides, $2$-CY-tilted algebras are Gorenstein algebra of dimension at most one, and their categories of Gorenstein projective modules (also called maximal Cohen-Macaulay modules) are stably $3$-CY \cite{KR1}. A fundamental result of Buchweitz \cite{Bu} and Happel \cite{Ha1} states that for a Gorenstein algebra $A$, the stable category of Gorenstein projective modules over $A$ is equivalent to its singularity category $D^b_{sg}(A)$, where the singularity category of an algebra is defined to be the Verdier quotient of the bounded derived category with respect to the thick subcategory formed by complexes isomorphic to bounded complexes of finitely generated projective modules \cite{Bu,Ha1,Or1}. The singularity categories of many algebras have been described clearly, see e.g., \cite{Chen1,Chen2,Ka,CL1}. In \cite{CGL}, we have settled the problem of singularity equivalence classification of the cluster-tilted algebras of type $\A$, $\D$ and $\E$.

In this paper, as a generalization of a type of cluster-tilted algebras of type $\D$, we define a class of algebras, which are called polygon-tree algebras, see Definition \ref{definition of polygon-tree algebras}. Roughly speaking, they are Jacobian algebras with their quivers constructed from several oriented cycles like trees, which are called polygon-tree quivers, and the potentials are primitive in the sense of \cite{DWZ}. Locally, the quiver of the polygon-tree algebra is called a floriated quiver, see Section 3. Both polygon-tree quivers and floriated quivers are cyclically oriented quivers in the sense of \cite{BGZ,BT}.

First, with the help of \cite{TV}, we prove that the polygon-tree algebras, including floriated algebras, are finite-dimensional Jacobian algebras, and then are $2$-CY-tilted algebras, see Proposition \ref{schurian of polygon-tree algebras}. Furthermore, we obtain that the polygon-tree algebras are schurian algebras, see Theorem \ref{lemma schurian algebra of simple polygon-tree algebras}. Second, we consider the singularity categories of the simple polygon-tree algebras, which form a subclass of polygon-tree algebras. Using a suitable process of mutations of QPs inducing derived equivalences (which are BB-mutations), and one-pointed (co)extensions which preserve singularity equivalences, we find a connected selfinjective Nakayama algebra whose stable category is equivalent to the singularity category of a simple polygon-tree algebra, see Theorem \ref{proposition to type Q(m,{i_1,...,i_r})}. In particular, simple polygon-tree algebras are CM-finite algebras. With the help of the description of the stable categories of representation finite selfinjective algebras of type $\A_n$ in \cite{Rie2}, we describe the singularity categories of the simple polygon-tree algebras clearly, see Corollary \ref{corollary singularity category}. Third, using the classification of the quivers of finite mutation type \cite{FeST}, and the representation type of Jacobian algebras in \cite{GLS}, we also give a classification of representation type for polygon-tree algebras, see Theorem \ref{theorem of representation type}.

\vspace{0.2cm} \noindent{\bf Acknowledgments.}
The author thanks Changjian Fu and Shengfei Geng for inspiring discussions, the author appreciates the referee
for careful reading and helpful insightful comments which improved this paper.

The author was supported by the National Natural Science Foundation of China(Grant No. 11401401).

\section{Preliminaries}
In this paper, $K$ is an algebraically closed field.
\subsection{Singularity categories and Gorenstein algebras}
Let $\Gamma$ be a finite-dimensional $K$-algebra. Let $\mod \Gamma$ be the category of finitely generated left $\Gamma$-modules. For an arbitrary $\Gamma$-module $_\Gamma X$, we denote by $\pd_\Gamma X$ (resp. $\ind_\Gamma X$) the projective dimension (resp. the injective dimension) of the module $_\Gamma X$. A $\Gamma$-module $G$ is \emph{Gorenstein projective}, if there is an exact sequence $$P^\bullet:\cdots \rightarrow P^{-1}\rightarrow P^0\xrightarrow{d^0}P^1\rightarrow \cdots$$ of projective $\Gamma$-modules, which stays exact under $\Hom_\Gamma(-,\Gamma)$, and such that $G\cong \Ker d^0$. We denote by $\Gproj(\Gamma)$ the subcategory of Gorenstein projective $\Gamma$-modules.

\begin{definition}[\cite{AR1,AR2,Ha1}]
A finite-dimensional algebra $\Gamma$ is called a Gorenstein algebra if $\Gamma$ satisfies $\ind \Gamma_\Gamma<\infty$ and $\ind\,_{\Gamma} \Gamma<\infty$.
\end{definition}

Observe that for a Gorenstein algebra $\Gamma$, we have $\ind _\Gamma\Gamma=\ind \Gamma_\Gamma$, \cite[Lemma 6.9]{Ha1}; the common value is denoted by $\Gd \Gamma$. If $\Gd \Gamma\leq d$, we say that $\Gamma$ is $d$-Gorenstein.

An algebra is of \emph{finite Cohen-Macaulay type}, or simply, \emph{CM-finite}, if there are only finitely many isomorphism classes of indecomposable finitely generated Gorenstein projective modules.

For an algebra $\Gamma$, the \emph{singularity category} of $\Gamma$ is defined to be the quotient category $D_{sg}^b(\Gamma):=D^b(\Gamma)/K^b(\proj \Gamma)$ \cite{Bu,Ha1,Or1}. Note that $D_{sg}^b(\Gamma)$ is zero if and only if $\gldim \Gamma<\infty$ \cite{Ha1}.

\begin{theorem}[\cite{Bu,Ha1}]
Let $\Gamma$ be a Gorenstein algebra. Then $\Gproj (\Gamma)$ is a Frobenius category with the projective modules as the projective-injective objects. The stable category $\underline{\Gproj}(\Gamma)$ is triangle equivalent to the singularity category $D^b_{sg}(\Gamma)$ of $\Gamma$.
\end{theorem}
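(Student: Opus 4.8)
The plan is to prove the two assertions in turn, treating the Frobenius structure first and then constructing and analyzing the canonical comparison functor into the singularity category. I would equip $\Gproj(\Gamma)$ with the exact structure inherited from $\mod\Gamma$, whose conflations are the short exact sequences with all three terms in $\Gproj(\Gamma)$. The first routine checks are that $\Gproj(\Gamma)$ is closed under extensions, under kernels of admissible epimorphisms, and under cokernels of admissible monomorphisms, so that it is an exact category; each of these follows by splicing the defining complete resolutions. Next I would record that every projective is Gorenstein projective (resolve it by $\cdots\to 0\to P\xrightarrow{=}P\to 0\to\cdots$) and, crucially, that $\Ext^i_\Gamma(G,P)=0$ for every $G\in\Gproj(\Gamma)$, every projective $P$, and every $i>0$ — this vanishing is exactly the requirement that the complete resolution stay exact under $\Hom_\Gamma(-,\Gamma)$. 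From it one deduces that the projectives are precisely the injective objects of the exact category, and dually the projective objects. Finally, the complete resolution of each $G$ furnishes an admissible epimorphism $P\twoheadrightarrow G$ with kernel in $\Gproj(\Gamma)$ and an admissible monomorphism $G\hookrightarrow P'$ with cokernel in $\Gproj(\Gamma)$, giving enough projective-injectives. Identifying the projective-injective objects with $\proj\Gamma$ yields the Frobenius property, and $\underline{\Gproj}(\Gamma)$ then carries its triangulated structure with suspension the cosyzygy functor $\Omega^{-1}$.

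For the equivalence I would define $F\colon\underline{\Gproj}(\Gamma)\to D^b_{sg}(\Gamma)$ as the composite of the inclusion $\Gproj(\Gamma)\hookrightarrow D^b(\Gamma)$, viewing modules as stalk complexes in degree $0$, with the Verdier quotient functor $D^b(\Gamma)\to D^b_{sg}(\Gamma)$. Since projectives become zero in $D^b_{sg}(\Gamma)$, maps factoring through projectives are killed, so $F$ is well defined on the stable category. To see that $F$ is a triangle functor I would verify that each conflation $0\to G'\to G\to G''\to 0$ in $\Gproj(\Gamma)$ is sent to a distinguished triangle of $D^b_{sg}(\Gamma)$, and that $F$ intertwines the cosyzygy suspension with the shift $[1]$; the latter holds because the conflation $0\to G\to P\to\Omega^{-1}G\to 0$ becomes an isomorphism $\Omega^{-1}G\cong G[1]$ in $D^b_{sg}(\Gamma)$ once $P$ is made zero.

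It then remains to show that $F$ is dense and fully faithful, and here the Gorenstein hypothesis enters decisively through the finiteness $\Gd\Gamma=d<\infty$. For density I would use that for every $M\in\mod\Gamma$ the $d$-th syzygy $\Omega^d M$ lies in $\Gproj(\Gamma)$: applying $\Hom_\Gamma(-,\Gamma)$ to a projective resolution, the bound $\id{}_\Gamma\Gamma\le d$ forces $\Ext^{>0}_\Gamma(\Omega^d M,\Gamma)=0$, which with a dimension-shift gives the complete resolution of $\Omega^d M$. Each short exact sequence $0\to\Omega^{i+1}M\to P_i\to\Omega^i M\to 0$ of the resolution yields in $D^b_{sg}(\Gamma)$ an isomorphism $\Omega^i M\cong(\Omega^{i+1}M)[1]$, and iterating gives $M\cong(\Omega^d M)[d]$. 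Since every object of $D^b_{sg}(\Gamma)$ is isomorphic to a shift of a module by a standard truncation reduction, density follows.

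The main obstacle will be full faithfulness, i.e.\ the identification of the stable Hom group $\underline{\Hom}_\Gamma(G,G')$ — maps $G\to G'$ modulo those factoring through a projective — with the Verdier Hom $\Hom_{D^b_{sg}(\Gamma)}(G,G')$, which is a colimit over roofs $G\xleftarrow{s}X\to G'$ inverting morphisms with perfect cone. The technical heart is to show that each such roof can be straightened to a genuine module homomorphism $G\to G'$ and that no new relations are introduced; this rests on the vanishing $\Ext^{>0}_\Gamma(\Gproj(\Gamma),\proj\Gamma)=0$ together with the existence of complete resolutions, which allow fractions to be lifted and corrected at the level of modules. I would carry out this comparison by replacing the source of a roof with a Gorenstein-projective approximation, reducing the fraction to a single map and checking bijectivity on every Hom space. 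Combining this with density yields the asserted triangle equivalence $\underline{\Gproj}(\Gamma)\simeq D^b_{sg}(\Gamma)$.
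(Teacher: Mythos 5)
The paper does not prove this statement; it is quoted verbatim from Buchweitz and Happel, and your outline is precisely the standard argument from those sources. The overall architecture is right: the Frobenius structure on $\Gproj(\Gamma)$ via extension-closure and the vanishing $\Ext^{>0}_\Gamma(G,P)=0$, the comparison functor $F$ induced by the stalk-complex embedding, density via $d$-th syzygies, and full faithfulness as the technical core. Two steps, however, are asserted rather than proved, and they are exactly the places where the Gorenstein hypothesis does real work.

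First, in the density argument you pass from $\Ext^{>0}_\Gamma(\Omega^d M,\Gamma)=0$ to ``$\Omega^d M$ has a complete resolution'' by ``a dimension-shift.'' A dimension shift gives you the vanishing, but not the right half of the complete resolution: you must produce an exact coresolution $0\to\Omega^d M\to P^0\to P^1\to\cdots$ by projectives that remains exact under $\Hom_\Gamma(-,\Gamma)$. The standard route is to apply $(-)^{t}=\Hom_\Gamma(-,\Gamma)$, use $\ind\Gamma_\Gamma<\infty$ to see that the $\Gamma^{op}$-module $(\Omega^d M)^{t}$ again becomes $\Ext$-orthogonal to $\Gamma$ after finitely many syzygies, take a projective resolution of it over $\Gamma^{op}$, and dualize back, checking reflexivity; this is a genuine lemma (that over a Gorenstein algebra the Gorenstein projectives are exactly the modules $G$ with $\Ext^{>0}_\Gamma(G,\Gamma)=0$), not a formal consequence of the vanishing. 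Second, full faithfulness of $F$ is only a plan in your write-up. The usual proof does not straighten roofs directly; it shows that for $G\in\Gproj(\Gamma)$ and any complex $X$ the map $\underline{\Hom}(G,-)\to\Hom_{D^b_{sg}}(G,-)$ is an isomorphism by an induction on the length of $X$ (or by Buchweitz's argument identifying both sides with Tate cohomology), using that any morphism in $D^b(\Gamma)$ from $G$ to a perfect complex concentrated in degrees $\le 0$ factors through a projective. As a proposal your strategy is viable, but these two points are where the actual content of the theorem lies and would need to be written out.
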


\subsection{Quiver with potential and its mutations}
We follow \cite{DWZ} to introduce the quiver with potential and its mutation. A quiver $Q=(Q_v,Q_a,s,t)$ consists of a pair of finite sets $Q_v$ (vertices) and $Q_a$ (arrows) supplied with two maps $s:Q_a\rightarrow Q_v$ (source) and $t:Q_{a}\rightarrow Q_v$ (target). The complete path algebra $\widehat{KQ}$ is the completion of the path algebra $KQ$ with respect to the ideal generated by the arrows of $Q$. A \emph{potential} on $Q$ is an element of the closure $Pot(KQ)$ of the space generated by all non trivial cyclic paths of $Q$. Two potentials $W$ and $W'$ are called \emph{cyclically equivalent} if $W-W'\in\{KQ,KQ\}$, where $\{KQ,KQ\}$ denotes the closure of the vector space spanned by commutators. Let $\mathfrak{m}$ be the ideal of $\widehat{KQ}$ generated by all arrows of $Q$.
Let $W$ be a potential on $Q$ such that $W$ is in $\mathfrak{m}^2$ and no two cyclically equivalent cyclic paths appear in the decomposition of $W$. Then the pair $(Q,W)$ is called a \emph{quiver with potential} (QP for short).

Two QPs $(Q,W)$ and $(Q',W')$ are \emph{right-equivalent} if $Q$ and $Q'$ have the same set of vertices and there exists an algebra isomorphism $\varphi:\widehat{KQ}\rightarrow \widehat{KQ'}$ whose restriction on vertices is the identity map and $\varphi(W)$ and $W'$ are cyclically equivalent. Such an isomorphism $\varphi$ is called a \emph{right-equivalence}.

For an arrow $a$ of $Q$, the cyclic derivative $\partial_aW$ is defined by
$$\partial_a(a_1\cdots a_l)=\sum_{a_i=a}a_{i+1}\cdots a_la_1\cdots a_{i-1}$$
and extended linearly and continuously. The \emph{Jacobian algebra} of a QP $(Q,W)$, denoted by $J(Q,W)$, is the quotient of the complete path algebra $\widehat{KQ}$ by the \emph{Jacobian ideal} $J(W)$, where $J(W)$ is the closure of the ideal generated by $\partial_aW$, where $a$ runs over all arrows of $Q$.
It is clear that two right-equivalent QPs have isomorphic Jacobian algebras. A QP is called \emph{reduced} if $\partial_a W$ is contained in $\mathfrak{m}^2$ for all arrows $a$ of $Q$.

It is shown in \cite{DWZ} that for any QP $(Q,W)$, there exists a reduced QP $(Q_{\red},W_{\red})$ such that
$$J(Q,W)\simeq J(Q_{\red},W_{\red}),$$
which is uniquely determined up to right-equivalence. We call $(Q_{\red},W_{\red})$ the \emph{reduced part} of $(Q,W)$.

For every QP $(Q,W)$, we define its \emph{deformation space} $\Def(Q,W)$ by
$$\Def(Q,W)=\Tr(J(Q,W))/R,$$ where $\Tr(J(Q,W))=J(Q,W)/\{J(Q,W),J(Q,W)\}$ and $R=k^{Q_{v}}$. We call a QP $(Q,W)$ \emph{rigid} if $\Def(Q,W)=\{0\}$, i.e., if $\Tr(J(Q,W))=R$.

Let $(Q,W)$ be a QP. Let $i$ be a vertex of $Q$. Assume the following conditions:

(c1) the quiver $Q$ has no loops;

(c2) the quiver $Q$ does not have oriented $2$-cycles at $i$;

(c3) no cyclic path occurring in the expansion of $W$ starts and ends at $i$.

We define a new QP $\tilde{\mu}_i(Q,W)=(Q',W')$ as follows.
\begin{itemize}
\item[Step 1] For each arrow $\beta$ with target $i$ and each arrow $\alpha$ with source $i$, add a new arrow $[\alpha\beta]$ from the source of $\beta$ to the target of $\alpha$.
 \item[Step 2] Replace each arrow $\alpha$ with source or target $i$ with an arrow $\alpha^*$ in the opposite direction.
\item[Step 3]
The new potential $W'$ is the sum of two potentials $W_1'$ and $W_2'$. The potential $W_1'$ is obtained from $W$ by replacing each composition $\alpha\beta$ by $[\alpha\beta]$, where $\beta$ is an arrow with target $i$.
The potential $W_2'$ is given by $W_2'=\sum_{\alpha,\beta}[\alpha\beta]\beta^*\alpha^*$, where the sum ranges over all pairs of arrows $\alpha$ and $\beta$ such that $\beta$ ends at $i$ and $\alpha$ starts at $i$.
\end{itemize}
We define $\mu_i(Q,W)$ as the reduced part of $\tilde{\mu}_i(Q,W)$, and call $\mu_i$ \emph{the mutation at the vertex $i$}.
In this case, $\mu_i$ is also well-defined on the QP $\mu_i(Q,W)$, and $\mu_{i}(\mu_i(Q,W))$ is right-equivalent to $(Q,W)$ \cite{DWZ}.

\begin{definition}[\cite{DWZ}]
Let $k_1,\dots,k_l\in Q_{v}$ be a finite sequence of vertices such that $k_p\neq k_{p+1}$ for $p=1,\dots,l-1$. We say that a QP $(Q,W)$ is $(k_l,\dots,k_1)$-nondegenerate if all the quivers with potentials $$(Q,W),\mu_{k_1}(Q,W),\dots,\mu_{k_l}\cdots \mu_{k_1}(Q,W)$$ are $2$-acyclic. We say that $(Q,W)$ is nondegenerate if it is $(k_l,\dots,k_1)$-nondegenerate for every sequence of vertices as above.
\end{definition}

\begin{proposition}[\cite{DWZ}]\label{proposition rigid}
(a) If a reduced QP $(Q,W)$ is $2$-acyclic and rigid, then for any vertex $k$, $\mu_k(Q,W)$ is also rigid.

(b) Each rigid reduced QP $(Q,W)$ is $2$-acyclic.

(c) Each rigid QP is nondegenerate.
\end{proposition}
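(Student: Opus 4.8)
The whole statement is governed by the deformation space $\Def(Q,W)=\Tr(J(Q,W))/R$, and my plan is to reduce (a), (b) and (c) to its behaviour, computed inside the space of necklaces. Recall that $(Q,W)$ is rigid exactly when $\Def(Q,W)=0$, i.e. $\Tr(J(Q,W))=R$. First I would record two soft invariances. If $(Q,W)$ and $(Q',W')$ are right-equivalent, the right-equivalence induces an algebra isomorphism $J(Q,W)\cong J(Q',W')$ fixing the vertex subalgebra $R$, hence $\Def(Q,W)\cong\Def(Q',W')$; and since $J(Q,W)\cong J(Q_{\red},W_{\red})$ with the same vertex set, also $\Def(Q,W)\cong\Def(Q_{\red},W_{\red})$. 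Next I would pass to the necklace space $\cn=\widehat{KQ}/\overline{\{\widehat{KQ},\widehat{KQ}\}}$, graded by path length as $\cn=\prod_{n\geq 0}\cn_n$ with $\cn_0=R$, writing $[p]$ for the class of a cyclic path $p$. Then $\Tr(J(Q,W))=\cn/\overline{[J(W)]}$, where the necklace image $[J(W)]$ of the Jacobian ideal is spanned by the classes $[(\partial_x W)\,p]$, with $x$ an arrow and $p$ a path closing $(\partial_x W)$ into a cycle. When $(Q,W)$ is reduced one has $W\in\mathfrak m^3$, so $\partial_x W\in\mathfrak m^2$ and hence $[J(W)]\subseteq\prod_{n\geq2}\cn_n$, its degree-$2$ part being spanned by the $[\partial_x W_3]$, where $W_3$ is the cubic part of $W$.

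For (b) I would run a length argument in $\cn$. A loop $\ell$ yields a class $[\ell]\in\cn_1$; since $[J(W)]$ lives in degrees $\geq2$, the projection $\cn_1\to\Tr(J(Q,W))$ is injective, and as $R=\cn_0$ meets $\cn_1$ in $0$, the loop would give a nonzero class in $\Def(Q,W)$, so rigidity forbids loops. Assuming no loops, let $a\colon i\to j$, $b\colon j\to i$ be a hypothetical $2$-cycle, giving $[ab]\in\cn_2$. By length-homogeneity, $[ab]$ vanishes in $\Tr(J(Q,W))$ only if it lies in the degree-$2$ part of $[J(W)]$, i.e. in the span of the $[\partial_x W_3]$. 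But $[\partial_x W_3]=\sum[yz]$, summed over the cubic cyclic terms $[xyz]$ of $W$, and $[yz]=[ab]$ would force a cubic term $[xab]$ up to rotation, whose closing-up requires $x$ to be a loop at $i$ — impossible. Hence $[ab]\neq0$ in $\Tr(J(Q,W))$ and, lying in positive degree, is not in $R$, contradicting rigidity. So a rigid reduced QP is $2$-acyclic (and loopless).

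For (a) the two soft invariances reduce the claim to showing that the premutation preserves the deformation space, $\Def(\tilde{\mu}_k(Q,W))\cong\Def(Q,W)$: indeed $\mu_k(Q,W)$ is the reduced part of $\tilde{\mu}_k(Q,W)$, and both QPs carry the same vertex set $Q_v$, hence the same $R$. I would build the comparison on necklaces directly from Steps~1--3: necklaces avoiding $k$ are untouched; every factor $\alpha\beta$ routed through $k$ (with $\beta$ ending and $\alpha$ starting at $k$) is rewritten as $[\alpha\beta]$; and the added potential $W_2'=\sum_{\alpha,\beta}[\alpha\beta]\beta^*\alpha^*$ contributes, through cyclic derivatives such as $\partial_{\beta^*}W_2'=\alpha^*[\alpha\beta]$ and $\partial_{[\alpha\beta]}W_2'=\beta^*\alpha^*$, precisely the relations eliminating the reversed arrows $\alpha^*,\beta^*$ from the necklace quotient and matching $\overline{[J(W)]}$ with $\overline{[J(W')]}$. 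Rather than verify bijectivity head-on, I would first establish the one-sided estimate $\dim\Def(\mu_k(Q,W))\leq\dim\Def(Q,W)$ and then upgrade it to an isomorphism using that $\mu_k$ is an involution up to right-equivalence: applying the estimate to $\mu_k(Q,W)$ gives $\dim\Def(Q,W)=\dim\Def(\mu_k\mu_k(Q,W))\leq\dim\Def(\mu_k(Q,W))\leq\dim\Def(Q,W)$, forcing equalities, so that $\Def(Q,W)=0$ yields $\Def(\mu_k(Q,W))=0$. I expect the bookkeeping here — tracking how the $2$-cycles created in Step~2 cancel under reduction, and checking that the $W_2'$-relations account for exactly the new necklace classes — to be the main obstacle.

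Finally (c) is a formal consequence. Replacing a rigid QP by its reduced part changes neither the Jacobian algebra nor rigidity, so I may assume $(Q,W)$ reduced; by (b) it is then loopless and $2$-acyclic, so conditions (c1) and (c2) hold, while (c3) can be arranged at any vertex $k$ because, in the absence of loops, every cyclic term of $W$ visits a vertex other than $k$ and may be based there. Thus $\mu_k$ is applicable; by (a) the result $\mu_k(Q,W)$ is again rigid, and being a reduced output of $\mu_k$ it is, by (b), again loopless and $2$-acyclic. Inducting along an arbitrary admissible sequence $k_1,\dots,k_l$, every $\mu_{k_p}\cdots\mu_{k_1}(Q,W)$ is rigid, hence $2$-acyclic, so $(Q,W)$ is nondegenerate.
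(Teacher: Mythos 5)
The paper itself offers no argument for this proposition: it is imported wholesale from [DWZ], so there is no in-paper proof to compare your route against. Judged on its own terms, your reconstruction of (b) and (c) is essentially correct and follows the standard line: identify $\Tr(J(Q,W))$ with the necklace (cyclic) space of $\widehat{KQ}$ modulo the closed image of the Jacobian ideal, note that for reduced $W$ this image has no component in degree $\leq 1$ (and, once loops are excluded, none in degree $2$ either, since realizing a class $[ab]$ as a degree-$2$ component $[\partial_x W_3]$ would force $x$ to be a loop), so a loop or $2$-cycle survives as a nonzero positive-degree class in $\Def(Q,W)$; and then (c) follows from (a) and (b) by induction along mutation sequences, exactly as in [DWZ].

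The genuine gap is in (a). Your framing is right — $\Def$ is invariant under right-equivalence and under passing to the reduced part, and the involution trick would upgrade a one-sided inequality $\dim\Def(\mu_k(Q,W))\leq\dim\Def(Q,W)$ to an equality — but this only reduces the claim to the assertion that the premutation does not enlarge the deformation space, and that assertion is the entire technical content of the DWZ result, which you explicitly defer (``I expect the bookkeeping here to be the main obstacle''). What must actually be checked is nontrivial: that every necklace class of $Q'=\tilde{\mu}_k(Q)_a$-quiver is, modulo $J(W')$, represented by one avoiding the reversed arrows $\alpha^*,\beta^*$ (using the relations $\partial_{[\alpha\beta]}W'$, $\partial_{\alpha^*}W'$, $\partial_{\beta^*}W'$), and that under the substitution $[\alpha\beta]\mapsto\alpha\beta$ the image of $J(W')$ in the necklace space is carried into that of $J(W)$ — including the contributions of the cyclic derivatives with respect to the composite arrows $[\alpha\beta]$, which have no immediate counterpart among the $\partial_xW$. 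Until that surjection $\Def(Q,W)\twoheadrightarrow\Def(\tilde{\mu}_k(Q,W))$ (or the dimension inequality) is established, part (a) is unproved, and with it the inductive step of part (c).
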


\begin{definition}[\cite{BGZ,BT}]
A walk of length $p$ in a quiver $Q$ is a $(2p+1)$-tuple
$$w=(x_p,\alpha_p,x_{p-1},\alpha_{p-1},\dots,x_1,\alpha_1,x_0)$$
such that for all $i$ we have $x_i\in Q_0$, $\alpha_i\in Q_1$ and $\{s(\alpha_i),t(\alpha_i)\}=\{x_p,x_{p-1}\}$. The walk $w$ is oriented if either $s(\alpha_i)=x_{i-1}$ and $t(\alpha_i)=x_i$ for all $i$ or $s(\alpha_i)=x_i$ and $t(\alpha_i)=x_{i-1}$ for all $i$. Furthermore, $w$ is called a cycle if $x_0=x_p$. An oriented walk is also called a path. If $w$ is oriented and $t(\alpha_i)=s(\alpha_{i+1})$ for any $i$, we omit the vertices and abbreviate $w$ by $\alpha_p\dots \alpha_1$.

A cycle $c=(x_p,\alpha_p,\dots,x_1,\alpha_1,x_p)$ is called non-self-intersecting if its vertices $x_1,\dots,x_p$ are pairwise distinct. A non-self-intersecting cycle of length $2$ is called a $2$-cycle. If $c$ is a non-self-intersecting cycle, then any arrow $\beta\in Q \setminus \{\alpha_1,\dots,\alpha_p\}$ with $\{s(\beta),e(\beta)\}\subseteq \{x_1,\dots,x_p\}$ is called a chord of $c$. A cycle $c$ is called chordless if it is non-self-intersecting and there is no chord of $c$.

A quiver $Q$ without loops or oriented $2$-cycles is called cyclically oriented if each chordless cycle is oriented.

A path $\gamma$, which is anti-parallel to an arrow $\eta$ in a quiver $Q$, is called a shortest path if the full subquiver generated by the induced oriented cycle $\eta\gamma$ is chordless.
\end{definition}

\begin{definition}[\cite{DWZ}]
Let $Q$ be a quiver. A primitive potential $S$ is a linear combination of all oriented chordless cycles in $Q$ with non-zero scalars.
\end{definition}
\subsection{Mutation of cluster-tilting objects}
Let $\cc$ be a Hom-finite triangulated $k$-category. We denote by $[1]$ the shift functor in $\cc$. Then $\cc$ is said to be $n$-Calabi-Yau ($n$-CY for simplicity) if there is a functorial isomorphism
$$ D\Hom_\cc(A,B)\simeq \Ext^n_\cc(B,A)$$
for $A,B$ in $\cc$ and $D=\Hom_k(-,k)$.

Let $\cc$ be a $2$-CY triangulated category. An object in $\cc$ is a \emph{cluster-tilting object} if
$$\add T=\{X\in\cc|\Ext^1_\cc(T,X)=0\}$$
(see \cite{BMRRT,KR1}). In this case the algebra $\End_\cc(T)$ is called a \emph{$2$-CY-tilted algebra} \cite{Rei}. \cite[Corollary 3.7]{Am} shows that each finite-dimensional Jacobian algebra $J(Q,W)$ is $2$-CY-tilted.

Let $T=T_1\oplus T_2\oplus\cdots \oplus T_n$ be a cluster-tilting object, where $T_i$ are nonisomorphic indecomposable objects. Then for any $1\leq k\leq n$ there exists a unique $T_k^*$ non-isomorphic to $T_k$ such that $\mu_k(T)=(T/T_k)\oplus T_k^*$ is a cluster-tilting object. Moreover, there are so-called exchange triangles
$$T_k^*\xrightarrow{g}U_k\xrightarrow{f} T_k\rightarrow T_k^*[1]\mbox{ and }T_k\xrightarrow{g'}U'_k\xrightarrow{f'}T_k^*\rightarrow T_k[1],$$
where $f$ and $f'$ are minimal right $\add(T/T_k)$-approximations, and $g$ and $g'$ are minimal left $\add(T/T_k)$-approximations \cite{BMRRT,IY}.

Recall that a finite-dimensional algebra $\Lambda$ satisfies the \emph{vanishing condition} at $k$ if $$\Hom_\Lambda(\Ext^1_\Lambda(D\Lambda,S_k),S_k)=0$$ holds for the simple $\Lambda$-module $S_k$.

\begin{theorem}[\cite{BIRS2}]\label{theorem mutation cluster-tilting to mutation QP}
Let $\cc$ be a $2$-CY triangulated category with a basic cluster-tilting object $T$. If $\End_\cc(T)\simeq J(Q,W)$ for a QP $(Q,W)$, no oriented $2$-cycles start in the vertex $k$ of $Q$ and the vanishing condition is satisfied at $k$, then $\End_\cc(\mu_k(T))\simeq J(\mu_k(Q,W))$.
\end{theorem}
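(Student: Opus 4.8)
The plan is to identify $J(\mu_k(Q,W))$ with the endomorphism algebra $\Gamma:=\End_\cc(\mu_k(T))$ up to right-equivalence, by constructing a surjection from the complete path algebra of the mutated quiver onto $\Gamma$ whose kernel is the Jacobian ideal. Recall that $\mu_k(T)=(T/T_k)\oplus T_k^*$ is again a basic cluster-tilting object, with the two exchange triangles
$$T_k^*\xrightarrow{g}U_k\xrightarrow{f}T_k\rightarrow T_k^*[1]\quad\text{and}\quad T_k\xrightarrow{g'}U_k'\xrightarrow{f'}T_k^*\rightarrow T_k[1],$$
in which $f,f'$ are minimal right and $g,g'$ minimal left $\add(T/T_k)$-approximations. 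These triangles are the source of all the new morphisms in $\Gamma$, and the strategy is to match them, arrow by arrow and relation by relation, with the combinatorics of $\tilde\mu_k$ and its reduction.

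First I would read off the quiver $Q'$ of $\Gamma$ from the triangles. Irreducible maps between summands of $\mu_k(T)$ not involving $T_k^*$ are inherited from $Q$, the components of $g,f$ (resp. $g',f'$) supply the reversed arrows $\alpha^*,\beta^*$ at $k$, and a morphism that factors through $U_k$ realizes a new arrow $[\alpha\beta]$ for each path $\alpha\beta$ through $k$. Computing $\rad\Gamma/\rad^2\Gamma$ then exhibits $Q'$ as the quiver produced by Steps 1--2 of the definition of $\tilde\mu_k$, after cancelling the $2$-cycles coming from compositions $[\alpha\beta]\beta^*$ and $\alpha^*[\alpha\beta]$. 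Here the hypotheses enter: the absence of oriented $2$-cycles starting at $k$ makes the approximations $f,f'$ behave as expected, while the vanishing condition $\Hom_\Lambda(\Ext^1_\Lambda(D\Lambda,S_k),S_k)=0$ is precisely what rules out loops at $k$ and forces the $2$-cycle cancellation inside $\Gamma$ to coincide with the reduction performed in passing from $\tilde\mu_k$ to $\mu_k$.

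Next I would treat the potential. Assigning to each arrow of the quiver of $\tilde\mu_k(Q,W)$ the corresponding morphism chosen above and extending multiplicatively gives an algebra homomorphism $\Phi:\widehat{KQ'}\rightarrow\Gamma$, which is surjective by the previous step. For the kernel I would verify $J(W')\subseteq\ker\Phi$ with $W'=\mu_k(W)=W_1'+W_2'$: the relations from $\partial_a W_1'$ are inherited from the Jacobian relations of $(Q,W)$ under the substitution $\alpha\beta\mapsto[\alpha\beta]$, while the derivatives of $W_2'=\sum_{\alpha,\beta}[\alpha\beta]\beta^*\alpha^*$ translate into the defining identities of the exchange triangles — concretely, $\partial_{[\alpha\beta]}W_2'=\beta^*\alpha^*$ records that $g,f$ compose to zero, and $\partial_{\beta^*}W_2'$, $\partial_{\alpha^*}W_2'$ record that $[\alpha\beta]$ factors correctly through $U_k$. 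The reverse inclusion $\ker\Phi\subseteq J(W')$, which upgrades $\Phi$ to the desired isomorphism after reduction, I would try to close by a dimension count: $\Phi$ descends to a surjection $J(\mu_k(Q,W))\twoheadrightarrow\Gamma$, and using that $\mu_k$ is an involution up to right-equivalence on both cluster-tilting objects and QPs, one plays this surjection off against the analogous one obtained by mutating back, together with the hypothesis $\dim\Lambda=\dim J(Q,W)$, to force equality of dimensions.

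The main obstacle is the potential-matching, not the quiver: fixing the arrows is bookkeeping with the approximation maps, but showing that the \emph{reduced} mutated potential records exactly the relations of $\Gamma$ — with the correct scalar coefficients and, above all, the correct $2$-cycle cancellation — is the delicate point, and it is exactly where the reverse inclusion above must be made rigorous rather than merely counted. The vanishing condition is the linchpin, since it is what excludes the spurious loops or $2$-cycles at $k$ that would otherwise make $\Gamma$ disagree with $\mu_k(Q,W)$; without it the Derksen--Weyman--Zelevinsky reduction and the actual reduction of $\rad\Gamma$ need not coincide, and the theorem can fail.
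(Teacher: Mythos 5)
You should first note that the paper offers no proof of this statement: it is imported verbatim from \cite{BIRS2} and used as a black box, so there is no internal argument to compare against and your proposal must be judged as a reconstruction of the Buan--Iyama--Reiten--Smith proof. At the level of strategy your outline is the right one (read the quiver of $\Gamma=\End_\cc(\mu_k(T))$ off the exchange triangles, map $\widehat{KQ'}$ onto $\Gamma$, verify that the cyclic derivatives of $W'=W_1'+W_2'$ die in $\Gamma$, then show the kernel is exactly the Jacobian ideal), and your identification of where the difficulty sits is accurate.

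But the step you flag as delicate is a genuine gap, not a deferred computation. The dimension count you propose does not close as stated: mutating back produces a surjection $J(\mu_k\mu_k(Q,W))\twoheadrightarrow\End_\cc(\mu_k\mu_k(T))$, which up to right-equivalence is a surjective endomorphism of the finite-dimensional algebra $J(Q,W)$ and hence an isomorphism --- but this says nothing about the injectivity of your $\Phi$ unless you prove that the second surjection is obtained from the first by a mutation that is compatible with passing to quotients, i.e.\ that mutating the QP of a quotient $J(\mu_k(Q,W))\twoheadrightarrow\Gamma$ again yields a quotient. That compatibility statement is precisely the technical core of the argument in \cite{BIRS2} and is absent from your sketch. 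Two smaller points: the role of the vanishing condition $\Hom_\Lambda(\Ext^1_\Lambda(D\Lambda,S_k),S_k)=0$ is asserted ("the linchpin") rather than derived --- you must exhibit concretely how it controls the relations at $k$ coming from the exchange triangles; and the relation $\partial_{[\alpha\beta]}W'$ is $\partial_{[\alpha\beta]}W_1'+\beta^*\alpha^*$, not $\beta^*\alpha^*$ alone, since $[\alpha\beta]$ also occurs in $W_1'$, so the identity it encodes in $\Gamma$ is that the composite through $T_k^*$ equals a combination of old paths, not that $g$ and $f$ compose to zero.
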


Cluster categories are by definition the orbit categories $\cc_Q=D^b(KQ)/\tau^{-1}[1]$, where $Q$ is a finite connected acyclic quiver, and $\tau$ is the AR-translation in $D^b(KQ)$ \cite{BMRRT}. These orbit categories are triangulated categories \cite{Ke}, and are Hom-finite $2$-CY \cite{BMRRT}.
The cluster-tilted algebras are by definition the $2$-CY-tilted algebras coming from cluster categories \cite{BMR1}.

\subsection{Mutation of algebras}

We recall the notion of mutations of algebras from \cite{La}. Let $A=KQ/I$ be an algebra given as a quiver with relations. For any vertex $i$ of $Q$, there is a trivial path $e_i$ of length $0$; the corresponding indecomposable projective module $P_i=A e_i$ is spanned by the images of the paths starting at $i$. Thus an arrow $i\xrightarrow{\alpha}j$ gives rise to a map $P_j\rightarrow P_i$ given by right multiplication with $\alpha$. Furthermore, for any vertex $i$ of $Q$, there is an indecomposable injective module $I_i$, and a simple module $S_i$.

Let $k$ be a vertex of $Q$ without loops. Consider the following two complexes of projective $A$-modules
$$T_k^{-}(A)=(P_k\xrightarrow{f} \bigoplus_{j\rightarrow k}P_j)\oplus(\bigoplus_{i\neq k}P_i), \mbox{ } T_k^+(A)=(\bigoplus_{k\rightarrow j} P_j\xrightarrow{g} P_k )\oplus (\bigoplus_{i\neq k}P_i) $$
where the map $f$ is induced by all the maps $P_k\rightarrow P_j$ corresponding to the arrows $j\rightarrow k$ ending at $k$, the map $g$ is induced by the maps $P_j\rightarrow P_k$ corresponding to the arrows $k\rightarrow j$ starting at $k$, the term $P_k$ lies in degree $-1$ in $T^{-}_k(A)$ and in degree $1$ in
$T_k^+(A)$, and all other terms are in degree $0$.

\begin{definition}[\cite{BHL2}]
Let $A$ be an algebra given as a quiver with relations and $k$
a vertex without loops.

(a) We say that the negative mutation of $A$ at $k$ is defined if $T_k^-(A)$ is a tilting complex over $A$. In this case, we call the algebra
$\mu_k^-(A)=\End_{D^b(A)}(T^-_k(A))$ the negative mutation of $A$ at the vertex $k$.

(b) We say that the positive mutation of $A$ at $k$ is defined if $T_k^+(A)$ is a tilting complex over $A$. In this case, we call the algebra
$\mu_k^+(A)=\End_{D^b(A)}(T^+_k(A))$ the positive mutation of $A$ at the vertex $k$.
\end{definition}

Let $Q^{op}$ be the \emph{opposite quiver} of $Q$. Namely, it has the same set of vertices as $Q$, with the (opposite) arrow $\alpha^*:j\rightarrow i$ for any arrow $\alpha:i\rightarrow j$ of $Q$. If $A=KQ/I$, then the \emph{opposite algebra} $A^{op}$ can be written as $A^{op}=KQ^{op}/I^{op}$ where $I^{op}$ is generated by the paths opposite to those generating $I$. The indecomposable projective $A^{op}$-module corresponding to a vertex of $Q^{op}$ is then $P_i^t=\Hom_A(P_i,A)$. In particular, $(-)^t=\Hom(-,A):\mod A\rightarrow A^{op}$ induces a duality between the category $\proj A$ and the category $\proj A^{op}$.

\begin{lemma}\label{lemma negative positive mutation in opposite algebra}
Let $A$ be an algebra given as a quiver with relations and $k$
a vertex without loops. Then $\mu_k^-(A^{op})\simeq (\mu_k^+(A))^{op}$ and $\mu_k^+(A^{op})\simeq (\mu_k^-(A))^{op}$.
\end{lemma}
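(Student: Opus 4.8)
The plan is to exploit the duality $(-)^t=\Hom_A(-,A)$ which, as recorded in the excerpt, induces a duality $\proj A\to\proj A^{op}$ sending the indecomposable projective $P_i$ to the indecomposable projective $P_i^t$ at the same vertex. First I would upgrade this to a contravariant triangle equivalence $K^b(\proj A)\xrightarrow{\sim}K^b(\proj A^{op})$; since the mutations $\mu_k^{\pm}$ are by definition endomorphism algebras of the tilting complexes $T_k^{\pm}$, which live in $K^b(\proj)$, this is the right ambient category in which to work.

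The key computation is to apply $(-)^t$ to the two-term complex $T_k^+(A)$. Because $(-)^t$ is contravariant it reverses homological degree, so the term $P_k$ sitting in degree $1$ of $T_k^+(A)$ is sent to $P_k^t$ in degree $-1$, while $\bigoplus_{k\to j}P_j$ in degree $0$ is sent to $\bigoplus_{k\to j}P_j^t$ in degree $0$. Here I would use that an arrow $k\to j$ of $Q$ is exactly an arrow $j\to k$ of $Q^{op}$, so that $\bigoplus_{k\to j}P_j^t$ is precisely the sum over the arrows \emph{ending} at $k$ in $Q^{op}$. The structure map $g$, assembled from the right-multiplication maps $P_j\to P_k$ attached to the arrows $k\to j$, is carried by $(-)^t$ to a map $g^t\colon P_k^t\to\bigoplus_{k\to j}P_j^t$; tracing through the identification ``right multiplication by $\alpha$ dualizes to right multiplication by $\alpha^*$ in $A^{op}$'' shows that $g^t$ is exactly the map $f$ defining $T_k^-(A^{op})$. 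Hence $(T_k^+(A))^t\cong T_k^-(A^{op})$ in $K^b(\proj A^{op})$.

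From here everything is formal. A duality carries tilting complexes to tilting complexes, so $T_k^+(A)$ is a tilting complex over $A$ if and only if $T_k^-(A^{op})$ is a tilting complex over $A^{op}$; in particular the positive mutation of $A$ at $k$ is defined precisely when the negative mutation of $A^{op}$ at $k$ is. Moreover, for a contravariant equivalence $F$ one has $\End(FX)\cong\End(X)^{op}$, which applied to $X=T_k^+(A)$ yields
\[
\mu_k^-(A^{op})=\End_{D^b(A^{op})}(T_k^-(A^{op}))\cong\End_{D^b(A^{op})}\big((T_k^+(A))^t\big)\cong\big(\End_{D^b(A)}(T_k^+(A))\big)^{op}=(\mu_k^+(A))^{op},
\]
which is the first isomorphism. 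The second isomorphism $\mu_k^+(A^{op})\simeq(\mu_k^-(A))^{op}$ then follows by applying the first with $A$ replaced by $A^{op}$, using $(A^{op})^{op}=A$, and passing to opposite algebras.

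The step I expect to demand the most care is the identification of maps in the central computation: I must check that $(-)^t$ sends the structure map $g$ of $T_k^+(A)$ to \emph{precisely} the structure map $f$ of $T_k^-(A^{op})$, and not merely to some map with the correct source and target, together with confirming that the degree-$(-1)$ placement of $P_k^t$ produced by degree reversal agrees with the convention used to define $T_k^-$. Everything else reduces to bookkeeping with the opposite quiver and the formal properties of a duality of triangulated categories.
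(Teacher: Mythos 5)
Your proposal is correct and follows exactly the paper's argument: the paper also proves only the first isomorphism, observes that $(T_k^+(A))^t$ is $T_k^-(A^{op})$ under the duality $(-)^t$ between $\proj A$ and $\proj A^{op}$, and concludes formally. You have simply spelled out the degree-reversal and the identification of structure maps that the paper dismisses as "easy to see," together with the standard fact that a contravariant equivalence turns endomorphism rings into their opposites.
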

\begin{proof}
We only need prove $\mu_k^-(A^{op})\simeq (\mu_k^+(A))^{op}$. It is easy to see that $(T_k^+(A))^{t}$ is $T_k^-(A^{op})$, where $(-)^t$ is defined for complexes of projective modules naturally. Then the result follows immediately since $(-)^t$ induces a duality between $\proj A$ and $\proj A^{op}$.
\end{proof}

\begin{proposition}[\cite{La}]\label{proposition old good mutation}
Let $k$ be a vertex without loops.

(a) The negative mutation $\mu_k^-(A)$ is defined if and only if for any non-zero linear combination $\sum a_rp_r$ of paths $p_r$ starting at $k$ and ending at some vertex $i\neq k$, there exists at least one arrow $\alpha$ ending at $k$ such that the composition $\sum a_r p_r\alpha$ is not zero.

(b) The positive mutation $\mu_k^+(A)$ is defined if and only if for any non-zero linear combination $\sum a_rp_r$ of paths $p_r$ starting at some vertex $i\neq k$ and ending at $k$, there exists at least one arrow $\beta$ starting at $k$ such that the combination $\sum a_r\beta p_r$ is not zero.

(c) $T_k^-(A)$ is a tilting complex for $A$ if and only if $T_k^+(A^{op})$ is a tilting complex for $A^{op}$.

\end{proposition}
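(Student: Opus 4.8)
The plan is to prove (a) directly from the definition of a tilting complex, and then to obtain (c) and (b) by a duality argument. Recall that a bounded complex $T$ of finitely generated projectives is a tilting complex exactly when it is \emph{rigid}, i.e. $\Hom_{D^b(A)}(T,T[n])=0$ for all $n\neq 0$, and \emph{generating}, i.e. the smallest thick subcategory of $K^b(\proj A)$ containing $T$ is everything. Write $T_k^-(A)=T'\oplus\bigoplus_{i\neq k}P_i$ with $T'=(P_k\xrightarrow{f}\bigoplus_{j\rightarrow k}P_j)$ and $P_k$ in degree $-1$. Since $k$ has no loops, every $P_j$ in the degree-$0$ term has $j\neq k$, hence is a retained summand; the triangle $P_k\xrightarrow{f}\bigoplus_{j\rightarrow k}P_j\rightarrow T'\rightarrow P_k[1]$ then shows $P_k\in\operatorname{thick}(T_k^-(A))$, so the generating condition holds unconditionally. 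Everything thus reduces to rigidity, and since the stalk summands $P_i$ are projective and concentrated in degree $0$, the only graded Hom-spaces that can be nonzero in a forbidden degree are those involving the two-term complex $T'$.

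I would compute these graded Hom-spaces in $K^b(\proj A)$, using $\Hom_{D^b(A)}(P_i,X[n])\cong H^n(X)_i$ for projective $P_i$ and explicit null-homotopy bookkeeping for the rest. Three of the four relevant types vanish automatically: $\Hom(T',P_i[n])$ for $n\neq 0$ and $\Hom(T',T'[1])$ both vanish because any path ending at $k$ has its final arrow among the arrows $j\rightarrow k$ defining $f$, so the relevant comparison map is surjective (the no-loop hypothesis is what guarantees that this arrow's source differs from $k$). The decisive type is $\Hom(P_i,T'[n])$ for $i\neq k$: it is nonzero only for $n=-1$, where it equals $(\Ker f)_i$. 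Identifying $P_k=Ae_k$ with combinations of paths starting at $k$ and $f$ with right multiplication by the arrows into $k$, one sees that $(\Ker f)_i=e_i(\Ker f)$ consists precisely of those $\sum a_rp_r\in e_iAe_k$ killed by right multiplication by every arrow ending at $k$; hence $(\Ker f)_i=0$ for all $i\neq k$ is literally condition (a). It then remains to treat $\Hom(T',T'[-1])$, the subtle negative self-extension: it identifies with the space of maps $g\colon\bigoplus_{j\rightarrow k}P_j\to P_k$, given by right multiplication by paths $q_j\colon k\to j$, with $fg=0$, i.e. $q_j\alpha=0$ for every arrow $\alpha$ into $k$. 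Applying condition (a) to each nonzero $q_j\in e_jAe_k$ (note $j\neq k$) forces $q_j=0$, so this space vanishes once (a) holds. Assembling the four computations proves (a); I expect this negative self-extension term to be the main obstacle, since it is the one place where (a) must be used as a \emph{sufficient}, not merely necessary, input.

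For (c) I would use the duality $(-)^t=\Hom_A(-,A)$, which restricts to an exact anti-equivalence $\proj A\to\proj A^{op}$ and so extends to a contravariant triangle equivalence $K^b(\proj A)\to K^b(\proj A^{op})$ carrying tilting complexes to tilting complexes. The computation recorded in the proof of Lemma \ref{lemma negative positive mutation in opposite algebra} gives $(T_k^+(A^{op}))^t\cong T_k^-(A)$, so $T_k^-(A)$ is a tilting complex over $A$ if and only if $T_k^+(A^{op})$ is a tilting complex over $A^{op}$, which is exactly (c). Finally, (b) follows by combining (a) and (c): applying (c) to $A^{op}$ shows that $T_k^+(A)$ is a tilting complex over $A$ if and only if $T_k^-(A^{op})$ is one over $A^{op}$, and by (a) the latter is equivalent to condition (a) holding for $A^{op}$. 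Reversing all paths via $Q\leftrightarrow Q^{op}$, which turns right multiplication into left multiplication and interchanges arrows into $k$ with arrows out of $k$, translates the (a)-condition for $A^{op}$ verbatim into the (b)-condition for $A$, completing the proof.
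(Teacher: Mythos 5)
The paper does not prove this proposition: it is imported verbatim from \cite{La}, so there is no internal argument to measure yours against, and the right question is simply whether your direct verification is sound. It is. Generation is indeed automatic from the triangle $P_k\xrightarrow{f}\bigoplus_{j\to k}P_j\to T'\to P_k[1]$ (the no-loop hypothesis guaranteeing that every $P_j$ occurring in degree $0$ is one of the retained stalk summands), and rigidity correctly reduces to the graded Hom-spaces involving the two-term complex $T'$. Your four computations check out: $\Hom(T',P_i[1])$ and $\Hom(T',T'[1])$ vanish unconditionally because for $j\neq k$ every element of $e_kAe_j$ is a combination of nontrivial paths and hence factors through its last arrow, which ends at $k$ (this is where no loops at $k$ enters); $\Hom(P_i,T'[-1])\cong\Hom_A(P_i,H^{-1}(T'))=e_i\Ker f$, whose vanishing for all $i\neq k$ is precisely condition (a) and supplies the only-if direction; and the negative self-extension $\Hom(T',T'[-1])$ is killed by (a) applied to each component $q_\alpha\in e_{s(\alpha)}Ae_k$ with $s(\alpha)\neq k$. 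One minor imprecision: a chain map $T'\to T'[-1]$ must satisfy both $f\circ g=0$ and $g\circ f=0$, whereas you record only the first; since you show that $f\circ g=0$ alone already forces $g=0$ under (a), imposing the extra condition only shrinks the space and the conclusion is unaffected. The derivation of (c) from the contravariant triangle equivalence induced by $(-)^t=\Hom_A(-,A)$ together with the identification $(T_k^+(A^{op}))^t\cong T_k^-(A)$ from Lemma \ref{lemma negative positive mutation in opposite algebra}, and of (b) by translating condition (a) for $A^{op}$ back through $Q\leftrightarrow Q^{op}$, are both correct.
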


Let $\tau$ denote the \emph{Auslander-Reiten translation} in $\mod A$.

\begin{definition}[\cite{La}]
We say that the BB-tilting module is defined at the vertex $k$ if the $A$-module
$$T_k^{BB}=\tau^{-1}S_k\oplus (\bigoplus_{i\neq k}P_i)$$
is a tilting module of projective dimension at most $1$. In this case, $T_k^{BB}$ is called the BB-tilting module associated with $k$.
\end{definition}

\begin{lemma}[\cite{La}]\label{lemma BB-tilting and negative tilting}
Assume that $S_k$ is not a submodule of the radical of $P_k$. If $T_k^-$ is a tilting complex, then the BB-tilting module is defined at $k$ and $T_k^-\simeq T_k^{BB}$ in $D^b(A)$.
\end{lemma}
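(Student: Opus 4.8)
The plan is to show that under the hypotheses the tilting complex $T_k^-$ is concentrated in degree $0$ and that its zeroth cohomology is exactly $T_k^{BB}$. Write $C$ for the two-term summand $P_k\xrightarrow{f}\bigoplus_{j\to k}P_j$ with $P_k$ in degree $-1$, so that $H^{-1}(T_k^-)=\Ker f$ and $H^0(T_k^-)=\Coker f\oplus\bigoplus_{i\neq k}P_i$. I would first compute $\Coker f$. Using the standard description $\tau^{-1}=\Tr D$ of the inverse Auslander--Reiten translate, I take the minimal projective presentation of the simple module at $k$, observe that $f$ (right multiplication by the arrows ending at $k$) is precisely the $A$-dual $\Hom_A(-,A)$ of the presentation map of the simple $A^{op}$-module $DS_k$, and conclude $\Coker f\cong\tau^{-1}S_k$. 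Hence $H^0(T_k^-)=\tau^{-1}S_k\oplus\bigoplus_{i\neq k}P_i=T_k^{BB}$.

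The heart of the argument is then to prove $\Ker f=0$. Since $T_k^-$ is a tilting complex, the negative mutation $\mu_k^-(A)$ is defined, so the criterion of Proposition \ref{proposition old good mutation}(a) holds at $k$. The map $f$ preserves the grading of $P_k$ by the endpoints of paths, so I would split a hypothetical kernel element accordingly. Its part supported at vertices $i\neq k$ would be a nonzero combination of paths starting at $k$ and ending away from $k$ that is annihilated by every arrow into $k$, which Proposition \ref{proposition old good mutation}(a) forbids; hence that part vanishes. The remaining part is a cyclic element $x$ at $k$ with $x\alpha=0$ for every arrow $\alpha$ ending at $k$. Because $A=KQ/I$ is admissible ($I\subseteq\rad^2$), a length comparison rules out a trivial-path component, so $x\in\rad P_k$; such an $x$ then generates a simple submodule isomorphic to $S_k$ lying inside $\rad P_k$, contradicting the standing hypothesis. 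Therefore $\Ker f=0$.

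With $\Ker f=0$, the complex $T_k^-$ is quasi-isomorphic to the module $T_k^{BB}$ placed in degree $0$, and the short exact sequence $0\to P_k\xrightarrow{f}\bigoplus_{j\to k}P_j\to\tau^{-1}S_k\to 0$ is a projective resolution of length one, whence $\pd_A\tau^{-1}S_k\leq 1$ and so $\pd_A T_k^{BB}\leq 1$. Since a tilting complex that is isomorphic in $D^b(A)$ to a module is a tilting module in the classical sense, $T_k^{BB}$ is a tilting module of projective dimension at most one; this is exactly the statement that the BB-tilting module is defined at $k$, and $T_k^-\simeq T_k^{BB}$ in $D^b(A)$.

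The main obstacle is the vanishing $\Ker f=0$: the reduction to a cyclic kernel element is clean, but verifying that such an element forces a copy of $S_k$ inside $\rad P_k$ requires care with the left/right bookkeeping between $f$ and the radical condition; if convenient one may instead dualize via Lemma \ref{lemma negative positive mutation in opposite algebra} together with Proposition \ref{proposition old good mutation}(c) and run the argument over $A^{op}$. The cohomology computation and the passage from a degree-$0$ tilting complex to a tilting module of projective dimension one are otherwise routine.
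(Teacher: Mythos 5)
The paper offers no proof of this lemma---it is quoted from Ladkani's preprint---so your proposal has to be judged against the standard argument. Your overall strategy is the right one: compute $H^{0}$ of the two-term summand, identify $\Coker f$ with $\tau^{-1}S_k$ via the $A$-dual of the minimal projective presentation of the simple right module $DS_k$, and reduce everything to showing $\Ker f=0$. Those parts, and the final passage from a tilting complex concentrated in degree $0$ to a classical tilting module of projective dimension at most one, are correct.

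The gap is exactly the point you flag but do not resolve. After the reduction, your residual kernel element is some $0\neq x\in e_k(\rad A)e_k$ with $x\alpha=0$ for every arrow $\alpha$ \emph{ending} at $k$ (right annihilation, because $f$ is right multiplication). But $Ax\cong S_k$ inside $\rad P_k$ is equivalent to $\beta x=0$ for every arrow $\beta$ \emph{starting} at $k$ (left annihilation). These conditions are not equivalent, and no amount of bookkeeping identifies them: for $Q$ with arrows $\alpha\colon a\to k$, $\beta\colon k\to a$ and $I=(\alpha\beta\alpha)$, the element $x=\alpha\beta$ satisfies $x\alpha=0$ yet $\beta x=\beta\alpha\beta\neq0$, so $Ax\not\cong S_k$. (There $T_k^-$ is not tilting, consistent with the lemma---which shows the missing step must again invoke the tilting hypothesis rather than pure formalities.) The repair is one more application of Proposition \ref{proposition old good mutation}(a): if some arrow $\beta\colon k\to j$ had $\beta x\neq0$, then since $Q$ has no loops $\beta x\in e_jAe_k$ with $j\neq k$ is a nonzero linear combination of paths starting at $k$ and ending at $j\neq k$, so the criterion provides an arrow $\alpha$ ending at $k$ with $\beta x\alpha\neq0$; but $\beta x\alpha=\beta(x\alpha)=0$, a contradiction. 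Hence $\beta x=0$ for all arrows $\beta$, so $Ax\cong S_k\subseteq\rad P_k$, contradicting the hypothesis, and $\Ker f=0$ follows. Your suggested alternative of dualizing to $A^{op}$ does not avoid this issue, since the hypothesis ``$S_k$ is not a submodule of $\rad P_k$'' is a one-sided condition that does not transport cleanly under $(-)^{op}$; the extra use of the criterion above is the honest fix.
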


Note that the above lemma holds when $A$ is \emph{schurian}. Recall that an algebra $A=KQ/I$ is schurian if $\dim_k\Hom_A(P_i,P_j)\leq1$ for any two vertices $i,j$ of $Q$, or in other words, the entries of its Cartan matrix
are only $0$ or $1$.

\begin{lemma}[\cite{La}]\label{lemma mutation of algebra}
Let $k$ be a vertex of $Q$ without loops.

(a) If $\mu_k^-(A)$ is defined, then $\mu_k^+(\mu_k^-(A))$ is defined and isomorphic to $A$.

(b) If $\mu_k^+(A)$ is defined, then $\mu_k^-(\mu_k^+(A))$ is defined and isomorphic to $A$.

(c) If $\mu_k^{BB}(A)$ is defined, then $\mu_k^{BB}((\mu_k^{BB}(A))^{op})$ is defined and isomorphic to $A^{op}$.
\end{lemma}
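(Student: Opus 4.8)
The plan is to prove part (a) directly via the theory of derived equivalences, and then to obtain parts (b) and (c) formally from (a) by passing to the opposite algebra. For (a), assume $\mu_k^-(A)$ is defined, so that $T_k^-(A)$ is a tilting complex and $B:=\mu_k^-(A)=\End_{D^b(A)}(T_k^-(A))$. By Rickard's theory of derived equivalences there is a triangle equivalence $F\colon D^b(A)\xrightarrow{\sim}D^b(B)$ sending $T_k^-(A)$ to the free rank-one module $B$; under $F$ the indecomposable summand $X_k:=(P_k\to\bigoplus_{j\to k}P_j)$ of $T_k^-(A)$ corresponds to the indecomposable projective $Q_k$, and each $P_i$ (for $i\neq k$) to $Q_i$. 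Since $A$ is itself the free rank-one module over $A$, it will suffice to prove that $F^{-1}(T_k^+(B))\cong A$ in $D^b(A)$: then $T_k^+(B)$, being the image under $F$ of the tilting complex $A$, is a tilting complex, so $\mu_k^+(B)$ is defined, and moreover $\End_{D^b(B)}(T_k^+(B))\cong\End_{D^b(A)}(A)=A$.

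The essential computation is at the vertex $k$. In the quiver of $B$ the arrows incident to $k$ are reversed by the mutation, so $\bigoplus_{k\to l}Q_l=\bigoplus_{j\to k}Q_j$, the sum now indexed by the arrows $j\to k$ of $Q$. The summand of $T_k^+(B)$ at $k$ is the two-term complex $\bigoplus_{k\to l}Q_l\to Q_k$, which is $\cone(g)[-1]$ for the structure map $g$; applying $F^{-1}$ turns it into $\cone(\tilde g)[-1]$, where $\tilde g\colon\bigoplus_{j\to k}P_j\to X_k$ is the chain map hitting the degree-zero component $\bigoplus_{j\to k}P_j$ of $X_k$ by the identity. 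Writing out $\cone(\tilde g)$ one finds its differential is $(\mathrm{id},f)$ on the copy $\bigoplus_{j\to k}P_j\oplus P_k$ in degree $-1$; Gaussian elimination of the invertible block $\mathrm{id}$ collapses this to the stalk $P_k[1]$, whence the summand pulls back to $P_k$. Since the remaining $Q_i$ pull back to $P_i$, we obtain $F^{-1}(T_k^+(B))\cong\bigoplus_iP_i=A$. I expect this cancellation --- carefully tracking the degree shifts and verifying that the connecting map is genuinely the identity on $\bigoplus_{j\to k}P_j$ --- to be the main technical obstacle; the rest is formal.

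For (b), set $C:=\mu_k^+(A)$ and invoke Lemma \ref{lemma negative positive mutation in opposite algebra}, which gives $\mu_k^-(A^{op})\simeq C^{op}$; in particular $\mu_k^-(A^{op})$ is defined. Applying (a) to $A^{op}$ shows that $\mu_k^+(\mu_k^-(A^{op}))=\mu_k^+(C^{op})$ is defined and isomorphic to $A^{op}$. Passing to opposite algebras and using $\bigl(\mu_k^+(C^{op})\bigr)^{op}\simeq\mu_k^-(C)$ from the same lemma yields $\mu_k^-(\mu_k^+(A))=\mu_k^-(C)\cong A$, as desired.

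Finally, for (c), under the standing hypotheses (the paper notes these hold, e.g., when $A$ is schurian, so that $S_k$ is not a submodule of $\rad P_k$) Lemma \ref{lemma BB-tilting and negative tilting} identifies $T_k^{BB}\simeq T_k^-$ and hence $\mu_k^{BB}(A)\cong\mu_k^-(A)=:B$. Then $(\mu_k^{BB}(A))^{op}=B^{op}=(\mu_k^-(A))^{op}=\mu_k^+(A^{op})$ by Lemma \ref{lemma negative positive mutation in opposite algebra}, and applying (b) to $A^{op}$ gives $\mu_k^-(\mu_k^+(A^{op}))\cong A^{op}$. Re-identifying the negative mutation with the BB-mutation at this last step produces $\mu_k^{BB}((\mu_k^{BB}(A))^{op})\cong A^{op}$. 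The only delicate point is to check, at each application, that the relevant simple module is not a submodule of the radical of the corresponding projective, so that the BB/negative identification is legitimate throughout.
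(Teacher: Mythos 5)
First, a remark on the comparison you were asked to be judged against: the paper does not prove this lemma at all --- it is quoted verbatim from \cite{La} --- so your proposal can only be assessed on its own terms. Your reduction of (b) to (a) via Lemma \ref{lemma negative positive mutation in opposite algebra} together with Proposition \ref{proposition old good mutation}(c) is correct and clean. The problem is that in part (a) essentially the whole mathematical content of the statement sits inside the step you defer. To even write down $T_k^+(B)$ for $B=\End_{D^b(A)}(T_k^-(A))$ you must control the quiver presentation of $B$ at the vertex $k$: you assert that the arrows $k\to l$ of $Q_B$ are precisely the reversals of the arrows $l\to k$ of $Q$, and that the corresponding irreducible maps $Q_l\to Q_k$ pull back under $F^{-1}$ to the component inclusions $P_l\hookrightarrow\bigoplus_{j\to k}P_j$ followed by the canonical degree-zero chain map into $X_k$. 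Proving this amounts to computing $\rad_B(Q_l,Q_k)/\rad_B^2(Q_l,Q_k)\cong$ (radical of $\Hom_{D^b(A)}(P_l,X_k)$ modulo the square of the radical of $\add T_k^-(A)$): one must show (i) for $l$ not the source of an arrow into $k$, every such map lies in $\rad^2$ --- which uses that the canonical map $\bigoplus_{j\to k}P_j\to X_k$ is itself a radical map, hence that $X_k$ is indecomposable and not isomorphic to any $P_j$, facts you also use tacitly --- and (ii) for $l$ the source of an arrow into $k$, the inclusion does \emph{not} lie in $\rad^2$ and nothing else survives. None of this is carried out; the Gaussian-elimination computation that follows is correct but entirely conditional on it. So (a), and with it (b), is not yet established.

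Part (c) has a separate logical gap. The statement assumes only that $\mu_k^{BB}(A)$ is defined (i.e.\ $T_k^{BB}$ is a tilting module of projective dimension at most one), whereas your argument identifies $T_k^{BB}$ with $T_k^-$ via Lemma \ref{lemma BB-tilting and negative tilting}, whose hypotheses are that $S_k$ is not a submodule of $\rad P_k$ \emph{and} that $T_k^-$ is already known to be a tilting complex; neither is assumed in (c). You flag this yourself, but flagging it does not repair it: in the stated generality (c) cannot be deduced from (a)/(b) by this route and requires a direct argument for BB-tilting modules (essentially the classical Brenner--Butler fact that the inverse of a BB-tilt is again a BB-tilt over the opposite of the tilted algebra).
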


\begin{proposition}[\cite{La}]\label{proposition mutation to BB}
Let $U$ be a cluster-tilting object in a $2$-CY triangulated category $\cc$ and let $\Lambda=\End_{\cc}(U)$ and $\Lambda'=\End_\cc(\mu_k(U))$ be two neighboring $2$-CY-tilted algebras. Then $\Lambda'\simeq \mu_k^{BB}(\Lambda)$ if and only if the BB-tilting modules $T_k^{BB}(\Lambda)$ and $T_k^{BB}({\Lambda'}^{op})$ are defined. In particular, in this case $\Lambda$ and $\Lambda'$ are derived equivalent.
\end{proposition}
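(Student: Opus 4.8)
The plan is to prove the two directions separately: the forward implication is a formal consequence of the involutivity of BB-mutation, while the converse carries the categorical content. For the forward direction, suppose $\Lambda'\simeq\mu_k^{BB}(\Lambda)$. The mere fact that $\mu_k^{BB}(\Lambda)$ exists as an algebra means $T_k^{BB}(\Lambda)$ is defined. Applying Lemma \ref{lemma mutation of algebra}(c) with $A=\Lambda$ shows that $\mu_k^{BB}\bigl((\mu_k^{BB}(\Lambda))^{op}\bigr)$ is defined; since $(\mu_k^{BB}(\Lambda))^{op}\simeq\Lambda'^{op}$, this is precisely the assertion that $T_k^{BB}(\Lambda'^{op})$ is defined. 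No geometry is needed here, only the self-duality packaged in that lemma.

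For the converse, assume both $T_k^{BB}(\Lambda)$ and $T_k^{BB}(\Lambda'^{op})$ are defined. First I would pass from BB-tilting modules to two-term complexes. Granting that $S_k$ is not a submodule of $\rad P_k$ — automatic for the loop-free, cyclically oriented quivers at issue — Lemma \ref{lemma BB-tilting and negative tilting} turns the first hypothesis into the statement that $T_k^-(\Lambda)$ is a tilting complex with $T_k^-(\Lambda)\simeq T_k^{BB}(\Lambda)$ in $D^b(\Lambda)$, so that $\mu_k^-(\Lambda)\simeq\mu_k^{BB}(\Lambda)$. The second hypothesis similarly makes $T_k^-(\Lambda'^{op})$ a tilting complex, and Proposition \ref{proposition old good mutation}(c) converts this into the statement that $T_k^+(\Lambda')$ is a tilting complex, i.e. $\mu_k^+(\Lambda')$ is defined. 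Everything is thereby reduced to the single isomorphism $\mu_k^+(\Lambda')\simeq\Lambda$: granting it, Lemma \ref{lemma mutation of algebra}(b) yields $\Lambda'\simeq\mu_k^-(\mu_k^+(\Lambda'))\simeq\mu_k^-(\Lambda)\simeq\mu_k^{BB}(\Lambda)$, and the final derived-equivalence clause follows at once, since $\mu_k^{BB}(\Lambda)$ is by construction the endomorphism algebra of a tilting complex over $\Lambda$.

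The isomorphism $\mu_k^+(\Lambda')\simeq\Lambda$ is the heart of the matter and the step I expect to be the main obstacle; it says that the tilting-complex mutation of algebras realizes the cluster-tilting mutation of objects. To prove it I would run the two exchange triangles $U_k^*\to B_k\to U_k\to U_k^*[1]$ and $U_k\to B_k'\to U_k^*\to U_k[1]$ through the functor $\Hom_\cc(\mu_k(U),-)$, which restricts to an equivalence $\add(\mu_k(U))\xrightarrow{\sim}\proj\Lambda'$. Since $B_k,B_k'\in\add(U/U_k)\subseteq\add(\mu_k(U))$ and $\Ext^1_\cc(\mu_k(U),\mu_k(U))=0$, the images of these triangles realize the two-term summand of $T_k^+(\Lambda')$ and identify the indecomposable $U_k$ — removed by the first mutation and restored by $\mu_k(\mu_k(U))\simeq U$ — with its $k$-th term. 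It then remains to compute $\End_{D^b(\Lambda')}(T_k^+(\Lambda'))$ and match each morphism space with the corresponding $\Hom_\cc(U_i,U_j)$, so that $\End_{D^b(\Lambda')}(T_k^+(\Lambda'))\simeq\End_\cc(\mu_k(\mu_k(U)))\simeq\End_\cc(U)=\Lambda$.

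The delicate points in this last computation are twofold. First, one must show that the connecting maps in the long exact sequences obtained from the exchange triangles vanish, so that the images are genuinely the expected two-term complexes and the morphism spaces in $D^b(\Lambda')$ acquire no spurious contributions; this is where the $2$-CY duality and the vanishing of the relevant $\Ext^1$ groups are used. Second, one must track the arrows incident to $k$, which reverse under mutation: it is precisely this reversal that routes the exchange triangles onto $T_k^+(\Lambda')$ rather than $T_k^-(\Lambda')$, and that makes the two defined-ness hypotheses — one attached to $\Lambda$, the other to $\Lambda'^{op}$ — line up on the correct sides of the comparison. Keeping this bookkeeping consistent, rather than any single deep input, is what I anticipate to be the real work.
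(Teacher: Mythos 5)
First, a point of comparison that matters here: the paper does not prove this proposition at all — it is quoted from Ladkani \cite{La} and used as a black box — so there is no in-paper argument to measure yours against, and I can only assess your reconstruction on its own terms. Your formal scaffolding is fine: the forward direction via Lemma \ref{lemma mutation of algebra}(c), the passage between $T_k^{BB}$ and the two-term complexes, and the reduction of the converse to the single isomorphism $\mu_k^+(\Lambda')\simeq\Lambda$ all hold together, modulo one small slip: Lemma \ref{lemma BB-tilting and negative tilting} as stated gives only the implication from ``$T_k^-(\Lambda)$ is tilting'' to ``$T_k^{BB}(\Lambda)$ is defined'', whereas you invoke its converse. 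That converse is true under the hypothesis $S_k\not\subseteq\rad P_k$ (the complex $T_k^-$ is then quasi-isomorphic to the module $T_k^{BB}$), but it needs a sentence of justification or should be bypassed via Lemma \ref{lemma mutation of algebra}(b) after the central isomorphism is established.

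The genuine gap is in that central isomorphism. The functor $F=\Hom_\cc(\mu_k(U),-)$ induces an equivalence $\cc/\add(\mu_k(U)[1])\xrightarrow{\sim}\mod\Lambda'$, so what your exchange-triangle computation actually produces is a surjection $\Lambda=\End_\cc(U)\twoheadrightarrow\End_{\Lambda'}(F(U))$ whose kernel is the ideal of endomorphisms of $U$ factoring through $\add(\mu_k(U)[1])$ — in effect through $U_k^*[1]$, and the space $\Hom_\cc(U,U_k^*[1])=\Ext^1_\cc(U_k,U_k^*)$ is nonzero for every exchange pair, so this ideal is not obviously zero. Showing that it vanishes is the theorem: it is exactly here that the $2$-CY duality plays the two definedness hypotheses off against each other. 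Your sketch names the delicate points as the vanishing of connecting maps and the orientation bookkeeping, but it never says where the hypothesis that $T_k^{BB}(\Lambda)$ is defined enters; as written, your argument for $\mu_k^+(\Lambda')\simeq\Lambda$ consumes only the hypothesis on $\Lambda'^{op}$. If the isomorphism really followed from the exchange triangles plus one hypothesis, the proposition would be stronger than stated — and false, since there are neighbouring $2$-CY-tilted algebras for which one of the two mutations is defined and the conclusion fails. So the missing step is not bookkeeping but the identification and annihilation of this factoring ideal, which is the actual content of Ladkani's proof.
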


\begin{lemma}\label{lemma extension vanishes}
Let $Q$ be a finite quiver without loops, $k$ a vertex of $Q$. Assume that the quotient algebra $A=KQ/I$ is finite-dimensional and there is at most one arrow $\alpha$ ending at $k$. Then $\Ext^1_A(I_k,S_k)=0$.
\end{lemma}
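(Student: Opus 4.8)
The plan is to transport the extension group, via the standard $K$-duality $D=\Hom_K(-,K)\colon\mod A\to\mod A^{op}$, into an $\Ext$ between a simple module and a projective module, where the quiver combinatorics become transparent. Since $I_k=D(e_kA)$ and $e_kA$ is exactly the indecomposable projective $A^{op}$-module at $k$, while $DS_k=S_k$, the duality yields
$$\Ext^1_A(I_k,S_k)\cong\Ext^1_{A^{op}}(S_k,P_k),$$
where on the right $P_k$ denotes the indecomposable projective $A^{op}$-module at $k$. Writing $B=A^{op}=KQ^{op}/I^{op}$, the hypothesis ``at most one arrow of $Q$ ends at $k$'' becomes ``at most one arrow of $Q^{op}$ starts at $k$'', which is the convenient form, as it controls $\rad P_k$.

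First I would dispose of the degenerate case: if no arrow of $Q^{op}$ starts at $k$, then $\rad P_k=0$, so $S_k=P_k$ is projective and the group vanishes trivially. So assume there is a unique arrow $\alpha\colon k\to j$ in $Q^{op}$ (with $j\neq k$, since $Q$ has no loops). Applying $\Hom_B(-,P_k)$ to the short exact sequence $0\to\rad P_k\to P_k\to S_k\to0$ and using $\Ext^1_B(P_k,P_k)=0$, I obtain
$$\Ext^1_B(S_k,P_k)\cong\Coker\bigl(\Hom_B(P_k,P_k)\xrightarrow{\ \rho\ }\Hom_B(\rad P_k,P_k)\bigr),$$
where $\rho$ is restriction along $\rad P_k\hookrightarrow P_k$. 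Thus it suffices to show that $\rho$ is surjective, i.e.\ that every $\psi\colon\rad P_k\to P_k$ is the restriction of an endomorphism of $P_k$. Here the single-arrow hypothesis does its work: because $\alpha$ is the only arrow leaving $k$, the module $\rad P_k$ is cyclic, generated by $\alpha$, with top the single simple $S_j$, so $\psi$ is completely determined by $z:=\psi(\alpha)\in e_jBe_k$.

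The mechanism I would try is to produce, from $z$, an element $y\in e_kBe_k$ with $z=\alpha y$; then the endomorphism $\tilde\psi$ of $P_k$ with $\tilde\psi(e_k)=y$ satisfies $\tilde\psi(\alpha)=\alpha y=z$, so it restricts to $\psi$ and $\rho$ is surjective. The step I expect to be the main obstacle is precisely this factorization. Using that $\alpha$ is the unique arrow out of $k$, every path from $k$ to $j$ begins with $\alpha$, so one can strip off the initial $\alpha$ and write $z=w\alpha$ with $w$ a combination of paths $j\to j$; converting this right factorization into the required left factorization $z=\alpha y$ by cycles at $k$ is not formal. This is where the compatibility of $\psi$ with the relations, i.e.\ the inclusion $\ann_B(\alpha)\subseteq\ann_B(z)$ forced by $I^{op}$, must be used decisively, together with the schurian structure of the algebras under consideration (which bounds $\dim_K e_jBe_k$ and rigidifies the paths through $k$). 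I would therefore isolate the factorization $z=\alpha y$ as a separate claim about paths meeting $k$ and establish it by a direct analysis of the relations incident to $k$; granting that claim, the surjectivity of $\rho$, and hence the vanishing of $\Ext^1_A(I_k,S_k)$, follows at once.
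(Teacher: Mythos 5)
Your reduction is correct as far as it goes, and it is in fact the exact dual of the paper's own argument: the paper invokes the Auslander--Reiten formula $D\Ext^1_A(I_k,S_k)\cong\underline{\Hom}_A(\tau^{-1}S_k,I_k)$ and presents $\tau^{-1}S_k$ by the complex $P_k\xrightarrow{\cdot\alpha}P_j$, which is precisely the transpose of your projective presentation of $S_k$ over $B=A^{op}$, while your cokernel description of $\Ext^1_B(S_k,P_k)$ repackages the same data. The two arguments also get stuck at the same place: the paper closes by asserting $\tau^{-1}S_k\cong S_j$, which amounts to the identity $A\alpha=\rad P_j$, and you close by asking for the factorization $z=\alpha y$ with $y\in e_kBe_k$; these are dual formulations of one and the same claim, and neither follows from anything in the hypotheses. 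So the step you defer to "a separate claim about paths meeting $k$" is not a technical verification --- it is the entire content of the lemma, and leaving it unproved leaves the proof incomplete.

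Worse, that claim cannot be established in the stated generality, because the lemma as stated is false. Take $Q$ with vertices $j,k,l$, arrows $a\colon j\to k$, $c\colon j\to l$, $b\colon l\to j$, and let $I$ be generated by the composite $l\xrightarrow{b}j\xrightarrow{c}l$. Then $A=KQ/I$ is $9$-dimensional with basis $\{e_j,e_k,e_l,a,b,c,ab,bc,abc\}$, $Q$ has no loops, and $a$ is the only arrow ending at $k$. Since no arrow starts at $k$, $P_k=S_k$. The socle of the right module $e_kA$ is spanned by the path $abc$ (that is, $j\xrightarrow{c}l\xrightarrow{b}j\xrightarrow{a}k$), which sits at vertex $j$, so $\Top I_k\cong S_j$ and the projective cover of $I_k$ is $P_j$; comparing dimension vectors ($(2,2,1)$ against $(2,1,1)$) its kernel is one-dimensional, concentrated at $k$, hence isomorphic to $S_k=P_k$. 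From $0\to P_k\to P_j\to I_k\to0$ one reads off $\Ext^1_A(I_k,S_k)\cong\Coker\bigl(\Hom_A(P_j,S_k)\to\Hom_A(P_k,S_k)\bigr)\cong K\neq0$ (equivalently: the sequence cannot split because $P_j$ is indecomposable). On the $B=A^{op}$ side this is exactly your obstruction: here $\rad P_k=B\alpha\cong P_j$ is projective, so $\psi(\alpha)=\gamma\beta\alpha$ defines a homomorphism $\rad P_k\to P_k$; it has the form $w\alpha$ with $w$ a cycle at $j$, but $e_kBe_k=Ke_k$, so it is not of the form $\alpha y$. Consequently no "direct analysis of the relations incident to $k$" can close your gap without importing hypotheses beyond those of the lemma --- for instance that $\alpha$ is also the unique arrow starting at its source $j$ (which makes $A\alpha=\rad P_j$ and renders both your factorization and the paper's $\tau^{-1}S_k\cong S_j$ immediate), or specific properties of the Jacobian relations of the algebras to which the paper actually applies this statement. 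Your instinct that the schurian/Jacobian structure must enter "decisively" is sound, but that structure is not among the stated hypotheses, so the proposal does not prove the statement as given.
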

\begin{proof}
Since $A$ is finite-dimensional, we get that $D\Ext_A^1(I_k,S_k)\simeq \underline{\Hom}_A(\tau^{-1}S_k,I_k)$.

Case (1) there does not exist any arrow ending at $k$. Then $S_k\simeq I_k$, and so
$$\Ext^1_A(I_k,S_k)=\Ext^1_A(I_k,I_k)=0.$$

Case (2) there exists only one arrow $\alpha:j\rightarrow k$ ending at $k$. Note that $j\neq k$ since $Q$ has no loops. Then there is an exact sequence
$$P_k\xrightarrow{f} P_j\rightarrow \tau^{-1}S_k\rightarrow 0$$
where $f$ is induced by $\alpha$. So $\tau^{-1}S_k\simeq S_j$.
It is easy to see that $\Hom_A(S_j,I_k)=0$ since $\soc(I_k)=S_k\ncong S_j$.
So $D\Ext_A^1(I_k,S_k)\simeq \underline{\Hom}_A(S_j,I_k)=0$.
\end{proof}

\begin{lemma}\label{lemma vanishing conditions for schurian jacobian algebras}
Let $\cc$ be a $2$-CY triangulated category with a basic cluster-tilting object $T$. Assume that $\End_\cc(T)\simeq J(Q,W)$ for a QP $(Q,W)$ where $Q$ has no loops, no oriented $2$-cycles start in the vertex $k$ of $Q$, and there is at most one arrow $\alpha$ ending at $k$. If $\End_\cc(T)$ is finite-dimensional, then $\End_\cc(\mu_k(T))\simeq J(\mu_k(Q,W))$.
\end{lemma}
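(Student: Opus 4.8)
The plan is to deduce this as a special case of Theorem \ref{theorem mutation cluster-tilting to mutation QP}. Write $\Lambda=\End_\cc(T)\simeq J(Q,W)$. Among the hypotheses of that theorem, the requirements that $Q$ have no loops and that no oriented $2$-cycle start at $k$ are handed to us directly. Thus the whole problem reduces to verifying the remaining input, the vanishing condition at $k$, i.e. that
$$\Hom_\Lambda(\Ext^1_\Lambda(D\Lambda,S_k),S_k)=0,$$
after which Theorem \ref{theorem mutation cluster-tilting to mutation QP} yields $\End_\cc(\mu_k(T))\simeq J(\mu_k(Q,W))$ with no further work. Note that since $\Lambda$ is finite-dimensional, $J(Q,W)\cong KQ/I$ and the completion plays no role, so the lemmas stated for algebras of the form $KQ/I$ apply.

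To check the vanishing condition I would exploit the right $\Lambda$-module structure on $M:=\Ext^1_\Lambda(D\Lambda,S_k)$ induced by the right action on the bimodule $D\Lambda$. As $\Lambda$ is basic, $D\Lambda=\bigoplus_j I_j$ with $I_j=D(e_j\Lambda)$, and right multiplication by $e_k$ is the idempotent endomorphism of $D\Lambda$ (as a left module) projecting onto $(D\Lambda)e_k=I_k$. By additivity of $\Ext$ this produces a decomposition $M=\bigoplus_j\Ext^1_\Lambda(I_j,S_k)$ in which $e_k$ cuts out the summand $Me_k=\Ext^1_\Lambda(I_k,S_k)$. On the other hand, any nonzero map $M\to S_k$ is a surjection onto the simple supported at $k$, and since $S_ke_k=S_k$ it restricts to a surjection $Me_k\twoheadrightarrow S_k$; hence $Me_k=0$ already forces $\Hom_\Lambda(M,S_k)=0$. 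Combining these two observations, it suffices to prove $\Ext^1_\Lambda(I_k,S_k)=0$. But this is precisely Lemma \ref{lemma extension vanishes}, whose hypotheses are met here: $Q$ has no loops, $\Lambda$ is finite-dimensional, and there is at most one arrow ending at $k$. This gives $\Ext^1_\Lambda(I_k,S_k)=0$, establishes the vanishing condition, and completes the proof.

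The step I expect to require the most care is the module-theoretic bookkeeping in the vanishing condition: one must treat $\Ext^1_\Lambda(D\Lambda,S_k)$ as a right $\Lambda$-module so that the idempotent $e_k$ isolates exactly the factor $\Ext^1_\Lambda(I_k,S_k)$, and one must justify that testing against $S_k$ only detects this $e_k$-component. Once the reduction to the single summand $\Ext^1_\Lambda(I_k,S_k)$ is made rigorous, the remainder is a direct appeal to Lemma \ref{lemma extension vanishes} and Theorem \ref{theorem mutation cluster-tilting to mutation QP}.
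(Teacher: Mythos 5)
Your proposal is correct and follows essentially the same route as the paper: reduce to verifying the vanishing condition at $k$, observe that the idempotent $e_k$ acting on $\Ext^1_\Lambda(D\Lambda,S_k)$ cuts out exactly the summand $\Ext^1_\Lambda(I_k,S_k)$, which vanishes by Lemma \ref{lemma extension vanishes}, and then invoke Theorem \ref{theorem mutation cluster-tilting to mutation QP}. The paper packages the final step through the duality $D\Hom_\Lambda(M,S_k)\cong DS_k\otimes_\Lambda M$ and a pull-back description of $e_k\xi$ rather than your direct $\Hom$ argument, but this is only a cosmetic difference (just watch the handedness: the right action of $e_k$ on $D\Lambda$ induces, via the contravariant $\Ext^1(-,S_k)$, an action on $M$ for which your idempotent bookkeeping goes through unchanged).
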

\begin{proof}
Set $A=\End_\cc(T)\simeq J(Q,W)$.
Since $A$ is finite-dimensional, we get that $$D\Hom_A(\Ext^1_A(DA,S_k),S_k)\cong D S_k\otimes _A \Ext^1_A(DA,S_k).$$
Note that
$D(S_k)$ is the right simple module corresponding to vertex $k$.
Let $e=e_k$ be the trivial path corresponding to $k$. For any $\alpha\otimes \xi\in D S_k\otimes _A \Ext^1_A(DA,S_k)$, where $\xi\in \Ext^1_A(DA,S_k)$ is represented by a short exact sequence $0\rightarrow S_k\xrightarrow{f} M\xrightarrow{g} DA\rightarrow0$, we get that $\alpha\otimes \xi=\alpha e\otimes \xi=\alpha\otimes e\xi$. The bimodule structure of $\Ext^1_A(DA,S_k)$ is induced by the bimodule structure of $DA$, and $e$ acts on $DA$ on the right inducing a morphism of left modules $\beta:DA\rightarrow DA$. Noting that $DA=\oplus_{i\in Q_v}DAe_i=DA e\oplus (\bigoplus_{i\neq k}DA e_i)$, we get that $\beta=\left( \begin{array}{ccc} 1&0\\0&0 \end{array} \right)$, where $1$ is the identity morphism for $DAe$.
Then $e\xi$ is constructed by a pull-back as the following diagram shows.
\[\xymatrix{ e\xi:&S_k\ar@{.>}[r]^{f'} \ar@{=}[d]  & M'\ar@{.>}[r]^{g'}  \ar@{.>}[d]  & DA \ar[d]^{\beta} \\
\xi:& S_k\ar[r]^{f}   & M\ar[r]^{g}  & DA.}\]
Since Lemma \ref{lemma extension vanishes} yields that $\Ext^1_A(I_k,S_k)=0$, we get that $e\xi=0$, and then $\alpha\otimes \xi=0$, which implies $D S_k\otimes _A \Ext^1_A(DA,S_k)=0$.
So $A$ satisfies the vanishing condition at $k$. By Theorem \ref{theorem mutation cluster-tilting to mutation QP}, we get that $\End_\cc(\mu_k(T))\simeq J(\mu_k(Q,W))$.
\end{proof}

\begin{proposition}\label{proposition mutations for schurian algebras}
Let $A$ be the Jacobian algebra $J(Q,W)$ of a QP $(Q,W)$, $k$ a vertex of $Q$ and $B=J(\mu_k(Q,W))$. Assume that $A$ is finite-dimensional, both $(Q,W)$ and $\mu_k(Q,W)$ have no loops or oriented $2$-cycles, and $Q$ has at most one arrow ending at $k$. If $A$ and $B$ have the property that the simple modules $_AS_k$ and $_BS_k$ are not submodules of the radicals of $_AP_k$ and $_BP_k$ respectively, then

(a) $B=J(\mu_k(Q,W))\simeq \mu_k^-(A)$ if and only if the two algebra mutations $\mu_k^-(A)$ and $\mu_k^+(B)$ are defined.

(b) $B=J(\mu_k(Q,W))\simeq \mu_k^+(A)$ if and only if the two algebra mutations $\mu_k^+(A)$ and $\mu_k^-(B)$ are defined.
\end{proposition}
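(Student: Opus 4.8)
The plan is to realize $A$ and $B$ as the endomorphism algebras of two neighbouring cluster-tilting objects and then translate the criterion of Proposition~\ref{proposition mutation to BB} into the definedness of the one-sided algebra mutations. Since $A=J(Q,W)$ is a finite-dimensional Jacobian algebra, \cite{Am} gives $A\simeq\End_\cc(T)$ for a basic cluster-tilting object $T$ in some $2$-CY triangulated category $\cc$. As $Q$ has no loops, no oriented $2$-cycles start at $k$, and there is at most one arrow ending at $k$, Lemma~\ref{lemma vanishing conditions for schurian jacobian algebras} applies and yields $B=J(\mu_k(Q,W))\simeq\End_\cc(\mu_k(T))$. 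Thus $A$ and $B$ are neighbouring $2$-CY-tilted algebras, and since mutation of cluster-tilting objects is an involution, $A\simeq\End_\cc(\mu_k(\mu_k(T)))$ as well; hence Proposition~\ref{proposition mutation to BB} is available in both directions, giving ``$B\simeq\mu_k^{BB}(A)$ iff $T_k^{BB}(A)$ and $T_k^{BB}(B^{op})$ are defined'' and ``$A\simeq\mu_k^{BB}(B)$ iff $T_k^{BB}(B)$ and $T_k^{BB}(A^{op})$ are defined''.

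For part (a) the forward implication is the easy one: if $B\simeq\mu_k^-(A)$ then in particular $\mu_k^-(A)$ is defined, and Lemma~\ref{lemma mutation of algebra}(a) shows $\mu_k^+(\mu_k^-(A))=\mu_k^+(B)$ is defined. For the converse, assume $\mu_k^-(A)$ and $\mu_k^+(B)$ are defined. Because ${}_AS_k$ is not a submodule of $\rad({}_AP_k)$, Lemma~\ref{lemma BB-tilting and negative tilting} converts ``$\mu_k^-(A)$ defined'' into ``$T_k^{BB}(A)$ defined'' together with $T_k^-(A)\simeq T_k^{BB}(A)$ in $D^b(A)$, so that $\mu_k^-(A)\simeq\mu_k^{BB}(A)$. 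On the other side, Proposition~\ref{proposition old good mutation}(c) (equivalently Lemma~\ref{lemma negative positive mutation in opposite algebra}) turns ``$\mu_k^+(B)$ defined'' into ``$\mu_k^-(B^{op})$ defined'', and a second application of Lemma~\ref{lemma BB-tilting and negative tilting}, now over $B^{op}$, gives ``$T_k^{BB}(B^{op})$ defined''. Feeding these two definedness statements into Proposition~\ref{proposition mutation to BB} produces $B\simeq\mu_k^{BB}(A)\simeq\mu_k^-(A)$, as required. The whole argument thus reduces to the two equivalences ``$T_k^{BB}(A)$ defined $\Leftrightarrow\mu_k^-(A)$ defined'' and ``$T_k^{BB}(B^{op})$ defined $\Leftrightarrow\mu_k^+(B)$ defined''.

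Part (b) is the positive/opposite mirror of (a), and I would prove it by running the same chain with $\mu^-$ and $\mu^+$ interchanged and with the second instance $A\simeq\mu_k^{BB}(B)$ of Proposition~\ref{proposition mutation to BB}. Concretely, using Lemma~\ref{lemma negative positive mutation in opposite algebra} to pass between $\mu_k^+(A)$ and $\mu_k^-(A^{op})$, one gets ``$T_k^{BB}(A^{op})$ defined'' from ``$\mu_k^+(A)$ defined'' (via Lemma~\ref{lemma BB-tilting and negative tilting} over $A^{op}$) and ``$T_k^{BB}(B)$ defined'' with $\mu_k^-(B)\simeq\mu_k^{BB}(B)$ from ``$\mu_k^-(B)$ defined'' and ${}_BS_k\not\hookrightarrow\rad({}_BP_k)$. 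Then Proposition~\ref{proposition mutation to BB} yields $A\simeq\mu_k^{BB}(B)\simeq\mu_k^-(B)$, whence $\mu_k^+(A)\simeq B$ by Lemma~\ref{lemma mutation of algebra}(a); the forward implication again uses Lemma~\ref{lemma mutation of algebra}(b).

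The step I expect to be the \emph{main obstacle} is the second application of Lemma~\ref{lemma BB-tilting and negative tilting} in each part: it requires that ${}_{B^{op}}S_k$ (respectively ${}_{A^{op}}S_k$ in part (b)) is not a submodule of $\rad({}_{B^{op}}P_k)$ (respectively $\rad({}_{A^{op}}P_k)$), whereas the hypothesis only supplies the corresponding statement over $B$ (respectively $A$). Unwinding the standard duality $D=\Hom_k(-,k)$, the condition over $B^{op}$ amounts to the absence of a nonzero cyclic path at $k$ in $B$ that cannot be prolonged on its source side, while the given condition over $B$ forbids such a path that cannot be prolonged on its target side. To bridge this gap I would exploit the quiver combinatorics: since $Q$ has at most one arrow ending at $k$, the mutated quiver $\mu_k(Q)$ has at most one arrow starting at $k$, so $B^{op}$ has at most one arrow ending at $k$; together with the cyclic nature of the Jacobian relations $\partial_a W'=0$ this should force the two one-sided non-prolongability conditions on cyclic paths at $k$ to coincide, so that the given hypothesis over $B$ genuinely transfers to $B^{op}$ (and symmetrically from $A$ to $A^{op}$). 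Making this transfer precise is where the real work lies, the remainder being the formal bookkeeping with Propositions~\ref{proposition mutation to BB} and \ref{proposition old good mutation} and Lemmas~\ref{lemma negative positive mutation in opposite algebra}, \ref{lemma BB-tilting and negative tilting} and \ref{lemma mutation of algebra}.
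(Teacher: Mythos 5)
Your proposal follows essentially the same route as the paper: realize $A$ and $B$ as neighbouring $2$-CY-tilted algebras via Lemma \ref{lemma vanishing conditions for schurian jacobian algebras}, then convert the definedness of $\mu_k^{\mp}$ into the definedness of the BB-tilting modules $T_k^{BB}(A)$ and $T_k^{BB}(B^{op})$ demanded by Proposition \ref{proposition mutation to BB}, using Lemma \ref{lemma BB-tilting and negative tilting} and Proposition \ref{proposition old good mutation}(c). Your forward implications are in fact cleaner than the paper's: you obtain the definedness of $\mu_k^{+}(B)$ directly from Lemma \ref{lemma mutation of algebra}(a), where the paper routes back through $T_k^{BB}(B^{op})$ and \cite[Proposition 2.8]{La}. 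Likewise your treatment of (b), which simply exchanges the roles of $T$ and $\mu_k(T)$ in Proposition \ref{proposition mutation to BB}, avoids the paper's detour through $J(Q^{op},W^{op})$ and through $(Q',W')=\mu_k(Q,W)$, for neither of which is the hypothesis ``at most one arrow ending at $k$'' literally available.

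The obstacle you single out is a genuine logical point, but be aware that the paper does not resolve it either: in the converse direction it applies Lemma \ref{lemma BB-tilting and negative tilting} to $B^{op}$ while citing only the hypothesis that $_BS_k$ is not a submodule of $\rad({}_BP_k)$, which is a condition on left $B$-modules, not on left $B^{op}$-modules. The two conditions are not equivalent in general: the first forbids a nonzero element of $e_k(\rad B)e_k$ annihilated by postcomposition with every arrow leaving $k$, the second one annihilated by precomposition with every arrow entering $k$. Your proposed bridge (``at most one arrow'' together with the cyclic nature of the Jacobian relations) is not obviously sufficient and would require a genuine argument. The cleanest repair, and surely the intended reading, is to note that in every application in this paper the algebras involved are schurian (Theorem \ref{lemma schurian algebra of simple polygon-tree algebras}), so that $e_k(\rad B)e_k=0$ and the condition holds for $B$ and $B^{op}$ (and for $A$ and $A^{op}$) simultaneously and vacuously; alternatively one can simply add the opposite-sided condition to the hypotheses. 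With that proviso your argument is complete and faithful to the paper's.
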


\begin{proof}
Since $A$ is finite-dimensional and $(Q,W)$ has no loops or oriented $2$-cycles, \cite[Proposition 6.4]{DWZ} yields that $B$ is finite-dimensional. \cite[Corollary 3.7]{Am} shows that $A$ is a $2$-CY-tilted algebra. Denote by $\cc$ the $2$-CY category, and $T$ the cluster-tilting object in $\cc$ such that $A\simeq\End_\cc(T)$.
Since $Q$ has only one arrow ending at $k$, Lemma \ref{lemma vanishing conditions for schurian jacobian algebras} shows that $B=J(\mu_k(Q,W))\simeq \End_\cc(\mu_k(T))$.

(a) If $B=J(\mu_k(Q,W))\simeq \mu_k^-(A)$, then the proof of \cite[Theorem 4.2]{La} implies that $T_k^-(A)$ is a tilting complex which is isomorphic to $T_k^{BB}(A)$. So $\mu_k^-(A)$ is defined, and $J(\mu_k(Q,W))\simeq \mu_k^{BB}(A)$. Then
Proposition \ref{proposition mutation to BB} shows that $T_k^{BB}(B^{op})$ is a tilting complex, which yields that $T_k^{-}(B^{op})$ is tilting by \cite[Proposition 2.8]{La}. So $T_k^{+}(B)$ is a tilting complex by Proposition \ref{proposition old good mutation} (c).

Conversely, if the two algebra mutations $\mu_k^-(A)$ and $\mu_k^+(B)$ are defined, then $\mu_k^{BB}(A)$ and $\mu_k^{BB}(B^{op})$ are defined by Lemma \ref{lemma BB-tilting and negative tilting} since $A$ and $B$ have the property that the simple modules $_AS_k$ and $_BS_k$ are not submodules of the radicals of $_AP_k$ and $_BP_k$ respectively. The desired result follows from Proposition \ref{proposition mutation to BB} easily.

(b) We consider the opposite algebras $A^{op}$ and $B^{op}$. It is easy to see that $B^{op}=J(\mu_k(Q^{op},W^{op}))$, where $W^{op}$ is defined naturally. If $B=J(\mu_k(Q,W))\simeq \mu_k^+(A)$, then $B^{op}=J(\mu_k(Q^{op},W^{op}))\simeq (\mu_k^+(A))^{op}= \mu_k^-(A^{op})$, so $\mu_k^-(A^{op})$ and $\mu_k^+(B^{op})$ are defined by (a). Then we get that $\mu_k^+(A)$ and $\mu_k^-(B)$ are defined by Proposition \ref{proposition old good mutation} (c).

Conversely, let $(Q',W')=\mu_k(Q,W)$. Then $B=J(Q',W')$ and $A=J(\mu_k(Q',W'))$. If the two algebra mutations $\mu_k^+(A)$ and $\mu_k^-(B)$ are defined,
by (a), we get that $A=J(\mu_k(Q',W'))\simeq \mu_k^{-}(B)$. Lemma \ref{lemma mutation of algebra} (a) shows that $\mu_k^+(\mu_k^-(B))$ is defined and isomorphic to $B$ since $\mu_k^-(B)$ is defined. So $B\simeq \mu_k^+(\mu_k^-(B))\simeq \mu_k^+(A)$.
\end{proof}

\section{Polygon-tree algebras}
As the introduction says, motivated by the structure of the cluster-tilted algebras of Dynkin type $\D$, we define a class of algebras, which are called \emph{floriated algebras}.

In this section, let $Q_0,Q_1,\dots,Q_n$ be oriented cycles such that each $Q_i$ has $m_i$ ($m_i\geq 3$) vertices for each $Q_i$. The vertices of $Q_i$ are denoted by $v_1^i,v_2^i,\dots,v_{m_i}^i$, such that there is an arrow from $v_j^i$ to $v_{j+1}^i$ for $1\leq j\leq m_i$, where we set $m_i+1=1$.

We also assume that $m_0\geq n$. Let $i_1,\dots,i_n$ be distinct vertices of $Q_0$ satisfying $i_j<i_k$ for $j<k$. Identifying $v_{i_j}^0$ with $v_{1}^j$, $v_{i_{j}+1}^0$ with $v_2^j$, and also the arrow $v_{i_j}^0\rightarrow v_{i_{j}+1}^0$ with the arrow $v_1^j\rightarrow v_2^j$ for $1\leq j\leq n$, we get a quiver $Q$ as the following diagram shows.

\setlength{\unitlength}{0.9mm}
\begin{center}
\begin{picture}(20,70)(0,-12)
\put(0,0){\circle*{1.5}}
\put(-10,10){\circle*{1.5}}
\put(-10,10){\vector(1,-1){9.5}}
\put(-10,25){\circle*{1.5}}
\put(-10,25){\vector(0,-1){14.5}}
\put(0,35){\circle*{1.5}}
\put(0,35){\vector(-1,-1){9.5}}
\put(15,35){\circle*{1.5}}
\put(15,35){\vector(-1,0){14.5}}
\put(25,25){\circle*{1.5}}
\put(25,25){\vector(-1,1){9.5}}
\put(25,10){\circle*{1.5}}
\put(25,10){\vector(0,1){14.5}}
\put(15,0){\circle*{1.5}}
\put(15,0){\vector(1,1){9.5}}

\multiput(0.5,0)(1,0){14}{\circle*{0.3}}
\put(25,25){\vector(1,0){14.5}}
\put(40,25){\circle*{1.5}}
\put(40,10){\circle*{1.5}}
\put(40,10){\vector(-1,0){14.5}}
\multiput(40,11)(0,1){14}{\circle*{0.3}}

\put(0,35){\vector(0,1){14.5}}
\put(0,50){\circle*{1.5}}
\put(15,50){\circle*{1.5}}
\put(15,50){\vector(0,-1){14.5}}
\multiput(0.5,50)(1,0){14}{\circle*{0.3}}

\put(-10,45){\circle*{1.5}}
\put(-10,45){\vector(1,-1){9.5}}

\put(-20,35){\circle*{1.5}}
\put(-10,25){\vector(-1,1){9.5}}
\multiput(-20,35)(1,1){10}{\circle*{0.3}}

\put(13,-4){\tiny$v_{m_0}^0$}
\put(24,7){\tiny$v_{1}^0$}
\put(20,11){\tiny$v_{i_1}^0$}
\put(25,26){\tiny$v_{2}^0$}
\put(16.5,24){\tiny$v_{i_1+1}^0$}

\put(40,26){\tiny$v_{3}^1$}
\put(40,8){\tiny$v_{m_1}^1$}

\put(16,35){\tiny$v_3^0$}
\put(13,31){\tiny$v_{i_2}^0$}

\put(0,37){\tiny$v_4^0$}
\put(-1,32){\tiny$v_{i_2+1}^0$}
\put(0,29){\tiny$\|$}
\put(-1,26){\tiny$v_{i_3}^0$}

\put(-3,52){\tiny$v_3^2$}
\put(15,51){\tiny$v_{m_2}^2$}

\put(-9,23){\tiny$v_5^0$}
\put(-9,10){\tiny$v_6^0$}
\put(-1,-4){\tiny$v_7^0$}
\put(-25,33){\tiny$v_3^3$}
\put(-12,47){\tiny$v_{m_3}^3$}

\put(8,15){$Q_0$}
\put(30,15){$Q_1$}
\put(7,40){$Q_2$}
\put(-12,35){$Q_3$}

\put(-15,-10){Figure 1. Floriated quiver. }
\end{picture}

\end{center}
$(Q_0,\{i_1,\dots,i_n\})$ (or just $Q_0$) is called the \emph{central cycle}, $Q_i$ are called the \emph{petals} for $1\leq i\leq n$ and $v_{i_1}^0,\dots,v_{i_n}^0$ are called \emph{petal vertices}, $Q$ is called the \emph{floriated quiver of $(Q_0,\{i_1,\dots,i_n\})$ by $Q_1,\dots,Q_n$} (or simply \emph{floriated quiver}).

Let $W=\sum_{j=0}^nw_j$, where $w_j$ is the chordless oriented cycle $v_1^j\rightarrow v_2^j\rightarrow \dots\rightarrow v_{m_j}^j\rightarrow v_{1}^j$ along $Q_j$ for $j=0,1,\dots, n$. It is easy to see that $W$ is a primitive potential, and $(Q,W)$ is a QP. We call the Jacobian algebra $J(Q,W)$ to be the \emph{floriated algebra of $(Q_0,\{i_1,\dots,i_n\})$ by $Q_1,\dots,Q_n$} (or simply \emph{floriated algebra}).

\begin{remark}
From \cite{Va}, we get that if $Q_1,\dots,Q_n$ satisfy $m_1=\cdots=m_n=3$, then the floriated algebra $A=J(Q,W)$ is a cluster-tilted algebra of type $\D$.
\end{remark}

For a quiver $Q$, we denote by $\cs(Q)$ the set of all the chordless cycles in $Q$¡£

\begin{proposition}\label{proposition floriated algbra to one cycle}
Let $A=J(Q,W)$ be a floriated algebra of $(Q_0,\{i_1,\dots,i_n\})$ by $Q_1, \dots,Q_n$. Then its associated QP $(Q,W)$ is mutation equivalent to a QP $(Q',W')$, where $Q'$ has only one oriented cycle $c$, and $W'=c$. Furthermore, $(Q,W)$ is rigid and then nondegenerate.
\end{proposition}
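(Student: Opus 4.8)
The plan is to prove the two assertions in turn: first that $(Q,W)$ is mutation equivalent to a single oriented cycle carrying its own potential, and then to read off rigidity and nondegeneracy from Proposition \ref{proposition rigid}. For the first assertion I would induct on the number $n$ of petals. When $n=0$ the floriated quiver is the central cycle $Q_0$ itself and $W=w_0$ is exactly that cycle, so there is nothing to do. For the inductive step the key move is to \emph{absorb a single petal into the central cycle}, producing a floriated quiver with $n-1$ petals, to which the inductive hypothesis applies.

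Fix a petal $Q_j$. Its private vertices $v_3^j,\dots,v_{m_j}^j$ each have in-degree and out-degree one and lie on no other cycle, so $Q_j$ contributes the oriented path $v_2^j\to v_3^j\to\cdots\to v_{m_j}^j\to v_1^j$ closed up by the shared arrow $a_j\colon v_1^j\to v_2^j$. I would mutate successively at $v_{m_j}^j,v_{m_j-1}^j,\dots,v_3^j$. Before each mutation the QP is reduced (hence $2$-acyclic), so conditions (c1)--(c3) hold at the chosen vertex once a cyclic representative of the potential is taken to start away from it; and this sequence reverses the petal path exactly as a string of BGP-type reflections reverses a linearly oriented $\A$-quiver. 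After the reversal the shared arrow $a_j$ is anti-parallel to a newly created arrow, forming a $2$-cycle that the DWZ reduction cancels: $a_j$ disappears, and the central edge $v_{i_j}^0\to v_{i_j+1}^0$ is rerouted through the reversed private vertices. The result is again a floriated quiver, now with $n-1$ petals and central cycle of length $m_0+m_j-2$, whose potential is once more the sum of its chordless cycles. Iterating $n$ times yields a single oriented cycle $c$ with $W'=c$.

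The step requiring real care — the main obstacle — is checking that the composite of these mutations \emph{together with the intervening $2$-cycle reductions} returns precisely a floriated QP whose potential is the primitive sum of chordless cycles, rather than a merely right-equivalent QP carrying spurious lower-order terms. Concretely one must track $W_1'+W_2'$ at each mutation and verify that the triangle terms in $W_2'$ created along the way are exactly those cancelled against $a_j$ during reduction, leaving no residue. Because the mutations and reductions are localized along $Q_j$ (its private vertices together with the arrow $a_j$), the arrows and potential terms of the other petals are never disturbed; this is what keeps the bookkeeping tractable and what guarantees that the inductive hypothesis genuinely applies to the new floriated quiver.

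For the second assertion, observe that the terminal QP $(Q',W')$ — a single oriented $m$-cycle with $W'=c=\alpha_1\cdots\alpha_m$ — is rigid: the cyclic derivative $\partial_{\alpha_i}W'$ is the complementary path of length $m-1$, so in $J(Q',W')$ every path of length $\ge m-1$ vanishes and the only surviving cyclic paths are the trivial idempotents; hence $\Tr(J(Q',W'))=R$ and $\Def(Q',W')=\{0\}$. Moreover $(Q',W')$ is reduced and $2$-acyclic. Now $(Q,W)$ is itself reduced and $2$-acyclic (its potential is primitive, so each $\partial_aW$ lies in $\mathfrak m^2$), and since mutation is an involution, $(Q,W)$ lies in the mutation class of the rigid QP $(Q',W')$. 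Proposition \ref{proposition rigid}(a),(b) then propagates rigidity along the entire mutation sequence, so $(Q,W)$ is rigid, and Proposition \ref{proposition rigid}(c) shows it is nondegenerate.
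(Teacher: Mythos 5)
Your overall strategy --- absorb the petals one at a time by mutating along their private vertices, then pull rigidity and nondegeneracy of $(Q,W)$ back from the terminal QP via Proposition \ref{proposition rigid} --- is the same as the paper's. But your description of what the mutation sequence actually produces is wrong, and the error breaks your induction. Mutating at $v_{m_j}^j,v_{m_j-1}^j,\dots,v_3^j$ does \emph{not} reroute the central arrow $v_{i_j}^0\to v_{i_j+1}^0$ through all of the reversed private vertices. Carry out the computation for $m_0=3$, $m_j=4$, petal $1\to 2\to a\to b\to 1$: after $\mu_b$ one gets the shrunken petal $1\to 2\to a\to 1$ together with a new triangle $a\to 1\to b\to a$; after $\mu_a$ the $2$-cycles at $(1,2)$ and $(1,b)$ are cancelled by the reduction, the shared arrow $1\to 2$ does disappear and is replaced by the path $1\to a\to 2$, but $b$ survives only as a pendant sink carrying the single arrow $a\to b$. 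In general only the one vertex $v_3^j$ is inserted into the central cycle, which grows to length $m_0+1$ (not $m_0+m_j-2$), while $v_4^j,\dots,v_{m_j}^j$ become a linear $\A$-type tail hanging off $v_3^j$; this is exactly what the paper's Figures 2.1--2.5 record. Consequently the quiver obtained after absorbing one petal is \emph{not} a floriated quiver (it has a pendant tail), so your inductive hypothesis, stated for floriated quivers, does not apply to it; and the terminal quiver is not a bare oriented cycle of length $m-2n$ but an oriented cycle of length $m_0+n$ with linear trees attached --- which is all the proposition asserts, but is not the object you prove rigidity for.

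The gap is repairable along the lines you sketch: the tails are inert (they lie on no cycle, contribute nothing to the potential, and are untouched by the mutations at the remaining petals), so one can either enlarge the class in the induction to allow linear trees attached to a floriated quiver, or, as the paper does, process all petals in one pass and observe directly that the resulting quiver $Q'$ has exactly one oriented chordless cycle $c$ with $W'=c$. The rigidity argument then needs the observation that in a quiver with a unique oriented cycle every nontrivial cyclic path winds around $c$ and hence is cyclically equivalent to an element of $J(W')$ (since $c\in J(W')$), so $\Tr(J(Q',W'))=R$; your computation for the bare cycle does not literally apply to the correct $Q'$, although it adapts.
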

\begin{proof}
If $n=0$, then we have nothing to prove.

For $n>0$, we assume that the quiver $Q$ is as Figure 1 shows. Note that $m_i\geq3$ for $1\leq i\leq n$. Without losing generality, we assume that $m_1=\max\{m_i|1\leq i\leq n\}$. If $m_1=3$, then $m_1=m_2=\cdots=m_n=3$, and so $A=J(Q,W)$ is a cluster-tilted algebra of type $\D$.  From \cite{GP,Va}, we know that the mutation class of a cluster-tilted algebra of type $\D$ contains an oriented cycle with the potential given by that cycle, which yields our desired result immediately.
So we assume that $m_1>3$. We mutate the QP at the vertex $v_{m_1}^1$ and denote the resulting QP by $\mu_{v_{m_1}^1}(Q,W)=(Q^1,W^1)$. Then the quiver $Q^1$ is as Figure 2.1 shows and $W^1=\sum_{w\in\cs(Q^1)}w$.

\setlength{\unitlength}{0.8mm}
\begin{center}

\begin{picture}(180,55)
\put(30,0){\begin{picture}(80,50)
\put(0,0){\circle*{1.5}}
\put(-10,10){\circle*{1.5}}
\put(-10,10){\vector(1,-1){9.5}}
\put(-10,25){\circle*{1.5}}
\put(-10,25){\vector(0,-1){14.5}}
\put(0,35){\circle*{1.5}}
\put(0,35){\vector(-1,-1){9.5}}
\put(15,35){\circle*{1.5}}
\put(15,35){\vector(-1,0){14.5}}
\put(25,25){\circle*{1.5}}
\put(25,25){\vector(-1,1){9.5}}
\put(25,10){\circle*{1.5}}
\put(25,10){\vector(0,1){14.5}}
\put(15,0){\circle*{1.5}}
\put(15,0){\vector(1,1){9.5}}

\multiput(0.5,0)(1,0){14}{\circle*{0.3}}

\put(40,25){\circle*{1.5}}
\put(40,10){\circle*{1.5}}
\put(40,10){\vector(-1,0){14.5}}
\put(32.5,5){\vector(3,2){7}}
\put(32.5,5){\circle*{1.5}}
\put(25,10){\vector(3,-2){7}}
\put(25,25){\vector(3,2){7}}
\put(32.5,30){\circle*{1.5}}
\put(40,25){\vector(0,-1){14.5}}
\multiput(32.5,30)(0.9,-0.6){9}{\circle*{0.3}}
\put(41,8){\tiny$v_{m_1-1}^1$}

\put(0,35){\vector(0,1){14.5}}
\put(0,50){\circle*{1.5}}
\put(15,50){\circle*{1.5}}
\put(15,50){\vector(0,-1){14.5}}
\multiput(0.5,50)(1,0){14}{\circle*{0.3}}

\put(-10,45){\circle*{1.5}}
\put(-10,45){\vector(1,-1){9.5}}

\put(-20,35){\circle*{1.5}}
\put(-10,25){\vector(-1,1){9.5}}
\multiput(-20,35)(1,1){10}{\circle*{0.3}}

\put(8,15){$Q_0$}

\put(7,40){$Q_2$}
\put(-12,35){$Q_3$}

\put(10,3){\tiny$v_{m_0}^0$}
\put(21,11){\tiny$v_{1}^0$}

\put(21,22){\tiny$v_{2}^0$}

\put(30,32){\tiny$v_3^1$}

\put(40,26){\tiny$v_{m_1-2}^1$}
\put(30,1){\tiny$v_{m_1}^1$}

\put(12,31){\tiny$v_3^0$}

\put(0,31){\tiny$v_4^0$}

\put(-3,51){\tiny$v_3^2$}
\put(15,51){\tiny$v_{m_2}^2$}

\put(-10,23){\tiny$v_5^0$}
\put(-9,10){\tiny$v_6^0$}
\put(0,1){\tiny$v_7^0$}
\put(-24,33){\tiny$v_3^3$}
\put(-12,47){\tiny$v_{m_3}^3$}

\put(-15,-10){Figure 2.1. $(Q^1,W^1)$. }
\end{picture}}

\put(120,0){\begin{picture}(80,80)
\put(0,0){\circle*{1.5}}
\put(-10,10){\circle*{1.5}}
\put(-10,10){\vector(1,-1){9.5}}
\put(-10,25){\circle*{1.5}}
\put(-10,25){\vector(0,-1){14.5}}
\put(0,35){\circle*{1.5}}
\put(0,35){\vector(-1,-1){9.5}}
\put(15,35){\circle*{1.5}}
\put(15,35){\vector(-1,0){14.5}}
\put(25,25){\circle*{1.5}}
\put(25,25){\vector(-1,1){9.5}}
\put(25,10){\circle*{1.5}}
\put(25,10){\vector(0,1){14.5}}
\put(15,0){\circle*{1.5}}
\put(15,0){\vector(1,1){9.5}}

\multiput(0.5,0)(1,0){14}{\circle*{0.3}}

\put(40,25){\circle*{1.5}}
\put(40,10){\circle*{1.5}}
\put(25,10){\vector(1,0){14.5}}
\put(40,10){\vector(-3,-2){7}}
\put(32.5,5){\circle*{1.5}}
\put(40,25){\vector(-1,-1){14.5}}
\put(25,25){\vector(3,2){7}}
\put(32.5,30){\circle*{1.5}}
\put(40,10){\vector(0,1){14.5}}
\multiput(32.5,30)(0.9,-0.6){9}{\circle*{0.3}}
\put(41,26){\tiny$v_{m_1-2}^1$}

\put(0,35){\vector(0,1){14.5}}
\put(0,50){\circle*{1.5}}
\put(15,50){\circle*{1.5}}
\put(15,50){\vector(0,-1){14.5}}
\multiput(0.5,50)(1,0){14}{\circle*{0.3}}

\put(-10,45){\circle*{1.5}}
\put(-10,45){\vector(1,-1){9.5}}

\put(-20,35){\circle*{1.5}}
\put(-10,25){\vector(-1,1){9.5}}
\multiput(-20,35)(1,1){10}{\circle*{0.3}}

\put(8,15){$Q_0$}

\put(7,40){$Q_2$}
\put(-12,35){$Q_3$}

\put(10,3){\tiny$v_{m_0}^0$}
\put(21,11){\tiny$v_{1}^0$}

\put(21,22){\tiny$v_{2}^0$}

\put(30,32){\tiny$v_3^1$}

\put(30,1){\tiny$v_{m_1}^1$}

\put(12,31){\tiny$v_3^0$}

\put(0,31){\tiny$v_4^0$}

\put(-3,51){\tiny$v_3^2$}
\put(15,51){\tiny$v_{m_2}^2$}

\put(-10,23){\tiny$v_5^0$}
\put(-9,10){\tiny$v_6^0$}
\put(0,1){\tiny$v_7^0$}
\put(-24,33){\tiny$v_3^3$}
\put(-12,47){\tiny$v_{m_3}^3$}
\put(41,8){\tiny$v_{m_1-1}^1$}

\put(-15,-10){Figure 2.2. $(Q^2,W^2)$. }
\end{picture}}

\end{picture}
\vspace{0.7cm}
\end{center}

Let $(Q^2,W^2)=\mu_{v_{m_1-1}^1}(Q^1,W^1)$. Then $Q^2$ is as Figure 2.2 shows and
$W^2=\sum_{w\in\cs(Q^2)}w.$
Now apply $\mu_{v_{m_1-2}^1}$ to $(Q^2,W^2)$, and so on. After $m_1-3$ steps, $(Q,W)$ is mutation equivalent to $(Q^{m_1-3},W^{m_1-3})$, where
$Q^{m_1-3}$ is as Figure 2.3 shows, and
$W^{m_1-3}=\sum_{w\in\cs(Q^{m_1-3})}w.$

Let $(Q^{m_1-2},W^{m_1-2})=\mu_{v_3^1}(Q^{m_1-3},W^{m_1-3})$. Then $Q^{m_1-2}$ is as Figure 2.4 shows, where $Q_0^1$ is an oriented cycle with $m_0+1$ vertices, and
$W^{m_1-2}=\sum_{w\in\cs(Q^{m_1-2})}w.$

\setlength{\unitlength}{0.8mm}
\begin{center}

\begin{picture}(180,60)
\put(20,0){\begin{picture}(80,80)
\put(0,0){\circle*{1.5}}
\put(-10,10){\circle*{1.5}}
\put(-10,10){\vector(1,-1){9.5}}
\put(-10,25){\circle*{1.5}}
\put(-10,25){\vector(0,-1){14.5}}
\put(0,35){\circle*{1.5}}
\put(0,35){\vector(-1,-1){9.5}}
\put(15,35){\circle*{1.5}}
\put(15,35){\vector(-1,0){14.5}}
\put(25,25){\circle*{1.5}}
\put(25,25){\vector(-1,1){9.5}}
\put(25,10){\circle*{1.5}}
\put(25,10){\vector(0,1){14.5}}
\put(15,0){\circle*{1.5}}
\put(15,0){\vector(1,1){9.5}}

\multiput(0.5,0)(1,0){14}{\circle*{0.3}}

\put(40,25){\circle*{1.5}}
\put(40,10){\circle*{1.5}}
\put(25,10){\vector(1,0){14.5}}
\put(40,25){\vector(-1,-1){14.5}}

\put(25,25){\vector(1,0){14.5}}

\put(40,10){\vector(0,1){14.5}}
\put(40,10){\vector(1,0){9.5}}
\put(50,10){\circle*{1.5}}
\put(60,10){\circle*{1.5}}
\put(60,10){\vector(1,0){9.5}}
\put(70,10){\circle*{1.5}}
\multiput(50,10)(1,0){9}{\circle*{0.3}}
\put(41,26){\tiny$v_3^1$}

\put(0,35){\vector(0,1){14.5}}
\put(0,50){\circle*{1.5}}
\put(15,50){\circle*{1.5}}
\put(15,50){\vector(0,-1){14.5}}
\multiput(0.5,50)(1,0){14}{\circle*{0.3}}

\put(-10,45){\circle*{1.5}}
\put(-10,45){\vector(1,-1){9.5}}

\put(-20,35){\circle*{1.5}}
\put(-10,25){\vector(-1,1){9.5}}
\multiput(-20,35)(1,1){10}{\circle*{0.3}}

\put(8,15){$Q_0$}

\put(7,40){$Q_2$}
\put(-12,35){$Q_3$}

\put(10,3){\tiny$v_{m_0}^0$}
\put(21,11){\tiny$v_{1}^0$}

\put(21,22){\tiny$v_{2}^0$}

\put(12,31){\tiny$v_3^0$}

\put(0,31){\tiny$v_4^0$}

\put(-3,51){\tiny$v_3^2$}
\put(15,51){\tiny$v_{m_2}^2$}

\put(-10,23){\tiny$v_5^0$}
\put(-9,10){\tiny$v_6^0$}
\put(0,1){\tiny$v_7^0$}
\put(-24,33){\tiny$v_3^3$}
\put(-12,47){\tiny$v_{m_3}^3$}
\put(39,6){\tiny$v_{4}^1$}
\put(49,6){\tiny$v_{5}^1$}
\put(69,6){\tiny$v_{m_1}^1$}

\put(-15,-10){Figure 2.3. $(Q^{m_1-3},W^{m_1-3})$. }
\end{picture}}

\put(120,0){\begin{picture}(80,80)
\put(0,0){\circle*{1.5}}
\put(-10,10){\circle*{1.5}}
\put(-10,10){\vector(1,-1){9.5}}
\put(-10,25){\circle*{1.5}}
\put(-10,25){\vector(0,-1){14.5}}
\put(0,35){\circle*{1.5}}
\put(0,35){\vector(-1,-1){9.5}}
\put(15,35){\circle*{1.5}}
\put(15,35){\vector(-1,0){14.5}}
\put(25,25){\circle*{1.5}}
\put(25,25){\vector(-1,1){9.5}}
\put(25,10){\circle*{1.5}}
\put(25,10){\vector(0,1){14.5}}
\put(15,0){\circle*{1.5}}
\put(15,0){\vector(1,1){9.5}}

\multiput(0.5,0)(1,0){14}{\circle*{0.3}}

\put(25,10){\vector(1,0){9.5}}
\put(35,10){\circle*{1.5}}
\put(45,10){\circle*{1.5}}
\put(45,10){\vector(1,0){9.5}}
\put(55,10){\circle*{1.5}}
\multiput(35,10)(1,0){9}{\circle*{0.3}}

\put(0,35){\vector(0,1){14.5}}
\put(0,50){\circle*{1.5}}
\put(15,50){\circle*{1.5}}
\put(15,50){\vector(0,-1){14.5}}
\multiput(0.5,50)(1,0){14}{\circle*{0.3}}

\put(-10,45){\circle*{1.5}}
\put(-10,45){\vector(1,-1){9.5}}

\put(-20,35){\circle*{1.5}}
\put(-10,25){\vector(-1,1){9.5}}
\multiput(-20,35)(1,1){10}{\circle*{0.3}}

\put(8,15){$Q^1_0$}

\put(7,40){$Q_2$}
\put(-12,35){$Q_3$}

\put(12,2){\tiny$v_{1}^0$}
\put(21,11){\tiny$v_{3}^1$}

\put(21,22){\tiny$v_{2}^0$}

\put(12,31){\tiny$v_3^0$}

\put(0,31){\tiny$v_4^0$}

\put(-3,51){\tiny$v_3^2$}
\put(15,51){\tiny$v_{m_2}^2$}

\put(-10,23){\tiny$v_5^0$}
\put(-9,10){\tiny$v_6^0$}
\put(0,1){\tiny$v_7^0$}
\put(-24,33){\tiny$v_3^3$}
\put(-12,47){\tiny$v_{m_3}^3$}
\put(33,6){\tiny$v_{4}^1$}

\put(53,6){\tiny$v_{m_1}^1$}
\put(-15,-10){Figure 2.4. $(Q^{m_1-2},W^{m_1-2})$. }
\end{picture}}

\end{picture}

\vspace{1cm}

\end{center}

For $(Q^{m_1-2},W^{m_1-2})$, we do mutations at the vertices in $Q_2$ similar to the above, and inductively, we get that $(Q,W)$ is mutation equivalent to
$(Q^{m-2n},W^{m-2n})$, where $m=\sum_{i=1}^nm_i$, and $Q^{m-2n}$ is as Figure 3 shows, where $Q_0^n$ is an oriented cycle with $m_0+n$ vertices, and
$W^{m-2n}=\sum_{w\in \cs(Q^{m-2n})}w.$
 \setlength{\unitlength}{0.9mm}
\begin{center}
\begin{picture}(20,55)(0,-10)

\put(0,0){\circle*{1.5}}
\put(-10,10){\circle*{1.5}}
\put(-10,10){\vector(1,-1){9.5}}
\put(-10,25){\circle*{1.5}}
\put(-10,25){\vector(0,-1){14.5}}
\put(0,35){\circle*{1.5}}
\put(0,35){\vector(-1,-1){9.5}}
\put(15,35){\circle*{1.5}}
\put(15,35){\vector(-1,0){14.5}}
\put(25,25){\circle*{1.5}}
\put(25,25){\vector(-1,1){9.5}}
\put(25,10){\circle*{1.5}}
\put(25,10){\vector(0,1){14.5}}
\put(15,0){\circle*{1.5}}
\put(15,0){\vector(1,1){9.5}}

\multiput(0.5,0)(1,0){14}{\circle*{0.3}}

\put(25,10){\vector(1,0){9.5}}
\put(35,10){\circle*{1.5}}
\put(45,10){\circle*{1.5}}
\put(45,10){\vector(1,0){9.5}}
\put(55,10){\circle*{1.5}}
\multiput(35,10)(1,0){9}{\circle*{0.3}}

\put(0,35){\vector(-1,0){9}}
\put(-10,35){\circle*{1.5}}
\multiput(-11,35)(-1,0){9}{\circle*{0.3}}
\put(-20,35){\circle*{1.5}}
\put(-20,35){\vector(-1,0){9}}
\put(-30,35){\circle*{1.5}}

\put(-10,10){\vector(-1,0){9}}
\put(-20,10){\circle*{1.5}}
\multiput(-21,10)(-1,0){9}{\circle*{0.3}}
\put(-30,10){\circle*{1.5}}
\put(-30,10){\vector(-1,0){9}}
\put(-40,10){\circle*{1.5}}

\put(8,15){$Q^n_0$}

\put(12,2){\tiny$v_{1}^0$}

\put(16,-2){\tiny$v_{i_1}^0$}

\put(21,11){\tiny$v_{3}^1$}

\put(21,22){\tiny$v_{2}^0$}
\put(26,24){\tiny$v_{i_1+1}^0$}

\put(12,31){\tiny$v_3^0$}
\put(12,38){\tiny$v_{i_2}^0$}

\put(0,31){\tiny$v_3^2$}

\put(-11,37){\tiny $v_4^2$}

\put(-31,37){\tiny $v_{m_2}^2$}

\put(-10,23){\tiny$v_4^0$}
\put(-18,28){\tiny$v_{i_2+1}^0$}
\put(-15,24){\tiny$\|$}
\put(-16,20){\tiny$v_{i_3}^0$}

\put(-9,10){\tiny$v_3^3$}

\put(-21,12){\tiny $v_4^3$}

\put(-41,12){\tiny $v_{m_3}^3$}

\put(0,2){\tiny$v_5^0$}

\put(-8,-2){\tiny$v_{i_3+1}^0$}

\put(33,6){\tiny$v_{4}^1$}

\put(53,6){\tiny$v_{m_1}^1$}

\put(-15,-10){Figure 2.5. $(Q^{m-2n},W^{m-2n})$. }
\end{picture}
\end{center}

For $(Q^{m-2n}, W^{m-2n})$, we get that $W^{m-2n}=c$ where $c$ is the chordless cycle around $Q_0^n$ in $Q^{m-2n}$. Since $Q^{m-2n}$ has only one oriented cycle, we get that every potential
on $Q$ is cyclically equivalent to $c^t$ for some $t>0$, which is in $J(W^{m-2n})$, and so the proof of \cite[Proposition 8.1]{DWZ} implies that $(Q^{m-2n}, W^{m-2n})$ is rigid.
Therefore, Proposition \ref{proposition rigid} yields that $(Q,W)$ is rigid and nondegenerate.
\end{proof}

\begin{proposition}
Let $A=J(Q,W)$ be a floriated algebra of $(Q_0,\{i_1,\dots,i_n\})$ by $Q_1,\dots,Q_n$. If one of the following conditions is satisfied, then
$A$ is a cluster-tilted algebra.

(a) There is at most one petal with more than $3$ vertices.

(b) There are two petals, denoted by $Q_j,Q_k$ for $j<k$, with more than $3$ vertices, and either $k=j+1$ and $i_{j+1}=i_j+1$, or $k=n$, $j=1$ and $i_1+m_0-i_n=1$.
\end{proposition}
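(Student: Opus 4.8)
The plan is to show that under either hypothesis the QP $(Q,W)$ is mutation equivalent to one whose quiver is acyclic. This suffices: by \cite{Am} the Jacobian algebra $J(Q,W)$ is $2$-CY-tilted, and since mutation of QPs is matched by mutation of cluster-tilting objects along such a sequence (Theorem \ref{theorem mutation cluster-tilting to mutation QP}, Lemma \ref{lemma vanishing conditions for schurian jacobian algebras}), reaching an acyclic $(Q',0)$ identifies the ambient $2$-CY category with the genuine cluster category $\cc_{Q'}$ and exhibits $A$ as the endomorphism algebra of a cluster-tilting object in $\cc_{Q'}$, i.e. as a cluster-tilted algebra. Since the number of vertices $N=m_0+\sum_{i=1}^{n}(m_i-2)$ and the cyclic structure are mutation invariants, I expect $Q'$ to be of Dynkin type $\D$. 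The starting point is Proposition \ref{proposition floriated algbra to one cycle}: its explicit mutation sequence turns $(Q,W)$ into a single oriented cycle $c$ of length $m_0+n$, with $W=c$, carrying a linear type-$\A$ tail of length $m_i-3$ attached to the cycle at the position of the petal $Q_i$ for each $i$. Thus a petal produces a nonempty tail exactly when $m_i>3$, so under (a) the reduced quiver is an oriented cycle with a single tail, while under (b) it is an oriented cycle with exactly two tails, sitting at the positions of the long petals $Q_j$ and $Q_k$.

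First I would settle (a). Here the reduced quiver is an oriented cycle with one type-$\A$ path glued at a vertex, which is precisely the shape of a ``Type I'' quiver in Vatne's description of the mutation class of $\D_N$ \cite{Va}. Hence the reduced QP lies in that mutation class and is mutation equivalent to an acyclic orientation of $\D_N$, so by the first paragraph $A$ is cluster-tilted of type $\D$. Equivalently, one can reabsorb the single tail into the central cycle and appeal to the Remark following Figure 1, which already records that floriated algebras with only $3$-cycle petals are cluster-tilted of type $\D$.

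Next I would treat (b). The role of the hypotheses $k=j+1$, $i_{j+1}=i_j+1$ (and the cyclic variant $k=n$, $j=1$, $i_1+m_0-i_n=1$) is to force the two long petals to be glued along \emph{consecutive} edges of $Q_0$; after the reduction of Proposition \ref{proposition floriated algbra to one cycle} their two tails are then attached at two \emph{neighbouring} vertices of the cycle of length $m_0+n$. I would check that the two paths, together with the connecting arc of the cycle, organize into a single admissible type-$\A$ part hanging off the cycle, so that the reduced quiver is again of Vatne Type I, hence in the mutation class of $\D_N$; as before this shows $A$ is cluster-tilted of type $\D$.

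The hard part will be the bookkeeping in (b): one must follow, through the mutations of Proposition \ref{proposition floriated algbra to one cycle}, the exact attachment vertices and orientations of the two tails, and verify that adjacency of the petals is precisely what lets the two appendages and the arc between them assemble into one legitimate type-$\A$ piece of a Type I quiver. For non-adjacent long petals, or for three or more long petals, the tails land on non-consecutive vertices and the quiver leaves Vatne's list, which is exactly what forces the hypotheses; but since only the stated implications are claimed, verifying the two positive configurations above is enough.
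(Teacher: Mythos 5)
Your opening paragraph describes exactly the paper's strategy: reduce $(Q,W)$ via Proposition \ref{proposition floriated algbra to one cycle} to a single oriented cycle with tails, mutate further to an acyclic quiver $Q'$ with zero potential, and conclude that $A$ is the endomorphism algebra of a cluster-tilting object in the cluster category $\cc_{Q'}$. The proof breaks down, however, at the step where you identify the reduced quiver with a quiver in the mutation class of $\D_N$. An oriented cycle of length $m_0+n\geq 5$ with a nonempty type-$\A$ tail hanging directly off a vertex of the cycle is \emph{not} of Vatne's Type I: deleting the two distinguished vertices of a Type I quiver must leave a quiver in the mutation class of $\A_{N-2}$, which cannot contain an oriented cycle of length $\geq 4$; nor is it of Type IV, where attachments to the central cycle are made through oriented triangles (spikes), not bare tails. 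Indeed, the paper's own Lemma \ref{lemma vertices more than 10 is of D} shows that such a quiver contains a subquiver mutation-equivalent to $\E_6$, so it is of type $\E$ or of infinite mutation type, and by Theorem \ref{theorem of representation type} the algebras covered by this proposition are in general wild. The proposition only claims that $A$ is cluster-tilted for \emph{some} acyclic quiver, not of type $\D$; your expectation that $Q'$ is Dynkin of type $\D$ is false as soon as the central cycle is long. (Your fallback for (a) --- ``reabsorb the tail and invoke the Remark after Figure 1'' --- also fails, since that Remark applies only when every petal is a triangle.)

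What is missing is the second, explicit mutation sequence that the paper supplies: starting from the cycle-with-tails quiver one mutates successively at $v_4^0, v_5^0,\dots,v_{m_0}^0, v_1^0, v_3^1, v_4^1,\dots,v_{m_1}^1$, which unrolls the oriented cycle into the tree-shaped acyclic quiver of Figure 3.2 (a path with one pendant vertex at an interior vertex --- generally not a Dynkin diagram). The adjacency hypotheses in (b) are precisely what make this unrolling terminate in an acyclic quiver; note also that after the reduction the two tails sit at distance two on the cycle (separated by the vertex that becomes the pendant vertex of the tree), not at neighbouring vertices as you state. Without this computation, or some substitute argument producing an acyclic quiver in the mutation class, the appeal to Vatne's classification cannot close the proof.
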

\begin{proof}

First, we prove (b). We only prove for the case $j=1$, $k=2$. Proposition \ref{proposition floriated algbra to one cycle} implies that $(Q,W)$ is mutation equivalent to
$(Q^1,W^1)$ with $Q^1$ as Figure 3.1 shows, and $W^1=c$, where $c$ is the non-self-intersecting oriented cycle in $Q^1$.

We do mutations of QPs at $v_4^0$, $v_5^0$, $\dots$, $v_{m_0}^0$, $v_1^0$, $v_3^1$, $v_4^1$, $\dots$, $v_{m_1}^1$ successively, and then get a QP $(Q^l,W^l=0)$, where $Q^l$ is as Figure 3.2 shows. Since $Q^l$ is acyclic, let $\cc$ be the cluster category of $KQ^l$, then $KQ^l$ is a cluster-tilted algebra. Since mutations of cluster-tilted objects coincides with the mutations of QPs in this case, we get that $J(Q,W)$ is a cluster-tilted algebra.

\setlength{\unitlength}{0.9mm}
\begin{center}
\begin{picture}(20,50)(0,-10)

\put(0,0){\circle*{1.5}}
\put(-10,10){\circle*{1.5}}
\put(-10,10){\vector(1,-1){9.5}}
\put(-10,25){\circle*{1.5}}
\put(-10,25){\vector(0,-1){14.5}}
\put(0,35){\circle*{1.5}}
\put(0,35){\vector(-1,-1){9.5}}
\put(15,35){\circle*{1.5}}
\put(15,35){\vector(-1,0){14.5}}
\put(25,25){\circle*{1.5}}
\put(25,25){\vector(-1,1){9.5}}
\put(25,10){\circle*{1.5}}
\put(25,10){\vector(0,1){14.5}}
\put(15,0){\circle*{1.5}}
\put(15,0){\vector(1,1){9.5}}

\multiput(0.5,0)(1,0){14}{\circle*{0.3}}

\put(25,10){\vector(1,0){9.5}}
\put(35,10){\circle*{1.5}}
\put(45,10){\circle*{1.5}}
\put(45,10){\vector(1,0){9.5}}
\put(55,10){\circle*{1.5}}
\multiput(35,10)(1,0){9}{\circle*{0.3}}

\put(15,35){\vector(1,0){9.5}}
\put(25,35){\circle*{1.5}}
\put(35,35){\circle*{1.5}}
\put(35,35){\vector(1,0){9.5}}
\put(45,35){\circle*{1.5}}
\multiput(25,35)(1,0){9}{\circle*{0.3}}

\put(10,2){\tiny$v_{m_0}^0$}

\put(21,11){\tiny$v_{1}^0$}

\put(21,22){\tiny$v_{2}^0$}

\put(12,31){\tiny$v_3^0$}

\put(0,31){\tiny$v_4^0$}

\put(-10,23){\tiny$v_5^0$}

\put(-9,10){\tiny$v_6^0$}

\put(0,2){\tiny$v_7^0$}

\put(23,37){\tiny $v_4^2$}
\put(43,37){\tiny $v_{m_2}^2$}

\put(33,6){\tiny$v_{4}^1$}

\put(53,6){\tiny$v_{m_1}^1$}

\put(-10,-10){Figure 3.1. $(Q^{1},W^{1})$. }
\end{picture}
\end{center}

\setlength{\unitlength}{0.9mm}
\begin{center}
\begin{picture}(160,15)(-20,0)

\put(0,0){\circle*{1.5}}

\put(10,0){\circle*{1.5}}
\put(20,0){\circle*{1.5}}
\put(30,0){\circle*{1.5}}
\put(40,0){\circle*{1.5}}
\put(50,0){\circle*{1.5}}
\put(60,0){\circle*{1.5}}
\put(70,0){\circle*{1.5}}
\put(80,0){\circle*{1.5}}
\put(90,0){\circle*{1.5}}
\put(100,0){\circle*{1.5}}
\put(110,0){\circle*{1.5}}

\put(120,0){\circle*{1.5}}

\put(130,0){\circle*{1.5}}

\put(0,0){\vector(1,0){9}}

\multiput(10.5,0)(1,0){9}{\circle*{0.3}}
\put(20,0){\vector(1,0){9}}

\put(30,0){\vector(1,0){9}}

\put(40,0){\vector(1,0){9}}

\put(50,0){\vector(1,0){9}}

\put(70,0){\vector(1,0){9}}

\put(80,0){\vector(1,0){9}}

\put(90,0){\vector(1,0){9}}

\put(100,0){\vector(1,0){9}}

\put(120,0){\vector(1,0){9}}

\put(40,10){\circle*{1.5}}
\put(40,10){\vector(0,-1){9}}

\multiput(60.5,0)(1,0){9}{\circle*{0.3}}

\multiput(110.5,0)(1,0){9}{\circle*{0.3}}

\put(-2,1){$^{v_4^0}$}

\put(8,1){$^{v_5^0}$}
\put(16,1){$^{v_{m_0-1}^0}$}

\put(28,1){$^{v_{m_0}^0}$}

\put(38,-3){$_{v_1^0}$}
\put(38,11){$^{v_2^0}$}

\put(48,1){$^{v_4^1}$}
\put(58,1){$^{v_5^1}$}
\put(66,1){$^{v_{m_1-1}^1}$}

\put(78,1){$^{v_{m_1}^1}$}
\put(88,1){$^{v_3^0}$}

\put(98,1){$^{v_4^2}$}
\put(108,1){$^{v_5^2}$}
\put(116,1){$^{v_{m_2-1}^2}$}

\put(128,1){$^{v_{m_2}^2}$}

\put(45,-10){Figure 3.2. $(Q^{l},W^{l})$. }
\end{picture}
\vspace{1cm}
\end{center}

(a) Proposition \ref{proposition floriated algbra to one cycle} implies that $(Q,W)$ is mutation equivalent to
$(Q^1,W^1)$ with $Q^1$ as the following diagram shows, and $W^1=c$, where $c$ is the oriented cycle in $Q^1$. It is a special case of (b), we omit the proof here.

 \setlength{\unitlength}{0.9mm}
\begin{center}
\begin{picture}(20,50)(0,-10)

\put(0,0){\circle*{1.5}}
\put(-10,10){\circle*{1.5}}
\put(-10,10){\vector(1,-1){9.5}}
\put(-10,25){\circle*{1.5}}
\put(-10,25){\vector(0,-1){14.5}}
\put(0,35){\circle*{1.5}}
\put(0,35){\vector(-1,-1){9.5}}
\put(15,35){\circle*{1.5}}
\put(15,35){\vector(-1,0){14.5}}
\put(25,25){\circle*{1.5}}
\put(25,25){\vector(-1,1){9.5}}
\put(25,10){\circle*{1.5}}
\put(25,10){\vector(0,1){14.5}}
\put(15,0){\circle*{1.5}}
\put(15,0){\vector(1,1){9.5}}

\multiput(0.5,0)(1,0){14}{\circle*{0.3}}

\put(25,10){\vector(1,0){9.5}}
\put(35,10){\circle*{1.5}}
\put(45,10){\circle*{1.5}}
\put(45,10){\vector(1,0){9.5}}
\put(55,10){\circle*{1.5}}
\multiput(35,10)(1,0){9}{\circle*{0.3}}

\put(21,11){\tiny$v_{3}^1$}

\put(33,6){\tiny$v_{4}^1$}

\put(53,6){\tiny$v_{m_1}^1$}

\put(-15,-10){Figure 3.3. $(Q^{1},W^{1})$. }
\end{picture}
\end{center}

\end{proof}

At the end of this section, we consider the following algebras, generalizing floriated algebras.

\begin{definition}\label{definition of polygon-tree algebras}
Let $Q_0,Q_1,\dots,Q_N$ be oriented cycles with more than $3$ vertices for each. We define a gluing quiver $Q$ inductively.

Choose two arrows $\alpha_0\in Q_0,\alpha_1\in Q_1$. Define a quiver $Q_{0,1}$ from $Q_0$ and $Q_1$ by identifying $\alpha_0$ and $\alpha_1$.
Choose two arrows $\alpha_{0,1}\in Q_{0,1}, \alpha_2\in Q_2$, where $\alpha_{0,1}$ is different from the glued arrow $\alpha_0=\alpha_1$ in $Q_{0,1}$. Define a quiver
$Q_{0,1,2}$ from $Q_{0,1}$ and $Q_2$ by identifying $\alpha_{0,1}$ and $\alpha_2$.

Inductively, choose two arrows $\alpha_{0,1,\dots,N-1}\in Q_{0,1,\dots,N-1}$ and $\alpha_N\in Q_N$, where $\alpha_{0,1,\dots,N-1}$ is different from the glued arrows $\alpha_0,\alpha_{0,1},\dots,\alpha_{0,1,\dots,N-2}$. Define a quiver $Q=Q_{0,1,\dots,N}$ from $Q_{0,1,\dots,N-1}$ and $Q_N$ by identifying $\alpha_{0,1,\dots,N-1}$ and $\alpha_N$.

The quiver $Q$ is called a polygon-tree quiver, and $Q_0,Q_1,\dots,Q_N$ are called gluing components of $Q$. Define $W=\sum_{i=0}^Nw_i$, where $w_i$ is the non-self-intersecting oriented cycle along $Q_i$ for $0\leq i\leq N$. Then $(Q,W)$ is a QP, and its Jacobian algebra $J(Q,W)$ is called a polygon-tree algebra.
\end{definition}

Let $J(Q,W)$ be a polygon-tree algebra, where the gluing components of $Q$ are $Q_0,Q_1,\dots,Q_N$. In the following, we view these gluing components as subquivers of $Q$ naturally.
We call $Q_i$ and $Q_j$ to be \emph{adjacent} if $i\neq j$ and they have a common arrow in $Q$. For each $Q_i$, we define a full subquiver $Q(i)$ of $Q$ consisting of $Q_i$ and the quivers $Q_j$ which are adjacent to $Q_i$. We also set that $W(i)=\sum_{w\in\cs(Q(i))}w $.

\begin{lemma}
For any polygon-tree quiver $Q$ with gluing components $Q_0,Q_1,\dots,Q_N$, the Jacobian algebra associated to $(Q(i),W(i))$ is a floriated algebra for any $0\leq i\leq N$.
\end{lemma}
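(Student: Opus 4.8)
The plan is to exhibit $Q(i)$, together with the potential $W(i)$, as an instance of the floriated construction given earlier in this section, with $Q_i$ playing the role of the central cycle and the gluing components adjacent to $Q_i$ playing the roles of the petals.

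First I would record the combinatorics forced by Definition~\ref{definition of polygon-tree algebras}. At each step a new cycle $Q_{k+1}$ is glued to the previously built quiver along a single arrow, and that arrow is required to be different from all previously glued arrows. Hence no arrow of $Q$ is ever used for gluing twice, so every arrow of $Q$ lies in at most two gluing components; moreover the arrow $\alpha_{0,1,\dots,k}$ along which $Q_{k+1}$ is attached belongs to exactly one of $Q_0,\dots,Q_k$. It follows that adjacency is symmetric, that two adjacent components share exactly one arrow, and that the gluing graph (one node per component, one edge per adjacency) is a tree.

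Next, fix $i$ and let $Q_{j_1},\dots,Q_{j_r}$ be the components adjacent to $Q_i$, with $\alpha_{j_s}$ the unique arrow shared by $Q_{j_s}$ and $Q_i$. Since each arrow lies in at most two components, the arrows $\alpha_{j_1},\dots,\alpha_{j_r}$ are pairwise distinct arrows of $Q_i$; in particular $r\le m_i$, so the central-cycle length bound $m_0\ge n$ of the floriated setup is automatic. Ordering these arrows around the oriented cycle $Q_i$ and calling their sources $v_{i_1}^0,\dots,v_{i_r}^0$ (with $i_1<\dots<i_r$), the attachment of each petal $Q_{j_s}$ along $v_{i_s}^0\to v_{i_s+1}^0$ reproduces verbatim the identifications defining a floriated quiver. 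Thus $Q(i)$ is the floriated quiver of $(Q_i,\{i_1,\dots,i_r\})$ by $Q_{j_1},\dots,Q_{j_r}$, once the potentials are seen to coincide.

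The remaining, and main, point is to compute $\cs(Q(i))$ and check that $W(i)=w_i+\sum_{s=1}^{r}w_{j_s}$. I would argue that $\cs(Q(i))=\{w_i,w_{j_1},\dots,w_{j_r}\}$, the central cycle and the petal cycles. Each is chordless: a chord of $w_{j_s}$ (or of $w_i$) would be an arrow joining two of its vertices but not lying on it, which cannot occur because the only vertices of $Q_{j_s}$ shared with another component are the two endpoints of $\alpha_{j_s}$, already joined inside $Q_{j_s}$ by that very arrow. Conversely, no other non-self-intersecting oriented cycle exists: distinct components meet only along a single shared arrow with matching orientation, so deleting that arrow from either component leaves parallel oriented paths between its two endpoints; hence any oriented cycle through the shared arrow must close up inside a single component, while the petals---pairwise sharing at most a vertex of $Q_i$ and no arrow---cannot be linked into a new oriented cycle. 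Therefore $W(i)=\sum_{w\in\cs(Q(i))}w=w_i+\sum_{s=1}^{r}w_{j_s}$ is exactly the floriated potential, and $J(Q(i),W(i))$ is the asserted floriated algebra. The delicate step is this orientation-compatibility argument ruling out mixed chordless cycles; matching indices to the floriated template is routine bookkeeping.
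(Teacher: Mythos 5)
Your proposal is correct and is simply a careful unwinding of the definitions, which is exactly what the paper intends: its entire proof of this lemma is the single sentence ``It follows from the definition of the polygon-tree algebra directly.'' The only substantive point your write-up adds is the verification that $\cs(Q(i))$ consists precisely of the component cycles (so that $W(i)$ is the floriated potential); this matches the argument the paper gives later, in the proof of Proposition \ref{schurian of polygon-tree algebras}, that the chordless cycles of a polygon-tree quiver are exactly the non-self-intersecting cycles of its gluing components.
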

\begin{proof}
It follows from the definition of the polygon-tree algebra directly.
\end{proof}

In order to prove that the polygon-tree algebras are finite-dimensional, we need to give the following definitions.
For a quiver $Q$, we denote by $Path(Q)$ the set of all the oriented paths in $Q$.

\begin{definition}[\cite{TV}]
Let $Q$ be a cyclically oriented quiver such that for any arrow $\alpha$, there are at most $2$ shortest paths anti-parallel to $\alpha$. Let $c=\beta_L\dots\beta_1\beta_0$ be an oriented chordless cycle. We construct a sequence of triples $(\alpha_n,\rho_n,\rho_n')\in Q_a\times (Path(Q)\cup\{0\})\times Path(Q)$ for each $n\in\N\cup\{0\}$, such that $\rho_n$ and $\rho_n'$
are the shortest paths anti-parallel to $\alpha_n$ or $\rho_n=0$ and $\rho_n'$ is the shortest path anti-parallel to $\alpha_n$ if there is only one shortest path anti-parallel to $\alpha_n$, in the following way:

\begin{itemize}
\item[Step 0] We denote by $\alpha_0$ the arrow $\beta_0$ and by $\rho_0'$ the shortest path $\beta_L\dots\beta_1$ anti-parallel to the arrow $\alpha_0$ in $c$. If there exists other shortest path $\rho_0$ anti-parallel to $\alpha_0$ different to $\rho_0'$, then the first element of the sequence is $(\alpha_0,\rho_0,\rho_0')$. Otherwise, the sequence is $(\alpha_n,\rho_n,\rho_n')=(\alpha_0,0,\rho_0')$ for all $n$.
\item[Step 1] We denote by $\alpha_1$ the arrow in the path $\rho_0$ such that $t(\alpha_0)=s(\alpha_1)$ and by $\rho_1'$ the shortest path anti-parallel to $\alpha_1$ in $\rho_0\alpha_0$.
If there exists other shortest path $\rho_1$ anti-parallel to $\alpha_1$ different to $\rho_1'$, then the second element of the sequence is $(\alpha_1,\rho_1,\rho_1')$. Otherwise, $(\alpha_n,\rho_n,\rho_n')=(\alpha_1,0,\rho_1')$ for each $n\geq1$.

\item[] {\hspace{5cm}$\vdots$}

\item[Step i] We denote by $\alpha_i$ the arrow in the path $\rho_{i-1}$ such that
$s(\alpha_{i-1})=t(\alpha_{i})$ if $i$ is even or, $t(\alpha_{i-1})=s(\alpha_{i})$ if $i$ is odd, and by $\rho_i'$ the shortest path anti-parallel to $\alpha_i$ in $\rho_{i-1}\alpha_{i-1}$. If there exists other shortest path $\rho_i$ anti-parallel to $\alpha_i$ different to $\rho_i'$, then the $i+1$th element of the sequence is $(\alpha_i,\rho_i,\rho_i')$. Otherwise, $(\alpha_n,\rho_n,\rho_n')=(\alpha_i,0,\rho_i')$ for each $n\geq i$.
\end{itemize}
The sequence $\{(\alpha_n,\rho_n,\rho_n')\}_{\N\cup\{0\}}$ is called the cyclic sequence of $c$. We say that the cyclic sequence $\{(\alpha_n,\rho_n,\rho_n')\}_{\N\cup\{0\}}$ is finite if there exists $m\in\N$ such that
$(\alpha_n,\rho_n,\rho_n')=(\alpha_m,0,\rho_m')$ for every $n\geq m$.
\end{definition}

\begin{lemma}[\cite{TV}]\label{lemma finite dimension of simplex gluing Nakayama algebra}
Let $Q$ be a cyclically oriented quiver such that:

(a) for any arrow $\alpha$, there are at most $2$ shortest paths anti-parallel to $\alpha$;

(b) the cyclic sequence of any oriented chordless cycle $c$ is finite.

\noindent If $W$ is a primitive potential of $Q$, then the Jacobian algebra $J(Q,W)$ is a finite-dimensional algebra.
\end{lemma}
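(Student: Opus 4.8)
The plan is to prove finite‑dimensionality by bounding the lengths of paths that are nonzero in $J(Q,W)$. Since $J(Q,W)=\widehat{KQ}/J(W)$, it suffices to produce an integer $N$ with $\mathfrak m^{N}\subseteq J(W)$, i.e. to show that every oriented path of length $\ge N$ vanishes in $J(Q,W)$; then $J(Q,W)$ is a quotient of the finite‑dimensional algebra $\widehat{KQ}/\mathfrak m^{N}$. First I would read off the relations imposed by a primitive potential. Writing $W=\sum_{c\in\cs(Q)}\lambda_c\,c$ with all $\lambda_c\neq 0$, for every arrow $\alpha$ one has $\partial_\alpha W=\sum_{c\ni\alpha}\lambda_c\,\partial_\alpha c$, and since each chordless cycle is non‑self‑intersecting, $\partial_\alpha c$ is exactly the shortest path anti‑parallel to $\alpha$ determined by $c$. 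Hypothesis (a) then says $\partial_\alpha W$ is a linear combination, with nonzero coefficients, of at most two such paths. Hence in $J(Q,W)$: if $\alpha$ admits a unique shortest anti‑parallel path $\rho$, then $\rho=0$; if it admits two, $\rho$ and $\rho'$, then $\rho$ and $\rho'$ are nonzero scalar multiples of each other.

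The core step is to show that every chordless cycle, written as a based closed path, vanishes in $J(Q,W)$; here the cyclic sequence is exactly the bookkeeping that makes this work. Fix a chordless cycle $c=\beta_L\cdots\beta_0$ with cyclic sequence $(\alpha_n,\rho_n,\rho_n')$, so that $c=\rho_0'\alpha_0$ and $\rho_0'$ is the arc of $c$ anti‑parallel to $\alpha_0=\beta_0$. At each step $n$ before termination the arrow $\alpha_n$ has two shortest anti‑parallel paths, so the two‑term relation gives $\rho_n'=u_n\rho_n$ for some nonzero scalar $u_n$; the prescription defining $\alpha_{n+1}$ and $\rho_{n+1}'$ (as the arc of $\rho_n\alpha_n$ anti‑parallel to $\alpha_{n+1}$, with the alternating source/target condition) is precisely what lets these relations be composed as genuine identities of based paths, rewriting $c$ step by step. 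Hypothesis (b) guarantees a terminal index $m$ at which $\alpha_m$ has a unique anti‑parallel path, so that $\rho_m'=0$. The composed rewriting then expresses $c$ as a nonzero scalar multiple of a path having $\rho_m'$ as a consecutive factor; since $\rho_m'=0$, we conclude $c=0$ in $J(Q,W)$. Thus every based chordless cycle vanishes.

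Finally I would convert this into a length bound. As $Q$ is finite, any path of length greater than $|Q_a|$ repeats an arrow and hence has a consecutive closed sub‑path $w$, so it suffices to show that every oriented closed path contains, as a consecutive subpath, one of the paths already shown to vanish: a unique anti‑parallel arc, or a based chordless cycle. I would prove this by induction on the length of $w$. If $w$ is self‑intersecting it factors through a strictly shorter closed path, to which induction applies. If $w$ is non‑self‑intersecting and chordless, it is a based chordless cycle, hence zero. If $w$ is non‑self‑intersecting with a chord $\gamma$, then $\gamma$ together with the complementary arc $\pi$ of $w$ forms a shorter oriented cycle; by induction it contains a vanishing consecutive subpath, and if that subpath lies inside $\pi$ it is also a subpath of $w$ and we are done, while if it is forced to involve $\gamma$ one uses the cyclically‑oriented hypothesis together with (a) to see that the arc $\pi$---a subpath of $w$---is itself a unique anti‑parallel arc and therefore vanishes. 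Choosing $N$ slightly larger than $|Q_a|$ (enlarged if needed to accommodate the induction) gives $\mathfrak m^{N}\subseteq J(W)$, so $\dim_K J(Q,W)<\infty$.

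The genuinely delicate point is this last passage from ``chordless cycles vanish'' to a uniform length bound, since one must control how an oriented path can wind through overlapping cycles; hypothesis (a) bounds the overlaps and (b) ensures each chord‑reduction eventually meets an arrow with a unique anti‑parallel path. The self‑contained and clean part is the cyclic‑sequence computation, where finiteness of the sequence is exactly what guarantees that the chain of two‑term relations terminates at the one‑term relation $\rho_m'=0$; the main subtlety there is tracking the basepoint correctly, which is precisely what the alternating source/target prescription in the definition of the cyclic sequence is designed to handle. I would also check the routine completion issues: that the finitely many surviving paths span $J(Q,W)$ and that the closed Jacobian ideal $J(W)$ contains $\mathfrak m^{N}$.
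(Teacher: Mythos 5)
A preliminary remark: the paper offers no proof of this lemma at all---it is imported from [TV]---so there is nothing internal to compare against; I am judging your proposal on its own terms. Your first two steps are essentially right. Since every chordless cycle of a cyclically oriented quiver is oriented and appears in the primitive potential with a nonzero coefficient, $\partial_\alpha W$ is a nonzero combination of the shortest paths anti-parallel to $\alpha$, so (a) gives exactly the dichotomy you state; and the cyclic-sequence computation does kill every based chordless cycle. The basepoint issue you flag does work out: the alternating source/target rule keeps the single vertex $t(\alpha_0)=s(\alpha_1)=t(\alpha_2)=\cdots$ on all the successive chordless cycles $\rho_n\alpha_n$, so each two-term relation can be multiplied by $\alpha_n$ on the appropriate side without leaving that basepoint, and termination of the sequence (hypothesis (b)) delivers the factor $\rho_m'=0$.

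The gap is in your last step, exactly where you place the delicacy. For a non-self-intersecting oriented cycle $w$ with a chord $\gamma$, you claim that when the vanishing subpath of the shorter cycle $B\gamma$ involves $\gamma$, the complementary arc $B$ of $w$ ``is itself a unique anti-parallel arc and therefore vanishes.'' This is false in general: $B$ is a shortest path anti-parallel to $\gamma$ only when $B\gamma$ is chordless, and even then hypothesis (a) allows a second shortest anti-parallel path, in which case $B$ is merely proportional to it, not zero. A concrete instance inside the very class this paper studies: take the floriated quiver with central triangle $1\to2\to3\to1$ and triangular petals on all three edges (vertices $4,5,6$ with arrows $2\to4$, $4\to1$, $3\to5$, $5\to2$, $1\to6$, $6\to3$); it is cyclically oriented and satisfies (a) and (b). Then $w\colon 4\to1\to6\to3\to5\to2\to4$ is a non-self-intersecting oriented $6$-cycle whose chords are $1\to2$, $2\to3$, $3\to1$. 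For the chord $\gamma=3\to1$ the arc $B\colon 1\to6\to3$ is one of \emph{two} shortest paths anti-parallel to $\gamma$ (the other being $1\to2\to3$), so it does not vanish, and replacing $B$ by its proportional partner yields a closed path of the \emph{same} length, so your induction on length does not advance; the same happens for the other two chords. (Here $w$ does vanish, but only after one observes that the substituted path $4\to1\to2\to3\to5\to2\to4$ is self-intersecting, or that it contains the zero arc $4\to1\to2$.) So the passage from ``chordless cycles vanish'' to a uniform bound on path lengths needs a different inductive measure or case analysis than the one you describe; as written the argument does not close.
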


\begin{proposition}\label{schurian of polygon-tree algebras}
Let $A=J(Q,W)$ be a polygon-tree algebra. Then $A$ is a finite-dimensional algebra. Moreover, $A$ has the property that $_AS_i$ is not a submodule of the radical of $P_i$ for any vertex $i\in Q$.
\end{proposition}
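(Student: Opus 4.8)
The plan is to prove the two assertions separately, using the structural description of polygon-tree algebras and the finiteness criterion provided by Lemma \ref{lemma finite dimension of simplex gluing Nakayama algebra}.

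For finite-dimensionality, the strategy is to verify that the polygon-tree quiver $Q$ together with its primitive potential $W$ satisfies the two hypotheses (a) and (b) of Lemma \ref{lemma finite dimension of simplex gluing Nakayama algebra}, after observing that $Q$ is cyclically oriented (the gluing construction introduces no loops, no oriented $2$-cycles, and each chordless cycle is one of the $w_i$, which are oriented by construction). For hypothesis (a), I would argue that because each arrow of $Q$ belongs either to a single gluing component $Q_i$ or to at most one shared (glued) arrow between two adjacent components, an arrow $\alpha$ can have at most two shortest paths anti-parallel to it: at most one going around each of the (at most two) chordless cycles through $\alpha$. This is where the restriction to gluing along \emph{arrows} (rather than longer overlaps) and the fact that each component has at least three vertices is used. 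For hypothesis (b), I would analyze the cyclic sequence of a chordless cycle $c=w_i$: each step passes from a cycle $Q_i$ to an adjacent cycle $Q_j$ through a glued arrow, and since the gluing is \emph{tree-like} (Definition \ref{definition of polygon-tree algebras} glues each new component to a previously-unused arrow, so the adjacency graph of the components is a tree with no cycles), the cyclic sequence cannot loop back indefinitely and must terminate. Thus the sequence is finite, and Lemma \ref{lemma finite dimension of simplex gluing Nakayama algebra} gives that $A=J(Q,W)$ is finite-dimensional.

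For the second assertion, that $_AS_i$ is not a submodule of the radical of $P_i$ for each vertex $i$, I would translate the statement: $S_i \hookrightarrow \rad P_i$ would force a nonzero path $p$ from $i$ to $i$ (a cyclic element of the Jacobian algebra supported at $i$) lying in $\rad P_i$, equivalently a nonzero oriented cycle at $i$ in $J(Q,W)$. So it suffices to show that no nonzero oriented cycle based at a vertex $i$ survives in the Jacobian algebra. Here I would use that the potential $W$ is primitive, so the Jacobian relations $\partial_\alpha W$ identify, for each arrow $\alpha$ lying on a chordless cycle, the complementary path around that cycle with a combination of paths around the (at most one) other chordless cycle through $\alpha$; iterating these relations shows any cyclic path at $i$ is equivalent to a scalar multiple of the full cycle around a gluing component, which is itself in the Jacobian ideal (it equals $\partial_\beta W$ up to the relevant arrow, hence zero in $J(Q,W)$). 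Thus every oriented cycle at $i$ vanishes in $A$, giving $\soc(P_i)$ no copy of $S_i$ inside $\rad P_i$.

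The main obstacle I expect is the careful bookkeeping in verifying hypothesis (a) of Lemma \ref{lemma finite dimension of simplex gluing Nakayama algebra} and, relatedly, the vanishing of cyclic paths: one must rule out configurations where a vertex $i$ is shared among several gluing components in such a way that an arrow acquires a third anti-parallel shortest path, or where composing partial cycles around two adjacent polygons produces a surviving nonzero cycle at $i$. The tree-like nature of the gluing (each new polygon attached along a single, previously-unglued arrow) is precisely what prevents such accumulation, so the crux of the argument is to formalize this tree structure of the adjacency of gluing components and use it to bound both the number of anti-parallel shortest paths and the length of cyclic sequences. Once this combinatorial control is in place, both conclusions follow from the cited results and the primitivity of $W$.
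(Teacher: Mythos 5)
Your proposal follows essentially the same route as the paper: both verify that the polygon-tree quiver is cyclically oriented with the $Q_i$ as its only chordless cycles, check the two hypotheses of the finiteness criterion of Trepode--Valdivieso-D\'{i}az via the tree-like adjacency of the gluing components, and deduce the second assertion from the vanishing of oriented cycles in the Jacobian algebra (which the paper extracts from the proof of that cited lemma rather than re-deriving from the Jacobian relations as you sketch). The argument is correct and matches the paper's proof in substance.
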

\begin{proof}
Let $Q_0,Q_1,\dots,Q_N$ be the gluing components. Assume that $Q_i$ has $m_i$ vertices for $0\leq i\leq N$.

First, we prove that $Q$ is cyclically oriented by induction on $N$. If $N=0$, there is nothing to prove.
We assume that $Q_{0,\dots,N-1}$ is cyclically oriented, where $Q_{0,\dots,N-1}$ is a subquiver of $Q$ formed by $Q_0,\dots,Q_{N-1}$.
Then $Q$ is as the following diagram shows.

\setlength{\unitlength}{0.9mm}
\begin{center}
\begin{picture}(20,60)(0,-10)

\put(0,0){\circle*{1.5}}
\put(0,0){\vector(0,1){14}}
\put(0,15){\circle*{1.5}}
\put(0,15){\vector(1,0){14}}

\put(15,15){\circle*{1.5}}

\put(15,15){\vector(0,-1){14}}
\put(15,0){\circle*{1.5}}

\put(15,15){\vector(0,1){14}}
\put(15,30){\circle*{1.5}}

\put(0,30){\circle*{1.5}}
\put(0,30){\vector(0,-1){14}}

\multiput(0.5,0)(1,0){14}{\circle*{0.3}}
\multiput(0.5,30)(1,0){14}{\circle*{0.3}}
\qbezier[34](0,15)(-20,30)(0,45)

\qbezier[34](15,15)(35,30)(15,45)
\qbezier[14](0,45)(7.5,48)(15,45)

\put(5,5){$Q_N$}

\put(0,35){$Q_{0,\dots,N-1}$}

\put(-8,8){$_{\gamma_{m_N}}$}
\put(16,8){$_{\gamma_{1}}$}
\put(6,15){$^{\gamma_0}$}
\put(-10,-8){Figure 4. The quiver $Q$.}
\end{picture}
\end{center}

For any chordless cycle $c$ of $Q$, if $c$ lies in the subquiver $Q_{0,\dots,N-1}$, then $c$ is oriented by our inductive assumption. Otherwise, if $c$ does not lie in the subquiver $Q_{0,\dots, N-1}$, then it is easy to see that $\gamma_{m_N}\dots \gamma_1$ is a subpath of $c$ since $c$ is a cycle. If $c$ is not equivalent to $\gamma_{m_N}\dots \gamma_1\gamma_0$, then $\gamma_0$ is a chord of $c$, which is a contradiction. So $c$ is equivalent to $\gamma_{m_N}\dots \gamma_1\gamma_0$, which is oriented. Thus, $Q$ is cyclically oriented.

Similar to the above, we also can get that the oriented chordless cycles are precisely the non-self-intersecting cycles $Q_i$ for $0\leq i\leq N$. So $W$ is a primitive potential.
By the construction of $Q$, it is easy to see that for any arrow $\alpha$, there are at most $2$ shortest paths anti-parallel to $\alpha$.

It is also easy to get that the cyclic sequence of any oriented chordless cycle $c$ is finite from the ``tree-like'' nature of the polygon-tree quiver, we omit the proof here. Then Lemma \ref{lemma finite dimension of simplex gluing Nakayama algebra} implies that $A$ is finite-dimensional. Furthermore, by the proof of Lemma \ref{lemma finite dimension of simplex gluing Nakayama algebra} in \cite{TV}, any non-self-intersecting oriented cycle $c$ is zero in $A$. So $A$ has the property that $_AS_i$ is not a submodule of the radical of $P_i$ for any vertex $i\in Q$.
\end{proof}

\begin{corollary}
Let $A$ be a polygon-tree algebra. Then $A$ is a $2$-CY-tilted algebra. In particular, $A$ is a Gorenstein algebra of dimension at most $1$.
\end{corollary}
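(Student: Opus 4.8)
The plan is to read this corollary off as a formal consequence of the finite-dimensionality just established together with two structural results already recalled in the text. The only genuine work — proving that $A=J(Q,W)$ is finite-dimensional — has already been carried out in Proposition \ref{schurian of polygon-tree algebras}, so the proof here should be short.

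First I would invoke Proposition \ref{schurian of polygon-tree algebras}, which tells us that for a polygon-tree algebra $A=J(Q,W)$ the quiver $Q$ is cyclically oriented, $W$ is a primitive potential, and the Jacobian algebra $A$ is finite-dimensional. Thus $A$ is, in particular, a \emph{finite-dimensional} Jacobian algebra of a QP.

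Next I would apply Amiot's theorem, stated as \cite[Corollary 3.7]{Am} in the subsection on mutation of cluster-tilting objects: every finite-dimensional Jacobian algebra is realized as the endomorphism algebra $\End_\cc(T)$ of a cluster-tilting object $T$ in a Hom-finite triangulated $2$-CY category $\cc$, hence is a $2$-CY-tilted algebra. This yields the first assertion immediately.

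Finally, for the ``in particular'' statement I would cite the result of Keller and Reiten \cite{KR1}, recalled in the introduction, that every $2$-CY-tilted algebra is Gorenstein of Gorenstein dimension at most one (equivalently $\Gd A\le 1$, so that $\ind{}_A A=\ind A_A\le 1$). Combining the two steps gives that $A$ is a $2$-CY-tilted algebra which is Gorenstein of dimension at most one, as claimed. There is no real obstacle in this argument: once finite-dimensionality is in hand from the preceding proposition, the corollary is purely an application of the two cited black-box theorems, and no additional computation with $Q$, $W$, or the relations $\partial_a W$ is required.
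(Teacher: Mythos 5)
Your proposal is correct and matches the paper's own argument: finite-dimensionality from Proposition \ref{schurian of polygon-tree algebras}, then \cite[Corollary 3.7]{Am} for the $2$-CY-tilted property, and the Keller--Reiten result (the paper cites \cite[Proposition 2.1]{KR1}, see also \cite[Theorem 4.4]{KZ}) for Gorenstein dimension at most one. No differences worth noting.
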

\begin{proof}
This is obvious from \cite[Corollary 3.7]{Am} since $A$ is a finite-dimensional Jacobian algebra. The last statement follows from \cite[Proposition 2.1]{KR1} immediately, see also \cite[Theorem 4.4]{KZ}.
\end{proof}

At the end of this section, we prove that all polygon-tree algebras are schurian algebras.

\begin{definition}\label{definition of floriated algebra}
Let $A=KQ/I$ be a floriated algebra of $(Q_0,\{i_1,\dots,i_n\})$ by $Q_1,\dots,Q_n$. Define $d_1,d_2,\dots,d_n$ to be
$$d_1=i_2-i_1,d_2=i_3-i_2,\dots,d_n=i_1+m_0-i_n,$$
and define $d_Q$ to be $\sharp\{d_i|d_i=1,i=1,\dots,n\}$. $d_i$ is called the distance from $Q_i$ to $Q_{i+1}$.
\end{definition}

\begin{theorem}\label{lemma schurian algebra of simple polygon-tree algebras}
Let $A=KQ/I$ be a polygon-tree algebra. Then $A$ is a schurian algebra.
\end{theorem}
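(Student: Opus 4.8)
The plan is to verify the defining inequality $\dim_k\Hom_A(P_i,P_j)\le 1$ directly, in the equivalent form that for any two vertices $u,v$ of $Q$ there is at most one linearly independent directed path from $u$ to $v$ in $A$. First I would record the Jacobian relations explicitly. By Proposition \ref{schurian of polygon-tree algebras} the oriented chordless cycles of $Q$ are exactly the gluing components $Q_0,\dots,Q_N$, and $W=\sum_{i=0}^N w_i$ is primitive; moreover, by the gluing in Definition \ref{definition of polygon-tree algebras}, every arrow lies in \emph{at most two} of these cycles. Computing $\partial_\alpha W$ then gives two kinds of relations: if $\alpha$ lies in a single component $Q_i$, its cyclic derivative is the complementary arc $p$ with $w_i$ cyclically equivalent to $\alpha p$, so $p=0$ (call this (R1)); if $\alpha$ is the arrow shared by two components $Q_i,Q_j$, then $\partial_\alpha W=p_i+p_j$, where $p_i,p_j$ are the two complementary arcs from $t(\alpha)$ to $s(\alpha)$, so $p_i=-p_j$ (call this (R2)). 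I would also invoke, again from Proposition \ref{schurian of polygon-tree algebras}, that every non-self-intersecting oriented cycle is zero in $A$.

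The first real step is to reduce to non-self-intersecting (simple) paths. I would argue that any directed path revisiting a vertex must traverse a full chordless cycle and hence vanishes in $A$: because $Q$ is a tree of polygons, its dual graph (components as nodes, shared arrows as edges) is a tree, so every directed closed walk is a concatenation of polygon boundaries; a self-intersecting directed path therefore contains some $w_i$ as a subpath, which is $0$. Consequently $\Hom_A(P_j,P_i)$ is spanned by the classes of the non-self-intersecting directed paths from $u$ to $v$.

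The combinatorial heart is then to classify these simple directed paths. I would prove that between two vertices there are at most two of them, and exactly two precisely when $\{u,v\}$ is the tail--head pair $\{b,a\}$ of a shared arrow $\alpha\colon a\to b$, in which case the two paths are the complementary arcs $p_i,p_j$ running around the two components $Q_i,Q_j$ sharing $\alpha$. The mechanism is that the only branch vertices of $Q$ are the targets $b$ of shared arrows, and the two arcs leaving such a $b$ into the two adjacent components reconverge only at the common source $a$; any attempt to prolong such a pair of arcs past $a$ is forced (the unique arrow out of $a$ is $\alpha$) to return through $b$ and hence to revisit a vertex, producing a self-intersecting, and thus zero, path. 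Granting this classification, the conclusion is immediate: a lone complementary arc of an unshared arrow is killed by (R1); a pair of arcs around a shared arrow is identified up to sign by (R2); and in every remaining case there is at most one simple directed path. Hence $\dim_k\Hom_A(P_i,P_j)\le 1$ for all $i,j$, i.e.\ $A$ is schurian.

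I expect the classification of simple directed paths to be the main obstacle, and I would organize it as an induction on the number $N$ of gluing components, peeling off a leaf component $Q_N$ glued along an arrow $\alpha\colon a\to b$ that, by the gluing rules, is unshared in the smaller quiver $Q'=Q_{0,\dots,N-1}$. The delicate point is that a relation $p_\ell=0$ holding in $J(Q',W')$ becomes $p_\ell=-p_N$ in $A$, so a Hom-space that was zero over the smaller algebra can acquire dimension one after gluing; the induction must therefore bound dimensions by $1$ \emph{directly} rather than by comparison with $J(Q',W')$, using (R2) to see that the new arc $p_N$ is never independent of the arc $p_\ell$ against which it is glued. Verifying that every path visiting the new vertices traverses the whole new arc $p_N$ (entered only at $b$, left only at $a$) and so collapses, via (R2), onto a path of $Q'$ is the key bookkeeping that makes the induction go through.
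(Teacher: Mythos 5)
Your reduction to non-self-intersecting paths is sound (a path with a repeated vertex contains a non-self-intersecting oriented cycle, and Proposition \ref{schurian of polygon-tree algebras} makes those vanish), and your description of the Jacobian relations (R1)/(R2) is correct. The gap is in the classification lemma on which everything else rests: it is \emph{false} that two vertices admit two simple directed paths only when they are the head and tail of a shared arrow. Take $Q_0\colon 1\to 2\to 3\to 1$, glue a triangle $P_1\colon 1\to 2\to u\to 1$ along $1\to 2$, and glue a triangle $P_2\colon 2\to u\to v\to 2$ along the arrow $2\to u$ of $P_1$ (allowed by Definition \ref{definition of polygon-tree algebras}). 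Then $u\to 1\to 2\to 3$ and $u\to v\to 2\to 3$ are two distinct non-self-intersecting paths from $u$ to $3$, although $\{u,3\}$ is not the tail--head pair of any arrow. The mechanism you offer for the classification fails at exactly this point: you assert that at the reconvergence vertex $a$ the unique outgoing arrow is $\alpha$, but $a$ may lie in further gluing components (here $a=2$ is also the target of the shared arrow $1\to 2$ and has the outgoing arrow $2\to 3$ in $Q_0$), so the pair of arcs can be prolonged past $a$ without revisiting a vertex. Consequently your final step ``in every remaining case there is at most one simple directed path'' does not hold, and what actually has to be proved is that \emph{whenever} two simple paths coexist they are forced to be proportional by combining (R2) with (R1) and the vanishing of oriented cycles; that is the nontrivial content, and the proposal does not supply it.

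For comparison, the paper proves exactly this dependence statement by a different organization: induction on the number of gluing components, peeling off a leaf component $Q_1$ glued to $Q_0$ and comparing $A$ with the polygon-tree algebra $U=KQ'/I'$ on the remaining quiver $Q'$. The key identity is $I'=\langle I'',\xi\rangle$, where $I''$ cuts out the full subalgebra of $A$ on $Q'$ and $\xi$ is the complementary arc of the glued arrow; linear dependence of two paths in the (inductively schurian) algebra $U$ is then lifted to $A$ by a case analysis tracking when the correction term involving $\xi$ survives. Your closing paragraph correctly identifies this as the delicate point (a relation $p_\ell=0$ over $Q'$ becoming $p_\ell=-p_N$ over $Q$), but because the inductive scheme you sketch is built on the false classification, the proof as written is incomplete.
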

\begin{proof}
We denote the gluing components of the polygon-tree quiver $Q$ by $Q_0$, $Q_1$, $\dots$, $Q_N$, and prove the statement by induction on $N$.

For any polygon-tree quiver with only one gluing component, the result is obvious.

We assume that the result holds for any polygon-tree quiver with no more than $N$ gluing components.

For $Q$ with $N+1$ gluing components, renumbering $Q_0,Q_1,\dots,Q_N$, we can assume that $Q_1$ has only one adjacent quiver, which is denoted by $Q_0$, and $Q(0)$ is the floriated quiver of $(Q_0,\{i_1,\dots,i_n\})$ by $Q_1,\dots,Q_n$. We construct a graph $T(Q)$ from $Q$ with vertices the gluing components $Q_0,Q_1,\dots, Q_N$. If $Q_i$ and $Q_j$ are adjacent, we add an edge between them. Then $T(Q)$ is a tree.
We can view $Q_0$ as the root, so that the other vertices form $n$ branches. We order the branches in such a way that the $i$-th branch is that containing $Q_i$.

The proof can be broken up into the following four cases.

{\bf Case (1)}: $Q(0)$ satisfies $d_1=i_2-i_1>1$ and $d_n=i_1+m_0-i_n>1$. We assume that $Q$ is as Figure 6.1 shows.

{\bf Case (2)}: $Q(0)$ satisfies $d_1=i_2-i_1=1$ and $d_n=i_1+m_0-i_n>1$. We assume that $Q$ is as Figure 7.1 shows.

{\bf Case (3)}: $Q(0)$ satisfies $d_1=i_2-i_1>1$ and $d_n=i_1+m_0-i_n=1$. It follows immediately by considering Case (2) in $A^{op}$ which is also a polygon-tree algebra.

{\bf Case (4)}: $Q(0)$ satisfies $d_1=i_2-i_1=1$ and $d_n=i_1+m_0-i_n=1$. We assume that $Q$ is as Figure 8.1 shows.

We give the proof only in Case (4), since the others are similar.

In Case (4), as Figure 8.1 shows, let $U=KQ'/I'$ be the polygon-tree algebra where $Q'$ is the polygon-tree subquiver of $Q$ with gluing components $Q_0,Q_2,\dots,Q_N$. For simplicity, we denote by $i$ the vertex $v^0_i$ in $Q_0$ for each vertex $v_i^0$. Then $U$ is a schurian algebra by the inductive assumption and it is a quotient algebra of $A$. Let $V=KQ'/I''$ be the full subalgebra of $A$ on the quiver $Q'$. Then $I'=\langle I'',  \xi\rangle$, where $\xi$ is the path $2\xrightarrow{\gamma_2} 3\xrightarrow{\gamma_3}\cdots\xrightarrow{\gamma_{m_0-1}} m_0\xrightarrow{\gamma_{m_0}} 1$ located in $Q_0$.

Let $a,b\in Q$ be two vertices. For any path $\alpha$ between them in $Q$, if there is an oriented cycle appearing as a subpath of $\alpha$, then $\alpha\in I$ by Proposition \ref{schurian of polygon-tree algebras}. So we only need to check that $\dim e_b A e_a\leq 1$ for $a\neq b$. The proof can be broken up into the following four cases.

{\bf Case (4a)}: $a,b\in Q_1$.

If $a=1$ and $b=2$, then there is an arrow $\gamma_1$ from $1$ to $2$. For any other path $\alpha$ from $1$ to $2$ different to $\gamma_1$, since every oriented cycle is in $I$, we assume that $\alpha$ is in $Q'$. Because $U=KQ'/I'$ is a schurian algebra, there exist $k_1,k_2\in K$, not all zero, such that $k_1\alpha+k_2\gamma_1\in I'$, which yields that $\alpha\in I'$. So there exist $p_1\in I''$ and $p_2\in e_2KQ'\xi KQ'e_1$ such that
$\alpha=p_1+p_2$. Obviously, $p_1\in I$. Since $p_2$ passes through an oriented cycle, we get that $p_2\in I$, and then $\alpha\in I$. Therefore, there is only one path $\gamma_1$ from $1$ to $2$ which is not in $I$, and then $\dim e_2 A e_1=1$.

For any path $\alpha$ from $a=2$ to $b=1$ in $Q$, if $\alpha$ is not located in $Q'$, then it is not hard to see that the path $\eta:2\rightarrow v_3^1\rightarrow \cdots \rightarrow v_{m_1}^1\rightarrow1$ is a subpath of $\alpha$. Since $\eta-\xi\in I$, after replacing all $\eta$ with $\xi$ in $\alpha$, we can assume that $\alpha$ is in $Q'$.
Because $Q'$ is a polygon-tree quiver, we get that $\alpha$ passes through the vertices $3,4,\dots, m_0$ in $Q_0$, and then $\alpha=\alpha_{m_0}\cdots\alpha_3\alpha_2$ such that $i+1$ is the end point of $\alpha_i$, and the starting point of $\alpha_{i+1}$ for any $2\leq i\leq m_0-1$. For any $2\leq i\leq m_0$, there is an arrow $\gamma_i$ from $i$ to $i+1$ in $Q'$ (where $m_0+1=1$), similar to the above, if $\alpha_i$ is different to $\gamma_i$, then we get that $\alpha_i\in I$, and then $\alpha\in I$. So if $\alpha\notin I$, then $\alpha=\xi$, which implies that $\dim e_1A e_2=1$.

For both of $a=v_i^1$ and $b=v_j^1$ not in $\{1,2\}$, if $3\leq i< j\leq m_1$, then there is only one path from $a$ to $b$ which does not go through any oriented cycle. So $\dim e_b A e_a\leq1$.
If $3\leq j< i\leq m_1$, then any path $\alpha$ from $a$ to $b$ passes through the vertices $1$ and $2$. So $\alpha=\beta_3\beta_2\beta_1$, where $\beta_1$ is the path $a=v_i^1\rightarrow \cdots\rightarrow v_{m_1}^1\rightarrow 1$, and $\beta_3$ is the path $2\rightarrow \cdots \rightarrow v_{j}^1=b$ in $Q_1$. We can assume that $\beta_2$ is a path from $1$ to $2$ in $Q'$. From the above, if $\beta_2\neq \gamma_1$, then $\beta_2\in I$. So if $\alpha\notin I$, then $\alpha=\beta_3\gamma_1\beta_1$, which implies that $\dim e_bA e_a\leq1$.

For $a=1$ and $b=v_j^1\notin \{1,2\}$, then any path $\alpha$ from $1$ to $b$ is of form
$\beta_2\beta_1$, where $\beta_1$ is a path from $1$ to $2$, and $\beta_2$ is the path
$2\rightarrow v_3^1\rightarrow\cdots\rightarrow v_j^1$ in $Q_1$.
From the above, we get that if $\alpha\notin I$, then $\beta_1=\gamma_1$, so $\alpha=\beta_2\gamma_1$, which implies that $\dim e_bAe_1\leq 1$.

For $a=2$ and $b=v_j^1\notin\{1,2\}$, there is only one path $\beta_2=2\rightarrow v_3^1\rightarrow\cdots\rightarrow v_j^1$ from $a=2$ to $b$ which does not pass through any oriented cycle. So $\dim e_bAe_2\leq 1$.

For $a=v_i^1\notin \{1,2\}$ and $b\in\{1,2\}$, by considering the polygon-tree algebras $A^{op}$, we can get that $\dim e_b A e_a\leq1$ from the above.

{\bf Case (4b)}: $a=v_i^1\in Q_1,b\in Q\setminus Q_1$. If $a\neq 2$, then any path $\alpha$ in $Q$ from $a=v_i^1$ to $b$ is of form $a=v_i^1\rightarrow v_{i+1}^1\rightarrow\cdots \rightarrow v_{m_1}^1\rightarrow 1\rightarrow \cdots \rightarrow b$. Denote by $\beta$ the path $a=v_i^1\rightarrow v_{i+1}^1\rightarrow\cdots \rightarrow v_{m_1}^1\rightarrow 1$.

For any two paths $\alpha_1,\alpha_2$ from $a$ to $b$ in $Q$, there are two paths $\delta_1,\delta_2$ from $1$ to $b$ in $Q$ such that $\alpha_i=\delta_i\beta$. We need to check that $\alpha_1,\alpha_2$ are linearly dependent in $A$. Since every oriented cycle is in $I$, we only need prove for the case: $\delta_1$ and $\delta_2$ are in $Q'$. Because $U=KQ'/I'$ is a schurian algebra, there exist $k_1,k_2\in K$, not all zero, such that $k_1\delta_1+k_2\delta_2\in I'$.
Since $I'=\langle I'',  \xi\rangle$, there exist $p_1\in I''$ and $p_2\in e_bKQ'\xi KQ'e_1$ such that $k_1\delta_1+k_2\delta_2=p_1+p_2$, and then
$k_1\alpha_1+k_2\alpha_2= p_1\beta+p_2\beta$. It is easy to see that $p_1\beta\in I$. Since each oriented cycle is in $I$, we get that $p_2\beta\in I$ and then $k_1\alpha_1+k_2\alpha_2\in I$. Therefore, $\dim e_b A e_a\leq1$.

If $a=2$, for any two paths $\alpha_1,\alpha_2$ from $2$ to $b$, then we need to check that they are linearly dependent in $A$. We also only need to prove it for the case : $\alpha_1$ and $\alpha_2$ are in $Q'$, and both of them do not go through any oriented cycles.
Since $U$ is a schurian algebra, we get that there exist $k_1,k_2\in K$, not all zero, such that $k_1\alpha_1+k_2\alpha_2\in I'$. Since $I'=\langle I'',  \xi\rangle$, there exist $q_1\in I''$ and $q_2\in e_bKQ'\xi KQ'e_2$ such that $k_1\alpha_1+k_2\alpha_2=q_1+q_2$. Obviously, $q_1\in I$.

If $q_2\in I''$, then $\alpha_1$ and $\alpha_2$ are linearly dependent in $V$ and then in $A$.

If $q_2\notin I''$, then there exists at least one path $l=l_2\xi l_1$ from $a=2$ to $b$
in $Q'$, where $l_1$ is of the form $2\rightarrow\cdots \rightarrow 2$, and $l_2$ is of the form $1\rightarrow \cdots \rightarrow b$, such that $l\notin I''$. So $l_1=e_2$. Note that $l_2$ does not pass through $\gamma_1$. In fact, otherwise, $l$ passes through an oriented cycle, and then $l\in I''$, contradicts. If $b$ is not located in an oriented cycle lying in the $n$-th branch, then the subpath $l_2$ passes through the vertex $m_0$ since $Q$ is a polygon-tree quiver, which yields that $l\in I''$ since every oriented cycle in $Q'$ is in $I''$, contradicts. So $b$ is located in an oriented cycle lying in the $n$-th branch and $b\neq m_0$.

Since $\alpha_1$ is a path from $2$ to $b$ in $Q'$ and $Q'$ is a polygon-tree quiver, $\alpha_1$ passes through the vertex $m_0$. Then $\alpha_1=\beta_2\beta_1$, where $\beta_1$ is a path from $2$ to $m_0$, $\beta_2$ is a path from $m_0$ to $b$. Since $\beta_1$ and $\gamma_{m_0-1}\cdots \gamma_2$ are two paths from $2$ to $m_0$ in $Q'$, they are linear dependent in $U$. Similar to the discussion in the above, we get that $\beta_1$ and $\gamma_{m_0-1}\cdots \gamma_2$ are linearly dependent in $V$. Since $\beta_2$ and the subpath of $l$: $l_2\gamma_{m_0}$ are two paths from $m_0$ to $b$ in $Q'$, we get that they are linearly dependent in $U$. Similar to the discussion in the above, we can get that they are linearly dependent in $V$. Therefore, $\alpha_1$ and $l$ are linearly dependent in $A$. Similarly, $\alpha_2$ and $l$ are linearly dependent in $A$, and then $\alpha_1$ and $\alpha_2$ are linearly dependent in $A$.

{\bf Case (4c)}: $a\in Q\setminus Q_1, b=v_i^1\in  Q_1$. It follows immediately by considering Case (4b) in the polygon-tree algebra $A^{op}$.

{\bf Case (4d)}: $a,b\in Q\setminus Q_1$. For any path $\alpha$ from $a$ to $b$ in $Q$, similarly, we can assume that $\alpha$ is in $Q'$.

For any two paths $\alpha_1,\alpha_2$ from $a$ to $b$, we need to check that they are linearly dependent in $A$. We also only need prove for the case : $\alpha_1$ and $\alpha_2$ are in $Q'$ which do not go through any oriented cycles.
Since $U$ is a schurian algebra, we get that there exist $k_1,k_2\in K$, not all zero, such that $k_1\alpha_1+k_2\alpha_2\in I'$. Since $I'=\langle I'',  \xi\rangle$, there exist $p_1\in I''$ and $p_2\in e_bKQ'\xi KQ'e_a$ such that $k_1\alpha_1+k_2\alpha_2=p_1+p_2$. Obviously, $p_1\in I$.

If $p_2\in I''$, then $\alpha_1$ and $\alpha_2$ are linearly dependent in $V$ and then in $A$.

If $p_2\notin I''$, then there exists at least one path $l=l_2\xi l_1$ from $a$ to $b$
in $Q'$, where $l_1$ is of form $a\rightarrow\cdots \rightarrow 2$, and $l_2$ is of form $1\rightarrow \cdots \rightarrow b$, such that $l\notin I''$. Note that both of $l_1$ and $l_2$ do not pass through $\gamma_1$. If $a$ is not located in an oriented cycle lying in the second branch, then the subpath $l_1$ passes through the vertex $3$ since $Q$ is a polygon-tree quiver, which yields that $l\in I''$ since every oriented cycle in $Q'$ is in $I''$, contradicts. So $a$ is located in an oriented cycle lying in the second branch and $a\neq 3$. Similarly, $b$ is located in an oriented cycle lying in the $n$-th branch and $b\neq m_0$.

Since $a$ is located in an oriented cycle lying in the second branch and $b$
is located in an oriented cycle lying in the $n$-th branch, we get that $\alpha_1$ goes through the vertices $3$ and $m_0$ since $Q'$ is a polygon-tree quiver. So we assume that $\alpha_1=\beta_3\beta_2\beta_1$ with $\beta_1$ from $a$ to $3$, $\beta_2$ from $3$ to $m_0$ and $\beta_3$ from $m_0$ to $b$.
Similar to the above, we can get that $\beta_1$ and $\gamma_2l_1$, $\beta_2$ and $\gamma_{m_0-1}\cdots \gamma_2$, $\beta_3$ and $l_2\gamma_{m_0}$ are three linearly dependent collections in $V$, and so $\alpha_1$ and $l$ are linearly dependent in $A$.

Similarly, $\alpha_2$ and $l$ are linearly dependent in $V$, which implies that $\alpha_1$ and $\alpha_2$ are linearly dependent in $A$. So $\dim e_bA e_a\leq1$.

To sum up, we get that $\dim e_bAe_a\leq 1$ for any two vertices $a$, $b$ in $Q$, which means $A$ is a schurian algebra.
\end{proof}

\section{Singularity categories of polygon-tree algebras}

In order to prove the main result of this section, we describe a construction of matrix algebras which is obtained by X-W. Chen in \cite{Chen2}, see also \cite[Section 4]{KN}. Let $A$ be a finite-dimensional algebra over a field $K$. Let $_AM$ and $N_A$ be a left and right $A$-module, respectively. Then $M\otimes_KN$ becomes an $A\mbox{-}A$-bimodule. Consider an $A\mbox{-}A$-bimodule monomorphism $\phi:M\otimes_K N\rightarrow A$ such that $\phi$ vanishes both on $M$ and $N$. Note that $\Im \phi\subseteq A$ is an ideal. The vector space $\Gamma=\left( \begin{array}{cc} A &M\\ N& K\end{array}\right)$ becomes an associative algebra via the multiplication
$$\left( \begin{array}{cc} a &m\\ n& \lambda\end{array}\right)\left( \begin{array}{cc} a' &m'\\ n'& \lambda'\end{array}\right)=\left( \begin{array}{cc} aa'+\phi(m\otimes n) &am'+\lambda'm\\ na'+\lambda n'& \lambda\lambda'\end{array}\right).$$
\begin{proposition}[\cite{Chen2}]\label{proposition homological epimorphism induces singularity equivalences}
Keep the notation and assumptions as above. Then there is a triangle equivalence $D^b_{sg}(\Gamma)\simeq D^b_{sg}(A/\Im\phi)$.
\end{proposition}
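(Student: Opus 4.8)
The plan is to realise the asserted equivalence as the singularity equivalence attached to an idempotent two-sided ideal. Put $e=\left(\begin{smallmatrix}0&0\\0&1\end{smallmatrix}\right)\in\Gamma$. Directly from the multiplication one computes $e\Gamma e\cong K$, and, since every product of the form $\gamma e\gamma'$ has $(1,1)$-entry lying in $\Im\phi$ while its other entries range over $M$, $N$ and $K$,
$$\Gamma e\Gamma=\begin{pmatrix}\Im\phi & M\\ N & K\end{pmatrix},\qquad\text{so that}\qquad \Gamma/\Gamma e\Gamma\cong A/\Im\phi.$$
It therefore suffices to produce a triangle equivalence $D^b_{sg}(\Gamma)\simeq D^b_{sg}(\Gamma/\Gamma e\Gamma)$, and I would deduce this from the general principle that factoring out a \emph{stratifying} idempotent ideal whose corner algebra has finite global dimension does not change the singularity category.

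The corner $e\Gamma e\cong K$ is a field, so $\gldim(e\Gamma e)=0$. The one substantive point, and the only place the hypothesis that $\phi$ is a monomorphism enters, is that $\Gamma e\Gamma$ is stratifying; concretely, that the multiplication map $\mu\colon\Gamma e\otimes_{e\Gamma e}e\Gamma\to\Gamma e\Gamma$ is bijective and $\mathrm{Tor}^{e\Gamma e}_{i}(\Gamma e,e\Gamma)=0$ for $i>0$. Since $e\Gamma e\cong K$ is semisimple the higher $\mathrm{Tor}$ vanish automatically, and $\mu$ is always surjective, so only injectivity of $\mu$ is at issue; this I would settle by a dimension count. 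Because the tensor product is over a field one has $\dim_K(\Gamma e\otimes_{e\Gamma e}e\Gamma)=\dim_K\Gamma e\cdot\dim_K e\Gamma=(\dim_K M+1)(\dim_K N+1)$, whereas $\dim_K\Gamma e\Gamma=\dim_K\Im\phi+\dim_K M+\dim_K N+1$. These two numbers coincide exactly when $\dim_K\Im\phi=\dim_K M\cdot\dim_K N=\dim_K(M\otimes_K N)$, which is precisely the assertion that $\phi$ is injective. Hence $\Gamma e\Gamma$ is stratifying, equivalently the projection $\pi\colon\Gamma\to B:=\Gamma/\Gamma e\Gamma$ is a homological epimorphism.

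With these two facts in hand I would invoke the recollement of bounded derived categories attached to $e$,
$$D^b(B)\longrightarrow D^b(\Gamma)\longrightarrow D^b(e\Gamma e),$$
and pass to Verdier quotients by the subcategories $K^b(\proj -)$ of perfect complexes. The functor $B\otimes^{\bL}_\Gamma-$ is well defined on singularity categories since it sends $\Gamma$ to the projective $B$-module $B$, hence preserves perfect complexes; because $\gldim(e\Gamma e)=0$ we have $D^b_{sg}(e\Gamma e)=0$, and the recollement should collapse to the required triangle equivalence $D^b_{sg}(\Gamma)\simeq D^b_{sg}(A/\Im\phi)$. The main obstacle is exactly this descent step: one must show that the induced functor $D^b_{sg}(\Gamma)\to D^b_{sg}(B)$ is both fully faithful and essentially surjective. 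Full faithfulness rests on $\pi$ being a homological epimorphism, so that $D^b(B)\hookrightarrow D^b(\Gamma)$ is fully faithful; essential surjectivity and the control of the kernel require that the complexes annihilated by $B\otimes^{\bL}_\Gamma-$ become perfect once $\gldim(e\Gamma e)<\infty$. Checking that the three functors of the recollement interact correctly with the perfect subcategories is the technical heart of the argument, and is where I would concentrate the effort.
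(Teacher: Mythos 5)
The paper offers no proof of its own here --- the statement is imported verbatim from [Chen2] --- so the only meaningful comparison is with Chen's argument, and your proposal is essentially a reconstruction of it. Your reductions are correct and cleanly executed: $e\Gamma e\cong K$, $\Gamma e\Gamma=\left(\begin{smallmatrix}\Im\phi&M\\ N&K\end{smallmatrix}\right)$, hence $\Gamma/\Gamma e\Gamma\cong A/\Im\phi$; and the dimension count showing that injectivity of $\phi$ is exactly what makes the multiplication $\Gamma e\otimes_{K}e\Gamma\to\Gamma e\Gamma$ bijective is precisely where the monomorphism hypothesis is consumed in the source as well. Be aware, though, that the step you defer --- that a stratifying idempotent ideal whose corner algebra has finite global dimension induces $D^b_{sg}(\Gamma)\simeq D^b_{sg}(\Gamma/\Gamma e\Gamma)$ --- is not a routine check but the main theorem of [Chen2]. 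Your outline of it is nevertheless the right one and does close: since $\Gamma e\Gamma\cong\Gamma e\otimes_K e\Gamma$ is projective as a left and as a right $\Gamma$-module, $B=\Gamma/\Gamma e\Gamma$ has finite projective dimension over $\Gamma$ on both sides, so $i^*=B\otimes^{\bL}_\Gamma-$ and the restriction $i_*$ both preserve the bounded derived categories of finitely generated modules and the subcategories $K^b(\proj)$, hence descend to the singularity categories; $i^*i_*\cong\mathrm{id}$ gives essential surjectivity of the induced $i^*$ at once; and for its full faithfulness one uses the recollement triangle $j_!j^*X\to X\to i_*i^*X\to j_!j^*X[1]$ together with $\gldim(e\Gamma e)=0$ to see that $j_!j^*X=\Gamma e\otimes^{\bL}_{e\Gamma e}eX$ is isomorphic to a bounded complex in $\add(\Gamma e)$, hence perfect, so the unit becomes invertible in $D^b_{sg}(\Gamma)$. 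One small misattribution in your closing paragraph: essential surjectivity is free from $i^*i_*\cong\mathrm{id}$; it is the full faithfulness of the induced $i^*$ (equivalently, killing the kernel) that requires the perfectness of $j_!j^*X$, not the other way around.
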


Note that the above construction contains the one-point extension and one-point coextension of algebras, where $M$ or $N$ is zero, see also \cite{Chen1}.

\begin{definition}\label{definition of simple polygon-tree alegbra}
Let $A=KQ/I$ be a polygon-tree algebra, where the gluing components of $Q$ are $Q_0,Q_1,\dots,Q_N$. If $Q$ does not admit any polygon-tree subquiver (with four gluing components) of one of the two forms in Figure 5,
then $A$ is called a simple polygon-tree algebra. In this case, $Q$ is called a simple polygon-tree quiver.
\end{definition}
\setlength{\unitlength}{0.5mm}
\begin{center}
\begin{picture}(180,80)(0,-10)
\put(10,0){\begin{picture}(80,60)

\put(20,20){\circle*{2}}
\put(40,20){\circle*{2}}

\put(20,40){\circle*{2}}
\put(20,60){\circle*{2}}
\put(20,20){\line(0,1){20}}
\put(20,40){\line(0,1){20}}

\put(0,40){\circle*{2}}
\put(0,40){\line(1,0){20}}
\put(0,60){\line(1,0){20}}
\put(0,60){\circle*{2}}

\put(40,20){\circle*{2}}
\put(40,20){\line(0,1){20}}
\put(20,40){\line(1,0){20}}
\put(20,60){\line(1,0){20}}

\put(40,60){\circle*{2}}
\put(40,40){\circle*{2}}

\qbezier[15](0,40)(0,50)(0,60)

\qbezier[15](40,40)(40,50)(40,60)

\qbezier[15](20,20)(30,20)(40,20)

\qbezier[15](0,10)(0,20)(0,30)

\put(0,30){\circle*{2}}
\put(0,30){\line(2,1){20}}
\put(0,10){\circle*{2}}
\put(0,10){\line(2,1){20}}
\end{picture}}

\put(100,0){\begin{picture}(80,60)

\put(20,20){\circle*{2}}
\put(40,20){\circle*{2}}

\put(20,40){\circle*{2}}
\put(20,60){\circle*{2}}
\put(20,20){\line(0,1){20}}
\put(20,40){\line(0,1){20}}

\put(0,40){\circle*{2}}
\put(0,40){\line(1,0){20}}
\put(0,60){\line(1,0){20}}
\put(0,60){\circle*{2}}

\put(40,20){\circle*{2}}
\put(40,20){\line(0,1){20}}
\put(20,40){\line(1,0){20}}
\put(20,60){\line(1,0){20}}

\put(40,60){\circle*{2}}
\put(40,40){\circle*{2}}

\qbezier[15](0,40)(0,50)(0,60)

\qbezier[15](40,40)(40,50)(40,60)

\qbezier[15](20,20)(30,20)(40,20)

\put(40,40){\line(1,0){20}}
\put(40,20){\line(1,0){20}}
\put(60,20){\circle*{2}}
\put(60,40){\circle*{2}}

\qbezier[15](60,20)(60,30)(60,40)

\end{picture}}
\put(-10,0){Figure 5. The banned subquivers: the edges may be oriented}
\put(-10,-10){in any way such that the resulting quiver is cyclically oriented.}
\end{picture}
\end{center}

For schurian algebras, including polygon-tree algebra, it is a little easier to check that the negative and positive mutations are defined. In fact, all the algebras appearing in the Lemmas \ref{lemma singularity category of cluster-tilted algebra of D}-\ref{the final lemma singularity category of cluster-tilted algebra of D 2} are schurian algebras.

\begin{lemma}[\cite{La,BHL2}]\label{proposition good mutation}
Let $\Gamma$ be a schurian algebra.

(a) The negative mutation $\mu_k^-(\Gamma)$ is defined if and only if for any non-zero path $k\rightsquigarrow i$ starting at $k$ and ending at some vertex $i$, there exists an arrow $j\rightarrow k$ such that the composition $j\rightarrow k\rightsquigarrow i$ is non-zero.

(b) The positive mutation $\mu_k^+(\Gamma)$ is defined if and only if for any non-zero path $i\rightsquigarrow k$ starting at some vertex $i$ and ending at $k$, there exists an arrow $k\rightarrow j$ such that the composition $i\rightsquigarrow k\rightarrow j$ is non-zero.
\end{lemma}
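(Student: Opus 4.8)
The plan is to derive both statements from the general mutation criterion of Proposition \ref{proposition old good mutation} using the one defining property of a schurian algebra, that $\dim_K e_i\Gamma e_k\le 1$ for all vertices $i,k$. The content of the reduction is that over a schurian algebra a non-zero linear combination of paths carries no more information than a single non-zero path, so the linear-combination hypotheses in Proposition \ref{proposition old good mutation} collapse to the path hypotheses stated here.

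For (a), I would first fix a target vertex $i\neq k$ (note that, since $e_k\Gamma e_k=Ke_k$ for a schurian algebra, no path of positive length runs from $k$ to $k$, so the relevant targets are automatically distinct from $k$). All paths $k\rightsquigarrow i$ lie in $e_i\Gamma e_k$, a space of dimension at most $1$; hence either there is no non-zero such path, or every non-zero path $k\rightsquigarrow i$ is a non-zero scalar multiple of a fixed path $p_{ki}$. Thus any non-zero $K$-linear combination of paths $k\rightsquigarrow i$ equals in $\Gamma$ a non-zero multiple of $p_{ki}$, and the condition of Proposition \ref{proposition old good mutation}(a) for such combinations reduces precisely to: for every non-zero path $p_{ki}$ there is an arrow $\alpha\colon j\to k$ with $p_{ki}\alpha\neq 0$. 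This is the assertion of (a).

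The one point that requires argument rather than mere substitution is a linear combination whose paths end at different vertices; this is the main (if modest) obstacle. After reducing each target-block as above, write such a combination as $\sum_i c_i p_{ki}$. For an arrow $\alpha\colon j\to k$ the element $p_{ki}\alpha$ lies in the summand $e_i\Gamma e_j$ of the decomposition $\Gamma e_j=\bigoplus_i e_i\Gamma e_j$, so contributions attached to distinct targets cannot cancel. Hence if $c_i\neq 0$ for some $i$ and the single-path condition furnishes an arrow $\alpha$ with $p_{ki}\alpha\neq 0$, then the $e_i\Gamma e_j$-component of $\sum_i c_i\, p_{ki}\alpha$ is $c_i\, p_{ki}\alpha\neq 0$, so the whole expression is non-zero. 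This shows that the general mixed-target criterion of Proposition \ref{proposition old good mutation}(a) is equivalent to the single-path criterion (the converse direction being trivial), finishing (a).

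For (b) I would pass to the opposite algebra. The Cartan matrix of $\Gamma^{op}$ is the transpose of that of $\Gamma$, so $\Gamma^{op}$ is again schurian, and paths $i\rightsquigarrow k$ in $\Gamma$ correspond bijectively to paths $k\rightsquigarrow i$ in $\Gamma^{op}$. By Proposition \ref{proposition old good mutation}(c) the positive mutation $\mu_k^+(\Gamma)$ is defined exactly when the negative mutation $\mu_k^-(\Gamma^{op})$ is defined, and under the correspondence of paths the condition in (b) for $\Gamma$ is exactly the condition in (a) for $\Gamma^{op}$. Applying the already-established part (a) to $\Gamma^{op}$ therefore yields (b); Lemma \ref{lemma negative positive mutation in opposite algebra} may be invoked in place of Proposition \ref{proposition old good mutation}(c) to perform the same interchange of positive and negative mutations under $(-)^{op}$.
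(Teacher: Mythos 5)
Your argument is correct. Note, however, that the paper does not prove this lemma at all: it is quoted from \cite{La,BHL2} as a known specialization of Proposition \ref{proposition old good mutation} to schurian algebras, and your reduction (each block $e_i\Gamma e_k$ is at most one-dimensional, so a non-zero linear combination of paths out of $k$ has a non-zero component in some $e_i\Gamma e_k$ which is a scalar multiple of a single path, and components in distinct blocks $e_i\Gamma e_j$ cannot cancel after composing with an arrow $j\rightarrow k$) is exactly the standard justification, with part (b) obtained from part (a) via $\Gamma^{op}$ and Proposition \ref{proposition old good mutation}(c). The only point worth flagging is that ``non-zero linear combination'' in Proposition \ref{proposition old good mutation} must be read as non-zero as an element of $\Gamma$ (otherwise the criterion is vacuously unsatisfiable), which is the reading your block decomposition implicitly and correctly uses.
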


\noindent {\bf Conventions}\emph{
For any polygon-tree quiver $Q$ with gluing components $Q_0,Q_1,\dots,Q_N$, we assume that $Q_i$ has $m_i$ vertices for any $0\leq i\leq N$. Set $m=\sum_{i=0}^N m_i$, and $d_{Q}=\sum_{i=0}^N d_{Q(i)}$, where $d_{Q(i)}$ is defined since $Q(i)$ is a floriated quiver. It is easy to see that there is no confusion if $Q$ is a floriated quiver.
}

In the following, we denote by $\cn_d$ the self-injective Nakayama algebra given by the path algebra of a cyclic quiver with $d$ vertices modulo the ideal generated by paths of length $d-1$ for any $d\geq3$.

\begin{theorem}\label{proposition to type Q(m,{i_1,...,i_r})}
Let $A=KQ/I$ be a simple polygon-tree algebra, where the gluing components of $Q$ are $Q_0$, $Q_1$, $\dots$, $Q_N$. Then
$$D^b_{sg}(A)\simeq \underline{\mod }(\cn_{m-3N+d_Q}).$$
In particular, $A$ is CM-finite.
\end{theorem}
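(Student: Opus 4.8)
The plan is to convert $A$ into the self-injective Nakayama algebra $\cn_{m-3N+d_Q}$ by a finite sequence of two kinds of operations, each preserving the singularity category up to triangle equivalence: first, mutations of the underlying QP at a vertex possessing a single incoming (resp. outgoing) arrow, which by Proposition \ref{proposition mutations for schurian algebras} agree with the negative (resp. positive) algebra mutation $\mu_k^\mp(A)=\End_{D^b(A)}(T_k^\mp(A))$ and hence, being endomorphism algebras of tilting complexes, yield \emph{derived-equivalent} algebras --- and a derived equivalence carries $K^b(\proj)$ onto $K^b(\proj)$ (perfect complexes are intrinsic), so it descends to a triangle equivalence of singularity categories; second, one-point extensions and coextensions, which are the cases $N=0$ and $M=0$ of the matrix construction of Proposition \ref{proposition homological epimorphism induces singularity equivalences}, where $\Im\phi=0$ and the conclusion reads $D^b_{sg}(\Gamma)\simeq D^b_{sg}(A)$. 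All hypotheses of Proposition \ref{proposition mutations for schurian algebras} are in force throughout, because every polygon-tree algebra is finite-dimensional and schurian with $_AS_k$ not a submodule of $\rad P_k$ by Proposition \ref{schurian of polygon-tree algebras} and Theorem \ref{lemma schurian algebra of simple polygon-tree algebras}, and the relevant one-sided mutations are defined by the schurian criterion of Lemma \ref{proposition good mutation}.

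I would argue by induction on the number $N$ of gluing components beyond the root. For $N=0$ the algebra is the Jacobian algebra of a single oriented $m_0$-cycle with potential that cycle; its cyclic derivatives impose exactly the relations killing all paths of length $m_0-1$, so $A\cong\cn_{m_0}$, and since $\cn_{m_0}$ is self-injective we get $D^b_{sg}(A)\simeq\underline{\mod}\,\cn_{m_0}$ with $m-3N+d_Q=m_0$. For the inductive step I would select a \emph{leaf} gluing cycle $Q_1$, adjacent to a single component $Q_0$, so that the floriated neighbourhood $Q(0)$ is as in Theorem \ref{lemma schurian algebra of simple polygon-tree algebras}, and split into the same four cases according to whether the distances $d_1,d_n$ of $Q(0)$ equal $1$ or exceed $1$. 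In each case I would run a sequence of mutations along the vertices of $Q_1$, in the spirit of Proposition \ref{proposition floriated algbra to one cycle} but using \emph{only} mutations at vertices with a unique incoming or outgoing arrow, unfolding $Q_1$ into a type-$A$ branch attached to an enlargement $Q_0^{\mathrm{ext}}$ of the cycle $Q_0$; each such step is a derived equivalence by the above.

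The resulting quiver is a simple polygon-tree quiver with components $Q_0^{\mathrm{ext}},Q_2,\dots,Q_N$ together with a dangling linear branch. Removing that branch one vertex at a time --- each endpoint being a source or a sink --- by the one-point (co)extensions of Proposition \ref{proposition homological epimorphism induces singularity equivalences} preserves $D^b_{sg}$ and leaves a genuine simple polygon-tree algebra with one fewer gluing component, to which the induction hypothesis applies. The arithmetic to verify is that the net effect of the mutations and the branch removal changes the Nakayama index by $m_1-3$ when $Q_1$ sits in general position, with a correction of $+1$ for each neighbouring petal glued at an adjacent position on $Q_0$ --- precisely the quantity recorded by $d_Q$ through Definition \ref{definition of floriated algebra}, so that $d=m-3N+d_Q$ is preserved under the reduction. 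This gives $D^b_{sg}(A)\simeq\underline{\mod}\,\cn_{m-3N+d_Q}$. Finally $A$ is Gorenstein, whence $\underline{\Gproj}(A)\simeq D^b_{sg}(A)\simeq\underline{\mod}\,\cn_{m-3N+d_Q}$; as the Nakayama algebra is representation-finite this stable category has only finitely many indecomposables, so $\Gproj(A)$ does as well and $A$ is CM-finite.

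The main obstacle I anticipate is the combinatorial control of the mutation sequence on a general tree of glued cycles: one must check that at \emph{every} intermediate step the vertex being mutated still has a unique incoming (resp. outgoing) arrow, so that Proposition \ref{proposition mutations for schurian algebras} genuinely applies and the step is a BB-mutation rather than an arbitrary QP-mutation, and that both an algebra and its mutation keep the property that $S_k$ is not a submodule of $\rad P_k$; one must also confirm that the enlarged quiver remains a \emph{simple} polygon-tree quiver so that the induction can proceed. This is exactly where the hypothesis that $A$ be a simple polygon-tree algebra is used: the two banned subquivers of Definition \ref{definition of simple polygon-tree alegbra} are precisely the local gluing patterns that would either create a second arrow into the mutation vertex, breaking the BB-condition, or change how many of the $m_1-2$ vertices contributed by $Q_1$ are absorbed into the enlarged cycle, so that their absence makes the four-case bookkeeping --- and in particular the $d_Q$ correction in the adjacent-gluing cases --- uniform.
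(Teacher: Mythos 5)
Your proposal is correct and follows essentially the same route as the paper: induction on the number of gluing components, reduction at a leaf petal split into the same four cases according to whether $d_1$ and $d_n$ equal $1$, with singularity equivalences supplied by BB-type QP-mutations (Proposition \ref{proposition mutations for schurian algebras}) and one-point (co)extensions (Proposition \ref{proposition homological epimorphism induces singularity equivalences}), and the same $m-3N+d_Q$ bookkeeping. The combinatorial verification you flag as the main obstacle is precisely what the paper carries out explicitly in Lemmas \ref{lemma singularity category of cluster-tilted algebra of D}--\ref{the final lemma singularity category of cluster-tilted algebra of D 2}.
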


\begin{proof}
We prove the statement by induction on $N$.

For any simple polygon-tree algebra with only one gluing component, the result is obvious.

We assume that the result holds for any simple polygon-tree algebra with no more than $N$ gluing components.

For $Q$ with $N+1$ gluing components, renumbering $Q_0,Q_1,\dots,Q_N$, we can assume that $Q_1$ has only one adjacent quiver, which is denoted by $Q_0$, and $Q(0)$ is the floriated quiver of $(Q_0,\{i_1,\dots,i_n\})$ by $Q_1,\dots,Q_n$.

The proof can be broken up into the following four cases.

{\bf Case (1):} $Q(0)$ satisfies $d_1=i_2-i_1>1$ and $d_n=i_1+m_0-i_n>1$. We prove it in Lemma \ref{lemma singularity category of cluster-tilted algebra of D}.

{\bf Case (2):} $Q(0)$ satisfies $d_1=i_2-i_1=1$ and $d_n=i_1+m_0-i_n>1$. We prove it in Lemma \ref{the other lemma singularity category of cluster-tilted algebra of D}.

{\bf Case (3):} $Q(0)$ satisfies $d_1=i_2-i_1>1$ and $d_n=i_1+m_0-i_n=1$. We prove it in Lemma \ref{the another lemma singularity category of cluster-tilted algebra of D}.

{\bf Case (4):} $Q(0)$ satisfies $d_1=i_2-i_1=1$ and $d_n=i_1+m_0-i_n=1$. We prove it in Lemma \ref{the final lemma singularity category of cluster-tilted algebra of D 2}.

In conclusion, we get that $$D^b_{sg}(A)\simeq \underline{\mod }(\cn_{m-3N+d_Q}).$$
\end{proof}

\begin{lemma}\label{lemma singularity category of cluster-tilted algebra of D}
Keep the notation as in the proof of Theorem \ref{proposition to type Q(m,{i_1,...,i_r})}. If $Q(0)$ satisfies $d_1=i_2-i_1>1$ and $d_n=i_1+m_0-i_n>1$, then
$$D^b_{sg}(A)\simeq \underline{\mod }(\cn_{m-3N+d_Q}).$$
\end{lemma}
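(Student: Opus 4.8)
The plan is to run the induction on $N$ set up in the proof of Theorem~\ref{proposition to type Q(m,{i_1,...,i_r})}: assuming the formula for all simple polygon-tree algebras with fewer gluing components, I would peel off the leaf petal $Q_1$ and reduce $A$ to a simple polygon-tree algebra $A'$ built on the components $Q_0,Q_2,\dots,Q_N$. The reduction uses only moves that preserve the singularity category up to triangle equivalence: \emph{BB-mutations} of the quiver with potential, which by Proposition~\ref{proposition mutations for schurian algebras} together with Proposition~\ref{proposition mutation to BB} induce derived equivalences, and the matrix (one-point (co)extension) construction of Proposition~\ref{proposition homological epimorphism induces singularity equivalences}, giving $D^b_{sg}(\Gamma)\simeq D^b_{sg}(A/\Im\phi)$. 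The hypotheses $d_1>1$ and $d_n>1$ are what make Case~(1) clean: they keep the two petals flanking $Q_1$ at distance greater than one even after $Q_1$ is absorbed, so the output stays a \emph{simple} polygon-tree quiver (no forbidden subquiver of Figure~5 occurs) and the distance-one count $d_Q$ is unaffected.

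Concretely, the work is to realise the passage $A\rightsquigarrow A'$ by an explicit chain of the two permitted moves: the idea is to use BB-mutations to bring $Q_1$ into a position from which a one-point (co)extension can absorb it into the central cycle. Because $A$ is schurian (Theorem~\ref{lemma schurian algebra of simple polygon-tree algebras}) and $_AS_i$ is never a submodule of $\rad P_i$ (Proposition~\ref{schurian of polygon-tree algebras}), the combinatorial criterion of Lemma~\ref{proposition good mutation} can be applied vertex by vertex to decide whether the algebra mutations $\mu_k^{\pm}$ are defined and hence whether a given QP-mutation is a derived equivalence; where it is not, the vertex must instead be removed by Proposition~\ref{proposition homological epimorphism induces singularity equivalences}, whose image ideal $\Im\phi$ is already zero modulo the vanishing-of-cycles relations of Proposition~\ref{schurian of polygon-tree algebras}, so that $A/\Im\phi$ is again a polygon-tree algebra. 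The net effect I am aiming for is to replace the union $Q_0\cup Q_1$ of the central cycle and the leaf petal by a single cycle on $m_0+m_1-3$ vertices, leaving $Q_2,\dots,Q_N$ untouched; this is exactly the configuration to which the inductive hypothesis applies.

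Granting this reduction, the bookkeeping is forced. The number of gluings drops to $N'=N-1$, the sum of cycle lengths becomes $m'=m-3$ (the central cycle gains $m_1-3$ vertices while the $m_1$ vertices of $Q_1$ disappear), and $d_1>1,\,d_n>1$ give $d_{Q'}=d_Q$. Hence $m'-3N'+d_{Q'}=(m-3)-3(N-1)+d_Q=m-3N+d_Q$, and the inductive hypothesis $D^b_{sg}(A')\simeq\underline{\mod}(\cn_{m'-3N'+d_{Q'}})$ yields $D^b_{sg}(A)\simeq\underline{\mod}(\cn_{m-3N+d_Q})$.

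The step I expect to be the main obstacle is pinning down the precise sequence of valid moves realising this reduction, that is, proving at each stage that the mutation in question really is a BB-mutation. This is genuinely delicate rather than routine: a naive attempt to collapse $Q_1$ down to the enlarged cycle by mutations alone fails, since for a triangular sub-configuration the positive algebra mutation at the apex is \emph{not} defined (the longest path into the apex composed with its unique outgoing arrow vanishes), and that step must be rerouted through the singular-equivalence construction of Proposition~\ref{proposition homological epimorphism induces singularity equivalences} instead. Controlling this interplay between genuine derived equivalences and one-point reductions, while simultaneously checking that $d_Q$ is preserved under the merging of the two gaps flanking $Q_1$ --- precisely the point that separates Case~(1) from Cases~(2)--(4) --- is the technical heart of the argument.
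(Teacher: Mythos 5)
Your strategy coincides with the paper's: induct on the number of gluing components, absorb the leaf petal $Q_1$ into the central cycle by an interleaved sequence of BB-mutations (derived equivalences via Proposition \ref{proposition mutations for schurian algebras}) and one-point (co)extensions/reductions (singularity equivalences via Proposition \ref{proposition homological epimorphism induces singularity equivalences}), ending with a simple polygon-tree algebra on $N$ components whose enlarged central cycle has $m_0+m_1-3$ vertices. Your bookkeeping is also exactly the paper's: $N'=N-1$, $m'=m-3$, $d_{Q'}=d_Q$ (this last equality being where the hypotheses $d_1>1$ and $d_n>1$ enter), so the Nakayama parameter is unchanged. You even correctly diagnose why mutations alone cannot do the job and why the matrix-algebra construction must be inserted at certain triangular configurations.

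The genuine gap is that the reduction $A\rightsquigarrow A'$ is asserted rather than constructed, and that construction is the entire content of the lemma. You explicitly defer "pinning down the precise sequence of valid moves" as "the main obstacle," but this is not a routine verification one can wave at: the paper devotes the whole proof (the chain $(Q^1,W^1)$ through $(Q^{l+4},W^{l+4})$, Figures 6.2--6.13) to exhibiting the sequence --- negative mutation at $v_3^1$, a one-point coextension adding a vertex $c_1$, positive mutation at $v_4^1$, a one-point reduction, positive mutations marching around $v_5^1,\dots,v_{m_1}^1$, a second pass around the new cycle $Q_1'$, and two further coextension/reduction steps --- and to checking at each stage, via Lemma \ref{proposition good mutation} and the schurian property, that the relevant $\mu_k^{\pm}$ is defined on \emph{both} the algebra and its mutation so that Proposition \ref{proposition mutations for schurian algebras} applies, and that the intermediate quivers remain $2$-acyclic polygon-tree-like so the hypotheses of that proposition persist. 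Without producing such a chain (or some other explicit realisation), the claim that $D^b_{sg}(A)\simeq D^b_{sg}(A')$ is unsupported; everything downstream of it, while correctly computed, rests on an unproved step. Separately, note that the paper must also treat $m_1=3$ by a different (and shorter) argument --- a single idempotent reduction killing $v_3^1$, after which the algebra has $N$ components but $m'=m-3$ still holds --- and $m_1=4$ as a degenerate case of the main chain; your sketch does not distinguish these, and the uniform "replace $Q_0\cup Q_1$ by a cycle on $m_0+m_1-3$ vertices" description would need to be checked separately for $m_1=3$, where no mutations are performed at all.
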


\begin{proof}
Without loss of generality, we can assume that $Q$ is as Figure 6.1 shows.
\setlength{\unitlength}{0.5mm}
\begin{center}
\begin{picture}(200,70)(0,-10)
\put(-20,0){\begin{picture}(60,60)
\put(60,0){\circle*{2}}
\put(60,0){\vector(0,1){19}}
\put(59,-4){${}_1$}
\put(54,1){${}_{i_1}$}

\put(60,20){\circle*{2}}

\put(60,20){\vector(-1,1){14}}
\put(59,23){${}_{2}$}
\put(47,17){${}_{i_1+1}$}

\put(60,20){\vector(1,1){9}}
\put(70,30){\circle*{2}}
\put(70,30){\vector(1,0){9}}
\put(80,30){\circle*{2}}
\put(80,30){\vector(1,-1){9}}
\put(90,20){\circle*{2}}

\put(90,20){\vector(0,-1){19}}
\put(90,0){\circle*{2}}
\put(90,0){\vector(-1,-1){9}}
\put(80,-10){\circle*{2}}
\put(70,-10){\vector(-1,1){9}}
\put(70,-10){\circle*{2}}
\qbezier[8](80,-10)(75,-10)(70,-10)
\put(45,35){\circle*{2}}
\put(45,35){\vector(-1,0){19}}
\put(47,37){${}_{3}$}
\put(43,31){${}_{i_2}$}

\put(25,35){\circle*{2}}

\put(25,35){\vector(-1,-1){14}}
\put(21,37){${}_{4}$}
\put(23,31){${}_{i_2+1}$}

\put(25,35){\vector(0,1){19}}

\put(25,55){\circle*{2}}

\put(22,55){$^{v_3^2}$}
\qbezier[14](25,55)(35,55)(45,55)

\put(45,55){\circle*{2}}
\put(45,55){\vector(0,-1){19}}

\put(45,55){$^{v_{m_2}^2}$}
\put(10,20){\circle*{2}}
\put(10,20){\vector(0,-1){19}}

\put(5,20){${}_{5}$}

\put(10,0){\circle*{2}}
\put(5,0){${}_{6}$}

\put(45,-15){\circle*{2}}
\put(45,-15){\vector(1,1){14}}
\put(47,-16){\tiny$m_0$}

\put(70,30){$^{v_{3}^1}$}
\put(80,30){$^{v_{4}^1}$}
\put(90,20){$^{v_{5}^1}$}

\put(90,-2){$_{v_{6}^1}$}

\put(80,-12){$_{v_{7}^1}$}

\put(68,-14){$_{v_{m_1}^1}$}

\qbezier[25](10,0)(20,-25)(45,-15)

\put(30,10){$Q_0$}
\put(30,42){$Q_2$}
\put(70,10){$Q_1$}
\put(10,-30){Figure 6.1. Quiver of $A$}
\end{picture}}

\put(100,0){\begin{picture}(80,70)
\put(60,0){\circle*{2}}
\put(60,0){\vector(0,1){19}}
\put(59,-4){${}_1$}
\put(54,1){${}_{i_1}$}

\put(60,20){\circle*{2}}

\put(60,20){\vector(-1,1){14}}
\put(59,23){${}_{2}$}
\put(47,17){${}_{i_1+1}$}

\put(70,30){\vector(-1,-1){9}}
\put(70,30){\circle*{2}}
\put(80,30){\vector(-1,0){9}}
\put(60,20){\vector(2,1){19}}
\put(80,30){\circle*{2}}
\put(80,30){\vector(1,-1){9}}
\put(90,20){\circle*{2}}

\put(90,20){\vector(0,-1){19}}
\put(90,0){\circle*{2}}
\put(90,0){\vector(-1,-1){9}}
\put(80,-10){\circle*{2}}
\put(70,-10){\vector(-1,1){9}}
\put(70,-10){\circle*{2}}
\qbezier[8](80,-10)(75,-10)(70,-10)
\put(45,35){\circle*{2}}
\put(45,35){\vector(-1,0){19}}
\put(47,37){${}_{3}$}
\put(43,31){${}_{i_2}$}

\put(25,35){\circle*{2}}

\put(25,35){\vector(-1,-1){14}}
\put(21,37){${}_{4}$}
\put(23,31){${}_{i_2+1}$}

\put(25,35){\vector(0,1){19}}

\put(25,55){\circle*{2}}

\qbezier[14](25,55)(35,55)(45,55)

\put(45,55){\circle*{2}}
\put(45,55){\vector(0,-1){19}}

\put(10,20){\circle*{2}}
\put(10,20){\vector(0,-1){19}}

\put(5,20){${}_{5}$}

\put(10,0){\circle*{2}}
\put(5,0){${}_{6}$}

\put(45,-15){\circle*{2}}
\put(45,-15){\vector(1,1){14}}
\put(47,-16){\tiny$m_0$}

\put(68,31){$^{v_{3}^1}$}
\put(80,30){$^{v_{4}^1}$}
\put(90,20){$^{v_{5}^1}$}

\put(90,-2){$_{v_{6}^1}$}

\put(80,-12){$_{v_{7}^1}$}

\put(68,-14){$_{v_{m_1}^1}$}

\qbezier[25](10,0)(20,-25)(45,-15)

\put(10,-30){Figure 6.2. $(Q^1,W^1)$}
\end{picture}}

\end{picture}
\vspace{1cm}
\end{center}

Denote the QP of $A$ by $(Q,W)$.

If $m_1=3$, let $e_{v_3^1}$ be the idempotent corresponding to $v_3^1$, and $V=A/A e_{v_3^1}A$. Similar to the proof of \cite[Lemma 4.4]{CGL} by using Proposition \ref{proposition homological epimorphism induces singularity equivalences}, we can prove that $D^b_{sg}(A)\simeq D^b_{sg}(V)$. It is easy to see that the quiver $Q'$ of $V$ is obtained from $Q$ by removing the vertex $v_3^1$ and its adjacent two arrows. Note that $V$ is a polygon-tree algebra with $N$ gluing components,
and $d_{Q'}=d_Q$, we get that
$$D^b_{sg}(V)\simeq \underline{\mod}(\cn_{\sum_{i\neq1} m_i-3(N-1)+d_{Q'}})\simeq \underline{\mod}(\cn_{ m-3N+d_{Q}})$$
by the assumption of induction in the proof of Theorem \ref{proposition to type Q(m,{i_1,...,i_r})} and $m_1=3$.

If $m_1>4$, it is routine to check that the vertex $v_3^1$ of $Q$ satisfies Lemma \ref{proposition good mutation} (a), which implies that the negative mutation of $A$ at $v_3^1$ is defined. Doing mutation of QP at $v_3^1$, we get that $\mu_{v_3^1}(Q,W)=(Q^1,W^1)$, where $Q^1$ is as Figure 6.2 shows, and $W^1=\sum_{w\in\cs(Q^1)}w$. Denote by $B_1$ the Jacobian algebra of $(Q^1,W^1)$, which is also a polygon-tree algebra. It is routine to check that the vertex $v_3^1$ of $Q^1$ satisfies Lemma \ref{proposition good mutation} (b), which implies that the positive mutation of $B_1$ at $v_3^1$ is defined. Note that $Q,Q^1$ have no loops or oriented $2$-cycles, and both of them are polygon-tree algebras. Proposition \ref{schurian of polygon-tree algebras} and Proposition \ref{proposition mutations for schurian algebras} (a) yield that $A$ is derived equivalent, and then singularity equivalent to $B_1$.

$Q^2$ is constructed from $Q^1$ by one-pointed coextension by adding a vertex $c_1$, and $W^2=\sum_{w\in\cs(Q^2)}w$. Denote by $B_2$ the Jacobian algebra of $(Q^2,W^2)$. Then Proposition \ref{proposition homological epimorphism induces singularity equivalences} implies that $D_{sg}^b(B_1)\simeq D_{sg}^b(B_2)$, see also \cite[Theorem 4.1]{Chen1}.
Easily, the vertex $v_4^1$ of $Q^2$ satisfies Proposition \ref{proposition old good mutation} (b), which implies that the positive mutation of $B_2$ at $v_4^1$ is defined.
Doing mutation at $v_4^1$, we get that $\mu_{v_4^1}(Q^2,W^2)=(Q^3,W^3)$, where $Q^3$ is as Figure 6.4 shows, and $W^3=\sum_{w\in\cs(Q^2)}w$. Denote by $B_3$ the Jacobian algebra of $(Q^3,W^3)$.

\setlength{\unitlength}{0.5mm}
\begin{center}
\begin{picture}(200,70)(0,-10)

\put(-20,0){\begin{picture}(80,70)
\put(60,0){\circle*{2}}
\put(60,0){\vector(0,1){19}}
\put(59,-4){${}_1$}
\put(54,1){${}_{i_1}$}

\put(60,20){\circle*{2}}

\put(60,20){\vector(-1,1){14}}
\put(59,23){${}_{2}$}
\put(47,17){${}_{i_1+1}$}

\put(70,30){\vector(-1,-1){9}}
\put(70,30){\circle*{2}}
\put(80,30){\vector(-1,0){9}}
\put(60,20){\vector(2,1){19}}
\put(80,30){\circle*{2}}
\put(80,30){\vector(1,-1){9}}
\put(90,20){\circle*{2}}

\put(90,20){\vector(0,-1){19}}
\put(90,0){\circle*{2}}
\put(90,0){\vector(-1,-1){9}}
\put(80,-10){\circle*{2}}
\put(70,-10){\vector(-1,1){9}}
\put(70,-10){\circle*{2}}
\qbezier[8](80,-10)(75,-10)(70,-10)
\put(45,35){\circle*{2}}
\put(45,35){\vector(-1,0){19}}
\put(46,37){${}_{3}$}
\put(43,31){${}_{i_2}$}

\put(25,35){\circle*{2}}

\put(25,35){\vector(-1,-1){14}}
\put(21,37){${}_{4}$}
\put(23,31){${}_{i_2+1}$}

\put(25,35){\vector(0,1){19}}

\put(25,55){\circle*{2}}

\qbezier[14](25,55)(35,55)(45,55)

\put(45,55){\circle*{2}}
\put(45,55){\vector(0,-1){19}}

\put(10,20){\circle*{2}}
\put(10,20){\vector(0,-1){19}}

\put(5,20){${}_{5}$}

\put(10,0){\circle*{2}}
\put(5,0){${}_{6}$}

\put(45,-15){\circle*{2}}
\put(45,-15){\vector(1,1){14}}
\put(47,-16){\tiny$m_0$}

\put(80,44){$^{c_1}$}

\put(70,30){$^{v_{3}^1}$}
\put(80,29){$^{v_{4}^1}$}

\put(80,30){\vector(0,1){14}}
\put(80,45){\circle*{2}}
\put(90,20){$^{v_{5}^1}$}

\put(90,-2){$_{v_{6}^1}$}

\put(80,-12){$_{v_{7}^1}$}

\put(68,-14){$_{v_{m_1}^1}$}

\qbezier[25](10,0)(20,-25)(45,-15)

\put(10,-30){Figure 6.3. $(Q^2,W^2)$}
\end{picture}}

\put(100,0){\begin{picture}(80,70)
\put(60,0){\circle*{2}}
\put(60,0){\vector(0,1){19}}
\put(59,-4){${}_1$}
\put(54,1){${}_{i_1}$}

\put(60,20){\circle*{2}}

\put(60,20){\vector(-1,1){14}}
\put(59,23){${}_{2}$}
\put(47,17){${}_{i_1+1}$}

\put(60,20){\vector(1,1){9}}
\put(70,30){\circle*{2}}
\put(70,30){\vector(1,0){9}}
\put(80,30){\vector(-2,-1){19}}
\put(80,30){\circle*{2}}
\put(90,20){\vector(-1,1){9}}
\put(90,20){\circle*{2}}
\put(60,20){\vector(1,0){29}}
\put(90,20){\vector(0,-1){19}}
\put(90,0){\circle*{2}}
\put(90,0){\vector(-1,-1){9}}
\put(80,-10){\circle*{2}}
\put(70,-10){\vector(-1,1){9}}
\put(70,-10){\circle*{2}}
\qbezier[8](80,-10)(75,-10)(70,-10)
\put(45,35){\circle*{2}}
\put(45,35){\vector(-1,0){19}}
\put(47,37){${}_{3}$}
\put(43,31){${}_{i_2}$}

\put(25,35){\circle*{2}}

\put(25,35){\vector(-1,-1){14}}
\put(21,37){${}_{4}$}
\put(23,31){${}_{i_2+1}$}

\put(25,35){\vector(0,1){19}}

\put(25,55){\circle*{2}}

\qbezier[14](25,55)(35,55)(45,55)

\put(45,55){\circle*{2}}
\put(45,55){\vector(0,-1){19}}

\put(10,20){\circle*{2}}
\put(10,20){\vector(0,-1){19}}

\put(5,20){${}_{5}$}

\put(10,0){\circle*{2}}
\put(5,0){${}_{6}$}

\put(45,-15){\circle*{2}}
\put(45,-15){\vector(1,1){14}}
\put(47,-16){\tiny$m_0$}

\put(70,30){$^{c_1}$}
\put(81,30){$^{v_{4}^1}$}

\put(80,45){\vector(0,-1){14}}
\put(80,45){\circle*{2}}
\put(90,20){$^{v_{5}^1}$}

\put(90,-2){$_{v_{6}^1}$}

\put(80,-12){$_{v_{7}^1}$}
\put(80,45){$^{v_3^1}$}
\put(68,-14){$_{v_{m_1}^1}$}

\qbezier[25](10,0)(20,-25)(45,-15)

\put(10,-30){Figure 6.4. $(Q^3,W^3)$}
\end{picture}}

\end{picture}
\vspace{1cm}
\end{center}

It is routine to check that $v_4^1$ of $Q^3$ satisfies Proposition \ref{proposition old good mutation} (a), which implies that the negative mutation of $B_3$ at $v_4^1$ is defined. Note that $Q^2,Q^3$ have no loops or oriented $2$-cycles. It is easy to see that $B_2$ and $B_3$ have the property that the simple $B_2$-module $_{B_2}S_i$ and the simple $B_3$-module $_{B_3}S_i$ are not submodules of the radicals of the indecomposable projective modules $_{B_2}P_i$ and $_{B_3}P_i$ for any vertex $i$ in $Q^2$ and $Q^3$ respectively. Thus Proposition \ref{proposition mutations for schurian algebras} (b) implies that $B_2$ is derived equivalent, and then singularity equivalent to $B_3$.

\setlength{\unitlength}{0.5mm}
\begin{center}
\begin{picture}(200,70)(0,-10)

\put(-20,0){\begin{picture}(80,70)

\put(60,0){\circle*{2}}
\put(60,0){\vector(0,1){19}}
\put(59,-4){${}_1$}
\put(54,1){${}_{i_1}$}

\put(60,20){\circle*{2}}

\put(60,20){\vector(-1,1){14}}
\put(59,23){${}_{2}$}
\put(47,17){${}_{i_1+1}$}

\put(60,20){\vector(1,1){9}}
\put(70,30){\circle*{2}}
\put(70,30){\vector(1,0){9}}
\put(80,30){\vector(-2,-1){19}}
\put(80,30){\circle*{2}}
\put(90,20){\vector(-1,1){9}}
\put(90,20){\circle*{2}}
\put(60,20){\vector(1,0){29}}
\put(90,20){\vector(0,-1){19}}
\put(90,0){\circle*{2}}
\put(90,0){\vector(-1,-1){9}}
\put(80,-10){\circle*{2}}
\put(70,-10){\vector(-1,1){9}}
\put(70,-10){\circle*{2}}
\qbezier[8](80,-10)(75,-10)(70,-10)
\put(45,35){\circle*{2}}
\put(45,35){\vector(-1,0){19}}
\put(47,37){${}_{3}$}
\put(43,31){${}_{i_2}$}

\put(25,35){\circle*{2}}

\put(25,35){\vector(-1,-1){14}}
\put(21,37){${}_{4}$}
\put(23,31){${}_{i_2+1}$}

\put(25,35){\vector(0,1){19}}

\put(25,55){\circle*{2}}

\qbezier[14](25,55)(35,55)(45,55)

\put(45,55){\circle*{2}}
\put(45,55){\vector(0,-1){19}}

\put(10,20){\circle*{2}}
\put(10,20){\vector(0,-1){19}}

\put(5,20){${}_{5}$}

\put(10,0){\circle*{2}}
\put(5,0){${}_{6}$}

\put(45,-15){\circle*{2}}
\put(45,-15){\vector(1,1){14}}
\put(47,-16){\tiny$m_0$}

\put(70,30){$^{c_1}$}
\put(80,30){$^{v_{4}^1}$}

\put(90,20){$^{v_{5}^1}$}

\put(90,-2){$_{v_{6}^1}$}

\put(80,-12){$_{v_{7}^1}$}

\put(68,-14){$_{v_{m_1}^1}$}

\qbezier[25](10,0)(20,-25)(45,-15)

\put(10,-30){Figure 6.5. $(Q^4,W^4)$}
\end{picture}}
\put(100,0){\begin{picture}(80,70)

\put(60,0){\circle*{2}}
\put(60,0){\vector(0,1){19}}
\put(59,-4){${}_1$}
\put(54,1){${}_{i_1}$}

\put(60,20){\circle*{2}}

\put(60,20){\vector(-1,1){14}}
\put(59,23){${}_{2}$}
\put(47,17){${}_{i_1+1}$}

\put(60,20){\vector(1,1){9}}
\put(70,30){\circle*{2}}
\put(70,30){\vector(1,0){9}}

\put(80,30){\circle*{2}}
\put(80,30){\vector(1,-1){9}}
\put(90,20){\circle*{2}}
\put(90,20){\vector(-1,0){29}}
\put(90,0){\vector(0,1){19}}
\put(60,20){\vector(3,-2){29}}
\put(90,0){\circle*{2}}
\put(90,0){\vector(-1,-1){9}}
\put(80,-10){\circle*{2}}
\put(70,-10){\vector(-1,1){9}}
\put(70,-10){\circle*{2}}
\qbezier[8](80,-10)(75,-10)(70,-10)
\put(45,35){\circle*{2}}
\put(45,35){\vector(-1,0){19}}
\put(47,37){${}_{3}$}
\put(43,31){${}_{i_2}$}

\put(25,35){\circle*{2}}

\put(25,35){\vector(-1,-1){14}}
\put(21,37){${}_{4}$}
\put(23,31){${}_{i_2+1}$}

\put(25,35){\vector(0,1){19}}

\put(25,55){\circle*{2}}

\qbezier[14](25,55)(35,55)(45,55)

\put(45,55){\circle*{2}}
\put(45,55){\vector(0,-1){19}}

\put(10,20){\circle*{2}}
\put(10,20){\vector(0,-1){19}}

\put(5,20){${}_{5}$}

\put(10,0){\circle*{2}}
\put(5,0){${}_{6}$}

\put(45,-15){\circle*{2}}
\put(45,-15){\vector(1,1){14}}
\put(47,-16){\tiny$m_0$}

\put(70,30){$^{c_1}$}
\put(80,30){$^{v_{4}^1}$}

\put(90,20){$^{v_{5}^1}$}

\put(90,-2){$_{v_{6}^1}$}

\put(80,-12){$_{v_{7}^1}$}

\put(68,-14){$_{v_{m_1}^1}$}

\qbezier[25](10,0)(20,-25)(45,-15)

\put(10,-30){Figure 6.6. $(Q^5,W^5)$}
\end{picture}}

\end{picture}
\vspace{1cm}
\end{center}

Let $Q^4$ be as Figure 6.5 shows, and $W^4= \sum_{w\in\cs(Q^4)}w$. Denote by $B_4$ the Jacobian algebra of $(Q^4,W^4)$. Then Proposition \ref{proposition homological epimorphism induces singularity equivalences} shows that
$D^b_{sg}(B_3)\simeq D^b_{sg}(B_4)$, see also \cite[Theorem 4.1]{Chen1}. Lemma \ref{proposition good mutation} (b) implies that the positive mutation of $B_4$ at $v_5^1$ is defined. Do mutation at $v_5^1$, and denote $\mu_{v_5^1}(Q^4,W^4)=(Q^5,W^5)$, where $Q^5$ is as Figure 6.6 shows, and $W^5=\sum_{w\in\cs(Q^5)}w$. Denote by $B_5$ the Jacobian algebra of $(Q^5,W^5)$. Similar to the above, we get that $B_4$ is derived equivalent and then singularity equivalent to $B_5$.

For $(Q^5,W^5)$, it is easy to see that the positive mutation at $v_6^1$ is defined, we take mutation of QP at $v_6^1$, and denote the resulting QP by $(Q^6,W^6)$, and its Jacobian algebra by $B_6$. Then take mutation at $v_7^1$ and inductively, after taking mutation at $v_{m_1-1}^1$, we get the Jacobian algebra $B_{m_1-1}$ of $(Q^{m_1-1},W^{m_1-1})$ with $Q^{m_1-1}$ as Figure 6.7 shows, and $W^{m_1-1}=\sum_{w\in\cs(Q^{m_1-1})}w$. So $B_6$ is derived equivalent and then singularity equivalent to $B_{m_1-1}$.

\setlength{\unitlength}{0.5mm}
\begin{center}
\begin{picture}(200,70)(0,-10)

\put(-20,0){\begin{picture}(80,70)

\put(60,0){\circle*{2}}
\put(60,0){\vector(0,1){19}}
\put(59,-4){${}_1$}
\put(54,1){${}_{i_1}$}

\put(60,20){\circle*{2}}

\put(60,20){\vector(-1,1){14}}
\put(59,23){${}_{2}$}
\put(47,17){${}_{i_1+1}$}

\put(60,20){\vector(1,1){9}}
\put(70,30){\circle*{2}}
\put(70,30){\vector(1,0){9}}

\put(80,30){\circle*{2}}
\put(80,30){\vector(1,-1){9}}
\put(90,20){\circle*{2}}

\put(90,20){\vector(0,-1){19}}
\put(60,20){\vector(1,-3){9.4}}
\put(90,0){\circle*{2}}

\put(80,-10){\circle*{2}}
\put(70,-10){\vector(-1,1){9}}
\put(70,-10){\circle*{2}}
\qbezier[8](80,-10)(85,-5)(90,0)
\put(45,35){\circle*{2}}
\put(45,35){\vector(-1,0){19}}
\put(47,37){${}_{3}$}
\put(43,31){${}_{i_2}$}

\put(25,35){\circle*{2}}

\put(25,35){\vector(-1,-1){14}}
\put(21,37){${}_{4}$}
\put(23,31){${}_{i_2+1}$}

\put(25,35){\vector(0,1){19}}

\put(25,55){\circle*{2}}

\qbezier[14](25,55)(35,55)(45,55)

\put(45,55){\circle*{2}}
\put(45,55){\vector(0,-1){19}}

\put(10,20){\circle*{2}}
\put(10,20){\vector(0,-1){19}}

\put(5,20){${}_{5}$}

\put(10,0){\circle*{2}}
\put(5,0){${}_{6}$}

\put(45,-15){\circle*{2}}
\put(45,-15){\vector(1,1){14}}
\put(47,-16){\tiny$m_0$}

\put(70,30){$^{c_1}$}
\put(80,30){$^{v_{4}^1}$}

\put(90,20){$^{v_{5}^1}$}

\put(90,-2){$_{v_{6}^1}$}
\put(70,-10){\vector(1,0){9}}
\put(80,-10){\vector(-2,3){19.2}}
\put(80,-12){$_{v_{m_1-1}^1}$}

\put(68,-14){$_{v_{m_1}^1}$}

\qbezier[25](10,0)(20,-25)(45,-15)

\put(10,-30){Figure 6.7. $(Q^{m_1-1},W^{m_1-1})$}
\end{picture}}

\put(100,0){\begin{picture}(80,70)
\put(60,0){\circle*{2}}
\put(60,0){\vector(0,1){19}}
\put(60,-1){${}^{v_{m_1}^1}$}
\put(54,1){${}_{i_1}$}

\put(60,20){\circle*{2}}

\put(60,20){\vector(-1,1){14}}
\put(59,23){${}_{2}$}
\put(47,17){${}_{i_1+1}$}

\put(60,20){\vector(1,1){9}}
\put(70,30){\circle*{2}}
\put(70,30){\vector(1,0){9}}
\put(80,30){\circle*{2}}
\put(80,30){\vector(1,-1){9}}
\put(90,20){\circle*{2}}

\put(90,20){\vector(0,-1){19}}
\put(90,0){\circle*{2}}
\put(90,0){\vector(-1,-1){9}}
\put(80,-10){\circle*{2}}
\put(70,-10){\vector(-1,1){9}}
\put(70,-10){\circle*{2}}
\qbezier[8](80,-10)(75,-10)(70,-10)
\put(45,35){\circle*{2}}
\put(45,35){\vector(-1,0){19}}
\put(47,37){${}_{3}$}
\put(43,31){${}_{i_2}$}

\put(25,35){\circle*{2}}

\put(25,35){\vector(-1,-1){14}}
\put(21,37){${}_{4}$}
\put(23,31){${}_{i_2+1}$}

\put(25,35){\vector(0,1){19}}

\put(25,55){\circle*{2}}

\qbezier[14](25,55)(35,55)(45,55)

\put(45,55){\circle*{2}}
\put(45,55){\vector(0,-1){19}}

\put(10,20){\circle*{2}}
\put(10,20){\vector(0,-1){19}}

\put(5,20){${}_{5}$}

\put(10,0){\circle*{2}}
\put(5,0){${}_{6}$}

\put(45,-15){\circle*{2}}
\put(45,-15){\vector(1,1){14}}
\put(22,-18){$_{m_0}$}

\put(70,30){$^{c_1}$}
\put(80,30){$^{v_{4}^1}$}
\put(90,20){$^{v_{5}^1}$}

\put(90,-2){$_{v_{6}^1}$}

\put(80,-12){$_{v_{7}^1}$}

\put(62,-12){$_{v_{m_1-1}^1}$}
\put(45,-19){$_1$}

\put(25,-15){\circle*{2}}
\put(25,-15){\vector(1,0){19}}
\qbezier[15](10,0)(20,-10)(25,-15)
\put(10,-30){Figure 6.8. $(Q^{m_1},W^{m_1})$}

\put(30,10){$Q_0'$}
\put(70,10){$Q_1'$}

\end{picture}}

\end{picture}
\vspace{1cm}
\end{center}

Then the positive mutation of $B_{m_1-1}$ at $v_{m_1}^1$ is defined. Take mutation at $v_{m_1}^1$, let $(Q^{m_1},W^{m_1})=\mu_{v_{m_1}^1}(Q^{m_1-1},W^{m_1-1})$, where $Q^{m_1}$ is as Figure 6.8 shows, and $W^{m_1}=\sum_{w\in\cs(Q^{m_1})}w$. Denote by $B_{m_1}$ the Jacobian algebra associated to $(Q^{m_1},W^{m_1})$. Similarly, $B_{m_1-1}$ is derived equivalent and then singularity equivalent to $B_{m_1}$. For convenience, we also denote the vertex $c_1$ in $Q^{m_1}$ by $v_3^1$.
$B_{m_1}$ is a polygon-tree algebra, denote by $Q_0'$, $Q_1'$ the oriented chordless cycles as in Figure 6.8. Then $Q_0'$ has $m_0+1$ vertices and $Q_1'$ has $m_1-1$ vertices. We do the same mutations at vertices in $Q_1'$ for $(Q^{m_1},W^{m_1})$ as  above, and then obtain a QP $(Q^{l},W^l)$, where $Q^l$ is as Figure 6.9 shows, $W^l= \sum_{w\in\cs(Q^l)}w$. Denote by $B_l$ its Jacobian algebra. Then $B_{m_1}$ is singularity equivalent to $B_l$.

\setlength{\unitlength}{0.5mm}
\begin{center}
\begin{picture}(200,70)(0,-10)

\put(-20,0){\begin{picture}(80,70)
\put(60,0){\circle*{2}}
\put(60,0){\vector(0,1){19}}
\put(58,-4){${}_{v_{5}^1}$}
\put(54,1){${}_{i_1}$}

\put(60,20){\circle*{2}}

\put(60,20){\vector(-1,1){14}}
\put(60,23){${}_{2}$}
\put(47,17){${}_{i_1+1}$}

\put(60,20){\vector(1,0){19}}

\put(80,20){\circle*{2}}
\put(80,20){\vector(0,-1){19}}

\put(80,0){\vector(-1,0){19}}
\put(80,0){\circle*{2}}

\put(45,35){\circle*{2}}
\put(45,35){\vector(-1,0){19}}
\put(47,37){${}_{3}$}
\put(43,31){${}_{i_2}$}

\put(25,35){\circle*{2}}

\put(25,35){\vector(-1,-1){14}}
\put(21,37){${}_{4}$}
\put(23,31){${}_{i_2+1}$}

\put(25,35){\vector(0,1){19}}

\put(25,55){\circle*{2}}

\put(45,55){\circle*{2}}
\put(45,55){\vector(0,-1){19}}

\put(10,20){\circle*{2}}
\put(10,20){\vector(0,-1){19}}

\put(5,20){${}_{5}$}

\put(10,0){\circle*{2}}
\put(5,0){${}_{6}$}

\qbezier[14](25,55)(35,55)(45,55)

\put(45,-15){\circle*{2}}
\put(45,-15){\vector(1,1){14}}
\put(23,-18){$_{v_7^1}$}

\put(80,20){$^{v_3^1}$}
\put(81,-1){$_{v_4^1}$}

\put(46,-19){\tiny$v_6^1$}

\put(25,-15){\circle*{2}}
\put(25,-15){\vector(1,0){19}}
\qbezier[15](10,0)(20,-10)(25,-15)

\put(10,-30){Figure 6.9. $(Q^{l},W^{l})$}

\end{picture}}

\put(100,0){\begin{picture}(80,70)
\put(60,0){\circle*{2}}
\put(60,0){\vector(0,1){19}}
\put(58,-4){${}_{v_{5}^1}$}
\put(54,1){${}_{i_1}$}

\put(60,20){\circle*{2}}
\put(60,20){\vector(1,-1){19}}
\put(60,20){\vector(-1,1){14}}
\put(60,23){${}_{2}$}
\put(47,17){${}_{i_1+1}$}

\put(80,20){\vector(-1,0){19}}

\put(80,20){\circle*{2}}
\put(80,0){\vector(0,1){19}}

\put(80,0){\vector(-1,0){19}}
\put(80,0){\circle*{2}}

\put(45,35){\circle*{2}}
\put(45,35){\vector(-1,0){19}}
\put(47,37){${}_{3}$}
\put(43,31){${}_{i_2}$}

\put(25,35){\circle*{2}}

\put(25,35){\vector(-1,-1){14}}
\put(21,37){${}_{4}$}
\put(23,31){${}_{i_2+1}$}

\put(25,35){\vector(0,1){19}}

\put(25,55){\circle*{2}}

\put(45,55){\circle*{2}}
\put(45,55){\vector(0,-1){19}}

\put(10,20){\circle*{2}}
\put(10,20){\vector(0,-1){19}}

\put(5,20){${}_{5}$}

\put(10,0){\circle*{2}}
\put(5,0){${}_{6}$}

\put(45,-15){\circle*{2}}
\put(45,-15){\vector(1,1){14}}
\put(22,-19){$_{v_7^1}$}

\qbezier[14](25,55)(35,55)(45,55)

\put(80,20){$^{v_3^1}$}
\put(80,-2){$_{v_4^1}$}

\put(46,-19){\tiny$v_6^1$}

\put(25,-15){\circle*{2}}
\put(25,-15){\vector(1,0){19}}
\qbezier[15](10,0)(20,-10)(25,-15)
\put(10,-30){Figure 6.10. $(Q^{l+1},W^{l+1})$}

\end{picture}}
\end{picture}
\vspace{1cm}
\end{center}

For $(Q^l,W^l)$, it is easy to see that the negative mutation at $v_3^1$ is defined. Take mutation at $v_3^1$, denote the resulting QP by $(Q^{l+1},W^{l+1})$, and its Jacobian algebra by $B_{l+1}$. Then $Q^{l+1}$ is as Figure 6.10 shows, and $W^{l+1}= \sum_{w\in\cs(Q^{l+1})}w$. Similar to the above, we get that $D^b_{sg}(B_{l})\simeq D^b_{sg}(B_{l+1})$.

Let $B_{l+2}$ be the one-point coextension algebra of $B_{l+1}$, which is also a Jacobian algebra with its QP $(Q^{l+2},W^{l+2})$, where $Q^{l+2}$ is as Figure 6.11 shows, and $W^{l+2}= \sum_{w\in\cs(Q^{l+2})}w$. Then Proposition \ref{proposition homological epimorphism induces singularity equivalences} implies that $D_{sg}^b(B_{l+1})\simeq D_{sg}^b(B_{l+2})$.

It is easy to see that the negative mutation at $v_4^1$ is defined, we take mutation at $v_4^1$, and denote the resulting algebra by $B_{l+3}$, which is also a Jacobian algebra with its QP $(Q^{l+3},W^{l+3})$. Then $D_{sg}^b(B_{l+2})\simeq D_{sg}^b(B_{l+3})$.

\setlength{\unitlength}{0.5mm}
\begin{center}
\begin{picture}(200,70)(0,-10)

\put(-20,0){\begin{picture}(80,70)
\put(60,0){\circle*{2}}
\put(60,0){\vector(0,1){19}}
\put(58,-4){${}_{v_{5}^1}$}
\put(54,1){${}_{i_1}$}

\put(60,20){\circle*{2}}
\put(60,20){\vector(1,-1){19}}
\put(60,20){\vector(-1,1){14}}
\put(60,23){${}_{2}$}
\put(47,17){${}_{i_1+1}$}

\put(80,20){\vector(-1,0){19}}

\put(80,20){\circle*{2}}
\put(80,0){\vector(0,1){19}}

\put(80,0){\vector(-1,0){19}}
\put(80,0){\circle*{2}}

\put(45,35){\circle*{2}}
\put(45,35){\vector(-1,0){19}}
\put(47,37){${}_{3}$}
\put(43,31){${}_{i_2}$}

\put(25,35){\circle*{2}}

\put(25,35){\vector(-1,-1){14}}
\put(21,37){${}_{4}$}
\put(23,31){${}_{i_2+1}$}

\put(25,35){\vector(0,1){19}}

\put(25,55){\circle*{2}}

\put(45,55){\circle*{2}}
\put(45,55){\vector(0,-1){19}}

\put(10,20){\circle*{2}}
\put(10,20){\vector(0,-1){19}}

\put(5,20){${}_{5}$}

\put(10,0){\circle*{2}}
\put(5,0){${}_{6}$}

\put(45,-15){\circle*{2}}
\put(45,-15){\vector(1,1){14}}
\put(22,-19){$_{v_7^1}$}

\qbezier[14](25,55)(35,55)(45,55)

\put(80,20){$^{v_3^1}$}
\put(80,-2){$_{v_4^1}$}

\put(46,-19){\tiny$v_6^1$}

\put(25,-15){\circle*{2}}
\put(25,-15){\vector(1,0){19}}
\qbezier[15](10,0)(20,-10)(25,-15)

\put(80,0){\vector(0,-1){19}}
\put(80,-20){\circle*{2}}
\put(80,-22){$_{c_2}$}

\put(10,-34){Figure 6.11. $(Q^{l+2},W^{l+2})$}

\end{picture}}

\put(100,0){\begin{picture}(80,70)
\put(60,0){\circle*{2}}
\put(60,0){\vector(0,1){19}}
\put(60,0){${}^{v_{4}^1}$}
\put(54,1){${}_{i_1}$}

\put(60,20){\circle*{2}}
\put(60,20){\vector(1,-1){19}}
\put(60,20){\vector(-1,1){14}}
\put(60,23){${}_{2}$}
\put(47,17){${}_{i_1+1}$}

\put(80,0){\vector(-1,0){19}}
\put(80,0){\circle*{2}}

\put(45,35){\circle*{2}}
\put(45,35){\vector(-1,0){19}}
\put(47,37){${}_{3}$}
\put(43,31){${}_{i_2}$}

\put(25,35){\circle*{2}}

\put(25,35){\vector(-1,-1){14}}
\put(21,37){${}_{4}$}
\put(23,31){${}_{i_2+1}$}

\put(25,35){\vector(0,1){19}}

\put(25,55){\circle*{2}}

\put(45,55){\circle*{2}}
\put(45,55){\vector(0,-1){19}}

\put(10,20){\circle*{2}}
\put(10,20){\vector(0,-1){19}}

\put(5,20){${}_{5}$}

\put(10,0){\circle*{2}}
\put(5,0){${}_{6}$}

\put(45,-15){\circle*{2}}
\put(45,-15){\vector(1,1){14}}
\put(23,-18){$_{v_6^1}$}

\put(80,-2){$_{c_2}$}

\put(70,-22){$_{v_3^1}$}
\put(70,-20){\circle*{2}}

\put(70,-20){\vector(-1,2){9.2}}
\put(43,-20){\tiny$v_5^1$}
\qbezier[14](25,55)(35,55)(45,55)
\put(25,-15){\circle*{2}}
\put(25,-15){\vector(1,0){19}}
\qbezier[15](10,0)(20,-10)(25,-15)
\put(10,-34){Figure 6.12. $(Q^{l+3},W^{l+3})$}

\end{picture}}
\end{picture}
\vspace{1cm}
\end{center}

For $B_{l+3}$, using Proposition \ref{proposition homological epimorphism induces singularity equivalences} twice, we can get that $D_{sg}^b(B_{l+3})\simeq D_{sg}^b(B_{l+4})$, where $B_{l+4}$ is a simple polygon-tree algebra with its quiver $Q^{l+4}$ as Figure 6.13 shows. Note that $B_{l+4}$ has $N$ gluing components $Q_0'',Q_2,\dots,Q_N$, and the oriented chordless cycle $Q_0''$ has $m_0+m_1-3$ vertices, the other gluing components are the same as in $Q$. It is easy to see that $d_{Q^{l+4}}=d_Q$, so we get that
$$D^b_{sg}(A)\simeq D^b_{sg}(B_{l+4})\simeq\underline{\mod}(\cn_{m_0+m_1-3+\sum_{i=2}^N m_i-3(N-1)+d_{Q'}})\simeq \underline{\mod}(\cn_{ m-3N+d_{Q}})$$
by the inductive assumption in the proof of Theorem \ref{proposition to type Q(m,{i_1,...,i_r})}.

\setlength{\unitlength}{0.5mm}
\begin{center}
\begin{picture}(60,70)(0,-10)

\put(0,0){\begin{picture}(80,70)
\put(60,0){\circle*{2}}
\put(60,0){\vector(0,1){19}}
\put(61,-2){${}^{v_{4}^1}$}

\qbezier[14](25,55)(35,55)(45,55)

\put(60,20){\circle*{2}}

\put(60,20){\vector(-1,1){14}}
\put(60,23){${}_{2}$}

\put(45,35){\circle*{2}}
\put(45,35){\vector(-1,0){19}}
\put(47,37){${}_{3}$}
\put(43,31){${}_{i_2}$}

\put(25,35){\circle*{2}}

\put(25,35){\vector(-1,-1){14}}
\put(21,37){${}_{4}$}
\put(23,31){${}_{i_2+1}$}

\put(25,35){\vector(0,1){19}}

\put(25,55){\circle*{2}}

\put(45,55){\circle*{2}}
\put(45,55){\vector(0,-1){19}}

\put(10,20){\circle*{2}}
\put(10,20){\vector(0,-1){19}}

\put(5,20){${}_{5}$}

\put(10,0){\circle*{2}}
\put(5,0){${}_{6}$}

\put(45,-15){\circle*{2}}
\put(45,-15){\vector(1,1){14}}
\put(22,-20){\tiny${v_6^1}$}

\put(30,10){$Q_0''$}

\put(30,42){$Q_2$}
\put(45,-20){\tiny$v_5^1$}

\put(25,-15){\circle*{2}}
\put(25,-15){\vector(1,0){19}}
\qbezier[15](10,0)(20,-10)(25,-15)
\put(0,-30){Figure 6.13. $(Q^{l+4},W^{l+4})$}

\end{picture}}
\end{picture}
\vspace{1cm}
\end{center}

If $m_1=4$, then $Q$ looks like the quiver as Figure 6.9 shows, after doing the same operations, we can get our desired result.
\end{proof}

\begin{lemma}\label{the other lemma singularity category of cluster-tilted algebra of D}
Keep the notation as in the proof of Theorem \ref{proposition to type Q(m,{i_1,...,i_r})}. If $Q(0)$ satisfies $d_1=i_2-i_1=1$ and $d_n=i_1+m_0-i_n>1$, then
$$D^b_{sg}(A)\simeq \underline{\mod }(\cn_{m-3N+d_Q}).$$
\end{lemma}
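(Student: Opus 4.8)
The plan is to follow the inductive scheme of Lemma \ref{lemma singularity category of cluster-tilted algebra of D} (Case (1)) and reduce the number of gluing components by one. Arguing by induction on $N$ as in Theorem \ref{proposition to type Q(m,{i_1,...,i_r})}, I assume the statement for all simple polygon-tree algebras with at most $N$ gluing components, and (after the renumbering of that proof) take $Q_1$ adjacent only to $Q_0$. I would then connect $A$ to a simple polygon-tree algebra $B$ with gluing components $Q_0'',Q_2,\dots,Q_N$ through a chain of equivalences of two kinds: mutations of the QP $(Q,W)$ at the vertices of $Q_1$, which are derived---hence singularity---equivalences by Proposition \ref{proposition mutations for schurian algebras}, and one-point (co)extensions, which are singularity equivalences by Proposition \ref{proposition homological epimorphism induces singularity equivalences}. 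Because polygon-tree algebras are schurian and have $_AS_i$ not a submodule of $\rad P_i$ (Proposition \ref{schurian of polygon-tree algebras}), the hypotheses of Proposition \ref{proposition mutations for schurian algebras} hold throughout, and each ``mutation is defined'' check reduces, via Lemma \ref{proposition good mutation}, to inspecting the nonzero paths leaving or entering the mutated vertex.

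Concretely, I would unfold $Q_1$ one vertex at a time, imitating the sequence at $v_3^1,v_4^1,\dots,v_{m_1}^1$ in Lemma \ref{lemma singularity category of cluster-tilted algebra of D}: a negative mutation at $v_3^1$, a one-point coextension, and then successive positive mutations that push the petal $Q_1$ into the central cycle, after which the two cycles swap roles and the step is iterated until $Q_1$ is absorbed into a new central cycle $Q_0''$. The only departure from Case (1) is local, at the vertex $v_{i_1+1}^0=v_{i_2}^0$: since $d_1=i_2-i_1=1$, the petal $Q_2$ is glued along the arrow immediately after the gluing arrow of $Q_1$, so this vertex carries an additional arrow into $Q_2$. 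I would recheck the path criteria of Lemma \ref{proposition good mutation} at this configuration to confirm that the same one-sided mutations stay defined, while the concluding homological-epimorphism steps must be adapted there; the opposite side, governed by $d_n=i_1+m_0-i_n>1$, behaves exactly as in Case (1).

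The heart of the proof---and the step I expect to be the main obstacle---is verifying that the invariant $m-3N+d_Q$ survives this reduction. Writing $Q'$ for the quiver of $B$, I must control two quantities. First, the size of $Q_0''$: whereas in Case (1) the new central cycle acquires $m_0+m_1-3$ vertices, the tightness forced by $d_1=1$ at $v_{i_2}^0$ should leave one more vertex undeleted, giving $Q_0''$ exactly $m_0+m_1-2$ vertices and $m'=m-2$. Second, the distance count: as $Q_1$ ceases to be a petal of $Q_0$ and $d_1=1$, the distance-$1$ pair $(Q_1,Q_2)$ recorded in $d_{Q(0)}$ is destroyed, while $d_n>1$ creates no new distance-$1$ pair, the vanishing neighborhood $Q(1)$ contributed $d_{Q(1)}=0$, and the neighborhoods $Q(j)$ with $j\ge 2$ are unchanged; hence $d_{Q'}=d_Q-1$.

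The two corrections then cancel, and the inductive hypothesis applied to $B$ yields
$$D^b_{sg}(A)\simeq \underline{\mod}(\cn_{m'-3(N-1)+d_{Q'}})=\underline{\mod}(\cn_{(m-2)-3(N-1)+(d_Q-1)})=\underline{\mod}(\cn_{m-3N+d_Q}).$$
The delicate point is thus to justify rigorously, from the precise chain of (co)extensions available when $d_1=1$, that exactly one fewer vertex is deleted than in Case (1); it is this single vertex that offsets the lost distance-$1$ pair. Finally, as in Lemma \ref{lemma singularity category of cluster-tilted algebra of D}, any degenerate small-$m_1$ situation would be handled separately by a direct homological epimorphism deleting the superfluous vertex through Proposition \ref{proposition homological epimorphism induces singularity equivalences}, after which $B$ is a simple polygon-tree algebra with $N$ gluing components and the induction closes.
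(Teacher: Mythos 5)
Your proposal is correct and follows essentially the same route as the paper: run the Case (1) reduction on the petal $Q_1$, land on a simple polygon-tree algebra with $N$ gluing components whose new central cycle has $m_0+m_1-2$ vertices, observe $d_{Q'}=d_Q-1$, and let the two corrections cancel in the formula $m'-3(N-1)+d_{Q'}=m-3N+d_Q$. The one mechanism you leave open --- why exactly one fewer vertex is deleted than in Case (1) --- is resolved in the paper by replacing the final homological-epimorphism deletions with a single negative mutation at the coextension vertex $c_1$, which absorbs it into the new central cycle instead of removing it.
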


\begin{proof} Without loss of generality, we assume that the quiver $Q$
is as Figure 7.1 shows.

\setlength{\unitlength}{0.5mm}
\begin{center}
\begin{picture}(180,70)(0,-10)

\put(-20,0){\begin{picture}(80,70)
\put(60,0){\circle*{2}}
\put(60,0){\vector(0,1){19}}
\put(59,-4){${}_{1}$}
\put(55,1){${}_{i_1}$}

\put(60,20){\circle*{2}}

\put(60,20){\vector(-1,1){14}}
\put(58,23){${}_{2}$}
\put(47,17){${}_{i_1+1}$}
\put(53,12){\tiny$\|$}
\put(52,8){$_{i_2}$}
\put(60,20){\vector(1,0){19}}

\put(80,20){\circle*{2}}

\qbezier[14](80,20)(80,10)(80,0)

\put(80,0){\vector(-1,0){19}}
\put(80,0){\circle*{2}}

\put(45,35){\circle*{2}}
\put(45,35){\vector(-1,0){19}}
\put(43,38){${}_{3}$}
\put(35,31){${}_{i_2+1}$}

\put(25,35){\circle*{2}}

\put(25,35){\vector(-1,-1){14}}
\put(22,38){${}_{4}$}

\put(60,50){$^{v_3^2}$}

\put(60,50){\circle*{2}}
\put(45,35){\vector(1,1){14}}

\put(75,35){\circle*{2}}
\put(75,35){\vector(-1,-1){14}}

\put(77,35){$_{v_{m_2}^2}$}

\put(10,20){\circle*{2}}
\put(10,20){\vector(0,-1){19}}

\put(5,20){${}_{5}$}

\qbezier[14](75,35)(67.5,42.5)(60,50)

\put(10,0){\circle*{2}}
\put(5,0){${}_{6}$}

\put(45,-15){\circle*{2}}
\put(45,-15){\vector(1,1){14}}
\put(22,-19){$_{v_{m_0-1}^1}$}

\put(80,20){$^{v_3^1}$}
\put(80,-2){$_{v_{m_1}^1}$}

\put(43,-20){\tiny$v_{m_0}^1$}

\put(25,-15){\circle*{2}}
\put(25,-15){\vector(1,0){19}}
\qbezier[15](10,0)(20,-10)(25,-15)
\put(10,-34){Figure 7.1. $(Q,W)$}

\put(30,10){$Q_0$}
\put(65,10){$Q_1$}
\put(55,35){$Q_2$}

\end{picture}}

\put(100,0){\begin{picture}(80,70)
\put(60,0){\circle*{2}}
\put(60,0){\vector(0,1){19}}
\put(58,-3){${}_{v_4^1}$}
\put(54,1){${}_{i_1}$}

\put(60,20){\circle*{2}}

\put(60,20){\vector(-1,1){14}}
\put(59,23){${}_{2}$}
\put(47,17){${}_{i_1+1}$}
\put(53,12){\tiny$\|$}
\put(52,8){$_{i_2}$}

\put(80,0){\vector(-1,0){19}}
\put(80,0){\circle*{2}}

\put(45,35){\circle*{2}}
\put(45,35){\vector(-1,0){19}}
\put(43,38){${}_{3}$}
\put(35,31){${}_{i_2+1}$}

\put(25,35){\circle*{2}}

\put(25,35){\vector(-1,-1){14}}
\put(22,38){${}_{4}$}

\put(60,50){\circle*{2}}
\put(45,35){\vector(1,1){14}}

\put(75,35){\circle*{2}}
\put(75,35){\vector(-1,-1){14}}

\put(10,20){\circle*{2}}
\put(10,20){\vector(0,-1){19}}

\put(5,20){${}_{5}$}

\qbezier[14](75,35)(67.5,42.5)(60,50)

\put(10,0){\circle*{2}}
\put(5,0){${}_{6}$}

\put(45,-15){\circle*{2}}
\put(45,-15){\vector(1,1){14}}
\put(23,-18){$_{v_{6}^1}$}

\put(60,20){\vector(1,-1){19}}
\put(80,-2){$_{c_1}$}

\put(44,-20){\tiny$v_{5}^1$}

\put(25,-15){\circle*{2}}
\put(25,-15){\vector(1,0){19}}
\qbezier[15](10,0)(20,-10)(25,-15)
\put(10,-34){Figure 7.2. $(Q^1,W^1)$}

\end{picture}}
\end{picture}
\vspace{1.05cm}
\end{center}

If $m_1>3$, doing the same mutations as in Lemma \ref{lemma singularity category of cluster-tilted algebra of D}, we can get that
$A$ is singularity equivalent to $B_1$, where $B_1$ is a simple polygon-tree algebra with its QP $(Q^1,W^1)$ as Figure 7.2 shows.
For $(Q^1,W^1)$, the negative mutation of $B_1$ at $c_1$ is defined, so we do this mutation and get a polygon-tree algebra $B_2$ with its QP $(Q^2,W^2)$ as Figure 7.3 shows.
The positive mutation of $B_2$ at $c_1$ is defined, so Proposition \ref{proposition mutations for schurian algebras} implies that $B_1$ is derived equivalent to $B_2$.
\setlength{\unitlength}{0.5mm}
\begin{center}
\begin{picture}(80,60)(0,-10)

\put(0,0){\begin{picture}(80,60)
\put(60,0){\circle*{2}}
\put(60,0){\vector(0,1){19}}
\put(60,-3){${}_{c_1}$}
\put(54,1){${}_{i_1}$}

\put(60,20){\circle*{2}}

\put(60,20){\vector(-1,1){14}}
\put(59,23){${}_{2}$}
\put(47,17){${}_{i_1+1}$}
\put(53,12){\tiny$\|$}
\put(52,8){$_{i_2}$}

\put(45,35){\circle*{2}}
\put(45,35){\vector(-1,0){19}}
\put(42,38){${}_{3}$}
\put(35,31){${}_{i_2+1}$}

\put(25,35){\circle*{2}}

\put(25,35){\vector(-1,-1){14}}
\put(21,38){${}_{4}$}

\put(60,50){\circle*{2}}
\put(45,35){\vector(1,1){14}}

\put(75,35){\circle*{2}}
\put(75,35){\vector(-1,-1){14}}

\put(10,20){\circle*{2}}
\put(10,20){\vector(0,-1){19}}

\put(5,20){${}_{5}$}

\qbezier[14](75,35)(67.5,42.5)(60,50)

\put(10,0){\circle*{2}}
\put(5,0){${}_{6}$}

\put(45,-15){\circle*{2}}
\put(45,-15){\vector(1,1){14}}
\put(23,-18){$_{v_{5}^1}$}

\put(44,-20){\tiny$v_{4}^1$}

\put(25,-15){\circle*{2}}
\put(25,-15){\vector(1,0){19}}
\qbezier[15](10,0)(20,-10)(25,-15)

\put(30,10){$Q_0'$}
\put(10,-34){Figure 7.3. $(Q^2,W^2)$}

\put(55,35){$Q_2$}

\end{picture}}
\end{picture}
\vspace{1cm}
\end{center}
Note that $B_2$ is a simple polygon-tree algebra with $N$ gluing components $Q_0',Q_2,\dots,Q_N$, the oriented chordless cycle $Q_0'$ of $Q^2$ has $m_0+m_1-2$ vertices and $d_{Q^2}=d_Q-1$.
So we get that
$$D^b_{sg}(A)\simeq D^b_{sg}(B_{2})\simeq\underline{\mod}(\cn_{m_0+m_1-2+\sum_{i=2}^N m_i-3(N-1)+d_{Q^2}})\simeq \underline{\mod}(\cn_{ m-3N+d_{Q}})$$
by the assumption of induction in the proof of Theorem \ref{proposition to type Q(m,{i_1,...,i_r})}.

If $m_3=3$, then the quiver $Q$ looks like $Q^1$ as Figure 7.2 shows, so we can get the desired result by doing similar mutations.
\end{proof}

\begin{lemma}\label{the another lemma singularity category of cluster-tilted algebra of D}
Keep the notation as in the proof of Theorem \ref{proposition to type Q(m,{i_1,...,i_r})}. If $Q(0)$ satisfies $d_1=i_2-i_1>1$ and $d_n=i_1+m_0-i_n=1$, then
$$D^b_{sg}(A)\simeq \underline{\mod }(\cn_{m-3N+d_Q}).$$
\end{lemma}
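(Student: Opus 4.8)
The plan is to reduce Case~(3) to Case~(2) (Lemma~\ref{the other lemma singularity category of cluster-tilted algebra of D}) by passing to the opposite algebra, exactly as the corresponding case was handled in the proof of Theorem~\ref{lemma schurian algebra of simple polygon-tree algebras}. First I would record that $A^{op}=KQ^{op}/I^{op}$ is again a simple polygon-tree algebra: reversing every arrow carries each non-self-intersecting oriented cycle of $Q$ to one of $Q^{op}$, so $A^{op}=J(Q^{op},W^{op})$ is a polygon-tree algebra with the standard potential (cf. the identity $B^{op}=J(\mu_k(Q^{op},W^{op}))$ used in Proposition~\ref{proposition mutations for schurian algebras}), and the two banned configurations of Figure~5 are symmetric under reversal of orientation, so simplicity is preserved. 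The invariants $m$ and $N$ are unchanged, and since reversing the orientation of the central cycle of a floriated neighbourhood $Q(i)$ only reverses the cyclic order of its distances $d_1,\dots,d_{n}$, the number $d_{Q(i)}$ of distances equal to $1$ is unchanged; summing over $i$ gives $d_{Q^{op}}=d_Q$.

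Next I would verify that, taking the same leaf $Q_1$ of the tree $T(Q)$ (still a leaf of $T(Q^{op})$) as the distinguished petal, the floriated neighbourhood of $Q_1$ in $Q^{op}$ meets the hypotheses of Case~(2). The two distances adjacent to $Q_1$ are $d_1>1$ (to the next petal) and $d_n=1$ (from the preceding one); reversing the orientation interchanges ``next'' and ``preceding'', so for $A^{op}$ these become $d_1'=d_n=1$ and $d_n'=d_1>1$, which are precisely the hypotheses of Lemma~\ref{the other lemma singularity category of cluster-tilted algebra of D}. Applying that lemma to $A^{op}$ therefore gives
$$D^b_{sg}(A^{op})\simeq \underline{\mod}(\cn_{m-3N+d_{Q^{op}}})=\underline{\mod}(\cn_{m-3N+d_{Q}}).$$

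It remains to transport this back to $A$. Here I would invoke the duality $D=\Hom_K(-,K)\colon D^b(\mod A)\to D^b(\mod A^{op})$, a contravariant triangle equivalence carrying $K^b(\proj A)$ onto $K^b(\operatorname{inj} A^{op})$. Since $A$ is $2$-CY-tilted it is Gorenstein, hence so is $A^{op}$, and over a Gorenstein algebra finite projective dimension and finite injective dimension of modules coincide, whence $K^b(\operatorname{inj} A^{op})=K^b(\proj A^{op})$ inside $D^b(\mod A^{op})$; thus $D$ descends to an equivalence $D^b_{sg}(A^{op})\simeq (D^b_{sg}(A))^{op}$. Finally $\cn_d$ is self-injective with $\cn_d^{op}\cong\cn_d$ (the cyclic quiver is isomorphic to its reverse), so the same duality gives $(\underline{\mod}(\cn_d))^{op}\simeq\underline{\mod}(\cn_d)$, and combining these,
$$D^b_{sg}(A)\simeq (D^b_{sg}(A^{op}))^{op}\simeq \underline{\mod}(\cn_{m-3N+d_{Q}}),$$
as desired. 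The step demanding the most care is precisely this last transport: one must use the Gorenstein property of $A$ to identify $K^b(\operatorname{inj} A^{op})$ with $K^b(\proj A^{op})$, so that the contravariant duality $D$ genuinely induces an anti-equivalence of singularity categories rather than merely of bounded derived categories.
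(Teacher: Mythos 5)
Your proof is correct and takes exactly the route the paper takes: the paper disposes of this case in a single line, saying it is ``dual to the above case, by doing the dual mutations \dots or considering the opposite algebras,'' and you have chosen the opposite-algebra option and supplied the details the paper leaves implicit (that $A^{op}$ is again a simple polygon-tree algebra with the same $m$, $N$ and $d_Q$, that reversal swaps $d_1$ and $d_n$ so that Lemma \ref{the other lemma singularity category of cluster-tilted algebra of D} applies, and that the duality $D$ together with the Gorenstein property of $A$ transports the equivalence back, using $\cn_d^{op}\cong\cn_d$). All of these verifications are sound, so the proposal is a correct, fleshed-out version of the paper's argument.
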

\begin{proof}
This case is dual to the above case, by doing the dual mutations to those in the proof of Lemma \ref{the other lemma singularity category of cluster-tilted algebra of D}, or considering the opposite algebras.
\end{proof}

\begin{lemma}\label{the final lemma singularity category of cluster-tilted algebra of D 2}
Keep the notation as in the proof of Theorem \ref{proposition to type Q(m,{i_1,...,i_r})}. If $Q(0)$ satisfies $d_1=i_2-i_1=1$ and $d_n=i_1+m_0-i_n=1$, then
$$D^b_{sg}(A)\simeq \underline{\mod }(\cn_{m-3N+d_Q}).$$
\end{lemma}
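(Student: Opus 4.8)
The plan is to mimic the strategy of Lemmas \ref{lemma singularity category of cluster-tilted algebra of D} and \ref{the other lemma singularity category of cluster-tilted algebra of D}: dissolve the terminal petal $Q_1$ into the central cycle through a chain of singularity equivalences, thereby reducing to a simple polygon-tree algebra with $N$ gluing components, and then invoke the inductive hypothesis of Theorem \ref{proposition to type Q(m,{i_1,...,i_r})}. Here $Q$ is as in Figure 8.1, so that in the floriated quiver $Q(0)$ the petal $Q_1$ is flanked on both sides by $Q_2$ (at distance $d_1=1$) and $Q_n$ (at distance $d_n=1$); that is, $Q_1$ shares the vertex $v^1_2=v^0_{i_1+1}$ with $Q_2$ and the vertex $v^1_1=v^0_{i_1}$ with $Q_n$.

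First I would treat the generic case $m_1>4$ and dispose of the smallest value $m_1=4$ at the end, exactly as in the previous two lemmas. For $m_1>4$ the reduction alternates two kinds of moves. The first kind are QP-mutations at the vertices $v^1_3,v^1_4,\dots,v^1_{m_1}$ walking around the petal: at each vertex one verifies the elementary path condition of Lemma \ref{proposition good mutation} to see that the relevant negative or positive algebra mutation is defined, and then applies Proposition \ref{proposition mutations for schurian algebras}. This is legitimate because, by Proposition \ref{schurian of polygon-tree algebras}, every algebra in the chain is a finite-dimensional schurian polygon-tree algebra in which no simple $S_i$ is a submodule of $\rad P_i$, so the hypotheses of Proposition \ref{proposition mutations for schurian algebras} are met and each such step is a derived, hence singularity, equivalence. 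The second kind are the one-point (co)extensions introducing auxiliary vertices $c_1,c_2$; by Proposition \ref{proposition homological epimorphism induces singularity equivalences} each of these preserves the singularity category. Carrying out these moves absorbs $Q_1$ into the central cycle.

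Because both flanking distances equal $1$, the outcome differs from the earlier cases only in the bookkeeping: the new central chordless cycle $Q_0'$ acquires $m_0+m_1-1$ vertices (one more than in Case (2) and two more than in Case (1), since each distance-$1$ neighbour blocks one reduction), the number of gluing components drops from $N+1$ to $N$, and both distance-$1$ gaps flanking $Q_1$ disappear while the merged gap between $Q_n$ and $Q_2$ along $Q_0'$ is long, so that $d_{Q'}=d_Q-2$. Writing $B$ for the resulting simple polygon-tree algebra, the inductive hypothesis then gives
\[
D^b_{sg}(A)\simeq D^b_{sg}(B)\simeq \underline{\mod}(\cn_{(m_0+m_1-1)+\sum_{i=2}^N m_i-3(N-1)+d_{Q'}})\simeq \underline{\mod}(\cn_{m-3N+d_Q}),
\]
the last equality being the identity $(m-1)-3(N-1)+(d_Q-2)=m-3N+d_Q$.

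The step I expect to be the main obstacle is verifying that the entire chain stays inside the class of simple polygon-tree algebras. Both Proposition \ref{proposition mutations for schurian algebras} and the inductive hypothesis require every intermediate quiver to be a (simple) polygon-tree quiver, and the delicate point, sharper here than in Cases (1)--(3) because $Q_1$ is squeezed between two distance-$1$ petals, is that dissolving $Q_1$ must not produce one of the banned configurations of Figure 5 (Definition \ref{definition of simple polygon-tree alegbra}). This is precisely where the hypothesis that $A$ is \emph{simple} is used: one checks, as the vertices of $Q_1$ are eliminated, that the local configuration around the growing cycle $Q_0'$ never contains a Figure 5 subquiver, so that $B$ is again a simple polygon-tree algebra to which the induction applies.
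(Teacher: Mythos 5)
Your overall plan---dissolve $Q_1$ by a chain of BB-mutations and one-point (co)extensions and then appeal to something already established---is the right one, and your final bookkeeping is internally consistent: the identity $(m-1)-3(N-1)+(d_Q-2)=m-3N+d_Q$ is correct, and the target you name is exactly what one obtains by composing the paper's reduction with that of Lemma \ref{the other lemma singularity category of cluster-tilted algebra of D}. But the route you describe is not the one the paper takes, and the description you give would not work verbatim. The paper does \emph{not} reduce directly to $N$ gluing components and the induction hypothesis: it first performs a one-point \emph{extension} adding a single auxiliary vertex $c_1$ (Figure 8.2, justified by Proposition \ref{proposition homological epimorphism induces singularity equivalences}), then takes \emph{negative} mutations at $v_{m_1}^1,v_{m_1-1}^1,\dots,v_4^1,v_3^1$---walking around the petal in the direction opposite to the one you propose, with no $c_2$ and no coextension---and lands on a simple polygon-tree algebra $B_{l+1}$ that still has $N+1$ gluing components, with central cycle $Q_0'$ of $m_0+1$ vertices and $d_{Q^{l+1}}=d_Q-1$, now in the configuration of Case (2); the conclusion then comes from quoting Lemma \ref{the other lemma singularity category of cluster-tilted algebra of D}, not the inductive hypothesis on $N$.

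The genuine gap is that the chain of moves \emph{is} the content of the lemma, and you have only asserted it by transplanting the move sequence of Case (1) ($\mu_{v_3^1}$ first, then coextensions at $c_1,c_2$ and positive mutations at $v_4^1,v_5^1,\dots$) into a situation where the local structure at both endpoints of the glued arrow $1\to 2$ is different, since $Q_2$ and $Q_n$ now sit immediately adjacent; the choice of direction and of extension versus coextension is not cosmetic, as it determines which of the path conditions in Lemma \ref{proposition good mutation} can actually be verified. Concretely, the one verification the paper spells out---that the positive mutation of $B_2$ at $v_{m_1}^1$ is defined, which is needed for Proposition \ref{proposition mutations for schurian algebras}(a) and which is precisely where simplicity enters (a nonzero path into $v_{m_1}^1$ passing through the vertex $3$ still composes nontrivially with $v_{m_1}^1\to v_{m_1-1}^1$ \emph{because} the arrow $3\to v_3^2$ is not a glued arrow, i.e.\ no Figure 5 configuration occurs)---has no counterpart in your write-up: you correctly predict that simplicity must be used, but you do not identify the step at which it is used or what would fail without it. Likewise, your blanket claim that every intermediate algebra is a schurian polygon-tree algebra with $S_i\not\subseteq\rad P_i$ is something that must be checked move by move for the quivers of Figures 8.2--8.6, not deduced once from Proposition \ref{schurian of polygon-tree algebras}.
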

\begin{proof}
We assume that the quiver $Q$ is as Figure 8.1 shows. $Q^1$ is the one-pointed extension of $Q$ by adding a vertex $c_1$ as Figure 8.2 shows, and
$W^{1}=\sum_{w\in\cs(Q^{1})}w$. Denote by $B_1$ the Jacobian algebra associated to $(Q^1,W^1)$.

\setlength{\unitlength}{0.5mm}
\begin{center}
\begin{picture}(200,90)(0,-10)

\put(-20,0){\begin{picture}(80,90)
\put(60,0){\circle*{2}}
\put(60,0){\vector(0,1){19}}
\put(60,-3){${}_{v_{m_1-1}^1}$}

\put(60,20){\circle*{2}}

\put(60,20){\vector(-1,1){14}}
\put(60,23){$_{v_{m_1}^1}$}

\put(45,35){\circle*{2}}
\put(45,35){\vector(-1,0){19}}
\put(46,37){${}_{1}$}
\put(42,31){${}_{i_1}$}

\put(25,35){\circle*{2}}

\put(25,31){$_{i_2}$}

\put(25,35){\vector(-1,-1){14}}
\put(22,37){${}_{2}$}

\put(10,20){\circle*{2}}
\put(10,20){\vector(0,-1){19}}

\put(3,20){${}_{v_3^1}$}

\put(45,55){\circle*{2}}

\put(45,55){\vector(0,-1){19}}

\put(25,35){\vector(0,1){19}}

\put(35,65){\circle*{2}}

\put(35,65){\vector(1,-1){9}}

\put(65,55){\vector(-1,0){19}}
\put(65,55){\circle*{2}}

\put(65,35){\circle*{2}}

\qbezier[15](65,55)(65,45)(65,35)

\qbezier[10](35,65)(30,60)(25,55)
\put(45,35){\vector(1,0){19}}
\put(25,55){\circle*{2}}

\put(10,0){\circle*{2}}
\put(3,0){${}_{v_4^1}$}

\put(2,55){$^{v_{3}^2}$}

\put(1,31){$_{v_{m_2}^2}$}

\put(62,56){$^{v_{m_n}^n}$}

\put(64,31){$_{v_{3}^n}$}

\put(45,-15){\circle*{2}}
\put(45,-15){\vector(1,1){14}}
\put(24,-19){$_{v_{m_1-3}^1}$}

\put(5,35){\circle*{2}}
\put(5,35){\vector(1,0){19}}

\put(25,55){\vector(-1,0){19}}

\put(5,55){\circle*{2}}

\qbezier[15](5,55)(5,45)(5,35)

\put(45,-18){$_{v_{m_1-2}^1}$}

\put(25,-15){\circle*{2}}
\put(25,-15){\vector(1,0){19}}
\qbezier[15](10,0)(20,-10)(25,-15)

\put(27,55){$_{3}$}
\put(36,55){$_{m_0}$}
\put(10,-34){Figure 8.1. $(Q,W)$}

\put(30,10){$Q_1$}
\put(30,45){$Q_0$}
\put(10,45){$Q_2$}

\put(50,45){$Q_{n}$}
\end{picture}}

\put(100,0){\begin{picture}(80,90)
\put(60,0){\circle*{2}}
\put(60,0){\vector(0,1){19}}
\put(60,-3){${}_{v_{m_1-1}^1}$}

\put(60,20){\circle*{2}}

\put(60,20){\vector(-1,1){14}}
\put(59,26){$_{v_{m_1}^1}$}

\put(80,20){\circle*{2}}
\put(80,20){$^{c_1}$}

\put(80,20){\vector(-1,0){19}}

\put(45,35){\circle*{2}}
\put(45,35){\vector(-1,0){19}}
\put(46,37){${}_{1}$}
\put(42,31){${}_{i_1}$}

\put(25,35){\circle*{2}}

\put(25,31){$_{i_2}$}

\put(25,35){\vector(-1,-1){14}}
\put(22,37){${}_{2}$}

\put(10,20){\circle*{2}}
\put(10,20){\vector(0,-1){19}}

\put(3,20){${}_{v_3^1}$}

\put(45,55){\circle*{2}}

\put(45,55){\vector(0,-1){19}}

\put(25,35){\vector(0,1){19}}

\put(35,65){\circle*{2}}

\put(35,65){\vector(1,-1){9}}

\put(65,55){\vector(-1,0){19}}
\put(65,55){\circle*{2}}

\put(65,35){\circle*{2}}

\qbezier[15](65,55)(65,45)(65,35)

\qbezier[10](35,65)(30,60)(25,55)
\put(45,35){\vector(1,0){19}}
\put(25,55){\circle*{2}}

\put(10,0){\circle*{2}}
\put(3,0){${}_{v_4^1}$}

\put(45,-15){\circle*{2}}
\put(45,-15){\vector(1,1){14}}
\put(24,-19){$_{v_{m_1-3}^1}$}

\put(5,35){\circle*{2}}
\put(5,35){\vector(1,0){19}}

\put(25,55){\vector(-1,0){19}}

\put(5,55){\circle*{2}}

\qbezier[15](5,55)(5,45)(5,35)

\put(45,-18){$_{v_{m_1-2}^1}$}

\put(25,-15){\circle*{2}}
\put(25,-15){\vector(1,0){19}}
\qbezier[15](10,0)(20,-10)(25,-15)

\put(2,55){$^{v_{3}^2}$}

\put(1,31){$_{v_{m_2}^2}$}

\put(62,56){$^{v_{m_n}^n}$}

\put(64,31){$_{v_{3}^n}$}

\put(27,55){$_{3}$}
\put(36,55){$_{m_0}$}
\put(10,-34){Figure 8.2. $(Q^1,W^1)$}

\put(30,10){$Q_1$}
\put(30,45){$Q_0$}
\put(10,45){$Q_2$}

\put(50,45){$Q_{n}$}
\end{picture}}
\end{picture}
\vspace{1.3cm}
\end{center}

First, we consider the case $m_1>3$.

For $(Q^1,W^1)$, the negative mutation at $v_{m_1}^1$ is defined, and we take the mutation of QP at $v_{m_1}^1$, denoting by $(Q^2,W^2)$ the resulting QP, where $Q^2$ is as Figure 8.3 shows, and $W_2=\sum_{w\in\cs(Q^{2})}w$.
The Jacobian algebra associated to $(Q^2,W^2)$ is denoted by $B_2$. Then $B_2$ is a polygon-tree algebra and so it is a schurian algebra by Theorem \ref{lemma schurian algebra of simple polygon-tree algebras}.
For any nonzero path $p$ ending at $v_{m_1}^1$, if $p$ does not pass through the vertex $3$, then it is easy to see that the combination of $p$ and the arrow $v_{m_1}^1\rightarrow v_{m_1-1}^1$ is nonzero; otherwise, since $Q$ is a simple polygon-tree quiver, the arrow $3\rightarrow v_{3}^2$ is not a glued arrow, which implies that the combination of $p$ and the arrow $v_{m_1}^1\rightarrow v_{m_1-1}^1$ is also nonzero. We omit the concrete proof here.
Thus Proposition \ref{proposition mutations for schurian algebras} (a) shows that $B_1$ is derived equivalent to $B_2$.

The negative mutation of $B_2$ at $v_{m_1-1}^1$ is defined, and we take the mutation of QP at $v_{m_1-1}^1$, denoted by $(Q^3,W^3)$ the resulting QP, where $Q^3$ is as Figure 8.4 shows, and $W_3=\sum_{w\in\cs(Q^{3})}w$.
The Jacobian algebra associated to $(Q^3,W^3)$ is denoted by $B_3$. Easily, the positive mutation of $B_3$ at $v_{m_1-1}^1$ is defined, so Proposition \ref{proposition mutations for schurian algebras} (a) shows that $B_2$ is derived equivalent to $B_3$.

For $(Q^3,W^3)$, the negative mutation at $v_{m_1-2}^1$ is defined, we take the mutation of QP at this vertex, and denote the resulting QP by $(Q^4,W^4)$, and its Jacobian algebra by $B_4$. Similar to the above, we can get that $B_3$ is derived equivalent to $B_4$.
For $(Q^4,W^4)$, the negative mutation at $v_{m_1-3}^1$ is defined, we take the mutation and recursively, we get that $B_4$ is derived equivalent to the Jacobian algebra $B_l$ of $(Q^l,W^l)$, where
$Q^l$ is Figure 8.5 shows, and $W^l= \sum_{w\in\cs(Q^{l})}w$.

\setlength{\unitlength}{0.5mm}
\begin{center}
\begin{picture}(200,90)(0,-10)

\put(-20,0){\begin{picture}(80,90)
\put(60,0){\circle*{2}}
\put(60,20){\vector(0,-1){19}}
\put(60,-3){${}_{v_{m_1-1}^1}$}

\qbezier(60,0)(52.5,17.5)(46.5,31.5)

\put(46.5,31.5){\vector(-1,2){1}}

\put(60,20){\circle*{2}}

\put(45,35){\vector(1,-1){14}}
\put(60,17){$_{v_{m_1}^1}$}

\put(60,20){\vector(1,0){19}}
\put(80,20){\circle*{2}}
\put(80,20){$^{c_1}$}
\qbezier(80,20)(62.5,27.5)(48.5,33.5)
\put(48.5,33.5){\vector(-2,1){1}}
\put(45,35){\circle*{2}}
\put(45,35){\vector(-1,0){19}}
\put(46,37){${}_{1}$}
\put(42,31){${}_{i_1}$}

\put(25,35){\circle*{2}}

\put(25,31){$_{i_2}$}

\put(25,35){\vector(-1,-1){14}}
\put(22,37){${}_{2}$}

\put(10,20){\circle*{2}}
\put(10,20){\vector(0,-1){19}}

\put(3,20){${}_{v_3^1}$}

\put(45,55){\circle*{2}}

\put(45,55){\vector(0,-1){19}}

\put(25,35){\vector(0,1){19}}

\put(35,65){\circle*{2}}

\put(35,65){\vector(1,-1){9}}

\put(65,55){\vector(-1,0){19}}
\put(65,55){\circle*{2}}

\put(65,35){\circle*{2}}

\qbezier[15](65,55)(65,45)(65,35)

\qbezier[10](35,65)(30,60)(25,55)
\put(45,35){\vector(1,0){19}}
\put(25,55){\circle*{2}}

\put(10,0){\circle*{2}}
\put(3,0){${}_{v_4^1}$}

\put(45,-15){\circle*{2}}
\put(45,-15){\vector(1,1){14}}
\put(22,-19){$_{v_{m_1-3}^1}$}

\put(5,35){\circle*{2}}
\put(5,35){\vector(1,0){19}}

\put(25,55){\vector(-1,0){19}}

\put(5,55){\circle*{2}}

\qbezier[15](5,55)(5,45)(5,35)

\put(45,-18){$_{v_{m_1-2}^1}$}

\put(25,-15){\circle*{2}}
\put(25,-15){\vector(1,0){19}}
\qbezier[15](10,0)(20,-10)(25,-15)

\put(27,55){$_{3}$}
\put(36,55){$_{m_0}$}

\put(2,55){$^{v_{3}^2}$}

\put(1,31){$_{v_{m_2}^2}$}

\put(62,56){$^{v_{m_n}^n}$}

\put(64,31){$_{v_{3}^n}$}

\put(10,-34){Figure 8.3. $(Q^2,W^2)$}

\end{picture}}

\put(100,0){\begin{picture}(80,90)

\put(45,35){\circle*{2}}
\put(45,35){\vector(-1,0){19}}
\put(46,37){${}_{1}$}
\put(40,31){${}_{i_1}$}

\put(25,35){\circle*{2}}

\put(25,31){$_{i_2}$}

\put(25,35){\vector(-1,-1){14}}
\put(22,37){${}_{2}$}

\put(10,20){\circle*{2}}
\put(10,20){\vector(0,-1){19}}

\put(3,20){${}_{v_3^1}$}

\put(45,55){\circle*{2}}

\put(45,55){\vector(0,-1){19}}

\put(25,35){\vector(0,1){19}}

\put(35,65){\circle*{2}}

\put(35,65){\vector(1,-1){9}}

\put(65,55){\vector(-1,0){19}}
\put(65,55){\circle*{2}}

\put(65,35){\circle*{2}}

\qbezier[15](65,55)(65,45)(65,35)

\qbezier[10](35,65)(30,60)(25,55)
\put(45,35){\vector(1,0){19}}
\put(25,55){\circle*{2}}

\put(10,0){\circle*{2}}
\put(3,0){${}_{v_4^1}$}

\put(45,-15){\circle*{2}}

\put(22,-19){$_{v_{m_1-3}^1}$}

\put(5,35){\circle*{2}}
\put(5,35){\vector(1,0){19}}

\put(25,55){\vector(-1,0){19}}

\put(5,55){\circle*{2}}

\qbezier[15](5,55)(5,45)(5,35)

\put(45,-18){$_{v_{m_1-2}^1}$}

\put(25,-15){\circle*{2}}
\put(25,-15){\vector(1,0){19}}
\qbezier[15](10,0)(20,-10)(25,-15)

\put(27,55){$_{3}$}
\put(36,55){$_{m_0}$}

\put(2,55){$^{v_{3}^2}$}

\put(1,31){$_{v_{m_2}^2}$}

\put(62,56){$^{v_{m_n}^n}$}

\put(64,31){$_{v_{3}^n}$}

\put(45,-15){\vector(0,1){49}}
\put(70,-15){\vector(-1,0){24}}
\put(45,35){\vector(1,-2){24}}
\put(10,-34){Figure 8.4. $(Q^3,W^3)$}
\put(70,-15){\circle*{2}}

\put(70,-18){${}_{v_{m_1-1}^1}$}
\put(90,-18){${}_{v_{m_1}^1}$}
\put(70,-15){\vector(1,0){19}}
\put(90,-15){\circle*{2}}
\put(90,-15){\vector(0,1){26}}
\put(90,12.5){\circle*{2}}
\put(90,12.5){$^{c_1}$}
\put(90,12.5){\vector(-2,1){44}}
\end{picture}}
\end{picture}
\vspace{1cm}
\end{center}

For $(Q^l,W^l)$, the negative mutation at $v_{3}^1$ is defined, we take the mutation of QP at it, and denote the resulting QP by $(Q^{l+1},W^{l+1})$, where $Q^{l+1}$ is as Figure 8.6 shows, and $W^{l+1}=\sum_{w\in\cs(Q^{l+1})}w$. Its Jacobian algebra is denoted by $B_{l+1}$. Similar to the above, we can get that $B_{l}$ is derived equivalent to $B_{l+1}$.

\setlength{\unitlength}{0.5mm}
\begin{center}
\begin{picture}(200,90)(0,-10)

\put(-20,0){\begin{picture}(80,90)

\put(45,35){\circle*{2}}
\put(45,35){\vector(-1,0){19}}
\put(46,37){${}_{1}$}

\put(25,35){\circle*{2}}

\put(26,31){$_{i_2}$}

\put(21.5,37.5){${}_{2}$}

\put(45,55){\circle*{2}}

\put(45,55){\vector(0,-1){19}}

\put(25,35){\vector(0,1){19}}

\put(35,65){\circle*{2}}

\put(35,65){\vector(1,-1){9}}

\put(65,55){\vector(-1,0){19}}
\put(65,55){\circle*{2}}

\put(65,35){\circle*{2}}

\qbezier[15](65,55)(65,45)(65,35)

\qbezier[10](35,65)(30,60)(25,55)
\put(45,35){\vector(1,0){19}}
\put(25,55){\circle*{2}}

\qbezier[15](65,-5)(65,5)(65,15)

\put(65,15){$^{c_1}$}

\put(67,-5){$_{v_5^1}$}

\put(5,35){\circle*{2}}
\put(5,35){\vector(1,0){19}}

\put(25,55){\vector(-1,0){19}}

\put(5,55){\circle*{2}}

\qbezier[15](5,55)(5,45)(5,35)

\put(27,55){$_{3}$}
\put(36,55){$_{m_0}$}

\put(25,15){\circle*{2}}

\put(25,35){\vector(0,-1){19}}
\put(45,15){\circle*{2}}
\put(45,35){\vector(0,-1){19}}
\put(45,15){\vector(-1,0){19}}
\put(25,15){\vector(1,1){19}}

\put(58,26){$_{v_5^1}$}
\put(18,15){$_{v_3^1}$}

\put(47,15){$_{v_4^1}$}

\put(2,55){$^{v_{3}^2}$}

\put(1,31){$_{v_{m_2}^2}$}

\put(62,56){$^{v_{m_n}^n}$}

\put(64,31){$_{v_{3}^n}$}
\put(45,15){\vector(1,-1){19}}
\put(10,-38){Figure 8.5. $(Q^l,W^l)$}
\put(65,-5){\circle*{2}}
\put(65,15){\circle*{2}}
\put(65,15){\vector(-1,1){19}}

\end{picture}}

\put(100,0){\begin{picture}(80,90)
\put(60,0){\circle*{2}}
\put(60,0){\vector(0,1){19}}
\put(60,-3){${}_{v_{m_1}^1}$}

\put(60,20){\circle*{2}}

\put(60,20){\vector(-1,1){14}}
\put(60,23){$_{c_1}$}

\put(45,35){\circle*{2}}
\put(45,35){\vector(-1,0){19}}
\put(46,37){${}_{1}$}
\put(42,31){${}_{i_1}$}

\put(25,35){\circle*{2}}

\put(25,31){$_{v_3^1}$}

\put(25,35){\vector(-1,-1){14}}

\put(1,31){$_{v_{m_2}^2}$}

\put(62,56){$^{v_{m_n}^n}$}

\put(10,20){\circle*{2}}
\put(10,20){\vector(0,-1){19}}

\put(3,20){${}_{v_4^1}$}

\put(45,55){\circle*{2}}

\put(45,55){\vector(0,-1){19}}

\put(25,35){\vector(0,1){19}}

\put(35,65){\circle*{2}}

\put(65,55){\vector(-1,0){19}}
\put(65,55){\circle*{2}}

\put(65,35){\circle*{2}}

\put(25,55){\vector(1,1){9}}

\put(35,65){\vector(-1,1){14}}
\put(20,80){\circle*{2}}
\put(10,70){\circle*{2}}
\put(10,70){\vector(1,-1){14}}
\qbezier[10](10,70)(15,75)(20,80)

\qbezier[15](65,55)(65,45)(65,35)

\qbezier[10](35,65)(40,60)(45,55)
\put(45,35){\vector(1,0){19}}
\put(25,55){\circle*{2}}

\put(10,0){\circle*{2}}
\put(3,0){${}_{v_5^1}$}

\put(45,-15){\circle*{2}}
\put(45,-15){\vector(1,1){14}}
\put(24,-19){$_{v_{m_1-2}^1}$}

\put(45,-18){$_{v_{m_1-1}^1}$}

\put(25,-15){\circle*{2}}
\put(25,-15){\vector(1,0){19}}
\qbezier[15](10,0)(20,-10)(25,-15)

\put(27,54){$_{2}$}
\put(34,68){$_{3}$}

\put(18,54){$_{i_2}$}

\put(36,55){$_{m_0}$}
\put(10,-34){Figure 8.6. $(Q^{l+1},W^{l+1})$}

\put(30,10){$Q_1$}
\put(30,45){$Q_0'$}

\put(17,65){$Q_2$}

\put(50,45){$Q_{n}$}
\end{picture}}
\end{picture}
\vspace{1.5cm}
\end{center}

Note that $B_{l+1}$ is a simple polygon-tree algebra with $N+1$ gluing components $Q_0',Q_1,\dots,Q_N$, and $Q_0'$ has $m_0+1$ vertices. Note that $d_{Q^{l+1}}=d_Q-1$. So Lemma
\ref{the other lemma singularity category of cluster-tilted algebra of D} implies that
$$D^b_{sg}(A)\simeq D^b_{sg}(B_{l+1})\simeq\underline{\mod}\cn_{m_0+1+\sum_{i=1}^{N}m_i-3N+d_{Q^{l+1}}}=\underline{\mod}\cn_{m-3N+d_Q}.$$

For $m_1=3$, we also consider $(Q^1,W^1)$. The negative mutation of $B_1$ at $v_{3}^1$ is defined, and if we take the mutation of QP at $v_{3}^1$, then we already get a QP as Figure 8.6 shows. It is easy to get the desired result in this case.
\end{proof}

To describe the singularity category of a simple polygon-tree algebra, we need to recall a construction of Riedtmann \cite{Rie1}.
Let $\Z\A_n$ be the \emph{translation quiver} of a quiver of type $\A_n$. The translation of $\Z\A_n$ is denoted by $\tau$.
The \emph{mesh category} $K(\Z\A_n)$ is defined as the quotient category of the path category of $\Z\A_n$ by the \emph{mesh ideal}.
For any $r\in \N$, we denote by $K(\Z\A_n)/\tau^r$ the quotient category of $K(\Z\A_n)$ by the cyclic group $\tau^{r\Z}$.

\begin{corollary}\label{corollary singularity category}
Let $A=KQ/I$ be a simple polygon-tree algebra, where the gluing components of $Q$ are $Q_0$, $Q_1$, $\dots$, $Q_N$. Then
$D^b_{sg}(A)$ is triangle equivalent to $K(\Z\A_{m-3N+d_Q-2})/\tau^{m-3N+d_Q}$.
\end{corollary}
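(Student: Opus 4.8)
The plan is to combine Theorem \ref{proposition to type Q(m,{i_1,...,i_r})} with an explicit analysis of the stable category of a self-injective Nakayama algebra. Writing $d=m-3N+d_Q$ for brevity, Theorem \ref{proposition to type Q(m,{i_1,...,i_r})} already supplies a triangle equivalence $D^b_{sg}(A)\simeq\underline{\mod}(\cn_d)$. Hence it suffices to identify $\underline{\mod}(\cn_d)$ with $K(\Z\A_{d-2})/\tau^d$, so that the whole corollary reduces to a statement about the single algebra $\cn_d$ and requires no further input about $A$ itself.

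First I would describe the module category of $\cn_d$ explicitly. Since $\cn_d$ is the self-injective Nakayama algebra given by the cyclic quiver on $d$ vertices with $\rad^{d-1}=0$, every indecomposable module is uniserial, determined by its top and its length $\ell$ with $1\le\ell\le d-1$; the indecomposable projective-injective modules are precisely those of length $d-1$. Removing these, the stable category $\underline{\mod}(\cn_d)$ has exactly $d(d-2)$ indecomposable objects. Computing the Auslander--Reiten translation on these uniserial modules, one checks that the stable Auslander--Reiten quiver is the stable translation quiver $\Z\A_{d-2}/\langle\tau^d\rangle$: each of the $d-2$ rows of $\Z\A_{d-2}$ is a single $\tau$-orbit, and the quotient by $\tau^d$ yields $d$ objects per orbit, matching the count $d(d-2)$. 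The action of $\tau^d$ is free since $d\ge3$, so the quotient is an admissible stable translation quiver.

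Next I would invoke Riedtmann's theory of representation-finite self-injective algebras. The algebra $\cn_d$ is standard, so by the description of the stable categories of representation-finite self-injective algebras of type $\A$ in \cite{Rie2} (see also \cite{Rie1}), its stable category is triangle equivalent to the mesh category of its stable Auslander--Reiten quiver. Combined with the previous step this gives $\underline{\mod}(\cn_d)\simeq K(\Z\A_{d-2})/\tau^d$, and substituting $d=m-3N+d_Q$ finishes the proof.

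The main obstacle is the bookkeeping in the second step: one must pin down the correct Dynkin rank $\A_{d-2}$ and the correct translation period $d$ for the stable Auslander--Reiten quiver, and verify that $\cn_d$ is standard with freely acting $\tau^d$ so that Riedtmann's mesh-category description applies verbatim. Once these combinatorial invariants are confirmed, the equivalence follows immediately from \cite{Rie2}, with no homological input beyond Theorem \ref{proposition to type Q(m,{i_1,...,i_r})}.
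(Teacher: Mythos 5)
Your proposal is correct and follows essentially the same route as the paper: the paper likewise deduces the corollary by combining Theorem \ref{proposition to type Q(m,{i_1,...,i_r})} with the identification $\underline{\mod}(\cn_{d})\simeq K(\Z\A_{d-2})/\tau^{d}$, which it simply cites from \cite{Rie2}. Your additional verification of the combinatorial invariants (the count $d(d-2)$ of stable indecomposables and the shape $\Z\A_{d-2}/\tau^{d}$ of the stable Auslander--Reiten quiver) is accurate but is left implicit in the paper's one-line proof.
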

\begin{proof}
From \cite{Rie2}, we get that the stable category $\underline{\mod }(\cn_{m-3N+d_Q})$ is equivalent to $$K(\Z\A_{m-3N+d_Q-2})/\tau^{m-3N+d_Q}.$$
So the result follows from
Theorem \ref{proposition to type Q(m,{i_1,...,i_r})} immediately.
\end{proof}

\begin{example}
Let $A=KQ/I$ be the polygon-tree algebra with $Q$ as Figure 9.1 shows. It is not simple, and also a cluster-tilted algebra of type $\E_6$. From \cite{CGL}, we know that
$$D^b_{sg}(A)\simeq \underline{\mod}\cn_{4},$$
which does not satisfy the formula in Theorem \ref{proposition to type Q(m,{i_1,...,i_r})}.

\begin{center}\setlength{\unitlength}{0.7mm}
 \begin{picture}(60,30)
\put(10,10){\circle*{2}}
\put(10,10){\vector(1,1){9}}
\put(20,20){\circle*{2}}
\put(20,20){\vector(1,-1){9}}

\put(30,10){\circle*{2}}
\put(30,10){\vector(-1,0){19}}
\put(30,10){\vector(1,1){9}}
\put(40,20){\circle*{2}}
\put(40,20){\vector(-1,0){19}}
\put(40,20){\vector(1,-1){9}}
\put(50,10){\circle*{2}}
\put(50,10){\vector(-1,0){19}}
\put(50,10){\vector(1,1){9}}
\put(60,20){\circle*{2}}
\put(60,20){\vector(-1,0){19}}

\put(-20,0){Figure 9.1. A non-simple polygon-tree quiver.}
\end{picture}

\end{center}

\end{example}

We have the following direct corollaries.

\begin{corollary}
Let $A=KQ/I$ be a floriated algebra of $(Q_0,\{i_1,\dots,i_n\})$ by $Q_1,\dots,Q_n$. Then
$$D^b_{sg}(A)\simeq \underline{\mod }(\cn_{m-3n+d_Q}).$$
\end{corollary}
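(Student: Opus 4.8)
The plan is to realise a floriated algebra as a simple polygon-tree algebra and then to quote Theorem~\ref{proposition to type Q(m,{i_1,...,i_r})} directly. A floriated algebra of $(Q_0,\{i_1,\dots,i_n\})$ by $Q_1,\dots,Q_n$ is the polygon-tree algebra whose gluing components are the central cycle $Q_0$ together with the petals $Q_1,\dots,Q_n$; thus it has $n+1$ gluing components and the polygon-tree index is $N=n$. Under this identification the algebra $\cn_{m-3N+d_Q}$ produced by the theorem is precisely $\cn_{m-3n+d_Q}$, so the corollary reduces to checking the hypotheses of the theorem and matching the two occurrences of $d_Q$.

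First I would verify that a floriated algebra is a \emph{simple} polygon-tree algebra, namely that its quiver contains none of the banned subquivers of Figure~5. The key observation is that the adjacency tree $T(Q)$ of a floriated quiver is a \emph{star}: each petal $Q_j$ shares exactly one arrow with $Q_0$ and no two petals share an arrow, so every petal is adjacent to $Q_0$ alone. Hence any connected four-component polygon-tree subquiver of $Q$ is formed by $Q_0$ and three petals, and its adjacency tree is the star on four nodes, in which the centre $Q_0$ has degree three. By contrast, each of the two banned configurations glues four cycles whose adjacency tree is a \emph{path} on four nodes (with two interior cycles of degree two). A star on three leaves admits no such path, so no banned subquiver embeds and $A$ is simple.

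Next I would reconcile the two meanings of $d_Q$. By the Conventions the polygon-tree value is $d_Q=\sum_{i=0}^{n}d_{Q(i)}$, and for a floriated quiver one has $Q(0)=Q$ since $Q_0$ is adjacent to every petal; thus $d_{Q(0)}$ is exactly the floriated $d_Q=\sharp\{j:d_j=1\}$ of Definition~\ref{definition of floriated algebra}. For $i\ge 1$ the neighbourhood $Q(i)$ is the one-petal floriated quiver on $Q_0$ and $Q_i$, whose unique inter-petal distance is $m_0\ge 3>1$, so $d_{Q(i)}=0$. Therefore $\sum_{i=0}^{n}d_{Q(i)}=d_Q$, the two definitions agree, and Theorem~\ref{proposition to type Q(m,{i_1,...,i_r})} yields $D^b_{sg}(A)\simeq\underline{\mod}(\cn_{m-3n+d_Q})$.

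The only delicate point---and the main obstacle---is that a floriated algebra is permitted to have \emph{triangle petals} ($m_j=3$), whereas the formal definition of a polygon-tree algebra requires each component to have more than three vertices. Here I would invoke the fact that the proof of Theorem~\ref{proposition to type Q(m,{i_1,...,i_r})} already treats a size-three component in its $m_1=3$ branch: collapsing the free vertex of such a triangle through the idempotent quotient of Proposition~\ref{proposition homological epimorphism induces singularity equivalences} preserves $D^b_{sg}$ and yields a floriated algebra with one fewer petal. The point to verify is that this collapse leaves $m-3n+d_Q$ unchanged; removing a triangle petal at $i_j$ lowers $m$ by $3$ and $n$ by $1$, keeping $m-3n$ fixed, while it merges the two neighbouring distances $d_{j-1},d_j$ into $d_{j-1}+d_j\ge 2$, and the resulting adjustment of $\sharp\{d_j=1\}$ is exactly the one recorded by the four cases of the theorem (according to whether $d_1$ and $d_n$ equal $1$). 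Iterating this reduction eliminates all triangle petals and reduces the claim to the case where every petal has more than three vertices, which is settled by the previous paragraphs.
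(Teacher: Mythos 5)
Your first two paragraphs carry out exactly what the paper leaves implicit in calling this a ``direct corollary'' of Theorem \ref{proposition to type Q(m,{i_1,...,i_r})}: a floriated quiver is a polygon-tree quiver with $N=n$ whose adjacency tree is a star centred at $Q_0$, so neither banned configuration of Figure 5 (each of which has a path on four nodes as adjacency tree, hence two components of adjacency degree two) can occur, and the two readings of $d_Q$ agree because $Q(0)=Q$ gives $d_{Q(0)}=\sharp\{j: d_j=1\}$ while $d_{Q(i)}=0$ for $i\geq 1$. That part is correct and is a useful elaboration of the paper's one-line deduction.

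The last paragraph, however, contains a genuine error. The ``obstacle'' is only an artifact of reading ``more than $3$ vertices'' in Definition \ref{definition of polygon-tree algebras} literally; the paper plainly intends components with at least $3$ vertices (the proof of Theorem \ref{proposition to type Q(m,{i_1,...,i_r})} has an explicit $m_1=3$ branch, and Theorem \ref{theorem of representation type} concludes $m_i=3$ for all $i$ in certain cases), so no separate reduction is needed. More importantly, the reduction you propose is false as stated. Collapsing the free vertex of a triangle petal $Q_j$ keeps $m-3n$ fixed but decreases $d_Q$ by $\sharp\{i\in\{j-1,j\}: d_i=1\}$, since the two adjacent distances merge into $d_{j-1}+d_j\geq 2$; hence $m-3n+d_Q$ is preserved only when both distances adjacent to $Q_j$ exceed $1$, which is precisely the only situation (Case (1), via Lemma \ref{lemma singularity category of cluster-tilted algebra of D}) in which the paper uses the idempotent quotient of Proposition \ref{proposition homological epimorphism induces singularity equivalences}; the other cases are handled by mutations, which enlarge the central cycle rather than delete the petal. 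Concretely, take $m_0=3$ with a triangle petal on each arrow of $Q_0$ (a cluster-tilted algebra of type $\D_6$): then $m-3n+d_Q=12-9+3=6$, whereas collapsing one petal leaves a type $\D_5$ algebra with $9-6+1=4$, and $\underline{\mod}(\cn_6)\not\simeq\underline{\mod}(\cn_4)$ (they have $24$ and $8$ indecomposables respectively). So the collapse is not a singularity equivalence there, the claimed invariance of $m-3n+d_Q$ fails, and the appeal to ``the four cases of the theorem'' does not repair it. The correct fix is simply to observe that floriated algebras already fall under the theorem with components of $\geq 3$ vertices.
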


\begin{corollary}[\cite{CGL}]
Let $A=KQ/I$ be a floriated algebra of $(Q_0,\{i_1,\dots,i_n\})$ by $Q_1,\dots,Q_n$. If $A$ is a cluster-tilted algebra of type $\D$, then
$$D^b_{sg}(A)\simeq \underline{\mod }(\cn_{m_0+d_Q}).$$
\end{corollary}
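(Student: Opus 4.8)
The plan is to obtain this as a direct specialization of the preceding corollary, once we identify which floriated algebras are cluster-tilted of type $\D$. For an arbitrary floriated algebra $A=J(Q,W)$ of $(Q_0,\{i_1,\dots,i_n\})$ by $Q_1,\dots,Q_n$ that corollary already gives $D^b_{sg}(A)\simeq\underline{\mod}(\cn_{m-3n+d_Q})$, so the entire content of the present statement is the equality $m-3n=m_0$ under the hypothesis that $A$ is cluster-tilted of type $\D$.

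First I would record the combinatorial characterization: a floriated algebra is a cluster-tilted algebra of type $\D$ precisely when all of its petals are triangles, that is, $m_1=\cdots=m_n=3$. The ``if'' direction is exactly the Remark following Figure~1, which rests on \cite{Va}. For the ``only if'' direction I would recall from the proof of Proposition \ref{schurian of polygon-tree algebras} that the chordless cycles of the floriated quiver $Q$ are precisely the central cycle $Q_0$ together with the petals $Q_1,\dots,Q_n$; since in the mutation class of $\D_n$ (Vatne's classification in \cite{Va}) the central cycle $Q_0$ is the only chordless cycle whose length is allowed to exceed three, every petal $Q_j$ with $j\ge 1$ is forced to be a $3$-cycle, so $m_j=3$.

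Granting $m_1=\cdots=m_n=3$, the remaining step is a one-line substitution: $m=\sum_{i=0}^n m_i=m_0+3n$, whence $m-3n+d_Q=m_0+d_Q$, and the preceding corollary yields $D^b_{sg}(A)\simeq\underline{\mod}(\cn_{m_0+d_Q})$, as claimed. Note that $m_0$ (the length of the central cycle) and $d_Q$ are both unconstrained here, consistent with the formula keeping them as free parameters.

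I expect the only non-formal point, and therefore the main obstacle, to be the ``only if'' half of the characterization: it genuinely uses the explicit shape of the type-$\D$ mutation class to exclude petals of length greater than three, so that the single long cycle can only be the central one. Once that reduction is in place the conclusion is purely arithmetic, and everything else is inherited from the already-established formula of the previous corollary.
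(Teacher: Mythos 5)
Your overall strategy is the right one and, as far as one can tell, the intended one: the paper offers no argument at all (both statements are listed as ``direct corollaries''), and the only possible content beyond the preceding corollary is the arithmetic identity $m-3n=m_0$, i.e.\ $m_1=\cdots=m_n=3$. Your final substitution step is correct, and you are also right that the ``if'' half of the characterization is exactly the Remark following Figure~1.

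The genuine gap is in your ``only if'' step, and it is not repairable as stated: a floriated algebra can be a cluster-tilted algebra of type $\D$ while having a petal of length greater than $3$. Take $Q_0$ a $3$-cycle and $Q_1$ a $4$-cycle glued along one arrow of $Q_0$ ($m_0=3$, $n=1$, $m_1=4$, five vertices, primitive potential). Mutating at the vertex $v_3^1$ turns this quiver into three triangles glued in a row, which is a Vatne type IV quiver for $D_5$ (central triangle with two spikes on consecutive arrows); since the original quiver is cyclically oriented, its Jacobian algebra is precisely the cluster-tilted algebra of type $D_5$ with that quiver. Your argument breaks because you identify the central cycle $Q_0$ of the \emph{floriated presentation} with the central cycle of Vatne's type IV presentation; in this example Vatne's central cycle is the petal $Q_1$, not $Q_0$. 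Note that the example also shows the corollary is false if read literally for an arbitrary floriated presentation: here $m_0+d_Q=3+0=3$, whereas the preceding corollary (whose exponent $m-3n+d_Q$ is presentation-independent) gives $\underline{\mod}(\cn_{7-3+0})=\underline{\mod}(\cn_4)$, and $\underline{\mod}(\cn_3)\not\simeq\underline{\mod}(\cn_4)$. The statement is only correct under the convention that $Q_0$ is the central cycle of the type IV form, equivalently that $m_1=\cdots=m_n=3$ as in the Remark --- and under that reading your ``only if'' direction is not needed at all, while without it the reduction $m-3n=m_0$ simply fails.
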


Let $A=KQ/I$ be a quotient of the path algebra of the following quiver $Q$ modulo the ideal $I$ generated by the elements described below (in $Q$ the arm with arrows $x_i$ contains $p_i$ arrows):

\begin{center}\setlength{\unitlength}{0.5mm}
 \begin{picture}(110,70)
\put(0,40){\circle*{2}}
\put(2,42){\vector(1,1){16}}
\put(20,60){\circle*{2}}
\put(23,60){\vector(1,0){13}}
\put(40,60){\circle*{2}}
\put(43,60){\vector(1,0){13}}
\put(60,58.5){\large$\cdots$}
\put(70,60){\vector(1,0){13}}
\put(88,60){\circle*{2}}
\put(90,58){\vector(1,-1){16}}
\put(108,40){\circle*{2}}

\put(3,41){\vector(4,1){13}}
\put(20,45){\circle*{2}}
\put(23,45){\vector(1,0){13}}
\put(40,45){\circle*{2}}
\put(43,45){\vector(1,0){13}}
\put(60,43.5){\large$\cdots$}
\put(70,45){\vector(1,0){13}}
\put(88,45){\circle*{2}}
\put(91,44){\vector(4,-1){13}}
\qbezier(4,38)(50,25)(104,38)

\put(4,38){\vector(-3,1){1}}

\put(2,38){\vector(1,-1){16}}
\put(20,20){\circle*{2}}
\put(23,20){\vector(1,0){13}}
\put(40,20){\circle*{2}}
\put(43,20){\vector(1,0){13}}
\put(60,18.5){\large$\cdots$}
\put(70,20){\vector(1,0){13}}
\put(88,20){\circle*{2}}
\put(90,22){\vector(1,1){16}}
\put(50,33){$\eta$}
\put(3,50){$x_1$}
\put(24,62){$x_1$}
\put(44,62){$x_1$}
\put(96,53){$x_1$}

\put(6,39){$x_2$}
\put(24,41){$x_2$}
\put(44,41){$x_2$}
\put(93,39){$x_2$}

\put(3,29){$x_3$}

\put(24,22){$x_3$}
\put(44,22){$x_3$}
\put(96,25){$x_3$}

\put(10,8){$I: x_1^{p_1}+x_2^{p_2}+x_3^{p_3},x_i^{p_i-a}\eta x_i^{a-1}$}
\put(20,-1){$\mbox{ for } i=1,2,3$, $a=1,\dots,p_i$.}

\put(-70,-13){Figure 9.2. Cluster-tilted algebra of a canonical algebra with weights $(p_1,p_2,p_3)$.}
\end{picture}
\vspace{0.8cm}
\end{center}
From \cite{BKL}, we know that $A=KQ/I$ is a cluster-tilted algebra of a canonical algebra with weights $(p_1,p_2,p_3)$.

\begin{corollary}
Let $A=KQ/I$ be the cluster-tilted algebra of a canonical algebra with weights $(p_1,p_2,p_3)$ as above. If $p_1=2$, then $D^b_{sg}(A)\simeq \underline{\mod}\cn_{p_1+p_3-1}$.
\end{corollary}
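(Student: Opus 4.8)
The plan is to realize $A$ as a Jacobian algebra whose quiver is three oriented cycles sharing a single common arrow, and then to use the hypothesis that one weight equals $2$ to collapse that short cycle down to an honest, two-component \emph{simple} polygon-tree algebra, at which point Theorem \ref{proposition to type Q(m,{i_1,...,i_r})} applies verbatim. First I would rewrite the given presentation $A=KQ/I$ as $A=J(Q,W)$ with $W=C_1+C_2+C_3$, where $C_i$ is the oriented cycle formed by the $i$-th arm together with $\eta$. Indeed $\partial_\eta W$ returns the relation $x_1^{p_1}+x_2^{p_2}+x_3^{p_3}$, while the cyclic derivatives along the arrows of each arm return exactly the rotation relations $x_i^{p_i-a}\eta x_i^{a-1}$. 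Thus $Q$ is the union of the three cycles $C_1,C_2,C_3$ glued along the one arrow $\eta$; this is \emph{not} a polygon-tree quiver in the sense of Definition \ref{definition of polygon-tree algebras} (the gluing is not tree-like), so the main theorem cannot be applied to $A$ directly.

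The decisive reduction uses the weight-two hypothesis. The arm of weight $2$ then gives a triangle $L\xrightarrow{\alpha}c\xrightarrow{\beta}R\xrightarrow{\eta}L$ whose interior vertex $c$ carries a single incoming arrow $\alpha$ and a single outgoing arrow $\beta$, and the Jacobian relations coming from this triangle force $\eta\beta=0$ and $\alpha\eta=0$, so that every positive-length cycle at $c$ vanishes and $e_cAe_c=K$. I would therefore present $A$ in the block form $\left(\begin{smallmatrix} A' & M\\ N & K\end{smallmatrix}\right)$ of Proposition \ref{proposition homological epimorphism induces singularity equivalences}, with $A'=e_{\hat c}Ae_{\hat c}$ the subalgebra supported on the vertices $\neq c$, with $M=e_{\hat c}Ae_c=K\beta$, $N=e_cAe_{\hat c}=K\alpha$, and with $\phi(\beta\otimes\alpha)=\beta\alpha$. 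Here $\phi$ is an $A'$-$A'$-bimodule monomorphism vanishing on $M$ and $N$, and its image is the ideal generated by the composite $\beta\alpha$, that is, by the weight-two arm. Proposition \ref{proposition homological epimorphism induces singularity equivalences} then yields $D^b_{sg}(A)\simeq D^b_{sg}(A'/\Im\phi)$.

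It remains to identify $A'/\Im\phi$. Because $\partial_\eta W$ gives $\beta\alpha=-(x_2^{p_2}+x_3^{p_3})$ in $A$, the ideal $\Im\phi$ coincides with the ideal generated by the sum of the two surviving arm-paths, and killing it turns the remaining relations into precisely the Jacobian relations of the potential obtained by summing the two surviving cycles. Hence $A'/\Im\phi$ is the polygon-tree algebra with exactly two gluing components, namely the two cycles coming from the arms of weight $p_1$ and $p_3$, of lengths $p_1+1$ and $p_3+1$, glued along $\eta$. With only two gluing components there is no banned four-component subquiver, so this algebra is simple in the sense of Definition \ref{definition of simple polygon-tree alegbra}, and Theorem \ref{proposition to type Q(m,{i_1,...,i_r})} applies with $N=1$, $m=(p_1+1)+(p_3+1)$, and $d_Q=0$. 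This gives $D^b_{sg}(A)\simeq\underline{\mod}(\cn_{m-3N+d_Q})=\underline{\mod}(\cn_{p_1+p_3-1})$, as asserted.

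The step I expect to be the main obstacle is the identification in the last two paragraphs: one must check carefully that $e_cAe_c=K$ (so that the corner is genuinely $K$), that $\phi$ is injective, equivalently that $\beta\alpha\neq0$ in $A'$ (which is visible from $\partial_\eta W$), and above all that after quotienting by $\Im\phi$ the surviving relations are \emph{exactly} the Jacobian ideal of the sum of the two remaining cycles and nothing more, so that $A'/\Im\phi$ is literally the two-cycle polygon-tree algebra and not a proper quotient of it. By contrast the numerical bookkeeping that $d_Q=0$ for two cycles glued along a single arrow is routine, since for each floriated piece $Q(i)$ the unique inter-petal distance equals the full length of the other cycle and is therefore never $1$.
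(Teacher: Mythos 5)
Your argument follows the paper's proof essentially step for step: present $A$ as the matrix algebra of Proposition \ref{proposition homological epimorphism induces singularity equivalences} with corner $e_cAe_c=K$ at the interior vertex of the weight-two arm, use the Jacobian relations $\eta\beta=0$ and $\alpha\eta=0$ to see that $\phi$ is a bimodule monomorphism vanishing on $M$ and $N$, identify $A'/\Im\phi$ with the two-cycle floriated algebra, and finish with Theorem \ref{proposition to type Q(m,{i_1,...,i_r})}; all of this is sound. The one thing to repair is the bookkeeping in your final paragraph: the arm you collapse is precisely the $p_1$-arm (that is where the hypothesis $p_1=2$ enters), so the two surviving gluing components are the cycles built from the $p_2$- and $p_3$-arms together with $\eta$, of lengths $p_2+1$ and $p_3+1$ --- your claim that the survivors come from the arms of weights $p_1$ and $p_3$ contradicts your own reduction. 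Carried out consistently, the count is $m=(p_2+1)+(p_3+1)$, $N=1$, $d_Q=0$, giving $\underline{\mod}(\cn_{p_2+p_3-1})$; this agrees with the asserted $\cn_{p_1+p_3-1}$ only when $p_2=2$, and the same index slip occurs in the paper's own final line, so it appears to be a typo in the statement rather than a defect of the method --- but your write-up should not reproduce it by mislabelling which cycles survive.
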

\begin{proof}

Since $p_1=2$, the quiver $Q$ is as Figure 9.3 shows.
Let $e_{c_1}$ be the idempotent corresponding to the vertex $c_1$. Let $V$ be the quotient algebra of $A$ modulo the ideal generated by $e_{c_1}$, namely $Ae_{c_1}A$. The quiver of $V$ is obtained from $Q$ by removing the vertex $c_1$ and the adjacent arrows $\alpha,\beta$. It is easy to see that $V$ is a floriated algebra.

Let $B=(1-e_{c_1})A(1-e_{c_1})$. Then $k\alpha$ and $k\beta$ are naturally left and right $B$-modules respectively since $\beta\eta=0$ and $\eta\alpha=0$ in $B$.

\begin{center}\setlength{\unitlength}{0.5mm}
 \begin{picture}(110,70)
\put(0,40){\circle*{2}}
\put(2,42){\vector(1,1){16}}
\put(20,60){\circle*{2}}
\put(23,60){\vector(1,0){13}}
\put(40,60){\circle*{2}}
\put(43,60){\vector(1,0){13}}
\put(60,58.5){\large$\cdots$}
\put(70,60){\vector(1,0){13}}
\put(88,60){\circle*{2}}
\put(90,58){\vector(1,-1){16}}
\put(108,40){\circle*{2}}

\put(54,40){\circle*{2}}
\put(3,40){\vector(1,0){48}}
\put(57,40){\vector(1,0){48}}
\qbezier(4,38)(50,25)(104,38)

\put(4,38){\vector(-3,1){1}}

\put(2,38){\vector(1,-1){16}}
\put(20,20){\circle*{2}}
\put(23,20){\vector(1,0){13}}
\put(40,20){\circle*{2}}
\put(43,20){\vector(1,0){13}}
\put(60,18.5){\large$\cdots$}
\put(70,20){\vector(1,0){13}}
\put(88,20){\circle*{2}}
\put(90,22){\vector(1,1){16}}
\put(50,33){$\eta$}
\put(3,50){$x_2$}
\put(24,62){$x_2$}
\put(44,62){$x_2$}
\put(96,53){$x_2$}

\put(24,41){$\beta$}

\put(75,41){$\alpha$}

\put(3,29){$x_3$}

\put(24,22){$x_3$}
\put(44,22){$x_3$}
\put(96,25){$x_3$}
\put(53,42){$c_1$}
\put(-70,0){Figure 9.3. Cluster-tilted algebra of a canonical algebra with weights $(2,p_2,p_3)$.}
\end{picture}

\end{center}

We identify $A$ with $\left(\begin{array}{cc} B &k\alpha \\ k\beta &k  \end{array} \right)$, where the $k$ in the southeast corner is identified with $e_{c_1}A e_{c_1}$, and $B=(1-e_{c_1})A(1-e_{c_1})$. The morphism $\phi:k\alpha\otimes_k k\beta\rightarrow B$ maps $\alpha\otimes\beta$ to $\alpha\beta$, so $\phi$ is a monomorphism.
Furthermore, it is easy to see that $B/\Im\phi=V$.
Then Proposition \ref{proposition homological epimorphism induces singularity equivalences} yields a triangulated equivalence $D^b_{sg}(A)\simeq D^b_{sg}(V)$.
Using Theorem \ref{proposition to type Q(m,{i_1,...,i_r})}, we get that $D^b_{sg}(A)\simeq \underline{\mod}\cn_{p_1+p_3-1}$ immediately.
\end{proof}

\section{Representation type of polygon-tree algebras}

In this section, we study the representation type of polygon-tree algebras. We set $\D_3=\A_3$ for convenience.

We are more concerned with the number of arrows between the vertices rather than arrows themselves, we assume that the quivers have no loops or oriented $2$-cycles, but allow the arrows to be weighted by positive integers. If the weight of an arrow is $1$, we do not specify it in the picture and call it a single arrow; if an arrow has weight $2$ we call it a double arrow. For convenience, if an arrow $i\rightarrow j$ is weighted by a negative integer $r$, then it means that there is an arrow $j\rightarrow i$ with weight $-r$, and if an arrow $i\rightarrow j$ with weight $0$, then it means that there is no arrow between $i$ and $j$.
By a subquiver of $Q$, we always mean a quiver obtained from $Q$ by taking an induced (full) directed subgraph on a subset of vertices and keeping all its edge weights the same as in $Q$.
\begin{definition}[\cite{FZ1}]
Let $Q$ be a quiver without loops or oriented $2$-cycles. The FZ-mutation of $Q$ at a vertex $k$ (denoted by $\mu^{FZ}_k(Q)$) is a new quiver $Q^*$, described as follows:

1. Add a new vertex $k^*$.

2. If there is an arrow $i\rightarrow k$ with weight $r$, an arrow $k\rightarrow j$ with weight $s$ and an arrow $j\rightarrow i$ with weight $t$ in $Q$, then there is an arrow $j\rightarrow i$ with weight $t-rs$ in $Q^*$.

3. For any vertex $i$ replace an arrow from $i$ to $k$ with an arrow from $k^*$ to $i$, and replace an arrow from $k$ to $i$ with an arrow from $i$ to $k^*$, with the same weights.

4. Remove the vertex $k$.
\end{definition}

The FZ-mutation $\mu_k$ is involutive, so it defines a mutation-equivalence relation on quivers.
A quiver $Q$ is said to be of \emph{finite mutation type} if its mutation-equivalence class is finite. It is well known that, in a finite mutation type quiver with at least three vertices, any edge is a single edge or a double edge. The most basic examples of finite mutation type quivers are Dynkin quivers and extended Dynkin quivers \cite{BR}.

Another important class of finite mutation type quivers has been obtained in \cite{FST} using a construction that associates quivers to certain triangulations of surfaces. In this paper, we will not use this construction, so we do not recall it here. We call these quivers that \emph{come from the triangulation of a surface}.

\begin{theorem}[\cite{FeST}]\label{theorem quiver of finite mutation type}
A connected quiver $Q$ with at least three vertices is of finite mutation type if and only if it comes from the triangulation of a surface or it is mutation-equivalent to one of the exceptional types $\E_6,\E_7,\E_8,\tilde{\E}_6,\tilde{\E}_7,\tilde{\E}_8,\X_6,\X_7$ (see Figure 10.2 and Figure 10.3).
\end{theorem}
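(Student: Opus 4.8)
The statement is an ``if and only if,'' so the plan is to treat the two directions separately. For sufficiency I would verify that the two families listed are mutation-finite. For the surface quivers this follows from the interpretation of FZ-mutation as the flip operation on ideal (or tagged) triangulations of a bordered surface, as set up in \cite{FST}: the signed adjacency matrix of any triangulation of a \emph{fixed} surface has entries bounded in absolute value (any arc crosses any triangle at most twice), so the entire mutation class is confined to a finite set of matrices and is therefore finite. For the finitely many exceptional quivers $\E_6,\E_7,\E_8,\tilde{\E}_6,\tilde{\E}_7,\tilde{\E}_8,\X_6,\X_7$ this is a finite (computer-assisted) check that each mutation class is finite. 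The substantive content is the necessity direction.

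For necessity, suppose $Q$ is connected, has at least three vertices, and is of finite mutation type; I want to show it is either a surface quiver or mutation-equivalent to one of the exceptional types. The first tool I would develop is a \emph{locality principle}: mutation-finiteness can be detected on subquivers of bounded rank. Concretely, I would classify the \emph{minimal mutation-infinite} quivers, those that are mutation-infinite but all of whose proper full subquivers are mutation-finite, and show they all have small rank. Then $Q$ is mutation-finite precisely when it contains no minimal mutation-infinite full subquiver. A first easy corollary of this analysis is that in any mutation-finite quiver of rank $\geq 3$ every edge has weight at most $2$, since a weight $\geq 3$ edge already sits inside a mutation-infinite rank-$2$ or rank-$3$ configuration.

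The second tool is the \emph{block decomposition} of \cite{FeST}: one introduces a short list of elementary blocks and calls $Q$ block-decomposable if it can be assembled from these blocks by gluing at outlets. The structural heart of the argument is the equivalence \emph{block-decomposable $\Longleftrightarrow$ arises from a triangulated (bordered, possibly orbifold) surface}, building on \cite{FST}. Granting this, the remaining task is to show that a mutation-finite quiver which is \emph{not} block-decomposable must be one of the exceptional types. Here I would prove that any such non-decomposable mutation-finite quiver has bounded rank (the exceptional list tops out at rank $9$), because a larger non-decomposable quiver would necessarily contain a minimal mutation-infinite configuration; this reduces the problem to a finite search through quivers of small rank, which one carries out to produce exactly the $\E$, $\tilde{\E}$, and $\X$ families.

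The main obstacle is twofold. First, establishing the block-decomposable $=$ surface equivalence rigorously in both directions, and in particular showing that every block-decomposable quiver is genuinely realized by some triangulation, is delicate. Second, and more seriously, the finiteness and exhaustive enumeration of the non-decomposable mutation-finite quivers, namely bounding the rank and then sifting through all mutation classes up to that rank without omission, is where the real difficulty lies; this step is genuinely computational and is the part that relies on a controlled computer search over mutation classes rather than on a short structural argument.
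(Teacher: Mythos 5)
This statement is not proved in the paper: it is imported verbatim from \cite{FeST} as a known classification theorem, so there is no in-paper argument to compare yours against. That said, your outline does track the broad architecture of the Felikson--Shapiro--Tumarkin proof: the easy direction via flips of (tagged) triangulations and a finite check of the exceptional classes, and the hard direction via the block-decomposition criterion of \cite{FST} (block-decomposable $\Leftrightarrow$ adjacency quiver of a triangulated surface), a rank bound for mutation-finite quivers that are not block-decomposable, and a computer-assisted enumeration in low rank. Two caveats. First, what you have written is a road map, not a proof: each of the three pillars you name (the locality/minimal-mutation-infinite analysis, the block-decomposition equivalence, and the exhaustive search) is itself a substantial theorem, and your text defers all of them; in particular the actual induction in \cite{FeST} shows that any mutation-finite quiver on sufficiently many vertices all of whose vertex-deleted subquivers are block-decomposable is itself block-decomposable, which is more delicate than ``contains no minimal mutation-infinite configuration.'' Second, your claim that ``the exceptional list tops out at rank $9$'' is wrong for the full classification: the complete list also contains $\E_6^{(1,1)},\E_7^{(1,1)},\E_8^{(1,1)}$ (which this paper draws in Figure 10.3 and uses in Theorem 5.7 even though the theorem statement above omits them), and $\E_8^{(1,1)}$ has $10$ vertices, so the correct threshold in the rank-bounding step is that mutation-finite quivers on more than $10$ vertices come from surfaces. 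As a citation of a known result your summary is serviceable; as a proof it is not self-contained.
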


\begin{center}\setlength{\unitlength}{0.7mm}
 \begin{picture}(180,50)

\put(110,25){\begin{picture}(80,60)

\put(-15,0){${\E}_7$}
\put(0,0){\circle*{1}}

\put(0,0){\line(1,0){10}}
\put(10,0){\circle*{1}}

\put(10,0){\line(1,0){10}}
\put(20,0){\circle*{1}}

\put(20,0){\line(1,0){10}}
\put(30,0){\circle*{1}}

\put(30,0){\line(1,0){10}}
\put(40,0){\circle*{1}}

\put(40,0){\line(1,0){10}}
\put(50,0){\circle*{1}}

\put(20,10){\line(0,-1){10}}
\put(20,10){\circle*{1}}

\end{picture}}

\put(10,10){\begin{picture}(80,80)

\put(-15,0){${\E}_8$}
\put(0,0){\circle*{1}}

\put(0,0){\line(1,0){10}}
\put(10,0){\circle*{1}}

\put(10,0){\line(1,0){10}}
\put(20,0){\circle*{1}}

\put(20,0){\line(1,0){10}}
\put(30,0){\circle*{1}}

\put(30,0){\line(1,0){10}}
\put(40,0){\circle*{1}}

\put(40,0){\line(1,0){10}}
\put(50,0){\circle*{1}}

\put(50,0){\line(1,0){10}}
\put(60,0){\circle*{1}}

\put(20,10){\line(0,-1){10}}
\put(20,10){\circle*{1}}

\end{picture}}

\put(110,40){\begin{picture}(80,80)

\put(-15,0){${\D}_n$}
\put(0,0){\circle*{1}}

\put(0,0){\line(1,0){10}}
\put(10,0){\circle*{1}}

\put(10,0){\line(1,0){10}}
\put(20,0){\circle*{1}}

\put(30,0){\circle*{1}}

\put(22,-1){$\cdots$}
\put(40,0){\circle*{1}}

\put(30,0){\line(1,0){10}}
\put(40,0){\circle*{1}}

\put(40,0){\line(1,0){10}}
\put(50,0){\circle*{1}}

\put(10,10){\line(0,-1){10}}
\put(10,10){\circle*{1}}

\end{picture}}

\put(10,25){\begin{picture}(80,80)

\put(-15,0){${\E}_6$}
\put(0,0){\circle*{1}}

\put(0,0){\line(1,0){10}}
\put(10,0){\circle*{1}}

\put(10,0){\line(1,0){10}}
\put(20,0){\circle*{1}}

\put(20,0){\line(1,0){10}}
\put(30,0){\circle*{1}}

\put(30,0){\line(1,0){10}}
\put(40,0){\circle*{1}}

\put(20,10){\line(0,-1){10}}
\put(20,10){\circle*{1}}

\end{picture}}

\put(10,40){\begin{picture}(80,80)

\put(-15,0){$\A_n$}
\put(0,0){\circle*{1}}

\put(0,0){\line(1,0){10}}
\put(10,0){\circle*{1}}

\put(10,0){\line(1,0){10}}
\put(20,0){\circle*{1}}

\put(22,-1){$\cdots$}
\put(30,0){\circle*{1}}

\put(30,0){\line(1,0){10}}
\put(40,0){\circle*{1}}

\put(40,0){\line(1,0){10}}
\put(50,0){\circle*{1}}

\end{picture}}
\put(10,0){Figure 10.1. Dynkin quivers: the edges may be oriented arbitrarily.}

\end{picture}
\vspace{0.5cm}
\end{center}

\begin{center}\setlength{\unitlength}{0.7mm}
 \begin{picture}(180,60)

\put(10,10){\begin{picture}(80,80)

\put(-15,0){$\tilde{\E}_7$}
\put(0,0){\circle*{1}}

\put(0,0){\line(1,0){10}}
\put(10,0){\circle*{1}}

\put(10,0){\line(1,0){10}}
\put(20,0){\circle*{1}}

\put(20,0){\line(1,0){10}}
\put(30,0){\circle*{1}}

\put(30,0){\line(1,0){10}}
\put(40,0){\circle*{1}}

\put(40,0){\line(1,0){10}}
\put(50,0){\circle*{1}}

\put(50,0){\line(1,0){10}}
\put(60,0){\circle*{1}}

\put(30,10){\line(0,-1){10}}
\put(30,10){\circle*{1}}

\end{picture}}

\put(110,10){\begin{picture}(80,80)

\put(-15,0){$\tilde{\E}_8$}
\put(0,0){\circle*{1}}

\put(0,0){\line(1,0){10}}
\put(10,0){\circle*{1}}

\put(10,0){\line(1,0){10}}
\put(20,0){\circle*{1}}

\put(20,0){\line(1,0){10}}
\put(30,0){\circle*{1}}

\put(30,0){\line(1,0){10}}
\put(40,0){\circle*{1}}

\put(40,0){\line(1,0){10}}
\put(50,0){\circle*{1}}

\put(50,0){\line(1,0){10}}
\put(60,0){\circle*{1}}
\put(60,0){\line(1,0){10}}
\put(70,0){\circle*{1}}
\put(20,10){\line(0,-1){10}}
\put(20,10){\circle*{1}}

\end{picture}}

\put(10,25){\begin{picture}(80,80)

\put(-15,0){$\tilde{\D}_n$}
\put(0,0){\circle*{1}}

\put(0,0){\line(1,0){10}}
\put(10,0){\circle*{1}}

\put(10,0){\line(1,0){10}}
\put(20,0){\circle*{1}}

\put(30,0){\circle*{1}}

\put(22,-1){$\cdots$}
\put(40,0){\circle*{1}}

\put(30,0){\line(1,0){10}}
\put(40,0){\circle*{1}}

\put(40,0){\line(1,0){10}}
\put(50,0){\circle*{1}}

\put(10,10){\line(0,-1){10}}
\put(10,10){\circle*{1}}

\put(40,10){\line(0,-1){10}}
\put(40,10){\circle*{1}}
\end{picture}}

\put(110,25){\begin{picture}(80,80)

\put(-15,0){$\tilde{\E}_6$}
\put(0,0){\circle*{1}}

\put(0,0){\line(1,0){10}}
\put(10,0){\circle*{1}}

\put(10,0){\line(1,0){10}}
\put(20,0){\circle*{1}}

\put(20,0){\line(1,0){10}}
\put(30,0){\circle*{1}}

\put(30,0){\line(1,0){10}}
\put(40,0){\circle*{1}}

\put(20,10){\line(0,-1){10}}
\put(20,10){\circle*{1}}
\put(20,20){\line(0,-1){10}}
\put(20,20){\circle*{1}}

\end{picture}}

\put(10,40){\begin{picture}(80,80)

\put(-15,0){$\tilde{\A}_n$}
\put(0,0){\circle*{1}}

\put(0,0){\line(1,0){10}}
\put(10,0){\circle*{1}}

\put(10,0){\line(1,0){10}}
\put(20,0){\circle*{1}}

\put(22,-1){$\cdots$}
\put(30,0){\circle*{1}}

\put(30,0){\line(1,0){10}}
\put(40,0){\circle*{1}}

\put(40,0){\line(1,0){10}}
\put(50,0){\circle*{1}}

\put(25,15){\line(-5,-3){25}}
\put(25,15){\circle*{1}}

\put(25,15){\line(5,-3){25}}
\end{picture}}
\put(10,0){Figure 10.2. Extended Dynkin quivers: the edges may be}
\put(10,-10){oriented arbitrarily such that the quiver is acyclic.}
\end{picture}
\vspace{0.8cm}
\end{center}

\begin{center}\setlength{\unitlength}{0.7mm}
 \begin{picture}(180,75)

\put(10,10){\begin{picture}(80,80)

\put(-15,0){$\X_6$}
\put(0,0){\circle*{1}}

\put(0,10){\circle*{1}}

\put(0,0){\vector(0,1){10}}

\put(10,0){\circle*{1}}
\put(0,10){\vector(1,-1){10}}

\put(10,-10){\circle*{1}}

\put(10,-10){\line(0,1){10}}

\put(20,0){\circle*{1}}
\put(20,0){\vector(0,1){10}}
\put(20,10){\circle*{1}}
\put(20,10){\vector(-1,-1){10}}

\put(10,0){\vector(1,0){10}}

\put(10,0){\vector(-1,0){10}}

\put(-3,3){$2$}
\put(21,3){$2$}
\end{picture}}

\put(110,10){\begin{picture}(80,80)

\put(-15,0){$\X_7$}
\put(0,0){\circle*{1}}

\put(0,10){\circle*{1}}

\put(0,0){\vector(0,1){10}}

\put(10,0){\circle*{1}}
\put(0,10){\vector(1,-1){10}}

\put(10,-10){\circle*{1}}

\put(10,0){\vector(0,-1){10}}

\put(20,0){\circle*{1}}
\put(20,0){\vector(0,1){10}}
\put(20,10){\circle*{1}}
\put(20,10){\vector(-1,-1){10}}

\put(10,0){\vector(1,0){10}}

\put(10,0){\vector(-1,0){10}}

\put(-3,3){$2$}
\put(21,3){$2$}

\put(20,-10){\circle*{1}}
\put(20,-10){\vector(-1,1){10}}
\put(10,-10){\vector(1,0){10}}

\put(13,-14){$2$}

\end{picture}}

\put(10,25){\begin{picture}(80,80)

\put(-15,10){$\E_8^{(1,1)}$}
\put(0,10){\circle*{1}}

\put(0,10){\line(1,0){10}}
\put(10,10){\circle*{1}}

\put(20,0){\circle*{1}}

\put(20,20){\circle*{1}}

\put(30,10){\circle*{1}}

\put(10,10){\vector(1,-1){10}}

\put(20,20){\vector(-1,-1){10}}
\put(20,20){\vector(1,-1){10}}

\put(30,10){\vector(-1,-1){10}}

\put(20,0){\vector(0,1){20}}

\put(50,10){\circle*{1}}

\put(20,20){\vector(3,-1){30}}
\put(50,10){\vector(-3,-1){30}}

\put(50,10){\line(1,0){10}}

\put(60,10){\circle*{1}}

\put(60,10){\line(1,0){10}}

\put(70,10){\circle*{1}}

\put(70,10){\line(1,0){10}}

\put(80,10){\circle*{1}}

\put(80,10){\line(1,0){10}}

\put(90,10){\circle*{1}}

\put(21,8){$2$}
\end{picture}}

\put(120,50){\begin{picture}(80,80)

\put(-25,10){$\E_7^{(1,1)}$}
\put(-10,10){\circle*{1}}
\put(-10,10){\line(1,0){10}}
\put(0,10){\circle*{1}}

\put(0,10){\line(1,0){10}}
\put(10,10){\circle*{1}}

\put(20,0){\circle*{1}}

\put(20,20){\circle*{1}}

\put(30,10){\circle*{1}}

\put(10,10){\vector(1,-1){10}}

\put(20,20){\vector(-1,-1){10}}
\put(20,20){\vector(1,-1){10}}

\put(30,10){\vector(-1,-1){10}}

\put(20,0){\vector(0,1){20}}

\put(50,10){\circle*{1}}

\put(20,20){\vector(3,-1){30}}
\put(50,10){\vector(-3,-1){30}}

\put(50,10){\line(1,0){10}}

\put(60,10){\circle*{1}}

\put(60,10){\line(1,0){10}}

\put(70,10){\circle*{1}}

\put(21,8){$2$}
\end{picture}}

\put(10,50){\begin{picture}(80,80)

\put(-15,10){$\E_6^{(1,1)}$}

\put(0,10){\circle*{1}}

\put(0,10){\line(1,0){10}}
\put(10,10){\circle*{1}}

\put(20,0){\circle*{1}}

\put(20,20){\circle*{1}}

\put(30,10){\circle*{1}}

\put(10,10){\vector(1,-1){10}}

\put(20,20){\vector(-1,-1){10}}
\put(20,20){\vector(1,-1){10}}

\put(30,10){\vector(-1,-1){10}}

\put(20,0){\vector(0,1){20}}

\put(40,10){\circle*{1}}

\put(30,10){\line(1,0){10}}

\put(50,10){\circle*{1}}

\put(20,20){\vector(3,-1){30}}
\put(50,10){\vector(-3,-1){30}}

\put(50,10){\line(1,0){10}}

\put(60,10){\circle*{1}}

\put(21,8){$2$}

\end{picture}}

\put(10,-15){Figure 10.3. Exceptional quivers of finite mutation type which are acyclic:}
\put(10,-20){edges with unspecified orientation may be oriented arbitrarily.}

\end{picture}
\vspace{1.8cm}
\end{center}

\begin{center}\setlength{\unitlength}{0.7mm}
 \begin{picture}(180,25)
\put(20,15){\begin{picture}(80,25)

\put(-15,0){$\K_m$}

\put(0,0){\circle*{1.5}}

\put(20,0){\circle*{1.5}}

\put(0,0){\vector(1,0){19}}

\put(7,2){$m$}
\end{picture}}

\put(70,15){\begin{picture}(80,25)

\put(-15,0){$\T_6$}

\put(0,0){\circle*{1.5}}

\put(20,0){\circle*{1.5}}

\put(0,0){\vector(1,1){9}}

\put(10,10){\circle*{1.5}}
\put(20,0){\vector(-1,0){19}}

\put(10,10){\vector(1,-1){9}}

\put(2,5){$2$}
\put(15,5){$2$}
\put(9,-5){$2$}
\end{picture}}

\put(130,15){\begin{picture}(80,25)(0,10)

\put(-15,10){$\T_7$}

\put(0,10){\circle*{1.5}}

\put(20,10){\circle*{1.5}}

\put(10,0){\circle*{1.5}}

\put(10,20){\circle*{1.5}}
\put(0,10){\vector(1,1){9}}
\put(20,10){\vector(-1,1){9}}

\put(0,10){\vector(1,0){19}}

\put(10,20){\vector(0,-1){19}}
\put(20,10){\vector(-1,-1){9}}

\put(0,10){\vector(1,-1){9}}

\put(8.5,13){$2$}

\end{picture}}
\put(0,-2){Figure 10.4. Quivers of finite mutation type with non-removable double arrows.}
\end{picture}
\vspace{0.2cm}
\end{center}

\begin{theorem}[\cite{S}]\label{theorem mutation finite}
Let $Q$ be a connected quiver of finite mutation type. Suppose also that $Q$ has a subquiver which is mutation-equivalent to $\E_6$ (resp. $\X_6$). Then any quiver which is mutation-equivalent to $Q$ also contains a subquiver which is mutation-equivalent to $\E_6$ (resp. $\X_6$). Furthermore $Q$ is mutation-equivalent to a quiver which is one of the types $\E$ (resp. $\X$) given in Theorem \ref{theorem quiver of finite mutation type}.
\end{theorem}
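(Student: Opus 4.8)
The plan is to reduce everything to the classification of finite mutation type quivers recorded in Theorem \ref{theorem quiver of finite mutation type}, and to show that the property ``having a subquiver mutation-equivalent to $\E_6$'' (resp. $\X_6$) is in fact an invariant of the entire mutation class, one that singles out precisely the $\E$-types (resp. the $\X$-types). First I would fix notation: let $Q$ be of finite mutation type with a full subquiver $S$ mutation-equivalent to $\E_6$ (the $\X_6$ case runs in parallel and I would treat it in tandem). By Theorem \ref{theorem quiver of finite mutation type}, $Q$ either comes from the triangulation of a surface or is mutation-equivalent to one of $\E_6,\E_7,\E_8,\tilde{\E}_6,\tilde{\E}_7,\tilde{\E}_8,\X_6,\X_7$, together with the affine exceptional quivers of Figure 10.3. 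The goal is to prove that the surface alternative is impossible once an $\E_6$ (resp. $\X_6$) subquiver is present, and then to decide which exceptional class is forced.

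The first real step is to eliminate the surface case. Here I would invoke the structural fact that quivers arising from triangulated surfaces are \emph{block-decomposable} in the sense of Fomin--Shapiro--Thurston, together with the property that block-decomposability is inherited by full subquivers: deleting a vertex only deletes or splits blocks, so every full subquiver of a surface quiver is again a surface quiver. Consequently, if $Q$ came from a surface, then $S$ would be a surface quiver as well; but $S$ is mutation-equivalent to $\E_6$, and the exceptional types in Theorem \ref{theorem quiver of finite mutation type} are not mutation-equivalent to any surface quiver. This contradiction forces $Q$ to be exceptional.

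Next I would pin down which exceptional class contains $Q$, and the cleanest invariant to exploit is the double arrow. The $\X$-type quivers are exactly the finite mutation type quivers carrying a \emph{non-removable} double arrow (compare $\X_6,\X_7$ in Figure 10.3 with the non-removable double arrows of $\K_m,\T_6,\T_7$ in Figure 10.4), whereas each $\E$-type class contains a simply-laced representative. Thus a simply-laced $\E_6$ subquiver cannot sit inside an $\X$-type, and the non-removable double arrow carried by an $\X_6$ subquiver cannot sit inside an $\E$-type; this separates the two alternatives and shows that an $\E_6$ subquiver forces an $\E$-type while an $\X_6$ subquiver forces an $\X$-type. Since each exceptional mutation class is finite, the remaining assertion---that \emph{every} quiver mutation-equivalent to $Q$ still contains a subquiver mutation-equivalent to $\E_6$ (resp. $\X_6$)---can be settled either by running through the finitely many members of the relevant class and exhibiting the subquiver in each, or, more conceptually, by verifying that the distinguishing invariant (the simply-laced $\E_6$-core, resp. the persistent non-removable double arrow) survives every single FZ-mutation.

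The step I expect to be the main obstacle is exactly this persistence claim. The naive local approach---restrict a mutation $\mu_k$ of $Q$ to the vertex set of $S$---succeeds only when the mutating vertex $k$ lies in $S$, in which case the restriction is literally $\mu_k(S)$ and stays in the $\E_6$ class; when $k$ lies outside $S$, the FZ-rule can modify arrows among the vertices of $S$ through paths passing via $k$, so the full subquiver on those same vertices need not remain mutation-equivalent to $\E_6$, and the required subquiver may have to be located on a different vertex set altogether. Handling this cleanly is precisely what demands the global input: either the finiteness of the exceptional mutation classes, which makes a finite verification possible, or the theory of minimal non-simply-laced and ``$2$-infinite'' diagrams underlying the classification, which guarantees that the $\E_6$- (resp. $\X_6$-) defect cannot be destroyed without leaving finite mutation type.
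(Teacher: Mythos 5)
The paper does not actually prove this statement: it is quoted from Seven \cite{S} as an external result, so there is no internal proof to compare your argument against, and I can only assess your outline on its own terms. Your treatment of the second assertion is structurally sound and matches the standard route: quivers from surfaces are exactly the block-decomposable ones, block-decomposability passes to full subquivers, no quiver mutation-equivalent to $\E_6$ (resp. $\X_6$) is block-decomposable, and therefore a quiver containing such a subquiver cannot come from a surface and must, by Theorem \ref{theorem quiver of finite mutation type}, be mutation-equivalent to one of the exceptional types.

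However, two steps are asserted rather than proved, and together they are most of the content of the theorem. (i) The separation of the $\E$-alternatives from the $\X$-alternatives: you claim a subquiver mutation-equivalent to $\E_6$ cannot sit inside a quiver of the $\X_6$ or $\X_7$ class because the latter carry non-removable double arrows, and conversely. This does not follow: a full subquiver on six of the seven vertices of an $\X_7$-class quiver could perfectly well be simply laced, and several $\E$-type classes (e.g.\ $\E_8^{(1,1)}$) themselves contain members with double arrows, so the presence or absence of a double arrow in the ambient quiver decides nothing about its full subquivers; this must be settled by inspecting the finitely many members of the exceptional classes. (ii) The first sentence of the theorem --- persistence of the subquiver under mutation --- is the main claim. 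You correctly observe that restricting $\mu_k$ to the vertex set of $S$ fails when $k\notin S$, but you then leave the claim unresolved, offering ``finite verification'' or unnamed deeper theory without carrying either out. The clean way to close this is to prove the converse of the implication you established: every quiver lying in one of the exceptional $\E$-classes (resp.\ $\X$-classes) of Theorem \ref{theorem quiver of finite mutation type} contains a subquiver mutation-equivalent to $\E_6$ (resp.\ $\X_6$). Combined with your direction, ``contains such a subquiver'' becomes equivalent, within finite mutation type, to ``does not come from a surface,'' which is manifestly constant on mutation classes, and the first assertion follows at once. Without that converse --- a genuine finite computation over the exceptional mutation classes, not a formality --- the proof is incomplete.
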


\begin{lemma}\label{lemma vertices more than 10 is of D}
Let $A=KQ/I$ be a floriated algebra of $(Q_0,\{i_1,\dots,i_n\})$ by $Q_1,\dots,Q_n$. If $Q$ is of finite mutation type, then we have one of the following statements:

(a) $A$ is a cluster-tilted algebra of type $\D$.

(b) $Q$ is mutation-equivalent to a quiver which is one of the types $\E$ given in Theorem \ref{theorem quiver of finite mutation type}.
\end{lemma}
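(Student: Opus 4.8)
The plan is to combine the normal form produced in Proposition \ref{proposition floriated algbra to one cycle} with the classification of finite mutation type quivers in Theorem \ref{theorem quiver of finite mutation type}. By Proposition \ref{proposition floriated algbra to one cycle} the quiver $Q$ is mutation-equivalent to a quiver $\bar Q$ whose only oriented cycle is a single chordless cycle $C$ of length $m_0+n$, with $n$ attached acyclic branches of type $\A$ (the branch coming from $Q_i$ having $m_i-3$ vertices). Since mutation-equivalence preserves both the finite mutation type property and the number $V=m_0+\sum_{i=1}^n(m_i-2)$ of vertices, and since $\bar Q$ involves only single arrows, I may work with $\bar Q$ throughout; in particular $Q$ is of finite mutation type if and only if $\bar Q$ is.

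First I would dispose of the $\X$-types. Every quiver in the mutation class of $\X_6$ or $\X_7$ carries a double arrow that cannot be removed by mutation (see Figure 10.3), whereas $\bar Q$ is simply laced; hence $Q$ is not mutation-equivalent to $\X_6$ or $\X_7$. One may reinforce this with Theorem \ref{theorem mutation finite}: being mutation-equivalent to $\X_6$ would force a subquiver mutation-equivalent to $\X_6$, which is impossible for the simply laced $\bar Q$. Thus, assuming $Q$ of finite mutation type, Theorem \ref{theorem quiver of finite mutation type} leaves exactly two possibilities: either $Q$ comes from the triangulation of a surface, or $Q$ is mutation-equivalent to one of the $\E$-family exceptional quivers $\E_6,\E_7,\E_8,\tilde\E_6,\tilde\E_7,\tilde\E_8,\E_6^{(1,1)},\E_7^{(1,1)},\E_8^{(1,1)}$, all of which have at most $10$ vertices (the maximum, $10$, being attained by $\E_8^{(1,1)}$). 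The latter case is precisely alternative (b).

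It then remains to treat the surface case, where I would prove the structural statement that a floriated quiver arising from a surface triangulation must have all petals triangular, i.e.\ $m_1=\dots=m_n=3$; granting this, the observation that such a floriated algebra is cluster-tilted of type $\D$ (see \cite{Va}) yields alternative (a). The mechanism is that each petal $Q_i$ and the central cycle $Q_0$ are chordless oriented cycles: $3$-cycles arise as internal triangles of a triangulation, while a chordless oriented cycle of length $\geq 4$ forces a puncture, and the floriated gluing of two such ``puncture-cycles'' along a common arrow cannot be realized in a surface quiver. Equivalently, arguing contrapositively: a non-triangular petal produces, after passing to $\bar Q$, a long cycle $C$ carrying a non-trivial $\A$-tail, and such a configuration on more than $10$ vertices is neither a surface quiver nor (by the vertex bound above) one of the exceptional quivers, contradicting finite mutation type; this is the origin of the bound recorded in the name of the lemma.

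The main obstacle is exactly this surface step: pinning down which floriated quivers occur as cluster quivers of surfaces and showing that, apart from boundedly many small exceptions gathered into (b), they are only the once-punctured--disc quivers of type $\D$. I expect to carry it out by matching the normal form $\bar Q$ (a cycle with type-$\A$ branches) against the explicit description of the mutation class of $\D_n$ in \cite{Va} and against the block decomposition of surface quivers, thereby confining every non-$\D$ possibility to the finitely many quivers with $V\leq 10$. Those residual small cases, namely $V\leq 10$ with at least one non-triangular petal, are then settled by a finite check verifying that each is mutation-equivalent to an $\E$-family quiver, completing the dichotomy.
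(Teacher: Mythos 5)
Your overall strategy differs from the paper's: you route the argument through the surface/exceptional dichotomy of Theorem \ref{theorem quiver of finite mutation type} and propose to classify which floriated quivers arise from surface triangulations, whereas the paper never touches surfaces. After reducing to the normal form of Proposition \ref{proposition floriated algbra to one cycle} (a single oriented chordless cycle of length $m_0+n$ with type-$\A$ tails), the paper handles the small cases $m_0+n=3,4$ directly and then observes that if the normal form is not of type $\D$ it cannot be an oriented cycle, hence contains a chordless oriented cycle with a pendant vertex (Figure 11.1); for cycle length $5$ this configuration is itself mutation-equivalent to $\E_6$, and for larger length it contains an honest $\E_6$ subquiver (Figure 11.2). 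Theorem \ref{theorem mutation finite} (Seven's subquiver theorem) then forces $Q$ to be mutation-equivalent to one of the $\E$-types. This is short and avoids all of the surface machinery you invoke.

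The genuine problem with your proposal is that its decisive step --- the ``surface step'' --- is announced rather than proved. You say you ``would prove'' that a floriated quiver coming from a surface triangulation has all petals triangular, that you ``expect to carry it out'' by matching the normal form against Vatne's description of the mutation class of $\D_n$ and against the block decomposition of surface quivers, and that the residual cases with at most $10$ vertices ``are then settled by a finite check.'' None of this is executed. The heuristic you offer (a chordless oriented cycle of length at least $4$ forces a puncture, and two such puncture-cycles cannot be glued along a common arrow inside a surface quiver) is a plausible guide, but it is precisely the content that would have to be extracted from the block decomposition of \cite{FST}; as written it is an assertion, not an argument. Since the whole dichotomy between (a) and (b) rests on this step, the proof is incomplete. (The elimination of $\X_6,\X_7$ via non-removable double arrows is fine, and your vertex count for the normal form is correct, but these are the easy parts.) If you wish to keep your route you must actually carry out the block-decomposition analysis or the finite check; otherwise the paper's device --- exhibiting an explicit $\E_6$ subquiver in the normal form and applying Theorem \ref{theorem mutation finite} --- is the efficient repair.
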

\begin{proof}
Denote by $(Q,W)$ the QP satisfying $A=J(Q,W)$. Denote by $m_i$ the number of vertices in $Q_i$.

From Proposition \ref{proposition floriated algbra to one cycle}, we get that $(Q,W)$ is mutation-equivalent to $(Q^1,W^1)$ as Figure 2.5 shows. Recall that $Q^1$ has only one oriented cycle $c$ which has $m_0+n$ vertices. We consider the following cases.

Case (1) If $m_0+n=3$, then $m_0=3$ and $n=0$, which means that $Q^1$ is of type $\D_3=\A_3$.

Case (2) If $m_0+n=4$, then $m_0=4$ and $n=0$, or $m_0=3$ and $n=1$. It is easy to see that it is of type $\D$.

Case (3) For $m_0+n>4$, if $Q^1$ is not a quiver of type $\D$, then we get that $Q^1$ is not an oriented cycle. So the quiver in Figure 11.1 is a subquiver of $Q^1$, where the oriented cycle has $m_0+n$ vertices. If $m_0+n=5$, then the quiver in Figure 11.1 is in the mutation-equivalence class of $\E_6$, and the result follows from Theorem \ref{theorem mutation finite}.
If $m_0+n>5$, then the quiver in Figure 11.1 also admits a subquiver as Figure 11.2 shows, which is a quiver of type $\E_6$. So $Q^1$ admits a subquiver mutation-equivalent to a quiver of type $\E_6$, and then the result follows from Theorem \ref{theorem mutation finite}.

\setlength{\unitlength}{1mm}
\begin{center}
\begin{picture}(140,15)

    \put(10,0){\begin{picture}(20,20)
                     \put(10,0){\circle*{1}}
                     \put(20,0){\circle*{1}}
                     \put(20,10){\circle*{1}}
                     \put(10,10){\circle*{1}}
                     \put(10,0){\vector(1,0){10}}
                      \put(20,0){\vector(0,1){10}}

                        \put(10,10){\vector(0,-1){10}}
                        \put(15,15){\circle*{1}}
                      \qbezier[8](20,10)(17.5,12.5)(15,15)
                        \put(20,0){\vector(1,0){10}}
                        \put(30,0){\circle*{1}}
                        \put(15,15){\vector(-1,-1){5}}

\put(0,-10){Figure 11.1. Subquiver of $Q^1$.}
\end{picture}}

\put(70,0){\begin{picture}(60,20)

\put(0,0){\circle*{1}}

\put(0,0){\vector(1,0){9}}
\put(10,0){\circle*{1}}

\put(10,0){\vector(1,0){9}}
\put(20,0){\circle*{1}}

\put(20,0){\vector(1,0){9}}
\put(30,0){\circle*{1}}

\put(30,0){\vector(1,0){9}}
\put(40,0){\circle*{1}}

\put(20,0){\vector(0,1){9}}
\put(20,10){\circle*{1}}

\put(0,-10){Figure 11.2. A quiver of type $\E_6$.}
\end{picture}}
\end{picture}
\vspace{4mm}
\end{center}

\end{proof}

\begin{lemma}[\cite{GLS}]\label{lemma representation type of jacobian algebra}
Assume that $Q$ is not mutation equivalent to one of the quivers $\T_1,\T_2,\X_6,\X_7$ or
$\K_m$ with $m\geq3$. Then for any non-degenerate potential $S$ on $Q$ the following hold:

(a) $J(Q,S)$ is representation-finite if and only if $Q$ is of type $\A_n,\D_n(n\geq4),\E_6,\E_7,\E_8$.

(b) $J(Q,S)$ is tame if and only if $Q$ is of finite mutation type.

(c) $J(Q,S)$ is wild if and only if $Q$ is of infinite mutation type.
\end{lemma}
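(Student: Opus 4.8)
The plan is to reduce the computation of the representation type of $J(Q,S)$ to a single convenient representative of the mutation class of $Q$, and then to read off the trichotomy from the classification in Theorem \ref{theorem quiver of finite mutation type}. For the reduction I would first note that, under the standing hypothesis excluding $\T_1,\T_2,\X_6,\X_7$ and $\K_m$, the quiver $Q$ stays $2$-acyclic throughout its mutation class, so every QP-mutation $\mu_k$ produces again a nondegenerate QP with finite-dimensional Jacobian algebra, and consecutive Jacobian algebras are derived equivalent. Granting that the finite/tame/wild trichotomy is preserved by these mutations, the representation type becomes a mutation-class invariant, so it suffices to determine it on one QP per class. Two independence statements must be secured here: invariance under $\mu_k$, and independence of the particular nondegenerate potential $S$ on a fixed quiver (the latter I would obtain from a genericity/degeneration argument on the affine space of potentials, showing that the representation-variety data are constant on the nondegenerate locus).

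For the case analysis, if $Q$ is Dynkin ($\A_n,\D_n,\E_6,\E_7,\E_8$) then a representative is acyclic, the only potential is $0$, and $J(Q,0)=KQ$ is representation-finite by Gabriel's theorem; conversely, I would show that no non-Dynkin class contains a representation-finite member, which gives (a). By Theorem \ref{theorem quiver of finite mutation type}, a connected quiver with at least three vertices is either of finite mutation type --- coming from a surface triangulation, or mutation-equivalent to one of the exceptional finite types --- or of infinite mutation type. In the surface case I would exhibit a representative whose Jacobian algebra is a string (gentle or skew-gentle) algebra, hence tame; for the finitely many exceptional finite types the tameness of one representative can be checked directly. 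This yields the implication ``finite mutation type $\Rightarrow$ tame'' in (b).

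For the wild half of (c) I would use Theorem \ref{theorem mutation finite} and the separated-quiver/subquiver criteria: an infinite-mutation-type quiver is mutation-equivalent to one carrying a subquiver whose Jacobian algebra is already wild, and wildness then transfers to $J(Q,S)$ by the mutation invariance above. Together with the tame half this separates (b) from (c), and (a) is the representation-finite refinement inside the tame case.

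The hardest part will be the invariance step: derived equivalence is not known in general to preserve representation-finiteness or tameness, so I expect the real work to lie in controlling the representation varieties of $J(Q,S)$ directly, proving that a single mutation $\mu_k$ sets up a geometric correspondence between these varieties that preserves both tameness and the number of one-parameter families. A secondary obstacle is the surface case, where one must identify $J(Q,S)$ with a concrete tame string-type algebra uniformly over all triangulations; this is what ultimately pins down tameness rather than mere non-wildness.
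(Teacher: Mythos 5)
This lemma is not proved in the paper at all: it is quoted verbatim from \cite{GLS} (Gei{\ss}--Labardini-Fragoso--Schr\"oer), so there is no internal argument to compare yours against. What can be assessed is whether your outline is a viable route to the cited result, and here there is one genuine error and two large gaps. The error is the claim that consecutive Jacobian algebras $J(Q,S)$ and $J(\mu_k(Q,S))$ are derived equivalent. This is false in general; a single QP-mutation yields a derived equivalence only under additional hypotheses (this is precisely the content of Ladkani's criterion, Proposition \ref{proposition mutation to BB} in this paper, and the reason the present paper must verify the BB-tilting conditions vertex by vertex). What \cite{GLS} actually prove, and what your reduction needs, is that the representation type itself is invariant under QP-mutation of nondegenerate QPs; their argument goes through the Derksen--Weyman--Zelevinsky mutation of decorated representations (equivalently, the near Morita equivalence $\mod\Lambda/[S_k]\simeq\mod\Lambda'/[S_k']$ for neighbouring $2$-CY-tilted algebras), not through derived equivalence. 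Your closing paragraph concedes that this invariance is "the hardest part," but conceding it is not supplying it: without it the entire reduction to a representative of the mutation class collapses.

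The second gap is the surface case of (b): asserting that some triangulation has a gentle or skew-gentle (clannish) Jacobian algebra, uniformly over all marked surfaces and compatibly with the independence of the choice of nondegenerate potential, is again the technical heart of \cite{GLS} and of Labardini-Fragoso's work on potentials for surfaces; your "genericity/degeneration argument on the affine space of potentials" is named but not carried out, and it is not obvious that representation type is constant on the nondegenerate locus without the semicontinuity results \cite{GLS} develop for this purpose. In short, your proposal correctly identifies the skeleton of the proof in the cited reference (mutation invariance, Theorem \ref{theorem quiver of finite mutation type}, surface triangulations, direct checks for the exceptional types, wildness from forbidden subquivers), but the two steps you defer are the theorem, and the one mechanism you do propose for the reduction (derived equivalence along mutations) does not work.
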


Now we can get our main result of this section.

\begin{theorem}\label{theorem of representation type}
Let $A=KQ/I$ be a polygon-tree algebra, where the gluing components of $Q$ are $Q_0,Q_1,\dots,Q_N$. Then

(a) $A$ is of representation finite type if and only if $A$ is in the mutation-equivalence class of type $\A_3$, $\D_n$($n\geq4$), $\E_6$, $\E_7$, or $\E_8$;

(b) $A$ is of tame representation type which is not representation finite if and only if $A$ is in the mutation-equivalence class of type $\tilde{\E}_6$, $\tilde{\E}_7$, $\tilde{\E}_8$, $\E_6^{(1,1)}$, $\E_7^{(1,1)}$ or $\E_8^{(1,1)}$. In particular, in this case, the number of vertices in $Q$ is no more than $10$.
\end{theorem}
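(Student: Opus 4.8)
The plan is to reduce the representation type of $A=J(Q,W)$ to the mutation class of the quiver $Q$ and then to classify the polygon-tree quivers that are of finite mutation type. By Proposition \ref{schurian of polygon-tree algebras} the potential $W$ is primitive, and by the rigidity argument of Proposition \ref{proposition floriated algbra to one cycle} it is nondegenerate; so Lemma \ref{lemma representation type of jacobian algebra} becomes available as soon as $Q$ is not mutation-equivalent to $\T_1,\T_2,\X_6,\X_7$ or $\K_m$ ($m\ge3$). I would dispatch this by observing that a polygon-tree quiver is simply laced — it is glued from oriented cycles along single arrows and is cyclically oriented, so it has no multiple arrows — whereas every quiver in the mutation class of $\K_m$ ($m\ge3$), $\T_1,\T_2,\X_6,\X_7$ carries a multiple arrow (this is precisely why they are set apart in Figure 10.4 and in Lemma \ref{lemma representation type of jacobian algebra}; for $\X_6,\X_7$ it is the persistence statement of Theorem \ref{theorem mutation finite}), while $\K_m$ is excluded on sight by its two vertices. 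Consequently Lemma \ref{lemma representation type of jacobian algebra} governs everything: $A$ is representation-finite iff $Q$ is Dynkin, tame iff $Q$ is of finite mutation type, and wild iff $Q$ is of infinite mutation type.

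The core is then the claim that a finite-mutation-type polygon-tree quiver is mutation-equivalent to one of $\A_3,\D_n$ ($n\ge4$), $\E_6,\E_7,\E_8,\tilde{\E}_6,\tilde{\E}_7,\tilde{\E}_8,\E_6^{(1,1)},\E_7^{(1,1)},\E_8^{(1,1)}$. If $Q$ has a single gluing component $C_m$, then $J(Q,W)\cong\cn_m$ and $Q$ is mutation-equivalent to $\D_m$ (with the convention $\D_3=\A_3$), which is Dynkin. If $Q$ has at least two gluing components I would select an adjacent pair $Q_i,Q_j$: since the gluing tree $T(Q)$ has no cycles, the full subquiver on $Q_i\cup Q_j$ is chordless and is exactly two cycles glued along one arrow, i.e. a one-petal floriated quiver. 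When both components have at least four vertices, reducing this pair by the mutations of Proposition \ref{proposition floriated algbra to one cycle} yields an oriented cycle of length $\ge5$ carrying a nonempty tail, whence — exactly as in Case (3) of the proof of Lemma \ref{lemma vertices more than 10 is of D} — a full subquiver mutation-equivalent to the $\E_6$-quiver of Figure 11.2. Thus, after mutation, $Q$ has a full subquiver mutation-equivalent to $\E_6$; since full subquivers of mutation-finite quivers are again mutation-finite and no type-$\D$ quiver admits an $\E_6$ full subquiver (its sub-root-systems being of types $\A$ and $\D$ only), such a $Q$ cannot be of type $\D$, and Theorem \ref{theorem mutation finite} together with Theorem \ref{theorem quiver of finite mutation type} forces $Q$ to be mutation-equivalent to one of the exceptional $\E$-type quivers, all of which have at most $10$ vertices. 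The complementary polygon-tree quivers (a single cycle, or larger cycles meeting only through triangles) I expect to reduce, by iterating Proposition \ref{proposition floriated algbra to one cycle} and absorbing each triangular petal into its neighbour, to a single tailless cycle, hence to be of type $\D$.

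Assembling this with Lemma \ref{lemma representation type of jacobian algebra} yields both statements. For (a): the Dynkin quivers $\A_3,\D_n,\E_6,\E_7,\E_8$ give representation-finite algebras by part (a) of that lemma, and conversely a representation-finite polygon-tree algebra has Dynkin $Q$, which by the classification is one of these — in particular type $\A$ is forced down to $\A_3$, since any cycle of length $\ge4$ or any second large cycle destroys type $\A$ (and is realized, e.g. two $4$-cycles glued along an arrow give $\E_6$). For (b): the quivers $\tilde{\E}_6,\tilde{\E}_7,\tilde{\E}_8,\E_6^{(1,1)},\E_7^{(1,1)},\E_8^{(1,1)}$ are of finite mutation type but not Dynkin, so part (b) of the lemma makes the corresponding algebras tame and not representation-finite; conversely a tame, non-representation-finite polygon-tree algebra has $Q$ of finite, non-Dynkin mutation type (parts (b),(c)), hence one of these six by the classification, so $Q$ has at most $10$ vertices.

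I expect the main obstacle to lie in the second paragraph for genuinely tree-shaped (non-floriated) polygon-tree quivers: Proposition \ref{proposition floriated algbra to one cycle} and Lemma \ref{lemma vertices more than 10 is of D} are stated only in the star-shaped situation, so the delicate points are to run the local reduction of an adjacent pair $Q_i\cup Q_j$ inside the ambient $Q$, to certify that the produced $\E_6$ survives as an honest full subquiver independent of the remaining gluing components, and to organize the casework on which components are triangles and which are larger cycles so that the type-$\D$ case and the $\E_6$-containing case are cleanly separated for an arbitrary gluing tree. This combinatorial bookkeeping, rather than any homological input, is the heart of the argument.
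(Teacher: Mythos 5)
There is a genuine gap, and it sits exactly where you yourself locate ``the heart of the argument.'' Your dichotomy is: either two adjacent gluing components both have at least four vertices (in which case you extract a cycle of length $\geq 5$ with a tail and hence an $\E_6$-subquiver, which is fine and matches Lemma \ref{lemma vertices more than 10 is of D}), or else you claim the quiver ``reduces, by absorbing each triangular petal into its neighbour, to a single tailless cycle, hence to be of type $\D$.'' That claim is false. A path of four triangles glued successively along arrows (the quivers of Figure 12) has no two adjacent large cycles and no large cycle at all, yet it is mutation-equivalent to $\E_6$, not to $\D_6$; more generally, any polygon-tree quiver whose gluing tree contains a chain of three components $Q_3$--$Q_1$--$Q_0$--$Q_2$ with $Q_1,Q_0$ each having two neighbours fails to be of type $\D$ even when every component is a triangle. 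Likewise a $4$-cycle with a triangle petal carrying its own petal is not of type $\D$. So the ``complementary'' cases do not all land in type $\D$, and your proposed reduction does not close the only-if direction of (b): you would wrongly classify these quivers as representation-finite.

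The paper's proof supplies precisely the casework you defer. It first looks at the local floriated subquivers $Q(i)$ (each component together with its neighbours): if some $Q(i)$ already contains a subquiver in the mutation class of $\E_6$, Theorem \ref{theorem mutation finite} finishes the argument. Otherwise every $Q(i)$ is of type $\D$ by Lemma \ref{lemma vertices more than 10 is of D}, and a short argument shows that then \emph{all} components must be triangles (a component with $\geq 4$ vertices and a neighbour that itself has a second neighbour would make some $Q(k)$ fail to be of type $\D$). Finally, if $Q$ is still not of type $\D$, the gluing tree must contain a depth-two chain of triangles, which produces one of the two quivers of Figure 12 as a subquiver; these are in the mutation class of $\E_6$ by \cite{GP}, and Theorem \ref{theorem mutation finite} again applies. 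This is the missing combinatorial step, and it cannot be replaced by ``absorbing triangles into a single cycle.'' On the positive side, your verification that a polygon-tree quiver is never mutation-equivalent to $\K_m$, $\T_1$, $\T_2$, $\X_6$, $\X_7$ (via the non-removability of double arrows), which is needed to invoke Lemma \ref{lemma representation type of jacobian algebra} at all, is a point the paper passes over in silence, and your part (a) agrees with the paper's.
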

\begin{proof}
Denote by $m_i$ the number of vertices in $Q_i$, and $m=\sum_{i=0}^Nm_i$. Then the number of vertices in $Q$ is $m-2N$.
Since the polygon-tree quiver $Q$ is not in the mutation-equivalence class of type $\A$ when $m-2N>3$, (a) follows from Lemma \ref{lemma representation type of jacobian algebra} directly.

For (b), if $A$ is of tame representation type which is not representation finite, then $Q$ is of finite mutation type, and any floriated subquiver $Q(i)$ ($0\leq i\leq N$) of $Q$ is also of finite mutation type. If $Q(i)$ admits a subquiver in the mutation class of $\E_6$, then Theorem \ref{theorem mutation finite} shows that $Q$ is mutation-equivalent to a quiver which is one of the types $\E$, together with that $A$ is not representation finite, we get that $A$ is in the mutation-equivalence class of $\tilde{\E}_6$, $\tilde{\E}_7$, $\tilde{\E}_8$, $\E_6^{(1,1)}$, $\E_7^{(1,1)}$ or $\E_8^{(1,1)}$.

If each $Q(i)$ does not admit a subquiver in the mutation class of $\E_6$, we get that $Q(i)$ is of type $\D$ for any $0\leq i\leq N$ by Lemma \ref{lemma vertices more than 10 is of D}.
If there exists $Q_{i}$ such that $m_i\geq4$ for some $i$, without loss of generality, we assume that $m_0\geq4$. We assume that the oriented chordless subquivers adjacent to $Q_0$ are
$Q_1,\dots,Q_n$. Note that $n\geq1$ since $Q$ is not of type $\D$. Then $m_j=3$ for $1\leq j\leq n$. Since $Q$ is not of type $\D$, there exists
$Q_k$ with $1\leq k\leq n$ such that it admits an adjacent oriented chordless subquiver different to $Q_0$, and then $Q(k)$ is not of type $\D$, which is a contradiction.
Thus $m_i=3$ for any $0\leq i\leq N$.

If each $Q_i$ admits only one adjacent oriented chordless subquiver, then $Q$ has only $2$ gluing components $Q_0,Q_1$. Since $m_i=3$ for $i=0,1$, it is easy to see that $Q$ is of type $\D$.
So there exists a gluing component admitting at least $2$ adjacent ones.

Without loss of generality, we assume that $Q_1,Q_2$ are adjacent gluing components of $Q_0$. Since $Q$ is not of type $\D$, then one of $Q_1$, $Q_2$ admits an adjacent oriented chordless subquiver different to $Q_0$. We assume that $Q_1$ admits an adjacent oriented chordless subquiver $Q_3$ different to $Q_0$. Then one of the two quivers in Figure 12 is a subquiver of $Q$. From \cite{GP}, we know that both of them are of type $\E_6$. So $Q$ is in the mutation-equivalence class of $\tilde{\E}_6$, $\tilde{\E}_7$, $\tilde{\E}_8$, $\E_6^{(1,1)}$, $\E_7^{(1,1)}$ or $\E_8^{(1,1)}$.
\setlength{\unitlength}{1mm}
\begin{center}
\begin{picture}(200,20)

\put(30,0){\begin{picture}(20,20)
\put(10,0){\circle*{1}}
\put(20,0){\circle*{1}}
\put(20,10){\circle*{1}}
\put(10,10){\circle*{1}}
\put(10,0){\vector(1,0){10}}
\put(20,0){\vector(0,1){10}}
\put(10,0){\vector(0,1){10}}
\put(10,10){\vector(1,0){10}}
\put(20,10){\vector(-1,-1){10}}
\put(20,10){\vector(1,-1){10}}
\put(30,0){\circle*{1}}
\put(30,0){\vector(-1,0){10}}

\put(10,10){\vector(-1,-1){10}}
\put(0,0){\circle*{1}}
\put(0,0){\vector(1,0){10}}

\put(12,7){\tiny$Q_1$}
\put(15,3){\tiny$Q_0$}
\put(22,3){\tiny$Q_2$}
\put(5,3){\tiny$Q_3$}
\end{picture}}

\put(80,0){\begin{picture}(20,20)
\put(10,0){\circle*{1}}
\put(20,0){\circle*{1}}
\put(20,10){\circle*{1}}
\put(10,10){\circle*{1}}
\put(10,0){\vector(1,0){10}}
\put(20,0){\vector(0,1){10}}
\put(10,0){\vector(0,1){10}}
\put(10,10){\vector(1,0){10}}
\put(20,10){\vector(-1,-1){10}}
\put(20,10){\vector(1,-1){10}}
\put(30,0){\circle*{1}}
\put(30,0){\vector(-1,0){10}}
\put(10,20){\circle*{1}}

\put(20,10){\vector(-1,1){10}}

\put(10,20){\vector(0,-1){10}}

\put(12,7){\tiny$Q_1$}
\put(15,3){\tiny$Q_0$}
\put(22,3){\tiny$Q_2$}
\put(12,12){\tiny$Q_3$}
\end{picture}}

\put(30,-6){Figure 12. Quivers in the mutation-equivalence class of type $\E_6$.}

\end{picture}

\end{center}

\end{proof}

\begin{remark}
In fact, from \cite{GLS}, we know that $J(Q,S)$ is wild if $Q$ is mutation equivalent to one of the quivers $\X_6,\X_7$ and $\K_m$ for $m\geq3$, where $S$ is a non-degenerate potential on $Q$. In the tame case, the quivers $\T_i$ have no polygon-tree quivers in their mutation classes.
\end{remark}

For floriated quivers of type $\E_6^{(1,1)}, \E_7^{(1,1)},\E_8^{(1,1)}$, using Keller's applet for quiver mutations \cite{Ke2}, we can obtain that they are precisely as the following table shows. In particular, the floriated algebras of the following quivers are not cluster-tilted algebras.

\[
\setlength{\tabcolsep}{2pt}
\renewcommand{\arraystretch}{1.2}
\begin{tabular}{|l|l|}
\hline
\multicolumn{1}{|c|}{classes}  &\multicolumn{1}{|c|}{The floriated quivers}\\
\hline
\multicolumn{1}{|c|}{ \raisebox{4ex}[0pt]{$\E^{(1,1)}_7$}}  &\multicolumn{1}{|c|}{\setlength{\unitlength}{0.8mm}
                     \begin{picture}(140,25)
                     \put(0,12){\begin{picture}(20,20)
                     \put(0,5){\circle*{1}}
                     \put(5,0){\circle*{1}}
                     \put(5,10){\circle*{1}}
                     \put(10,5){\circle*{1}}
                      \put(15,0){\circle*{1}}
                          \put(15,10){\circle*{1}}
                           \put(20,5){\circle*{1}}
             \put(5,0){\vector(1,0){10}}
             \put(15,0){\vector(-1,1){5}}
             \put(10,5){\vector(-1,-1){5}}
             \put(5,0){\vector(-1,1){5}}
             \put(0,5){\vector(1,1){5}}
             \put(5,10){\vector(1,-1){5}}
               \put(10,5){\vector(1,1){5}}
             \put(15,10){\vector(1,-1){5}}
             \put(20,5){\vector(-1,-1){5}}

              \put(5,-10){\circle*{1}}
              \put(15,-10){\circle*{1}}
              \put(15,0){\vector(0,-1){10}}
              \put(15,-10){\vector(-1,0){10}}
             \put(5,-10){\vector(0,1){10}}
                        \end{picture}}

                     \end{picture}}\\
\hline

\multicolumn{1}{|c|}{ \raisebox{7ex}[0pt]{$\E^{(1,1)}_8$}}  &\multicolumn{1}{|c|}{\setlength{\unitlength}{0.8mm}
                     \begin{picture}(170,35)

     %%%%%%%%%%%%%E^{(1,1)}_8(1)%%%%%%%%%%%%%%%%%
  \put(0,10){\begin{picture}(20,20)
                 \put(10,0){\circle*{1}}
                     \put(20,0){\circle*{1}}
                     \put(20,10){\circle*{1}}
                     \put(10,10){\circle*{1}}
                     \put(10,0){\vector(1,0){10}}
                      \put(20,0){\vector(1,1){5}}
                      \put(25,5){\circle*{1}}
                       \put(25,5){\vector(-1,1){5}}

                      \put(10,10){\vector(-1,-1){5}}
                      \put(20,10){\vector(-1,0){10}}
                    \put(5,5){\circle*{1}}
                    \put(5,5){\vector(1,-1){5}}

                 \put(10,10){\vector(0,1){10}}
                  \put(10,20){\vector(1,0){10}}
                    \put(20,20){\vector(0,-1){10}}
                    \put(10,20){\circle*{1}}
                   \put(20,20){\circle*{1}}

           \put(10,-10){\vector(0,1){10}}
                  \put(20,-10){\vector(-1,0){10}}
                    \put(20,0){\vector(0,-1){10}}
                    \put(10,-10){\circle*{1}}
                   \put(20,-10){\circle*{1}}

                        \end{picture}}
       %%%%%%%%%%%%%E^{(1,1)}_8(2)%%%%%%%%%%%%%%%%%
        \put(35,10){\begin{picture}(20,20)
                 \put(10,0){\circle*{1}}
                     \put(20,0){\circle*{1}}
                     \put(20,10){\circle*{1}}
                     \put(10,10){\circle*{1}}
                     \put(10,0){\vector(1,0){10}}
                      \put(20,0){\vector(1,1){5}}
                      \put(25,5){\circle*{1}}
                       \put(25,5){\vector(-1,1){5}}

                      \put(5,5){\vector(1,1){5}}
                      \put(20,10){\vector(-1,0){10}}
                    \put(5,5){\circle*{1}}
                    \put(10,0){\vector(-1,1){5}}

                 \put(10,10){\vector(0,1){10}}
                  \put(10,20){\vector(1,0){10}}
                    \put(20,20){\vector(0,-1){10}}
                    \put(10,20){\circle*{1}}
                   \put(20,20){\circle*{1}}

           \put(10,-10){\vector(0,1){10}}
                  \put(20,-10){\vector(-1,0){10}}
                    \put(20,0){\vector(0,-1){10}}
                    \put(10,-10){\circle*{1}}
                   \put(20,-10){\circle*{1}}

 \put(10,10){\vector(0,-1){10}}

                        \end{picture}}
       %%%%%%%%%%%%%E^{(1,1)}_8(3)%%%%%%%%%%%%%%%%%
       \put(70,10){\begin{picture}(20,20)
                 \put(10,0){\circle*{1}}
                     \put(20,0){\circle*{1}}
                     \put(20,10){\circle*{1}}
                     \put(10,10){\circle*{1}}
                     \put(10,0){\vector(1,0){10}}
                      \put(25,5){\vector(-1,-1){5}}
                      \put(25,5){\circle*{1}}
                       \put(20,10){\vector(1,-1){5}}

                      \put(5,5){\vector(1,1){5}}
                      \put(20,10){\vector(-1,0){10}}
                    \put(5,5){\circle*{1}}
                    \put(10,0){\vector(-1,1){5}}

                 \put(10,10){\vector(0,1){10}}
                  \put(10,20){\vector(1,0){10}}
                    \put(20,20){\vector(0,-1){10}}
                    \put(10,20){\circle*{1}}
                   \put(20,20){\circle*{1}}

           \put(10,-10){\vector(0,1){10}}
                  \put(20,-10){\vector(-1,0){10}}
                    \put(20,0){\vector(0,-1){10}}
                    \put(10,-10){\circle*{1}}
                   \put(20,-10){\circle*{1}}

 \put(10,10){\vector(0,-1){10}}
 \put(20,0){\vector(0,1){10}}

                        \end{picture}}
       %%%%%%%%%%%%%E^{(1,1)}_8(4)%%%%%%%%%%%%%%%%%
        \put(100,10){\begin{picture}(20,20)
                 \put(10,0){\circle*{1}}
                     \put(20,0){\circle*{1}}
                     \put(20,10){\circle*{1}}
                     \put(10,10){\circle*{1}}
                     \put(10,0){\vector(1,0){10}}
                     \put(20,10){\vector(-1,0){10}}
                   \put(10,10){\vector(0,1){10}}
                  \put(10,20){\vector(1,0){10}}
                    \put(20,20){\vector(0,-1){10}}
                    \put(10,20){\circle*{1}}
                   \put(20,20){\circle*{1}}

           \put(10,-10){\vector(0,1){10}}
                  \put(20,-10){\vector(-1,0){10}}
                    \put(20,0){\vector(0,-1){10}}
                    \put(10,-10){\circle*{1}}
                   \put(20,-10){\circle*{1}}

 \put(10,10){\vector(0,-1){10}}
 \put(20,0){\vector(0,1){10}}

 \put(30,10){\circle*{1}}
 \put(30,0){\circle*{1}}
 \put(20,10){\vector(1,0){10}}
 \put(30,10){\vector(0,-1){10}}
 \put(30,0){\vector(-1,0){10}}

                        \end{picture}}
       %%%%%%%%%%%%%E^{(1,1)}_8(5)%%%%%%%%%%%%%%%%%

       \put(140,10){\begin{picture}(20,20)

                     \put(0,5){\circle*{1}}
                     \put(5,0){\circle*{1}}
                     \put(5,10){\circle*{1}}
                     \put(10,5){\circle*{1}}
                      \put(15,0){\circle*{1}}
                          \put(15,10){\circle*{1}}
                           \put(20,5){\circle*{1}}
             \put(5,0){\vector(1,0){10}}
             \put(15,0){\vector(-1,1){5}}
             \put(10,5){\vector(-1,-1){5}}
             \put(5,0){\vector(-1,1){5}}
            \put(0,5){\circle*{1}}
             \put(5,10){\vector(1,-1){5}}
               \put(10,5){\vector(1,1){5}}

             \put(20,5){\vector(-1,-1){5}}

         \put(0,10){\circle*{1}}
             \put(0,10){\vector(1,0){5}}
             \put(0,5){\vector(0,1){5}}

             \put(15,10){\vector(1,-1){5}}

                 \put(5,-10){\circle*{1}}
                 \put(15,-10){\circle*{1}}
                     \put(15,0){\vector(0,-1){10}}
                     \put(15,-10){\vector(-1,0){10}}
                     \put(5,-10){\vector(0,1){10}}
                        \end{picture}}

       \end{picture}}\\
\hline

\end{tabular}
\]

\end{document}